\newcommand{\birkhoff}{Birkhoff\xspace}
\newcommand{\lipschitz}{Lipschitz\xspace}
\newcommand{\tychonoff}{Tychonoff\xspace}
\newtheorem{theorem}{Theorem}[section]
\newtheorem*{theorem-nn}{Theorem}
\newtheorem{corollary}[theorem]{Corollary}
\newtheorem{lemma}[theorem]{Lemma}
\newtheorem{proposition}[theorem]{Proposition}
\newtheorem{question}[theorem]{Question}
\newtheorem*{question-nn}{Question}
\theoremstyle{definition}
\newtheorem{definition}[theorem]{Definition}
\newtheorem*{definition-nn}{Definition}
\newtheorem{example}[theorem]{Example}
\newtheorem*{example-nn}{Example}
\theoremstyle{remark}
\newtheorem{remark}[theorem]{Remark}
\newtheorem{case}{Case}
\newcommand{\amalgam}{{\coprod}_{A} G_{\lambda}}
\newcommand{\amalgamH}{{\coprod}_{A} H_{\lambda}}
\newcommand{\concat}{{}^{\frown}}
\newcommand{\env}[1]{\overline{#1}}
\newcommand{\diam}[1]{\mathrm{diam}(#1)}
\newcommand{\dist}{\underline{d}}
\newcommand{\id}{id}
\newcommand{\norm}{\mathrm{N}}
\newcommand{\nsbg}{\mathfrak{N}}
\newcommand{\seg}[2]{[#1,#2]}
\newcommand{\symmet}[4]{\mathrm{Sym}(#1,#2;#3,#4)}
\newcommand{\transferr}[4]{\mathrm{RTran}(#1,#2;#3,#4)}
\newcommand{\transferl}[4]{\mathrm{LTran}(#1,#2;#3,#4)}
\newcommand{\tsi}{tsi }
\newcommand{\word}[1]{\mathrm{Words}(#1)}
\newcommand{\words}{\mathrm{Words}(G)}
\renewcommand{\cong}{\sim}
\newcommand{\sign}[1]{\mathop{\mathrm{sign}}\nolimits(#1)}
\newcommand{\multipliable}{multipliable\xspace}
\begin{document}
\title[Graev metrics on free products]{Graev metrics on free products and HNN extensions} 

\author{Konstantin Slutsky} 
\thanks{Research supported in part by grant no. 10-082689/FNU from Denmark's Natural Sciences Research Council.}
\address{
  Institut for Matematiske Fag\\
  K\o benhavns Universitet\\
  Universitetsparken 5\\
  2100 K\o benhavn \O,\ Denmark \\}
\email{kslutsky@gmail.com}
\keywords{Graev metric, free product, HNN extension}

\begin{abstract}
  We give a construction of two-sided invariant metrics on free products (possibly with
  amalgamation) of groups with two-sided invariant metrics and, under certain conditions, on HNN
  extensions of such groups.  Our approach is similar to the Graev's construction of metrics on free
  groups over pointed metric spaces.
\end{abstract}

\maketitle

\section{Introduction}
\label{sec:introduction}

\subsection{History}
\label{sec:history}

Back in the 40's in his seminal papers \cite{MR0004634,MR0012301} A. Markov came up with a notion of
the free topological group over a completely regular (\tychonoff) space.  This notion gave birth to
a deep and important area in the general theory of topological groups.  We highly recommend an
excellent overview of free topological groups by O. Sipacheva \cite{MR2056625transl}.  Later
M. Graev \cite{MR0038357} gave another proof of the existence of free topological groups over
completely regular spaces.  In his approach Graev starts with a pointed metric space
\( (X,x_{0},d) \) and defines in a canonical way a two-sided invariant metric on
\( F \big( X\setminus \{x_{0}\} \big) \) --- the free group with bases \( X \setminus \{x_{0}\} \).
Moreover, this metric extends the metric \( d \) on \( X \setminus \{x_{0}\} \).  In modern terms,
Graev constructed a functor from the category of pointed metric spaces with \lipschitz maps to the
category of groups with two-sided invariant metrics and \lipschitz homomorphisms.

The topology given by the Graev metric on the free group \( F(X \setminus \{x_{0}\}) \) is, in
general, much weaker than the free topology on \( F \big( X \setminus \{x_{0}\} \big) \).  Since the
early 40's a lot of work was done to understand the free topology on free groups, and some of this
work shed light onto properties of the Graev metrics.

Graev metrics were used to construct exotic examples of Polish groups (see \cite{MR1288299,
  MR2332614,MR2541347}).  For example, the group completion of the free group
\( F(\mathbb{N}^{\mathbb{N}}) \) over the Baire space with the topology given by the Graev metric is
an example of a surjectively universal group in the class of Polish groups that admit compatible
two-sided invariant metrics (see \cite{MR1288299} for the proof).

Once the notion of a free topological group is available, the next step is to construct free
products.  It was made by Graev himself in \cite{MR0036768}, where he proves the existence of free
products in the category of topological groups.  For this he uses, in a clever and unexpected way,
Graev metrics on free groups.  But this time his approach does not produce a canonical metric on the
free product out of metrics on factors.

In this paper we would like to try to push Graev's method from free groups to free products of
groups with and without amalgamation.  As will be evident from the construction, the natural realm
for this approach is the category of groups with two-sided invariant metrics.  To be precise, a
basic object for us will be an abstract group \( G \) with a two-sided invariant metric \( d \) on
it. We recall that \( G \) will then automatically be a topological group in the topology given by
\( d \).  Topological groups that admit a compatible two-sided invariant metric form a very
restrictive subclass of the class of all the metrizable topological groups, but it
includes compact metrizable and abelian metrizable groups.

\subsection{Main results}
\label{sec:main-results}

The paper roughly consists of two parts.  In the first part we show the existence of free products
of groups with two-sided invariant metrics.  Here is a somewhat simplified version of the main
theorem.

\begin{theorem-nn}[Theorem \ref{sec:metrics-amalgams-MAIN-Graev-metric-on-products}]
  Let \( (G_{1},d_{1}) \) and \( (G_{2},d_{2}) \) be groups with two-sided invariant metrics.  If
  \( A < G_{i} \) is a common closed subgroup and \( d_{1}|_{A} = d_{2}|_{A} \), then there is a
  two-sided invariant metric \( \dist \) on the free product with amalgamation
  \( G_{1} *_{A} G_{2} \) such that \( \dist |_{G_{i}} = d_{i} \).  Moreover, if \( G_{1} \) and
  \( G_{2} \) are separable, then so is \( G_{1} *_{A} G_{2} \).
\end{theorem-nn}

Next we address the question of when a two-sided invariant metric can be extended to an HNN
extension.  We obtain the following results.

\begin{theorem-nn}[Theorem \ref{sec:hnn-extens-class-existence-of-hnn-extension}]
  Let \( (G,d) \) be a tsi group, \( \phi : A \to B \) be a \( d \)-isometric isomorphism between
  the closed subgroups \( A, B \).  Let \( H \) be the HNN extension of \( (G,\phi) \) in the
  abstract sense, and let \( t \) be the stable letter of the HNN extension.  If
  \( \diam{A} \le K \), then there is a tsi metric \( \dist \) on \( H \) such that
  \( \dist|_{G} = d \) and \( \dist(t,e) = K \).
\end{theorem-nn}

\begin{theorem-nn}[Theorem \ref{sec:induc-conj-hnn-general-theorem}]
  Let \( G \) be a SIN metrizable group.  Let \( \phi : A \to B \) be a topological isomorphism
  between two closed subgroups.  There exist a SIN metrizable group \( H \) and an element
  \( t \in H \) such that \( G < H \) is a topological subgroup and \( tat^{-1} = \phi(a) \) for all
  \( a \in A \) if and only if there is a compatible tsi metric \( d \) on \( G \) such that
  \( \phi \) becomes a \( d \)-isometric isomorphisms.
\end{theorem-nn}

\subsection{Notations}
\label{sec:notations}

We use the following conventions.  By an interval we always mean an interval of natural numbers, there will be no
intervals of reals in this paper.  An interval \( \{m, m+1, \ldots, n\} \) is denoted by \( [m,n] \).  For a finite set
\( F \) of natural numbers \( m(F) \) and \( M(F) \) denote its minimal and maximal elements respectively.  For two sets
\( F_{1} \) and \( F_{2} \) if \( M(F_{1})<m(F_{2}) \), then we say that \( F_{1} \) is less than \( F_{2} \) and denote this
by \( F_{1} < F_{2} \).

A finite set \( F \) of natural numbers can be represented uniquely as a union of its maximal sub-intervals, i.e.,
there are intervals \( \{I_{k}\}_{k=1}^{n} \) such that
\begin{enumerate}[(i)]
\item \( F = \bigcup_{k} I_{k} \);
\item \( M(I_{k}) + 1 < m(I_{k+1}) \) for all \( k \in \seg{1}{n-1} \).
\end{enumerate}
We refer to such a decomposition of \( F \) as to the \emph{family of maximal sub-intervals}.

By a \emph{tree} we mean a directed graph connected as an undirected graph without undirected cycles and with a
distinguished vertex, which is called the \emph{root} of the tree.  For any tree \( T \) its root
will be denoted by \( \emptyset \).  The \emph{height} on a tree \( T \) is a function \( H_{T} \)
that assigns to a vertex of the tree its graph-theoretic distance to the root.  For example
\( H_{T}(\emptyset) = 0 \) and \( H_{T}(t) = 1 \) for all \( t \in T \setminus \{\emptyset \} \)
such that \( (t,\emptyset) \in E(T) \), where \( E(T) \) is the set of directed edges of \( T \).
We use the word \emph{node} as a synonym for the phrase \emph{vertex of a tree}.
We say that a node \( s \in T \) is a \emph{predecessor} of \( t \in T \), and denote this by \( s \prec t \), if there
are nodes \( s_{0}, \ldots, s_{m} \in T \) such that \( s_{0} = s, s_{m} = t \) and \( (s_{i}, s_{i+1}) \in E(T) \).

For a metric space \( X \) its density character, i.e., the smallest cardinality of a dense subset, is denoted by
\( \chi(X) \).

\subsection{Acknowledgment}
\label{sec:acknowledgment}

The author wants to thank Christian Rosendal for his tireless support and numerous helpful and very inspiring
conversations.  Part of this work was done during the author's participation in the program on ``Asymptotic geometric
analysis'' at the Fields Institute in the Fall, 2010 and during the trimester on ``Von Neumann algebras and ergodic
theory of group actions'' at the Institute of Henri Poincare in Spring, 2011.  The author thanks sincerely the
organizers of these programs.

The author also thanks the anonymous referee for the valuable help in improving paper's writing.

\section{Trivial words in amalgams}
\label{sec:triv-words-amalg}

Let a family \( \{G_{\lambda}\}_{\lambda \in \Lambda} \) of groups be given, where \( \Lambda \) is
an index set.  Suppose all of the groups contain a subgroup \( A \subseteq G_{\lambda} \), and
assume that \( G_{\lambda_{1}} \cap G_{\lambda_{2}} = A \) for all \( \lambda_{1} \ne \lambda_{2}
\).  Let \( G = \bigcup_{\lambda \in \Lambda} G_{\lambda} \) denote the union of the
groups \( G_{\lambda} \). The identity element in any group is denoted by \( e \), the ambient group
will be evident from the context.  Let \( 0 \) be a symbol not in \( \Lambda \).  For
\( g_{1}, g_{2} \in G \) we set \( g_{1} \cong g_{2} \) to denote the existence of
\( \lambda \in \Lambda \) such that \( g_{1}, g_{2} \in G_{\lambda} \).  If \( g_{1} \cong g_{2} \),
we say that \( g_{1} \) and \( g_{2} \) are \emph{\multipliable.}  We also define a relation
on \( \Lambda \cup \{0\} \) by declaring that \( x, y \in \Lambda \cup \{0\} \) are in relation if and
only if either \( x = y \) or at least one of \( x, y \) is \( 0 \).  This relation on
\( \Lambda \cup \{0\} \) is also denoted by \( \cong \).

The free product of the groups \( G_{\lambda} \) with amalgamation over the subgroup \( A \) is
denoted by \( \amalgam \).  We carefully distinguish words over the alphabet \( G \) from elements
of the amalgam \( \amalgam \).  For that we introduce the following notation.  \( \words \) denotes
the set of finite nonempty words over the alphabet \( G \).  The length of a word
\( \alpha \in \words \) is denoted by \( |\alpha| \), the concatenation of two words \( \alpha \)
and \( \beta \) is denoted by \( \alpha \concat \beta \), and the \( i^{th} \) letter of
\( \alpha \) is denoted by \( \alpha(i) \); in particular, for any \( \alpha \) \( \in \words \)
\[ \alpha = \alpha(1) \concat \alpha(2) \concat \cdots \concat \alpha(|\alpha|).  \]
Two words \( \alpha, \beta \in \words \) are said to be \emph{\multipliable} if \( |\alpha| = |\beta| \)
and \( \alpha(i) \cong \beta(i) \) for all \( i \in \seg{1}{|\alpha|} \).  For technical reasons (to
be concrete, for the induction argument in Proposition
\ref{sec:triv-words-amalg-structure-of-the-trivial-word}) we need the following notion of a labeled
word. A \emph{labeled word} is a pair \( (\alpha,l_{\alpha}) \), where \( \alpha \) is a word of
length \( n \), and \( l_{\alpha} : \seg{1}{n} \to \Lambda \cup \{0\} \) is a function, called the
label of \( \alpha \), such that
\[ \alpha(i) \in G_{\lambda} \setminus A \implies l_{\alpha}(i) = \lambda  \]
for all \( i \in \seg{1}{n} \).
\begin{example}
\label{exm:canonical-labeling}
  Let \( \alpha \in \words \) be any word.  There is a canonical label for \( \alpha \) given by
  \begin{displaymath}
    l_{\alpha}(i) =
    \begin{cases}
      0& \textrm{if \( \alpha(i) \in A \)}; \\
      \lambda & \textrm{if \( \alpha(i) \in G_{\lambda} \setminus A \)}.
    \end{cases}
  \end{displaymath}
In fact, everywhere, except for the proof of Proposition \ref{sec:triv-words-amalg-structure-of-the-trivial-word}, we use
this canonical labeling only.  
\end{example}

Let \( \alpha \) be a word of length \( n \). For a subset \( F \subseteq \seg{1}{n} \), with \( F =
\{i_{k}\}_{k=1}^{m} \), where \( i_{1} < i_{2} < \ldots < i_{m} \), set
\[ \alpha[F] = \alpha(i_{1}) \concat \alpha(i_{2}) \concat \cdots \concat \alpha(i_{m}).  \]
We say that a subset \( F \subseteq \seg{1}{n} \) is \emph{\( \alpha \)-\multipliable} if \( \alpha(i) \cong \alpha(j) \) for all \( i, j \in F \).

There is a natural evaluation map from the set of words \( \words \) over the alphabet \( G \) to the amalgam
\( \amalgam \) given by the multiplication of letters in the group \( \amalgam \):
\[ \alpha \mapsto \alpha(1) \cdot \alpha(2) \cdots \alpha(|\alpha|).  \]
This map is denoted by a hat
\[ \widehat{}\ : \words \to \amalgam.  \]
Note that this is map is obviously surjective.  For a word \( \alpha \in \words \) and a subset
\( F \subseteq \seg{1}{|\alpha|} \) we write \( \hat{\alpha}[F] \) instead of
\( \widehat{\alpha[F]} \).  We hope this will not confuse the reader too much.  A word \( \alpha \)
is said to be \emph{trivial} if \( \hat{\alpha} = e \).

\subsection{Structure of trivial words}
\label{sec:struct-triv-words}

Elements of the group \( A \) will be special for us.  Let \( \alpha \in \words\) be a word of
length \( n \).  We say that its \( i^{th} \) letter is \emph{outside of \( A \)} if, as the name
suggests, \( \alpha(i) \not \in A \).  The \emph{list of external letters} of \( \alpha \) is a,
possibly empty, sequence \( \{i_{k}\}_{k=1}^{m} \) such that
\begin{enumerate}[(i)]
\item \( i_{k} < i_{k+1} \) for all \( k \in \seg{1}{m-1} \);
\item \( \alpha(i_{k}) \not \in A \) for all \( k \in \seg{1}{m} \);
\item \( \alpha(i) \not \in A \) implies \( i = i_{k} \) for some \( k \in \seg{1}{m} \).
\end{enumerate}
In other words, this is just the increasing list of all the letters in \( \alpha \) that are outside
of \( A \).

\begin{definition}
  \label{sec:triv-words-amalg-alternating-word}
  Let \( \alpha \in \words \) be a word with the list of external letters \( \{i_{k}\}_{k=1}^{m} \).
  The word \( \alpha \) is called \emph{alternating} if
  \( \alpha(i_{k}) \not \cong \alpha(i_{k+1}) \) for all \( k \in \seg{1}{m-1} \).  Note that a word
  is always alternating if \( m \le 1 \).  The word \( \alpha \) is said to be \emph{reduced} if
  \( \alpha(i) \not \cong \alpha(i+1) \) for all \( i \in \seg{1}{|\alpha|-1}\), and it is called a
  \emph{reduced form of \( f \in \amalgam \)} if additionally \( \hat{\alpha} = f \).
\end{definition}
The following is a basic fact about free products with amalgamation.

\begin{lemma}
  \label{sec:triv-words-amalg-non-triviality-of-reduced-words}
  Let \( \alpha \in \words \) be a reduced word.  If \( \alpha \ne e \), then
  \( \hat{\alpha} \ne e \).
\end{lemma}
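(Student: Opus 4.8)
The plan is to prove the contrapositive-free version directly by the standard normal-form argument for amalgamated free products, reducing the claim to the Normal Form Theorem (Britton's lemma for amalgams). Recall that a word $\alpha = g_{1} \concat g_{2} \concat \cdots \concat g_{n}$ is reduced in the sense of Definition \ref{sec:triv-words-amalg-alternating-word} exactly when no two consecutive letters are \multipliable, i.e.\ no two consecutive letters lie in a common $G_{\lambda}$; in particular no letter $g_{i}$ with $n \ge 2$ lies in $A$, since an element of $A$ is \multipliable with everything. So a reduced word with $n \ge 2$ is a genuine alternating product with letters coming from distinct factors and none in the amalgamated subgroup, which is precisely the hypothesis of the classical theorem.

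The key steps, in order, are as follows. First I would dispose of the case $n = 1$: then $\hat{\alpha} = g_{1}$, and the canonical embedding $G_{\lambda} \hookrightarrow \amalgam$ is injective, so $g_{1} \ne e$ gives $\hat{\alpha} \ne e$. Second, for $n \ge 2$, I would invoke the construction of $\amalgam$ via its action on the set of reduced sequences (the standard ``permutation representation'' proof): choose, in each $G_{\lambda}$, a transversal $T_{\lambda}$ of the left cosets of $A$ with $e$ representing $A$ itself, and let $\mathcal{S}$ be the set of finite sequences $(a; t_{1}, \ldots, t_{k})$ with $a \in A$, each $t_{j} \in T_{\lambda_{j}} \setminus \{e\}$, and $\lambda_{j} \ne \lambda_{j+1}$. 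Each $G_{\lambda}$ acts on $\mathcal{S}$ by left multiplication followed by re-normalization into reduced form, these actions agree on $A$, hence by the universal property they extend to an action of $\amalgam$ on $\mathcal{S}$. Third, I would compute $\hat{\alpha}$ acting on the empty sequence $(e;)$: since $\alpha$ is reduced with $n \ge 2$ and no letter in $A$, applying the letters $g_{n}, g_{n-1}, \ldots, g_{1}$ one at a time never causes a cancellation or a merge into the $A$-part, so the result is a sequence of length $\ge 1$, in particular not equal to $(e;)$. Therefore $\hat{\alpha}$ acts nontrivially on $\mathcal{S}$, so $\hat{\alpha} \ne e$.

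The main obstacle is really just bookkeeping rather than a conceptual difficulty: one must verify carefully that the prescribed action of each $G_{\lambda}$ on $\mathcal{S}$ is well-defined and is indeed an action (associativity on the nose), and that the three sub-cases in the re-normalization step --- (a) the new letter is absorbed into $A$, (b) it merges with the leading letter of the same factor, (c) it is prepended as a new letter --- are handled consistently so that the two actions coincide on $A \le G_{\lambda_{1}} \cap G_{\lambda_{2}}$. This is entirely routine but slightly tedious; since the lemma is stated as ``a basic fact about free products with amalgamation,'' I expect the paper either to cite it from a standard reference (e.g.\ Lyndon--Schupp or Serre) or to give exactly this transversal argument in compressed form. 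An alternative, if one prefers to avoid transversals, is to quote the length function / reduced word theorem for amalgams directly: every element of $\amalgam$ has a well-defined syllable length, the length of $\hat{\alpha}$ equals the number of syllables of any reduced expression for it, and a reduced word of length $n \ge 2$ over the alphabet $G$ in the present sense yields an alternating product of $n$ nontrivial syllables, hence $\hat{\alpha}$ has length $n \ge 1$ and cannot be $e$ (which has length $0$); the $n = 1$ case is handled by injectivity of the factor embeddings as above.
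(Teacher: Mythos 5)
Your proof is correct, and your anticipation about the paper was also correct: the paper gives \emph{no} proof of this lemma at all — it is stated immediately after the sentence ``The following is a basic fact about free products with amalgamation'' and is taken as known background. Substantively, your argument is the standard one. The key preliminary observation — that with the paper's definition of ``reduced'' ($\alpha(i) \not\cong \alpha(i+1)$ for all $i$), a reduced word of length $n \ge 2$ automatically has \emph{no} letter in $A$ (since an element of $A$ is \multipliable with every letter) and consecutive letters in distinct factors — correctly places the word in the hypothesis of the classical normal form theorem, after which either the transversal/action-on-reduced-sequences argument or the length-function formulation finishes the job. Your handling of $n=1$ is also fine; one should perhaps note that injectivity of $G_{\lambda} \hookrightarrow \amalgam$ is not a more elementary fact than the normal form theorem but a consequence of the same transversal construction (it is the $k=1$ case of your action argument), so there is no circularity, just a single proof covering both cases.
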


It is worth mentioning that if \( A \ne \{e\} \), then an element \( f \in \amalgam \) has many
different reduced forms (unless \( f \in G \), then it has only one).  But all these reduced forms
have the same length, therefore it is legitimate to talk about the length of an element \( f \)
itself.

\begin{lemma}
  \label{sec:triv-words-amalg-reduced-forms}
  Any element \( f \in \amalgam \) has a reduced form \( \alpha \in \words \).  Moreover, if
  \( \beta \in \words \) is another reduced form of \( f \), then \( |\alpha| = |\beta| \) and
  \( A\alpha(i)A = A\beta(i)A \) for all \( i \in \seg{1}{|\alpha|} \).
\end{lemma}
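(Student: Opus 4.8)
\emph{Existence.} The plan is to reduce an arbitrary word. Since the evaluation map is surjective, pick \( \gamma \in \words \) with \( \hat\gamma = f \). If \( \gamma \) is reduced we are done; otherwise \( \gamma(i) \cong \gamma(i+1) \) for some \( i \), say both letters lie in \( G_{\lambda} \). Replace the two letters \( \gamma(i), \gamma(i+1) \) by the single letter \( \gamma(i)\gamma(i+1) \in G_{\lambda} \); this strictly shortens \( \gamma \) and does not change \( \hat\gamma \). Iterating, the process terminates (a one-letter word is vacuously reduced) and delivers a reduced form of \( f \).

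\emph{Two preliminary observations.} First, a reduced word \( \delta \) with \( |\delta| \ge 2 \) has \( \delta(i) \notin A \) for every \( i \): an element of \( A \) lies in every \( G_{\lambda} \) and is therefore \multipliable with its neighbour, contradicting reducedness. Second, restating Lemma \ref{sec:triv-words-amalg-non-triviality-of-reduced-words}: a word \( \delta \) with \( |\delta| \ge 2 \) and \( \hat\delta = e \) must contain an adjacent \multipliable pair, for otherwise it is reduced and, being distinct from the one-letter word \( e \), it is nontrivial.

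\emph{Uniqueness of length and double cosets.} Let \( \alpha, \beta \) be reduced forms of \( f \); I would induct on \( |\alpha| + |\beta| \). If \( |\alpha| = |\beta| = 1 \), then \( \alpha(1) = \hat\alpha = \hat\beta = \beta(1) \) and we are done. Otherwise let \( \overline\beta \) be the word with letters \( \beta(|\beta|)^{-1}, \ldots, \beta(1)^{-1} \) (again reduced) and set \( \gamma = \alpha \concat \overline\beta \), so \( \hat\gamma = \hat\alpha\hat\beta^{-1} = e \) and \( |\gamma| \ge 2 \). By the second observation \( \gamma \) has an adjacent \multipliable pair; since \( \alpha \) and \( \overline\beta \) are each reduced, this pair straddles position \( |\alpha| \), whence \( \alpha(|\alpha|) \cong \beta(|\beta|) \), say both in \( G_{\lambda} \). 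Write \( a = \alpha(|\alpha|) \), \( b = \beta(|\beta|) \), \( h = ba^{-1} \in G_{\lambda} \). Using the first observation and reducedness one checks that appending the letter \( h \) to \( \beta(1)\concat\cdots\concat\beta(|\beta|-1) \) gives a reduced word \( \beta^{*} \) with \( \widehat{\beta^{*}} = fa^{-1} \), and appending \( h^{-1} = ab^{-1} \) to \( \alpha(1)\concat\cdots\concat\alpha(|\alpha|-1) \) gives a reduced word \( \alpha^{*} \) with \( \widehat{\alpha^{*}} = fb^{-1} \) --- provided \( h \notin A \). Comparing \( \alpha^{*} \) against \( \beta(1)\concat\cdots\concat\beta(|\beta|-1) \), and \( \beta^{*} \) against \( \alpha(1)\concat\cdots\concat\alpha(|\alpha|-1) \), by the induction hypothesis produces the incompatible relations \( |\alpha| = |\beta| - 1 \) and \( |\alpha| = |\beta| + 1 \); hence in fact \( h \in A \). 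This already yields \( A a A = A b A \), and now \( h \) can be absorbed into \( \beta(|\beta|-1) \), giving a reduced word \( \beta^{\#} \) of length \( |\beta| - 1 \) with \( \widehat{\beta^{\#}} \) equal to the evaluation of \( \alpha(1)\concat\cdots\concat\alpha(|\alpha|-1) \). The induction hypothesis applied to this pair (total length \( |\alpha| + |\beta| - 2 \)) gives \( |\alpha| = |\beta| \) and \( A\alpha(i)A = A\beta(i)A \) for \( i < |\alpha| \); together with \( A a A = A b A \) this is the claim. The degenerate cases in which one of the words has length \( 1 \) and the other length \( \ge 2 \) (so that a side of the comparison becomes the empty word) are settled directly: the cancellation performed once lands a reduced word distinct from \( e \) that evaluates to \( e \), contradicting Lemma \ref{sec:triv-words-amalg-non-triviality-of-reduced-words}.

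\emph{Main obstacle.} The delicate point is that the leftover element \( h = ba^{-1} \) need not lie in \( A \), and when it does not it cannot be merged into a neighbouring letter, since \( \beta(|\beta|-1) \) and \( h \) then span different factors. The device of running the terminal cancellation symmetrically from both ends to extract contradictory length relations --- thereby forcing \( h \in A \) --- is the crux; after that the argument is bookkeeping, the only care needed being that right (or left) multiplication of a letter by an element of \( A \) alters neither its double coset \( A\,\cdot\,A \) nor the reducedness of the word.
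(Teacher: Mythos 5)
Your proof is correct and shares the paper's skeleton: induct on $|\alpha|+|\beta|$, concatenate $\alpha$ with the reversed-and-inverted $\beta$, locate the forced adjacent multipliable pair at the junction, show the junction product lies in $A$, absorb and recurse. You diverge at the step you flag as the crux, namely proving $h = ba^{-1}\in A$. The paper does this in one stroke: if the junction product were outside $A$, merging the two junction letters into one would produce a reduced, nonempty word evaluating to $e$, which is impossible by Lemma~\ref{sec:triv-words-amalg-non-triviality-of-reduced-words}. You instead append $h^{-1}$ and $h$ to the two truncations and invoke the inductive hypothesis twice, deriving the incompatible equalities $|\alpha|=|\beta|-1$ and $|\alpha|=|\beta|+1$. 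This works, but it is more machinery than needed; because the paper's merge argument does not re-enter the induction, it never has to peel the length-one boundary cases out of the main line. Your dispatch of those degenerate cases is also a touch loose --- when, say, $|\alpha|=1$ and $|\beta|\ge 2$, the merged letter $ab^{-1}$ can land in $A\setminus\{e\}$, in which case a single cancellation does not yet yield a reduced word and one more absorption (using your first observation that a reduced word of length $\ge 2$ has no letters in $A$) is needed before Lemma~\ref{sec:triv-words-amalg-non-triviality-of-reduced-words} applies --- but that repair is routine, so the argument stands.
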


\begin{proof}
  The existence of a reduced form of \( f \in \amalgam \) is obvious.  Suppose \( \alpha \) and
  \( \beta \) are both reduced forms of \( f \).  Set
  \[ \zeta = \alpha(|\alpha|)^{-1} \concat \cdots \concat \alpha(1)^{-1} \concat \beta(1) \concat
  \cdots \concat \beta(|\beta|).  \]
  Since \( \hat{\zeta} = e \) and \( \zeta \ne e \), by Lemma
  \ref{sec:triv-words-amalg-non-triviality-of-reduced-words} \( \zeta \) is not reduced.  By
  assumption, \( \alpha \) and \( \beta \) were reduced, therefore \( \alpha(1) \cong \beta(1) \).
  We claim that \( \alpha(1)^{-1} \beta(1) \in A \).  Indeed, if
  \( \alpha(1)^{-1}\beta(1) \not \in A \), then the word
  \[ \xi = \alpha(|\alpha|)^{-1} \concat \cdots \concat \alpha(1)^{-1} \cdot \beta(1) \concat \cdots
  \concat \beta(|\beta|) \]
  is reduced, \( \hat{\xi} = e \), and \( \xi \ne e \), contradicting Lemma
  \ref{sec:triv-words-amalg-non-triviality-of-reduced-words}.  So \( \alpha(1)^{-1}\beta(1) \in A
  \), and therefore \( \beta(1) = \alpha(1)a_{1} \) for some \( a_{1} \in A \) and
  \( A\alpha(1)A = A\beta(1)A \).  Now set
  \[ \alpha_{1} = \alpha(2) \concat \cdots \concat \alpha(|\alpha|), \quad \beta_{1} = a_{1} \cdot
  \beta(2) \concat \cdots \concat \beta(|\beta|).  \]
  Since \( \hat{\alpha}_{1} = \hat{\beta}_{1} \) and \( \alpha_{1}, \beta_{1} \) are reduced, we can
  apply the same argument to get \( \alpha_{1}(1) = \beta_{1}(1)a_{2} \) for some \( a_{2} \in A \),
  whence
  \[ A\alpha(2)A = A\alpha_{1}(1)A = A\beta_{1}(1)A = A\beta(2)A.  \]
  And we proceed by induction on \( |\alpha| + |\beta| \).
\end{proof}

\begin{lemma}
  \label{sec:triv-words-amalg-reduced-form-length}
  Let \( f \in \amalgam \) and \( \alpha, \beta \in \words \) be given.  If \( \alpha \) is a
  reduced form of \( f \), \( |\alpha| = |\beta| \) and \( \hat{\alpha} = \hat{\beta} \), then
  \( \beta \) is a reduced form of \( f \).
\end{lemma}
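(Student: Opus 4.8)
The plan is to argue by contradiction and reduce everything to Lemma \ref{sec:triv-words-amalg-reduced-forms}, which guarantees that all reduced forms of a fixed element of \( \amalgam \) share the same length. The key auxiliary observation is that an arbitrary word can be turned into a reduced word of no greater length without changing its image under \( \widehat{\ } \): whenever a word \( \gamma \) has two consecutive \multipliable letters \( \gamma(i) \cong \gamma(i+1) \), replace the factor \( \gamma(i) \concat \gamma(i+1) \) by the single letter \( \gamma(i) \cdot \gamma(i+1) \), computed in whichever \( G_{\lambda} \) contains both. Each such step strictly decreases the length by one, preserves the value of \( \hat{\gamma} \), and produces a word that is still nonempty, so the procedure must terminate, and it terminates precisely at a reduced word.

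First I would apply this to \( \beta \) under the hypothesis (for contradiction) that \( \beta \) is \emph{not} reduced. Then \( \beta \) has at least one \multipliable consecutive pair, so at least one reduction step applies, and carrying the procedure to completion yields a reduced word \( \gamma \) with \( \hat{\gamma} = \hat{\beta} = \hat{\alpha} = f \) and \( |\gamma| \le |\beta| - 1 \). Hence \( \gamma \) is a reduced form of \( f \). But \( \alpha \) is also a reduced form of \( f \), and \( |\alpha| = |\beta| \); by Lemma \ref{sec:triv-words-amalg-reduced-forms} this forces \( |\gamma| = |\alpha| = |\beta| \), contradicting \( |\gamma| \le |\beta| - 1 \). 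Therefore \( \beta \) is reduced, and since \( \hat{\beta} = \hat{\alpha} = f \), it is a reduced form of \( f \), as claimed.

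I do not anticipate a genuine obstacle here; the one point that needs a moment's attention is verifying that the reduction procedure cannot run below length one (a one-letter word is automatically reduced, even if that letter is \( e \)), so the resulting \( \gamma \) is indeed a legitimate element of \( \words \) and the length estimate \( |\gamma| \le |\beta| - 1 \) is valid. Note also that when combining a \multipliable pair produces the letter \( e \) inside a longer word, that \( e \) is \multipliable with its neighbors and simply gets absorbed at a later step, so the strict decrease of length is maintained throughout. Everything else is an immediate invocation of Lemma \ref{sec:triv-words-amalg-reduced-forms}.
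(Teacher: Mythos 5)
Your argument is correct and follows the same route as the paper's own proof: cancel to obtain a strictly shorter reduced word with the same image, then invoke Lemma \ref{sec:triv-words-amalg-reduced-forms} to derive a contradiction with \( |\alpha| = |\beta| \). The extra care you take about termination and the length never dropping below one is sound but not really needed; nothing else to add.
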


\begin{proof}
  If \( \beta \) is not a reduced form of \( f \), we perform cancellations in \( \beta \) and get a
  reduced word \( \beta_{1} \) such that \( \hat{\beta}_{1} = f \) and \( |\beta_{1}| < |\beta| \).
  By Lemma \ref{sec:triv-words-amalg-reduced-forms} we have \( |\beta_{1}| = |\alpha| \),
  contradicting \( |\beta| = |\alpha| \).  Hence \( \beta \) is reduced.
\end{proof}

\begin{lemma}
  \label{sec:triv-words-amalg-alternatin-word-non-trivial}
  If \( \alpha \) is an alternating word with a nonempty list of external letters, then
  \( \hat{\alpha} \ne e \).
\end{lemma}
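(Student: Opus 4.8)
The plan is to deduce the lemma from Lemma~\ref{sec:triv-words-amalg-non-triviality-of-reduced-words} by \emph{absorbing} every letter of \( \alpha \) that lies in \( A \) into an adjacent external letter; this turns \( \alpha \) into a reduced word \( \beta \) of positive length with \( \widehat{\beta}=\widehat{\alpha} \), after which the conclusion is immediate.

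Concretely, I would fix the list of external letters \( \{i_{k}\}_{k=1}^{m} \) of \( \alpha \), so \( m\ge 1 \) by hypothesis, put \( i_{0}=0 \) and \( n=|\alpha| \), and partition \( \seg{1}{n} \) into the consecutive blocks \( B_{k}=\seg{i_{k-1}+1}{i_{k}} \) for \( k\in\seg{1}{m-1} \) together with \( B_{m}=\seg{i_{m-1}+1}{n} \). Each \( B_{k} \) is a nonempty interval, and by clause~(iii) of the definition of the list of external letters the position \( i_{k} \) is the only one in \( B_{k} \) whose letter lies outside \( A \), every other letter of \( \alpha \) inside \( B_{k} \) belonging to \( A \). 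Fix \( \lambda_{k}\in\Lambda \) with \( \alpha(i_{k})\in G_{\lambda_{k}}\setminus A \). Since \( A\subseteq G_{\lambda_{k}} \), the product \( \widehat{\alpha}[B_{k}] \) of the letters of \( \alpha \) indexed by \( B_{k} \) lies in \( G_{\lambda_{k}}\subseteq G \), and in fact \( \widehat{\alpha}[B_{k}]\notin A \), for otherwise \( \alpha(i_{k}) \) --- a product of \( \widehat{\alpha}[B_{k}] \) with inverses of elements of \( A \) --- would lie in \( A \). Hence \( \beta\in\words \) may be defined as the word of length \( m \) with \( \beta(k)=\widehat{\alpha}[B_{k}] \); concatenating the blocks recovers \( \alpha \), so \( \widehat{\beta}=\widehat{\alpha} \).

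It remains to verify that \( \beta \) is reduced and that \( \beta\ne e \), and then to invoke Lemma~\ref{sec:triv-words-amalg-non-triviality-of-reduced-words}. If \( \beta(k)\cong\beta(k+1) \) for some \( k \), then since \( \beta(k)\in G_{\lambda_{k}}\setminus A \) and \( \beta(k+1)\in G_{\lambda_{k+1}}\setminus A \), and an element of \( G \) outside \( A \) belongs to a unique factor (because \( G_{\lambda_{1}}\cap G_{\lambda_{2}}=A \) for distinct indices), we would get \( \lambda_{k}=\lambda_{k+1} \), i.e.\ \( \alpha(i_{k})\cong\alpha(i_{k+1}) \), contradicting that \( \alpha \) is alternating; thus \( \beta \) is reduced. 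Also \( |\beta|=m\ge 1 \) and \( \beta(1)\notin A\ni e \), so \( \beta\ne e \). Lemma~\ref{sec:triv-words-amalg-non-triviality-of-reduced-words} then gives \( \widehat{\beta}\ne e \), whence \( \widehat{\alpha}=\widehat{\beta}\ne e \).

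There is no serious obstacle here: all the genuine content is in Lemma~\ref{sec:triv-words-amalg-non-triviality-of-reduced-words}, and the only thing requiring care is the bookkeeping of the block decomposition --- that the blocks are nonempty, that absorbing the \( A \)-letters keeps each \( \beta(k) \) inside a single factor and outside \( A \), and that the relation \( \not\cong \) between \( \alpha(i_{k}) \) and \( \alpha(i_{k+1}) \) passes to \( \beta(k) \) and \( \beta(k+1) \) --- each of which is a one-line consequence of \( A\subseteq G_{\lambda} \) and \( G_{\lambda_{1}}\cap G_{\lambda_{2}}=A \).
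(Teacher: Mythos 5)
Your proof is correct and follows essentially the same strategy as the paper: group the \( A \)-letters with an adjacent external letter, multiply each block to form a reduced word \( \beta \) of length \( m\ge 1 \) with \( \widehat{\beta}=\widehat{\alpha} \), and invoke Lemma~\ref{sec:triv-words-amalg-non-triviality-of-reduced-words}. The only difference is cosmetic: the paper attaches the intervening \( A \)-letters to the \emph{preceding} external letter (with a special first block), whereas you attach them to the \emph{following} one (with a special last block).
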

\begin{proof}
  Let \( \{i_{k}\}_{k=1}^{m} \) be the list of external letters of \( \alpha \).  For
  \( k \in \seg{2}{m-1} \) set
  \[ \xi_{1} = \alpha(1) \cdots \alpha(i_{2}-1), \]
  \[ \xi_{k} = \alpha(i_{k}) \cdot \alpha(i_{k}+1) \cdots \alpha(i_{k+1}-1), \]
  \[ \xi_{m} = \alpha(i_{m}) \cdot \alpha(i_{m}+1) \cdots \alpha(n), \] and put
  \[ \xi = \xi_{1} \concat \cdots \concat \xi_{m}.  \]
  Then \( \hat{\xi} = \hat{\alpha} \), \( \xi \ne e \) (since \( \xi_{i} \ne e \) for all
  \( i \in \seg{1}{m} \)), and, as one easily checks, \( \xi \) is reduced.  An application of Lemma
  \ref{sec:triv-words-amalg-non-triviality-of-reduced-words} finishes the proof.
\end{proof}

\begin{lemma}
  \label{sec:triv-words-amalg-subword-of-trivial-word}
  If \( \zeta \) is a trivial word of length \( n \) with a nonempty list of external letters, then
  there is an interval \( I \subseteq \seg{1}{n} \) such that
  \begin{enumerate}[(i)]
  \item\label{lem:subword-trivial-word-item:evaluation} \( \hat{\zeta}[I] \in A \);
  \item\label{lem:subword-trivial-word-item:congruence} \( I \) is \( \zeta \)-\multipliable;
  \item\label{lem:subword-trivial-word-item:endpoints}
    \( \zeta \big( m(I) \big), \zeta \big( M(I) \big) \not \in A \).
  \end{enumerate}
\end{lemma}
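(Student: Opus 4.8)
The plan is to argue by contradiction, collapsing $\zeta$ to an alternating word and invoking Lemma~\ref{sec:triv-words-amalg-alternatin-word-non-trivial}. I would first record a small observation that will be used twice: on letters outside $A$ the relation $\cong$ is an equivalence relation. Indeed, if $g_{1} \cong g_{2} \cong g_{3}$ with $g_{2} \notin A$, then $g_{2}$ lies in a \emph{unique} factor $G_{\lambda}$ (because $G_{\lambda_{1}} \cap G_{\lambda_{2}} = A$ whenever $\lambda_{1} \ne \lambda_{2}$), which forces $g_{1}, g_{3} \in G_{\lambda}$ as well.

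Using this, let $\{i_{k}\}_{k=1}^{m}$ be the list of external letters of $\zeta$ (nonempty by hypothesis), and partition it into its maximal runs $R_{1} < R_{2} < \cdots < R_{p}$ of consecutive, pairwise \multipliable external letters; write $R_{q} = \{i_{a_{q}}, \ldots, i_{b_{q}}\}$, put $I_{q} = \seg{i_{a_{q}}}{i_{b_{q}}}$ and $g_{q} = \hat{\zeta}[I_{q}]$. The next step is to verify the routine structural facts about each $I_{q}$: the external letters of $\zeta$ lying inside $I_{q}$ are exactly those indexed by $R_{q}$ (nothing external sits strictly between two consecutive external letters), so every letter of $\zeta[I_{q}]$ lies in one common factor $G_{\lambda_{q}}$ --- the external ones by the definition of the run, the rest because they lie in $A \subseteq G_{\lambda_{q}}$. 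Hence $I_{q}$ is $\zeta$-\multipliable, $g_{q} \in G_{\lambda_{q}}$, and $\zeta\big(m(I_{q})\big), \zeta\big(M(I_{q})\big) \notin A$ since $m(I_{q}), M(I_{q}) \in R_{q}$ are external. Consequently, if $g_{q} \in A$ for some $q$, then $I = I_{q}$ already satisfies \eqref{lem:subword-trivial-word-item:evaluation}--\eqref{lem:subword-trivial-word-item:endpoints} (and automatically $|R_{q}| \ge 2$, since a one-element run gives $g_{q} = \zeta(i_{a_{q}}) \notin A$), so we are done.

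It then remains to rule out the case $g_{q} \notin A$ for every $q$. Here I would form the word $\eta$ obtained from $\zeta$ by replacing, for each $q$, the block $\zeta[I_{q}]$ by the single letter $g_{q}$. Since the $I_{q}$ are pairwise disjoint intervals in increasing order and every index outside $\bigcup_{q} I_{q}$ carries a letter of $A$, we get $\hat{\eta} = \hat{\zeta} = e$ and the list of external letters of $\eta$ is precisely $g_{1}, \ldots, g_{p}$, which is nonempty as $p \ge 1$. The point that must be gotten exactly right --- and which I expect to be the main (though still short) obstacle --- is that $\eta$ is \emph{alternating}: if $g_{q} \cong g_{q+1}$, then since neither lies in $A$ they share a common factor, whence $\zeta(i_{b_{q}})$ and $\zeta(i_{a_{q+1}})$, two \emph{consecutive} external letters of $\zeta$, lie in that factor and are \multipliable, contradicting the maximality of the runs $R_{q}$ and $R_{q+1}$. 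Thus $\eta$ is an alternating word with a nonempty list of external letters, so $\hat{\eta} \ne e$ by Lemma~\ref{sec:triv-words-amalg-alternatin-word-non-trivial} --- contradicting $\hat{\eta} = e$. The only genuinely delicate ingredients are the equivalence-relation observation (used both to organize the runs and to derive $g_{q} \not\cong g_{q+1}$) and the bookkeeping identifying the external letters of $\eta$; everything else is direct.
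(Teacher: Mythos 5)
Your proof is correct and follows essentially the same route as the paper's: partition the external letters into maximal runs of \multipliable consecutive external letters, take the spanned intervals $I_{q}$, note each satisfies items (\ref{lem:subword-trivial-word-item:congruence}) and (\ref{lem:subword-trivial-word-item:endpoints}), and if none satisfies (\ref{lem:subword-trivial-word-item:evaluation}) collapse each block to a single letter to obtain an alternating trivial word with nonempty external list, contradicting Lemma~\ref{sec:triv-words-amalg-alternatin-word-non-trivial}. The paper packages the same intervals via $m_{k}, M_{k}$ rather than explicit maximal runs, but the decomposition and the alternating/contradiction step are identical; your preliminary remark that $\cong$ is an equivalence relation off $A$ is a correct and useful cleanup of the bookkeeping.
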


\begin{proof}
  Let \( \{i_{k}\}_{k=1}^{m} \) be the list of external letters.  For all \( k \in \seg{1}{m} \) define \( m_{k} \) and
  \( M_{k} \) by
  \[ m_{k} = \min\{j \in \seg{1}{k}: \seg{i_{j}}{i_{k}}\ \textrm{is \( \zeta \)-\multipliable}\}, \]
  \[ M_{k} = \max\{j \in \seg{k}{m}: \seg{i_{k}}{i_{j}}\ \textrm{is \( \zeta \)-\multipliable}\}.  \]
  Set \( I_{k} = \seg{m_{k}}{M_{k}} \), and note that for \( k, l \in \seg{1}{m} \)
  \[ I_{k} \cap I_{l} \ne \emptyset \implies I_{l} = I_{k}.  \]
  
  Let \( I_{k_{1}}, \ldots, I_{k_{p}} \) be a list of all the distinct intervals \( I_{k_{i}} \).  Then
  \( \{I_{k_{i}}\}_{i=1}^{p} \) are pairwise disjoint.  Note that each of \( I_{k_{i}} \) satisfies items
  \eqref{lem:subword-trivial-word-item:congruence} and \eqref{lem:subword-trivial-word-item:endpoints}.  To prove the
  lemma it is enough to show that for some \( i \in \seg{1}{p} \) the corresponding \( I_{k_{i}} \) satisfies also item
  \eqref{lem:subword-trivial-word-item:evaluation}.  Suppose this is false and \( \hat{\zeta}[I_{k_{i}}] \not \in A \)
  for all \( i \in \seg{1}{p}\).  Set \( \xi_{i} = \hat{\zeta}[I_{k_{i}}] \) and
  \begin{multline*}
    \xi = \zeta(1) \concat \cdots \concat \zeta(m(I_{k_{1}})-1) \concat \xi_{1} \concat \zeta(M(I_{k_{1}})+1) \concat \cdots \\
    \cdots \concat \zeta(m(I_{k_{2}}) - 1) \concat \xi_{2} \concat \zeta(M(I_{k_{2}})+1) \concat \cdots\\
    \cdots \concat \zeta(m(I_{k_{p}})-1) \concat \xi_{p} \concat \zeta(M(I_{k_{p}})+1) \concat \cdots \concat \zeta(n).
  \end{multline*}
  Then, of course, \( \hat{\xi} = \hat{\zeta} = e \) and \( \xi \) is alternating by the choice of \( \{I_{k_{i}}\} \).
  By Lemma \ref{sec:triv-words-amalg-alternatin-word-non-trivial} the word \( \xi \) is non-trivial, which is a
  contradiction.
\end{proof}

\begin{lemma}
  \label{sec:triv-words-amalg-congruent-interval-in-trivial-word}
  If \( (\zeta,l_{\zeta}) \) is a trivial labeled word of length \( n \) with a nonempty list of external letters, then
  there is an interval \( I \subseteq \seg{1}{n} \) such that
  \begin{enumerate}[(i)]
  \item\label{lem:congruent-interval-item:evaluation} \( \hat{\zeta}[I] \in A \);
  \item\label{lem:congruent-interval-item:congruence} \( I \) is \( \zeta \)-\multipliable;
  \item\label{lem:congruent-interval-item:non-triviality} \( \zeta(i) \not \in A \) for some
    \( i \in I \);
  \item\label{lem:congruent-interval-item:weak-maximality} if \( m(I)>1 \), then
    \( l_{\zeta}(m(I)-1) \ne 0 \); if \( M(I)<n \), then \( l_{\zeta}(M(I)+1) \ne 0 \);
  \item\label{lem:congruent-interval-item:endpoints} if \( \zeta(m(I)) \in A \), then
    \( l_{\zeta}(m(I)) = 0 \); if \( \zeta(M(I)) \in A \), then \( l_{\zeta}(M(I)) = 0 \).
  \end{enumerate}
\end{lemma}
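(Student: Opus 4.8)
The plan is to take the interval produced by Lemma~\ref{sec:triv-words-amalg-subword-of-trivial-word} and then enlarge it as far as possible by absorbing neighbouring letters that carry the label \( 0 \). Throughout I would use the elementary consequence of the labeled-word axiom that \( l_{\zeta}(i) = 0 \) implies \( \zeta(i) \in A \): every letter lies in some \( G_{\lambda} \), and if it lay in \( G_{\lambda} \setminus A \) the axiom would force \( l_{\zeta}(i) = \lambda \ne 0 \). I would also use that an element of \( A \) lies in every \( G_{\lambda} \), hence is \multipliable with every letter of \( G \), and that \( A \) is a subgroup.

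First I would apply Lemma~\ref{sec:triv-words-amalg-subword-of-trivial-word} to \( \zeta \) — whose list of external letters is nonempty by hypothesis — obtaining an interval \( I_{0} \subseteq \seg{1}{n} \) with \( \hat{\zeta}[I_{0}] \in A \), with \( I_{0} \) being \( \zeta \)-\multipliable, and with \( \zeta(m(I_{0})), \zeta(M(I_{0})) \notin A \). Such \( I_{0} \) already satisfies items \eqref{lem:congruent-interval-item:evaluation} and \eqref{lem:congruent-interval-item:congruence}, satisfies \eqref{lem:congruent-interval-item:non-triviality} because its endpoints lie outside \( A \), and satisfies \eqref{lem:congruent-interval-item:endpoints} vacuously for the same reason; only the weak maximality condition \eqref{lem:congruent-interval-item:weak-maximality} may fail.

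Then I would run the following enlargement procedure, starting with \( I = I_{0} \): while \( m(I) > 1 \) and \( l_{\zeta}(m(I)-1) = 0 \), replace \( I \) by \( \seg{m(I)-1}{M(I)} \); then, while \( M(I) < n \) and \( l_{\zeta}(M(I)+1) = 0 \), replace \( I \) by \( \seg{m(I)}{M(I)+1} \). Each step absorbs a letter lying in \( A \); since such a letter is \multipliable with every other letter, the enlarged interval is still \( \zeta \)-\multipliable, and since \( A \) is a subgroup the product \( \hat{\zeta}[I] \) stays in \( A \); condition \eqref{lem:congruent-interval-item:non-triviality} is obviously preserved, as the set of indices only grows. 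Because \( I \) is an interval inside \( \seg{1}{n} \) that strictly grows at each step, the procedure halts; note that the two loops are independent, so the order does not matter.

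Finally I would verify the output. Items \eqref{lem:congruent-interval-item:evaluation}, \eqref{lem:congruent-interval-item:congruence}, \eqref{lem:congruent-interval-item:non-triviality} have been maintained, and item \eqref{lem:congruent-interval-item:weak-maximality} holds precisely because the loops terminated: on the left \( m(I) = 1 \) or \( l_{\zeta}(m(I)-1) \ne 0 \), and symmetrically on the right. For \eqref{lem:congruent-interval-item:endpoints}, consider the left endpoint: if it was never moved then \( m(I) = m(I_{0}) \) and \( \zeta(m(I_{0})) \notin A \), so the premise of \eqref{lem:congruent-interval-item:endpoints} is vacuous; if it was moved, then the leftward step producing its final value \( m(I) \) was performed because \( l_{\zeta}(m(I)) = 0 \), which is exactly the asserted conclusion (and is unconditional). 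The right endpoint is symmetric, and this completes the proof. There is no real obstacle here; the only point demanding care is this last verification of \eqref{lem:congruent-interval-item:endpoints}, i.e. observing that a moved endpoint necessarily carries label \( 0 \) while an endpoint inherited from \( I_{0} \) lies outside \( A \).
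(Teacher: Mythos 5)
Your proof is correct and takes essentially the same approach as the paper: apply Lemma~\ref{sec:triv-words-amalg-subword-of-trivial-word}, then extend the resulting interval outward over $0$-labeled letters until it hits either a nonzero label or the word boundary; your iterative ``while''-loop formulation is just a restatement of the paper's explicit definitions of the endpoints $j_l$ and $j_r$, and your verification of item~\eqref{lem:congruent-interval-item:endpoints} (moved endpoint has label $0$; unmoved endpoint inherits $\zeta(m(I_0))\notin A$) matches the paper's reasoning.
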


\begin{proof}
  We start by applying Lemma \ref{sec:triv-words-amalg-subword-of-trivial-word} to the word
  \( \zeta \).  This Lemma gives as an output an interval \( J \subseteq \seg{1}{n} \).  We will now
  enlarge this interval as follows.  If \( l_{\zeta}(i) = 0 \) for all \( i \in \seg{1}{m(J)-1} \),
  then set \( j_{l} = 1 \).  If there is some \( i < m(J) \) such that \( l_{\zeta}(i) \ne 0 \),
  then let \( j \in \seg{1}{m(J)-1} \) be maximal such that \( l_{\zeta}(j) \ne 0 \) and set
  \( j_{l} = j+1 \).  Similarly, if \( l_{\zeta}(i) = 0 \) for all \( i \in \seg{M(J)+1}{n} \), then
  set \( j_{r} = n \).  If there is some \( i > M(J) \) such that \( l_{\zeta}(i) \ne 0 \), then let
  \( j \in \seg{M(J)+1}{n} \) be minimal such that \( l_{\zeta}(j) \ne 0 \) and set
  \( j_{r} = j-1 \).  Define
  \[ I = J \cup \seg{j_{l}}{m(J)} \cup \seg{M(J)}{j_{r}} = \seg{j_{l}}{j_{r}}. \]
  
  We claim that \( I \) satisfies the assumptions.  Note that \( J \subseteq I \) and \( \zeta(i) \in A \) for all
  \( i \in I \setminus J \), so \eqref{lem:congruent-interval-item:evaluation},
  \eqref{lem:congruent-interval-item:congruence} and
  \eqref{lem:congruent-interval-item:non-triviality} follow from items
  \eqref{lem:subword-trivial-word-item:evaluation}, \eqref{lem:subword-trivial-word-item:congruence}
  and \eqref{lem:subword-trivial-word-item:endpoints} of Lemma
  \ref{sec:triv-words-amalg-subword-of-trivial-word}.  Items
  \eqref{lem:congruent-interval-item:weak-maximality} and
  \eqref{lem:congruent-interval-item:endpoints} follow from the choice of \( j_{l} \) and
  \( j_{r} \) and from item \eqref{lem:subword-trivial-word-item:endpoints} of Lemma
  \ref{sec:triv-words-amalg-subword-of-trivial-word}.
\end{proof}

\begin{definition}
  \label{sec:struct-triv-words-def-of-evaluation-tree}
  Let \( (\zeta, l_{\zeta}) \) be a trivial labeled word of length \( n \), and let \( T \) be a
  tree.  Suppose that to each node \( t \in T \) an interval \( I_{t} \subseteq \seg{1}{n} \) is
  assigned.  Set \( R_{t} = I_{t} \setminus \bigcup_{t' \prec t} I_{t'} \).  The tree \( T \)
  together with the assignment \( t \mapsto I_{t} \) is called \emph{an evaluation tree for
    \( (\zeta,l_{\zeta}) \)} if for all \( s, t \in T \) the following holds:
  \begin{enumerate}[(i)]
  \item\label{lem:structure-item:root} \( I_{\emptyset} = \seg{1}{n} \);
  \item\label{lem:structure-item:evaluation} \( \hat{\zeta}[I_{t}] \in A \);
  \item\label{lem:structure-item:endpoints} if \( t \ne \emptyset \) and \( \zeta(m(I_{t})) \in A \), then
    \( l_{\zeta}(m(I_{t})) = 0 \); if \( t \ne \emptyset \) and \( \zeta(M(I_{t})) \in A \), then
    \( l_{\zeta}(M(I_{t})) = 0 \);
  \item\label{lem:structure-item:intervals-order} if \( H(t) \le H(s) \) and
    \( I_{s} \cap I_{t} \ne \emptyset \), then \( s \prec t \) or \( s=t \);
  \item\label{lem:structure-item:strict-inclusion} if \( s \prec t \) and \( t \ne \emptyset \),
    then
    \[ m(I_{t}) < m(I_{s}) \le M(I_{s}) < M(I_{t}); \]
  \item\label{lem:structure-item:congruence} \( \zeta(i) \cong \zeta(j) \) for all
    \( i, j \in R_{t} \);
  \end{enumerate}
  An evaluation tree \( T \) is called \emph{balanced} if additionally the following two conditions
  hold:
  \begin{enumerate}[(i)]
    \setcounter{enumi}{6}
  \item\label{lem:structure-item:non-trivial-interior} if \( T \ne \{\emptyset\} \), then
    for any \( t \in T \) if \( R_{t} \) is written as a disjoint union of maximal
    sub-intervals \(\{ \mathcal{I}_{j} \}_{j=1}^{k}\), then for any \( j \) there is
    \( i \in \mathcal{I}_{j} \) such that \( l_{\zeta}(i) \ne 0 \);
  \item\label{lem:structure-item:non-trivial-boundary} if \( s \prec t \), then
    \[ m(I_{s}) - 1 \in R_{t} \implies l_{\zeta}(m(I_{s}) - 1) \ne 0; \]
    \[ M(I_{s}) + 1 \in R_{t} \implies l_{\zeta}(M(I_{s}) + 1) \ne 0. \]
  \end{enumerate}
\end{definition}

\begin{remark}
  \label{sec:struct-triv-words-standard-label-vacuous-condition}
  Note that if \( \zeta \in \words \) is a trivial word with the canonical label as in Example
  \ref{exm:canonical-labeling}, then item \eqref{lem:structure-item:endpoints} in the definition of
  an evaluation tree is vacuous.
\end{remark}

\begin{proposition}
  \label{sec:triv-words-amalg-structure-of-the-trivial-word}
  Any trivial labeled word \( (\zeta,l_{\zeta}) \) has a balanced evaluation tree.
\end{proposition}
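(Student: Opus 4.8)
The plan is to argue by induction on the length $n$ of $\zeta$. If the whole interval $\seg{1}{n}$ is $\zeta$-\multipliable --- which covers $n=1$, the case where $\zeta$ has no external letters, and, more generally, the case where all external letters of $\zeta$ lie in one fixed factor $G_{\lambda}$ --- then the one-node tree $T=\{\emptyset\}$ with $I_{\emptyset}=\seg{1}{n}$ is a balanced evaluation tree: conditions (i), (iv), (v), (vii), (viii) of Definition~\ref{sec:struct-triv-words-def-of-evaluation-tree} are vacuous for a single node, (iii) is vacuous since the only node is the root, (ii) holds because $\hat{\zeta}=e\in A$, and (vi) is exactly the hypothesis. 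So assume $\seg{1}{n}$ is not $\zeta$-\multipliable. Then $\zeta$ has a nonempty list of external letters, and Lemma~\ref{sec:triv-words-amalg-congruent-interval-in-trivial-word} supplies an interval $I=\seg{p}{q}$ with the five listed properties. Since $I$ carries an external letter while $\hat{\zeta}[I]\in A$, we have $|I|\ge 2$, and $I\ne\seg{1}{n}$ (otherwise $\seg{1}{n}$ would be $\zeta$-\multipliable); and since $I$ is $\zeta$-\multipliable and contains an external letter, all of its external letters belong to one fixed factor $G_{\lambda}$.

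Next I would contract $I$. Let $(\zeta',l_{\zeta'})$ be obtained from $(\zeta,l_{\zeta})$ by replacing the block $\zeta(p)\concat\cdots\concat\zeta(q)$ with the single letter $a=\hat{\zeta}[I]\in A$, labelled by $\lambda$ (legitimate, since a letter lying in $A$ imposes no constraint on its label), all other letters and labels being retained; thus $\zeta'$ is a trivial labelled word of length $n-|I|+1<n$, and by the inductive hypothesis it has a balanced evaluation tree $T'$ with assignment $t\mapsto I'_{t}$. From $T'$ I build $T$ as follows. Let $\iota$ be the uncontraction map that fixes a position $i<p$ of $\zeta'$, sends the position $p$ to the whole block $\seg{p}{q}$, and sends a position $i>p$ to $i+|I|-1$; it carries intervals to intervals, it is strictly increasing (hence preserves the nesting and disjointness of intervals), and it preserves letters and labels off the position $p$. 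Let $t^{*}$ be the unique deepest node of $T'$ with $p\in I'_{t^{*}}$ (equivalently, the unique $t$ with $p\in R'_{t}$), and put $T=T'\cup\{t_{0}\}$ with $t_{0}$ a new node attached as a child of $t^{*}$, setting $I_{t_{0}}=I$ and $I_{t}=\iota(I'_{t})$ for $t\in T'$.

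It then remains to verify the eight conditions for $T$; this is the bulk of the argument, and I single out the observation that drives it: by condition (iii) for $T'$ applied to $\zeta'(p)=a\in A$ --- which would force $l_{\zeta'}(p)=0$, contrary to $l_{\zeta'}(p)=\lambda$ --- no endpoint of $I'_{t}$ equals $p$ for any $t\in T'\setminus\{\emptyset\}$. Now: conditions (i) and (ii) are immediate (using $\hat{\zeta}[\iota(I'_{t})]=\hat{\zeta'}[I'_{t}]$, since the block evaluates to $a$, and $\hat{\zeta}[I_{t_{0}}]=a\in A$). For (iii) at $t_{0}$ one invokes the endpoint property of $I$ from Lemma~\ref{sec:triv-words-amalg-congruent-interval-in-trivial-word}, and for $t\in T'\setminus\{\emptyset\}$ it is inherited because, by the observation, $\iota$ merely translates the relevant endpoint. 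The observation also shows that $p$ lies strictly between the endpoints of $I'_{t^{*}}$ when $t^{*}\ne\emptyset$, which is exactly the strict inclusion required by (v) for the edge $t_{0}\prec t^{*}$ (when $t^{*}=\emptyset$, condition (v) is not demanded of that edge); the inclusions along the edges of $T'$ follow by applying $\iota$ to the inequalities for $T'$. Condition (iv) survives because $\iota$ preserves the laminar structure of $\{I'_{t}\}$, and $t_{0}$ meets only the nodes $t$ with $p\in I'_{t}$, all of which lie on the path from $t^{*}$ to the root. For (vi): at a node $t\ne t^{*}$ it is inherited, while $R_{t^{*}}=\iota\big(R'_{t^{*}}\setminus\{p\}\big)$ since the block $I$ was carved off into $t_{0}$, so $R_{t^{*}}$ is again $\zeta$-\multipliable by inheritance from $T'$ --- this is precisely why $I$ must be split off as a separate node rather than absorbed into $R_{t^{*}}$, and why its common factor $\lambda$ is irrelevant to (vi). Finally, for balancedness: (viii) at the edge $t_{0}\prec t^{*}$ comes from the ``weak maximality'' of $I$ in Lemma~\ref{sec:triv-words-amalg-congruent-interval-in-trivial-word} (the neighbours of $I$ carry nonzero labels) and along the edges of $T'$ it is inherited, while (vii) at $t_{0}$ uses the external letter guaranteed inside $I$, at $t^{*}$ it follows by noting that deleting $p$ from the maximal subinterval of $R'_{t^{*}}$ containing it leaves subintervals each of which still carries a nonzero label or meets $p-1$ or $p+1$ (again nonzero by weak maximality), and at every other node it is inherited.

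The step I expect to be the main obstacle is the strict-inclusion condition (v) for $t_{0}$, and more broadly the compatibility of the uncontracted tree with condition (iii): this is exactly where labelled words become indispensable, since the contracted letter lies in $A$ and an unlabelled analysis could not prevent it from landing at the boundary of its enclosing node, which would then violate condition (iii) of an evaluation tree --- assigning it the label $\lambda$ is what rules this out. The remaining verification, although lengthy, reduces to routine interval bookkeeping once $\iota$ and the placement of $t_{0}$ are fixed.
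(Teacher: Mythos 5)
Your proof takes essentially the same route as the paper: apply Lemma~\ref{sec:triv-words-amalg-congruent-interval-in-trivial-word} to locate an interval $I$, contract $I$ to the single letter $\hat\zeta[I]$ carrying the nonzero label $\lambda$, apply the inductive hypothesis to the contracted word, and uncontract by hanging a new node for $I$ off the deepest node of the old tree that still covers the contracted position. The only divergence is the choice of induction variable (you induct on $|\zeta|$, the paper on the number of external letters) and a slightly tidier unified base case (you treat ``$\seg{1}{n}$ is $\zeta$-\multipliable'' as the base, while the paper separates ``no external letters'' from ``$|I|=n$''); both are valid because contraction strictly decreases both the length and the external-letter count. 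You also correctly isolate the key role of the label $\lambda\ne 0$ in blocking the contracted letter from sitting on the boundary of any nonroot interval (via condition~(iii)), which is exactly the mechanism the paper uses to deduce strict inclusion (v) for the new edge and to preserve (iii) under uncontraction.
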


\begin{proof}
  We prove the proposition by induction on the cardinality of the list of external letters of
  \( \zeta \).  Suppose first that the list is empty, and \( \zeta(i) \in A \) for all
  \( i \in \seg{1}{n} \).  Set \( T_{\zeta} = \{\emptyset\} \) and \( I_{\emptyset} = \seg{1}{n} \).
  It is easy to check that all the conditions are satisfied, and \( T_{\zeta} \) is a balanced
  evaluation tree for \( (\zeta, l_{\zeta}) \).

  From now on we assume there is \( i \in \seg{1}{n} \) such that \( \zeta(i) \not \in A \).  Apply
  Lemma \ref{sec:triv-words-amalg-congruent-interval-in-trivial-word} to \( (\zeta, l_{\zeta}) \)
  and let \( I \) be the interval granted by this lemma.  Set \( \lambda_{0} = l_{\zeta}(i) \) for
  some (equivalently, any) \( i \in I \) such that \( \zeta(i) \not \in A \).  Note that
  \( \lambda_{0} \ne 0 \).  Let \( p = |I| \) be the length of \( I \).  If \( p = n \), then we set
  \( T_{\zeta} = \{\emptyset\} \) and \( I_{\emptyset} = \seg{1}{n} \).  Similarly to the base of
  induction this tree is a balanced evaluation tree for \( (\zeta, l_{\zeta}) \).  From now on we
  assume that \( p < n \).  We define the word \( \xi \) of length \( n-p+1 \) as follows.  Set
  \begin{displaymath}
    \xi(i) = 
    \begin{cases}
      \zeta(i) & \textrm{if \( i < m(I) \)}\\
      \hat{\zeta}[I] & \textrm{if \( i = m(I) \)}\\
      \zeta(i + p-1) & \textrm{if \( i > m(I) \)}.
    \end{cases}
  \end{displaymath}
  Define the label for \( \xi \) to be
  \begin{displaymath}
    l_{\xi}(i) = 
    \begin{cases}
      l_{\zeta}(i) & \textrm{if \( i < m(I) \)}\\
      \lambda_{0}     & \textrm{if \( i = m(I) \)}\\
      l_{\zeta}(i + p-1) & \textrm{if \( i > m(I) \)}.
    \end{cases}
  \end{displaymath}
  We claim that
  \[ \big| \{i \in \seg{1}{|\xi|} : \xi(i) \not \in A\} \big| < \big| \{i \in \seg{1}{n} : \zeta(i)
  \not \in A\} \big|.  \]
  Indeed, by the construction \( \zeta[I] \) has at least one letter (in fact, at least two letters)
  not from \( A \).

  By inductive assumption applied to the labeled word \( (\xi,l_{\xi}) \), there is a balanced
  evaluation tree \( T_{\xi} \) with intervals \( J_{t} \subseteq \seg{1}{|\xi|} \) for
  \( t \in T_{\xi} \). Since \( J_{\emptyset} = \seg{1}{|\xi|} \), there is at least one
  \( t \in T_{\xi}\) (namely \( t = \emptyset \)) such that the interval \( J_{t} \) contains
  \( m(I) \). By item \eqref{lem:structure-item:intervals-order} there is the smallest node
  \( t_{0} \in T_{\xi} \) such that \( m(I) \in J_{t_{0}} \).

  We define \( T_{\zeta} \) to be \( T_{\xi} \cup \{s_{0}\} \), where \( s_{0} \) is a new
  predecessor of \( t_{0} \), i. e. , \( s_{0} \prec t_{0} \). For \( t \in T_{\xi} \) set
  \begin{displaymath}
    I_{t} = 
    \begin{cases}
      \seg{m(J_{t})}{M(J_{t})} & \textrm{if \( M(J_{t}) < m(I) \)};\\
      \seg{m(J_{t})}{M(J_{t})+p-1} & \textrm{if \( m(J_{t}) \le m(I) \le M(J_{t}) \)};\\
      \seg{m(J_{t})+p-1}{M(J_{t})+p-1} & \textrm{if \( m(I)< m(J_{t}) \)};\\
    \end{cases}
  \end{displaymath}
  and
  \[ I_{s_{0}} = \seg{m(I)}{M(I)}.  \]
  We claim that such a tree \( T_{\zeta} \) with such an assignment of intervals \( I_{t} \) is a
  balanced evaluation tree for \( (\zeta,l_{\zeta}) \).

  \eqref{lem:structure-item:root} Since \( J_{\emptyset} = \seg{1}{|\xi|} \), it follows that
  \( I_{\emptyset} = \seg{1}{n} \).

  \eqref{lem:structure-item:evaluation} For any \( t \in T_{\xi} \) one has
  \( \hat{\xi}[J_{t}] = \hat{\zeta}[I_{t}] \).  Also, \( \hat{\zeta}[I_{s_{0}}] \in A \) by item
  \eqref{lem:congruent-interval-item:evaluation} of Lemma
  \ref{sec:triv-words-amalg-congruent-interval-in-trivial-word}.

  \eqref{lem:structure-item:endpoints} Since \( \xi(m(I)) \in A \) and
  \( l_{\xi}(m(I)) = \lambda_{0} \ne 0 \), by inductive hypothesis \( m(I_{t}) \ne m(I) \) and
  \( M(I_{t}) \ne m(I) \) for all \( t \in T_{\xi} \setminus \{\emptyset\} \).  Therefore
  \( l_{\xi}(m(J_{t})) = l_{\zeta}(m(I_{t})) \), \( l_{\xi}(M(J_{t})) = l_{\zeta}(M(I_{t})) \) for
  all \( t \in T_{\xi} \setminus \{\emptyset\} \).  Thus for \( t \ne s_{0} \) the item follows from
  the inductive hypothesis, and for \( t = s_{0} \) it follows from item
  \eqref{lem:congruent-interval-item:endpoints} of Lemma
  \ref{sec:triv-words-amalg-congruent-interval-in-trivial-word}.

  \eqref{lem:structure-item:intervals-order} Follows from the inductive hypothesis and the
  definition of \( s_{0} \).

  \eqref{lem:structure-item:strict-inclusion} It follows from the inductive hypothesis that this
  item is satisfied for all \( s, t \in T_{\xi} \).  We need to consider the case \( s = s_{0} \),
  \( t = t_{0} \) only. By item \eqref{lem:structure-item:endpoints} of the definition of an
  evaluation tree, and since \( l_{\xi}(m(I)) = \lambda_{0} \ne 0 \), it follows that if
  \( t_{0} \ne \emptyset \), then \( m(I_{t_{0}}) < m(I_{s_{0}}) \) and
  \( M(I_{s_{0}}) < M(I_{t_{0}}) \).

  \eqref{lem:structure-item:congruence} Follows easily from the inductive hypothesis and item
  \eqref{lem:congruent-interval-item:congruence} of Lemma
  \ref{sec:triv-words-amalg-congruent-interval-in-trivial-word}.

  Thus \( T_{\zeta} \) is an evaluation tree for \( (\zeta,l_{\zeta}) \).  It remains to check that
  it is balanced.

  \eqref{lem:structure-item:non-trivial-interior} For \( t \in T_{\xi} \setminus \{t_{0}\} \) the
  maximal sub-intervals of \( J_{t} \setminus \bigcup_{s \prec t} J_{s} \) naturally correspond to the
  maximal sub-intervals of \( I_{t} \setminus \bigcup_{s \prec t} I_{s} \), and hence for such a
  \( t \) the item follows from the inductive hypothesis.  For \( t = s_{0} \) the item follows from
  item \eqref{lem:congruent-interval-item:non-triviality} of Lemma
  \ref{sec:triv-words-amalg-congruent-interval-in-trivial-word}.  The remaining case \( t = t_{0} \)
  follows from item \eqref{lem:congruent-interval-item:weak-maximality} of Lemma
  \ref{sec:triv-words-amalg-congruent-interval-in-trivial-word}.

  \eqref{lem:structure-item:non-trivial-boundary} Again, for \( s \ne s_{0} \) this item follows
  from the inductive hypothesis and for \( s = s_{0} \), \( t =t_{0} \) follows from item
  \eqref{lem:congruent-interval-item:weak-maximality} of Lemma
  \ref{sec:triv-words-amalg-congruent-interval-in-trivial-word}.
\end{proof}

If \( \zeta \) is just a word with no labeling, then we canonically associate a label to it by
declaring \( l_{\zeta}(i) = 0 \) if and only if \( \zeta(i) \in A \) (as in Example
\ref{exm:canonical-labeling}).

\emph{From now on we view all trivial words as labeled words with the canonical labeling.}

\begin{definition}
  \label{sec:triv-words-amalg-slim-trivial-word}
  A trivial word \( \zeta \in \words \) of length \( n \) is called \emph{slim} if there exists an
  evaluation tree \( T_{\zeta} \) such that \( \hat{\zeta}[I_{t}] = e \) for all
  \( t \in T_{\zeta} \); such a tree is then called a \emph{slim} evaluation tree.  We say that
  \( \zeta \) is \emph{simple} if it is slim and \( \zeta(i) \in A \) implies \( \zeta(i) = e \) for
  all \( i \in \seg{1}{n} \).
\end{definition}

\begin{definition}
  \label{sec:metrics-amalgams-f-pair}
  Let \( f \in \amalgam \).  A pair of words \( (\alpha, \zeta) \) is called an \emph{\( f \)-pair} if
  \( |\alpha| = |\zeta| \) and \( \hat{\alpha} = f \), \( \hat{\zeta} = e \).  An \( f \)-pair \( (\alpha,\zeta) \) is
  said to be a \emph{\multipliable} \( f \)-pair if \( \alpha \) and \( \zeta \) are \multipliable.  An \( f \)-pair
  \( (\alpha,\zeta) \) is called \emph{slim} if it is \multipliable and \( \zeta \) is slim.  It is called \emph{simple}
  if it is \multipliable and \( \zeta \) is simple.
\end{definition}

For a \multipliable pair \( (\alpha,\beta) \) of length \( n \) we define the notions of right and left
transfers.  Let \( a \in A \) and \( i \in \seg{1}{n-1} \) be given.  The \emph{right \( (a,i)
  \)-transfer} of \( (\alpha,\beta) \) is the pair
\( \transferr{\alpha}{\beta}{a}{i} = (\gamma,\delta) \) defined as follows:
\begin{displaymath}
  (\gamma(j),\delta(j)) = 
  \begin{cases}
    (\alpha(j),\beta(j)) & \textrm{if \( j \not \in \{i,i+1\} \)};\\
    (\alpha(i)a^{-1},\beta(i)a^{-1}) & \textrm{if \( j = i \)};\\
    (a\alpha(i+1),a\beta(i+1)) & \textrm{if \( j = i+1 \)}.
  \end{cases}
\end{displaymath}
For \( a \in A \) and \( i \in \seg{2}{n} \) the \emph{left \( (a,i) \)-transfer} of
\( (\alpha,\beta) \) is denoted by \( \transferl{\alpha}{\beta}{a}{i} = (\gamma,\delta) \) and is
defined as
\begin{displaymath}
  (\gamma(j),\delta(j)) = 
  \begin{cases}
    (\alpha(j),\beta(j)) & \textrm{if \( j \not \in \{i-1,i\} \)};\\
    (a^{-1}\alpha(i), a^{-1}\beta(i)) & \textrm{if \( j = i \)};\\
    (\alpha(i-1)a,\beta(i-1)a) & \textrm{if \( j = i-1 \)}.
  \end{cases}
\end{displaymath}

We will typically have specific sequences of transfers, so it is convenient to make the following
definition.  Let \( (\alpha,\zeta) \) be a \multipliable pair of words of length \( n \).  In all the
applications \( \zeta \) will be a trivial word.  Let \( \{I_{k}\}_{k=1}^{m} \) be a sequence of
intervals such that:
\begin{enumerate}
\item \( I_{k} \subseteq \seg{1}{n} \);
\item \( I_{k} < I_{k+1} \) for all \( k \in \seg{1}{m-1} \);
\item \( \hat{\zeta}[I_{k}] \in A \) for all \( k \in \seg{1}{m} \);
\item \( M(I_{m}) < n \).
\end{enumerate}
Such a sequence is called \emph{right transfer admissible}.  If together with items \( (1)-(3) \)
the following is satisfied
\begin{enumerate}
\item[\( (4') \)] \( m(I_{1}) > 1, \)
\end{enumerate}
then the sequence \( \{I_{k}\}_{k=1}^{m} \) is called \emph{left transfer admissible}.

Let \( \{I_{k}\}_{k=1}^{m} \) be a right transfer admissible sequence of intervals.  Define
inductively words \( (\beta_{k},\xi_{k}) \) by setting \( (\beta_{0}, \xi_{0}) = (\alpha,\zeta) \)
and
\[ (\beta_{k+1}, \xi_{k+1}) =
\transferr{\beta_{k}}{\xi_{k}}{\hat{\xi}_{k}[I_{k+1}]}{M(I_{k+1})}.  \]
We have to show that the right-hand side is well-defined, i.e., that
\( \hat{\xi}_{k}[I_{k+1}] \in A\).  For the first step of the construction we have
\( \hat{\xi}_{0}[I_{1}] = \hat{\zeta}[I_{1}] \in A \), because the sequence is right transfer
admissible.  Suppose we have proved that \( \hat{\xi}_{k-1}[I_{k}] \in A \).  There are two cases:
either \( M(I_{k}) + 1 = m(I_{k+1}) \), and then
\[ \hat{\xi}_{k}[I_{k+1}] = (\hat{\xi}_{k-1}[I_{k}]) \cdot \hat{\zeta}[I_{k+1}], \]
or \( M(I_{k}) + 1 < m(I_{k+1})\), and then \( \hat{\xi}_{k}[I_{k+1}] = \hat{\zeta}[I_{k+1}] \).  In
both cases we get \( \hat{\xi}_{k}[I_{k+1}] \in A \).

By definition, the right \( \{I_{k}\} \)-transfer of \( (\alpha,\zeta) \) is the pair
\( (\beta_{m},\xi_{m}) \).

The left transfer is defined similarly, but with one extra change: we apply left transfers in the
decreasing order from \( I_{m} \) to \( I_{1} \).  Here is a formal definition. For a left
admissible sequence of intervals \( \{I_{k}\}_{k=1}^{m} \) set inductively
\( (\beta_{0},\xi_{0}) = (\alpha,\zeta) \) and
\[ (\beta_{k+1}, \xi_{k+1}) =
\transferl{\beta_{k}}{\xi_{k}}{\hat{\xi}_{k}[I_{m-k}]}{m(I_{m-k})}.  \]
Similarly to the case of the right transfer one shows that the right-hand side in the above
construction is well-defined.  By definition, the left \( \{I_{k}\} \)-transfer of
\( (\alpha,\zeta) \) is the pair \( (\beta_{m},\xi_{m}) \).

This notion of transfer, though a bit technical, will be crucial in some reductions in the next
section.  The following lemma establishes basic properties of the transfer operation with respect to
the earlier notion of the evaluation tree.

\begin{lemma}
  \label{sec:triv-words-amalg-transfer-preserves}
  Let \( (\alpha,\zeta) \) be a \multipliable \( f \)-pair of length \( n \) and let \( T_{\zeta} \) be
  a [balanced] evaluation tree for \( \zeta \).  Let \( \{I_{k}\}_{k=1}^{m} \) be a right [left]
  transfer admissible sequence of intervals.  Let \( (\beta,\xi) \) be the right [left]
  \( \{I_{k}\} \)-transfer of \( (\alpha,\zeta) \).  Then
  \begin{enumerate}[(i)]
  \item\label{lem:transfer-preserves-item:same-length} \( |\beta| = n = |\xi| \);
  \item\label{lem:transfer-preserves-item:congruent} \( (\beta,\xi) \) is a \multipliable \( f \)-pair;
  \item\label{lem:transfer-preserves-item:same-tree} \( T_{\zeta} \) is a [balanced] evaluation tree
    for \( \xi \).
  \item\label{lem:transfer-preserves-item:indices-under-change} \( \xi(i) = \zeta(i) \) for all
    \( i \not \in \{M(I_{k}), M(I_{k})+1 : k \in \seg{1}{m}\} \) for the right transfer and for all
    \( i \not \in \{m(I_{k}), m(I_{k})-1 : k \in \seg{1}{m}\} \) in the case of the left
    transfer;
  \item\label{lem:transfer-preserves-item:triviality-of-transfer-intervals}
    \( \hat{\xi}[I_{k}] = e \) for all \( k \in \seg{1}{m} \).
  \end{enumerate}
\end{lemma}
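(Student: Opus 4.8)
The plan is to prove everything by induction on the length $m$ of the transfer admissible sequence, treating the right transfer in detail (the left transfer is symmetric, swapping the roles of $m(\cdot)$ and $M(\cdot)$ and reversing the order of application). For $m=0$ there is nothing to prove: $(\beta_0,\xi_0)=(\alpha,\zeta)$, and all five items hold trivially. For the inductive step, write $(\beta',\xi')$ for the right $\{I_k\}_{k=1}^{m-1}$-transfer of $(\alpha,\zeta)$, so that by the inductive hypothesis items (i)--(v) hold for $(\beta',\xi')$ with the truncated sequence; then $(\beta,\xi)=\transferr{\beta'}{\xi'}{\hat\xi'[I_m]}{M(I_m)}$, and I only need to analyze one more transfer step. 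First I would observe that $a:=\hat\xi'[I_m]$ lies in $A$: this was already verified in the construction preceding the lemma, but I would re-derive it from item (v) of the inductive hypothesis applied to $I_{m-1}$ (namely $\hat\xi'[I_{m-1}]=e$) together with the two cases $M(I_{m-1})+1=m(I_m)$ or $M(I_{m-1})+1<m(I_m)$, exactly as in the well-definedness argument above. Items (i) and (ii) are then immediate from the definition of a single right transfer, since multiplying $\alpha(i)$ on the right by $a^{-1}$ and $\alpha(i+1)$ on the left by $a$ leaves the product $\hat\alpha$ unchanged and, because $a\in A$, preserves the congruence class of each letter (both $\alpha(i)a^{-1}\cong\alpha(i)$ and $a\alpha(i+1)\cong\alpha(i+1)$ since $a\in A\subseteq G_\lambda$ for every $\lambda$).

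For item (iv), a single right $(a,i)$-transfer changes only positions $i$ and $i+1$; tracking the union of the affected index sets over all $m$ steps gives precisely $\{M(I_k),M(I_k)+1:k\in\seg{1}{m}\}$, and since the intervals satisfy $I_k<I_{k+1}$ these modifications never interfere destructively — in particular $\xi(i)=\zeta(i)$ off that set. For item (v), I would argue that $\hat\xi[I_k]=e$ for each $k$. For $k<m-1$ this is inherited from $(\beta',\xi')$ because the last transfer step only touches positions $M(I_m),M(I_m)+1\ge M(I_m)>M(I_{m-1})\ge M(I_k)$, hence leaves $\xi'[I_k]$ untouched. The interesting indices are $k=m-1$ and $k=m$: the last transfer multiplies position $M(I_m)$ on the right by $a^{-1}=(\hat\xi'[I_m])^{-1}$, so $\hat\xi[I_m]=\hat\xi'[I_m]\cdot a^{-1}=e$; and it multiplies position $M(I_{m-1})+1$ (if that index equals $M(I_m)$, which happens exactly when $M(I_{m-1})+1=m(I_m)$ and $I_m$ is a singleton — more care is needed here) by $a$ on the left. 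In general I would split into the two cases according to whether $M(I_{m-1})+1=m(I_m)$ or $M(I_{m-1})+1<m(I_m)$: in the separated case the last transfer does not touch $I_{m-1}$ at all and $\hat\xi[I_{m-1}]=\hat\xi'[I_{m-1}]=e$; in the adjacent case one checks that $\hat\xi[I_{m-1}]=\hat\xi'[I_{m-1}]\cdot a$ where $a=\hat\xi'[I_m]$ — but wait, this is not obviously $e$. Here I would instead use the computation from the well-definedness discussion: in the adjacent case $\hat\xi'[I_{m-1}\cup I_m]$ was the relevant $A$-element, and after the transfer the element $a$ has been "pushed" out of $I_m$ and into position $m(I_m)=M(I_{m-1})+1$, so that $\hat\xi[I_{m-1}]=\hat\xi'[I_{m-1}]\cdot a=e\cdot a\cdot$ (correction factor); the cleanest route is to observe directly that after the last transfer $\hat\xi[I_k]=\hat{\xi'}[I_k]$ for all $k\le m-1$ because $\hat{\xi'}[I_{m-1}]=e$ already forces the "incoming" $a$ at position $m(I_m)$ to be absorbed, and to verify this by the same two-case product computation used to show the construction is well-defined.

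The real content, and the step I expect to be the main obstacle, is item (iii): that $T_\zeta$ remains a (balanced) evaluation tree for $\xi$. I would go through the defining conditions of Definition \ref{sec:struct-triv-words-def-of-evaluation-tree} one at a time for the node-interval assignment $t\mapsto I_t$, now re-interpreted relative to $\xi$. Conditions \eqref{lem:structure-item:root}, \eqref{lem:structure-item:intervals-order}, \eqref{lem:structure-item:strict-inclusion} depend only on the combinatorics of the intervals $I_t$ and the tree $T_\zeta$, not on the word, so they transfer verbatim. Condition \eqref{lem:structure-item:congruence} holds because, as noted for item (ii), each transfer only replaces a letter by a multipliable one, so $\xi(i)\cong\zeta(i)$ for every $i$, hence $\xi(i)\cong\xi(j)$ whenever $\zeta(i)\cong\zeta(j)$, in particular on each $R_t$. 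The crux is condition \eqref{lem:structure-item:evaluation}, $\hat\xi[I_t]\in A$: I would show more precisely that $\hat\xi[I_t]$ and $\hat\zeta[I_t]$ differ by conjugation/multiplication by elements of $A$ — concretely, that for each transfer interval $I_k$, the transfer only moves $A$-valued factors across the single boundary $M(I_k)\mid M(I_k)+1$, and an evaluation-tree interval $I_t$ either contains $I_k$ entirely, is disjoint from it, or... cannot straddle the boundary in a bad way, using \eqref{lem:structure-item:intervals-order} and \eqref{lem:structure-item:strict-inclusion} together with $I_k<I_{k+1}$ to control how $I_t$ meets $\{M(I_k),M(I_k)+1\}$. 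When $M(I_k)\in I_t$ but $M(I_k)+1\notin I_t$ (or vice versa) the value $\hat\xi[I_t]$ acquires an extra right (resp. left) factor of an element of $A$, which keeps it in $A$ since $\hat\zeta[I_t]\in A$ already; when both or neither endpoint of the moved pair lie in $I_t$ the value is unchanged. Condition \eqref{lem:structure-item:endpoints} is the place where one must be slightly careful: a transfer can change the endpoint letter $\zeta(m(I_t))$ or $\zeta(M(I_t))$, but it changes it only by $A$-multiplication, so membership in $A$ is preserved, and since we are using the canonical labeling (Remark \ref{sec:struct-triv-words-standard-label-vacuous-condition}) this condition is in fact vacuous. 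Finally, for the balanced conditions \eqref{lem:structure-item:non-trivial-interior} and \eqref{lem:structure-item:non-trivial-boundary}: these assert that certain positions $i$ have $l_\xi(i)\ne 0$, i.e., $\xi(i)\notin A$; since $\xi(i)\cong\zeta(i)$ and a multipliable letter lies outside $A$ iff it lies in the same $G_\lambda\setminus A$, we have $\xi(i)\notin A\iff\zeta(i)\notin A$ except possibly at the $2m$ modified positions — and at those positions the letter was multiplied by an element of $A$, which cannot move a letter from outside $A$ into $A$ unless... it can, if $\zeta(i)a\in A$ with $\zeta(i)\notin A$, which is impossible since that would give $\zeta(i)\in A$. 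So $l_\xi(i)\ne 0\iff l_\zeta(i)\ne 0$ for every $i$, and the balanced conditions transfer verbatim. Assembling these case analyses — and in particular pinning down exactly which endpoints of $\{M(I_k),M(I_k)+1\}$ can lie in a given $I_t$ — is the technical heart of the argument; everything else is bookkeeping along the induction on $m$.
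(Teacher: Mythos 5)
Your proposal is correct and matches the paper's (much terser) proof: items (i), (ii), (iv) are immediate from the definition of a single transfer; item (iii) follows from the fact that each letter of $\xi$ is a left or right $A$-multiple of the corresponding letter of $\zeta$, so $A$-membership and congruence classes are preserved letterwise, hence the canonical labeling is unchanged and each $\hat{\xi}[I_t]$ differs from $\hat{\zeta}[I_t]$ by $A$-factors only; and item (v) follows because $\hat{\xi}_k[I_k]=e$ by construction of the $k$-th step and every later step modifies only positions $M(I_j)$ and $M(I_j)+1$ with $j>k$, which lie strictly to the right of $I_k$. The only flaw is the digression in your treatment of item (v): the ``adjacent case'' worry is unfounded, since even when $M(I_{m-1})+1=m(I_m)$ the final transfer touches only positions $M(I_m)$ and $M(I_m)+1$, both of which exceed $M(I_{m-1})$ and so lie outside $I_{m-1}$; thus $\hat{\xi}[I_{m-1}]=\hat{\xi}'[I_{m-1}]$ unconditionally, and the identity ``$\hat{\xi}[I_{m-1}]=\hat{\xi}'[I_{m-1}]\cdot a$'' you briefly entertain is simply false. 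The two-case split $M(I_k)+1 = m(I_{k+1})$ versus $M(I_k)+1 < m(I_{k+1})$ is relevant only for checking that $\hat{\xi}_k[I_{k+1}]\in A$ (well-definedness of the transfer element), not for the preservation of $\hat{\xi}[I_j]$ with $j\le k$.
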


\begin{proof}
  Items \eqref{lem:transfer-preserves-item:same-length},
  \eqref{lem:transfer-preserves-item:congruent}, and
  \eqref{lem:transfer-preserves-item:indices-under-change} are trivial; item
  \eqref{lem:transfer-preserves-item:same-tree} follows easily from the observation that
  \( \xi(i) \in A \) if and only if \( \zeta(i) \in A\).  For item
  \eqref{lem:transfer-preserves-item:triviality-of-transfer-intervals} let \( \xi_{k} \) be as in
  the definition of the \( \{I_{k}\} \)-transfer.  Suppose for definiteness that we are in the case
  of the right transfer.  Then \( \hat{\xi}_{k}[I_{k}] = e \) by construction and also
  \( \xi_{k+1}[I_{j}] = \xi_{k}[I_{j}] \) for all \( j \in \seg{1}{k} \).  The lemma follows.
\end{proof}

We will later need another operation on words, we call it symmetrization. Here is the definition.

\begin{definition}
  \label{sec:metrics-amalgams-symmetrization-for-simple}
  Let \( (\alpha,\zeta) \) be a slim \( f \)-pair with a slim evaluation tree \( T_{\zeta} \).  Let
  \( t \in T_{\zeta} \) and \( \{i_{k}\}_{k=1}^{m} \subseteq R_{t}\) be a list such that
  \begin{enumerate}[(i)]
  \item \( i_{k} < i_{k+1} \) for \( k \in \seg{1}{m-1} \);
  \item if \( \zeta(i) \ne e \) for some \( i \in R_{t} \), then \( i = i_{k} \) for some
    \( k \in \seg{1}{m} \);
  \item \( \alpha(i_{k}) \cong \alpha(i_{l}) \) for all \( k, l \in \seg{1}{m} \).
  \end{enumerate}
  Such a list is called \emph{symmetrization admissible}.  For \( j_{0} \in \{i_{k}\}_{k=1}^{m} \)
  let \( k_{0} \) be such that \( j_{0} = i_{k_{0}} \) and define a \emph{symmetrization}
  \( \symmet{\alpha}{\zeta}{j_{0}}{\{i_{k}\}_{k=1}^{m}} \) of \( \zeta \) to be the word \( \xi \)
  such that
  \begin{displaymath}
    \xi(i) =
    \begin{cases}
      \zeta(i)     & \textrm{if \( i \ne i_{p} \) for all \( p \in \seg{1}{m} \)};\\
      \alpha(i)    & \textrm{if \( i \in \{i_{k}\}_{k=1}^{m} \setminus \{j_{0}\} \)};\\
      \alpha(i_{k_{0}-1})^{-1} \ldots \alpha(i_{1})^{-1} \cdot \alpha(i_{m})^{-1} \ldots
      \alpha(i_{k_{0}+1})^{-1} & \textrm{if \( i = j_{0} \).}
    \end{cases}
  \end{displaymath}

  If \( m = 1 \), the above definition does not make sense, so we set that in this case
  \( \symmet{\alpha}{\zeta}{i_{1}}{i_{1}} = \zeta \).
\end{definition}

\begin{lemma}
  \label{sec:metrics-amalgams-symmetrization-properties}
  Let \( (\alpha,\zeta) \) be a slim \( f \)-pair with a slim evaluation tree \( T_{\zeta} \).  Let
  \( t \in T_{\zeta} \), and let \( \{i_{k}\}_{k=1}^{m} \subseteq R_{t} \) be a symmetrization
  admissible list.  Fix some \( j_{0} \in \{i_{k}\}_{k=1}^{m} \).  If \( \xi \) is the
  symmetrization \( \symmet{\alpha}{\zeta}{j_{0}}{\{i_{k}\}_{k=1}^{m}} \) of \( \zeta \), then
  \( (\alpha,\xi) \) is a slim \( f \)-pair and \( T_{\zeta} \) is a slim evaluation tree for
  \( \xi \) with the same assignment of intervals \( s \mapsto I_{s} \).
\end{lemma}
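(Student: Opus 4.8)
I would begin by disposing of the degenerate case \( m = 1 \), where \( \xi = \symmet{\alpha}{\zeta}{i_{1}}{i_{1}} = \zeta \) by convention and there is nothing to prove.  So assume \( m \ge 2 \) and write \( j_{0} = i_{k_{0}} \).  The plan is to localize the whole argument to the block \( \seg{i_{1}}{i_{m}} \), in three steps.

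First I would fix the combinatorial picture around the node \( t \).  Let \( t_{1}, \ldots, t_{r} \) be the nodes with \( t_{j} \prec t \) and \( H(t_{j}) = H(t)+1 \).  By items \eqref{lem:structure-item:intervals-order}--\eqref{lem:structure-item:strict-inclusion} of Definition \ref{sec:struct-triv-words-def-of-evaluation-tree} the intervals \( I_{t_{1}}, \ldots, I_{t_{r}} \) are pairwise disjoint, each is contained in \( I_{t} \), and together with \( R_{t} \) they partition \( I_{t} \).  Since the sets \( R_{s} \) are pairwise disjoint (an easy consequence of \eqref{lem:structure-item:intervals-order}) and \( \{i_{k}\}_{k=1}^{m} \subseteq R_{t} \), I would establish the following dichotomy: for every node \( s \in T_{\zeta} \), the interval \( I_{s} \) is either disjoint from \( \{i_{k}\}_{k=1}^{m} \) or contains the whole block \( \seg{i_{1}}{i_{m}} \), the latter happening exactly when \( s = t \) or \( t \prec s \).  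In particular each \( I_{t_{j}} \) is either disjoint from \( \seg{i_{1}}{i_{m}} \) or contained in it.  Deriving this dichotomy from \eqref{lem:structure-item:intervals-order}--\eqref{lem:structure-item:strict-inclusion} is the step I expect to be the fussiest; everything else is bookkeeping plus one telescoping identity.

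Second I would carry out the evaluation on the block.  Condition (ii) of a symmetrization admissible list gives \( \zeta(i) = e \) for all \( i \in R_{t} \setminus \{i_{k}\}_{k=1}^{m} \), and slimness of \( T_{\zeta} \) gives \( \hat{\zeta}[I_{t_{j}}] = e \) for every \( j \).  Splitting \( \seg{i_{1}}{i_{m}} \) along the partition of \( I_{t} \), each \( I_{t_{j}} \)-block lying in it contributes \( e \) and each letter of \( R_{t} \) lying in it other than \( \zeta(i_{1}), \ldots, \zeta(i_{m}) \) is \( e \), so
\[ \hat{\zeta}[\seg{i_{1}}{i_{m}}] = \zeta(i_{1}) \cdots \zeta(i_{m}) = \hat{\zeta}[R_{t}] = \hat{\zeta}[I_{t}] = e. \]
Since \( \xi \) coincides with \( \zeta \) off \( \{i_{k}\}_{k=1}^{m} \), in particular on every \( I_{t_{j}} \) and on \( R_{t} \setminus \{i_{k}\}_{k=1}^{m} \), the same splitting gives \( \hat{\xi}[\seg{i_{1}}{i_{m}}] = \xi(i_{1}) \cdots \xi(i_{m}) \); substituting \( \xi(i_{k}) = \alpha(i_{k}) \) for \( k \ne k_{0} \) and the defining expression for \( \xi(j_{0}) \), this product telescopes to \( e \).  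Because \( \zeta \) and \( \xi \) differ only inside \( \seg{i_{1}}{i_{m}} \) and both evaluate that block to \( e \), it follows that \( \hat{\xi}[J] = \hat{\zeta}[J] \) for every interval \( J \supseteq \seg{i_{1}}{i_{m}} \).  Taking \( J = \seg{1}{n} \) gives \( \hat{\xi} = e \); and for every \( s \in T_{\zeta} \), either \( I_{s} \) meets \( \{i_{k}\}_{k=1}^{m} \), in which case \( \seg{i_{1}}{i_{m}} \subseteq I_{s} \) by the dichotomy and hence \( \hat{\xi}[I_{s}] = \hat{\zeta}[I_{s}] = e \), or it does not, in which case \( \xi \) and \( \zeta \) agree on \( I_{s} \) and again \( \hat{\xi}[I_{s}] = \hat{\zeta}[I_{s}] = e \).

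Third I would assemble the two assertions.  For \( (\alpha,\xi) \) to be an \( f \)-pair one needs \( |\xi| = |\zeta| = |\alpha| \) (clear), \( \hat{\alpha} = f \) (the word \( \alpha \) is untouched), and \( \hat{\xi} = e \) (second step).  It is \multipliable since \( \alpha(i) \cong \xi(i) \) is immediate for \( i \notin \{i_{k}\}_{k=1}^{m} \) and for \( i = i_{k} \ne j_{0} \), while at \( j_{0} \), condition (iii) of a symmetrization admissible list says that the letters \( \alpha(i_{1}), \ldots, \alpha(i_{m}) \) are pairwise \multipliable, and since \( G_{\lambda} \cap G_{\mu} = A \) whenever \( \lambda \ne \mu \) they therefore all lie in one factor \( G_{\lambda_{0}} \), so that \( \xi(j_{0}) \in G_{\lambda_{0}} \ni \alpha(j_{0}) \).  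Finally, to see that \( T_{\zeta} \) with the unchanged interval assignment is a slim evaluation tree for \( \xi \): items \eqref{lem:structure-item:root}, \eqref{lem:structure-item:intervals-order} and \eqref{lem:structure-item:strict-inclusion} of Definition \ref{sec:struct-triv-words-def-of-evaluation-tree} involve only the tree and the intervals and so carry over verbatim; item \eqref{lem:structure-item:endpoints} is vacuous for the canonical labeling (Remark \ref{sec:struct-triv-words-standard-label-vacuous-condition}); item \eqref{lem:structure-item:congruence} is inherited from \( \zeta \) at every node \( s \ne t \) (there \( R_{s} \) is disjoint from \( \{i_{k}\}_{k=1}^{m} \), so \( \xi \) and \( \zeta \) agree on \( R_{s} \)) and holds at \( s = t \) because each \( \xi(i) \) with \( i \in R_{t} \) is either \( e \) or an element of \( G_{\lambda_{0}} \); and item \eqref{lem:structure-item:evaluation}, together with the defining property \( \hat{\xi}[I_{s}] = e \) of a slim evaluation tree, was obtained in the second step.
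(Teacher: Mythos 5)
Your proof is correct and takes the same approach as the paper's, which simply observes that $\hat{\xi}[I_t] = e$ follows from slimness at the predecessors of $t$ together with $\zeta$ being trivial off $\{i_k\}$ in $R_t$ (the telescoping of $\xi(i_1)\cdots\xi(i_m)$). You spell out the bookkeeping — the localization dichotomy, multipliability at $j_0$, and the preservation of the tree conditions — that the paper regards as routine, but the core calculation is identical.
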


\begin{proof}
  The only non-trivial part in the lemma is to show that \( \hat{\xi}[I_{t}] = e \).  This follows
  from the facts that \( \hat{\zeta}[I_{s}] = e \) for all \( s \prec t \) (because \( T_{\zeta} \)
  is slim) and that \( \zeta(i) = e \) for all \( i \in R_{t} \setminus \{i_{1}, \ldots, i_{m}\} \)
  (by the definition of the symmetrization admissible list).
\end{proof}

\medskip

\section{Groups with two-sided invariant metrics}
\label{sec:tsi-groups}

In this section we would like to recall some facts from the theory of groups with two-sided invariant metrics.  The
reader can consult \cite{MR2455198} for the details.

\begin{definition}
  \label{sec:tsi-groups-tsi-group}
  A metric \( d \) on a group \( G \) is called two-sided invariant if
  \[ d(gf_{1},gf_{2}) = d(f_{1},f_{2}) = d(f_{1}g,f_{2}g) \]
  for all \( g, f_{1}, f_{2} \in G \).  A tsi group is a pair \( (G,d) \), where \( G \) is a group and \( d \) is a
  two-sided invariant metric on \( G \); tsi stands for two-sided invariant.
\end{definition}

\begin{proposition}
  \label{sec:tsi-groups-tsi-topological-group}
  If \( (G,d) \) is a tsi group, then \( G \) is a topological group in the topology of the metric \( d \).
\end{proposition}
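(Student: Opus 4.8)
The plan is to verify directly the two defining properties of a topological group --- joint continuity of multiplication and continuity of inversion --- using nothing beyond two-sided invariance and the triangle inequality for \( d \).

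First I would handle inversion. For \( g, h \in G \), compute \( d(g^{-1},h^{-1}) \) by inserting the identity: left-invariance (multiply both arguments on the left by \( g \)) gives \( d(g^{-1},h^{-1}) = d(e, g h^{-1}) \), and then right-invariance (multiply both arguments on the right by \( h \)) gives \( d(e, g h^{-1}) = d(h,g) \). Hence \( d(g^{-1},h^{-1}) = d(g,h) \), so \( g \mapsto g^{-1} \) is an isometry of \( (G,d) \) and in particular continuous.

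Next I would treat multiplication \( G \times G \to G \), where I equip \( G \times G \) with the metric \( \rho\big((g,h),(g',h')\big) = d(g,g') + d(h,h') \), which induces the product topology. For \( g, g_0, h, h_0 \in G \) the triangle inequality gives \( d(gh, g_0 h_0) \le d(gh, g_0 h) + d(g_0 h, g_0 h_0) \); by right-invariance \( d(gh,g_0 h) = d(g,g_0) \) and by left-invariance \( d(g_0 h, g_0 h_0) = d(h,h_0) \), so \( d(gh,g_0h_0) \le d(g,g_0) + d(h,h_0) = \rho\big((g,h),(g_0,h_0)\big) \). Thus multiplication is \( 1 \)-\lipschitz from \( (G \times G,\rho) \) to \( (G,d) \), hence continuous.

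Combining the two facts, \( G \) in the topology of \( d \) satisfies the axioms of a topological group. I do not expect any genuine obstacle; the only point worth attention is that one must establish \emph{joint} (not merely separate) continuity of multiplication, but the single estimate above already delivers this.
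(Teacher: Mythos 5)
Your proof is correct: the inversion step shows $g \mapsto g^{-1}$ is an isometry, and the multiplication step shows $(g,h) \mapsto gh$ is $1$-\lipschitz for the sum metric on $G \times G$, which together give joint continuity of both group operations. The paper states this proposition without proof, deferring to the reference \cite{MR2455198}, and your argument is exactly the standard one that reference would give, so there is nothing to compare beyond noting that you have supplied the routine verification the paper chose to omit.
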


\begin{proposition}
  \label{sec:tsi-groups-tsi-criterion}
  Let \( d \) be a left invariant metric on the group \( G \).
  \begin{enumerate}[(i)]
  \item If for all \( g_{1}, g_{2}, f_{1}, f_{2} \in G \)
    \[ d(g_{1}g_{2},f_{1}f_{2}) \le d(g_{1},f_{1}) + d(g_{2},f_{2}), \] then \( d \) is two-sided invariant;
  \item If \( d \) is two-sided invariant, then for all \( g_{1}, \ldots, g_{k}, f_{1}, \ldots ,f_{k} \in G \)
    \[ d(g_{1} \cdots g_{k}, f_{1} \cdots f_{k}) \le \sum_{i=1}^{k}d(g_{i},f_{i}).  \]
  \end{enumerate}
\end{proposition}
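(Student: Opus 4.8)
The plan is to prove (i) by showing that the assumed subadditivity inequality forces right invariance, and then to derive (ii) by a straightforward induction whose engine is the special case of subadditivity that two-sided invariance always provides.

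For (i), I would fix $f_1, f_2, g \in G$ and apply the hypothesis to the four elements $f_1, g, f_2, g$ — that is, with both second coordinates equal to $g$ — to get
\[ d(f_1 g, f_2 g) \le d(f_1, f_2) + d(g, g) = d(f_1, f_2). \]
For the reverse inequality I would apply the same hypothesis with the roles played by $f_1 g, g^{-1}, f_2 g, g^{-1}$, obtaining
\[ d(f_1, f_2) = d\big( (f_1 g) g^{-1}, (f_2 g) g^{-1} \big) \le d(f_1 g, f_2 g) + d(g^{-1}, g^{-1}) = d(f_1 g, f_2 g). \]
Hence $d(f_1 g, f_2 g) = d(f_1, f_2)$ for all $f_1, f_2, g$, so $d$ is right invariant; together with the assumed left invariance this is exactly two-sided invariance.

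For (ii), I would first record the two-variable case: if $d$ is two-sided invariant, then inserting $f_1 g_2$ as an intermediate point and using right invariance on the first summand and left invariance on the second gives
\[ d(g_1 g_2, f_1 f_2) \le d(g_1 g_2, f_1 g_2) + d(f_1 g_2, f_1 f_2) = d(g_1, f_1) + d(g_2, f_2). \]
The general statement then follows by induction on $k$: the case $k = 1$ is trivial, and for the inductive step I would group $g_1 \cdots g_k = g_1 (g_2 \cdots g_k)$ and $f_1 \cdots f_k = f_1 (f_2 \cdots f_k)$, apply the two-variable case to obtain $d(g_1 \cdots g_k, f_1 \cdots f_k) \le d(g_1, f_1) + d(g_2 \cdots g_k, f_2 \cdots f_k)$, and then invoke the inductive hypothesis on the last term.

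Neither part presents a genuine obstacle — this is a recollection of standard material on tsi groups — but the one point requiring care is the reverse inequality in (i): one must resist tacitly assuming right invariance and instead re-derive it by applying the hypothesis to the ``shifted'' tuples $f_1 g, g^{-1}$ and $f_2 g, g^{-1}$, which is precisely what closes the argument.
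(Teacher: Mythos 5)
The paper states this proposition without proof (it is standard material, with a reference to \cite{MR2455198}), so there is no in-paper argument to compare against. Your proof is correct and is the usual one: deriving right invariance from the subadditivity hypothesis by specializing to equal second (and then ``shifted'') coordinates, and establishing the two-variable case of (ii) by inserting $f_1 g_2$ as a midpoint before inducting on $k$.
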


Because of Proposition \ref{sec:tsi-groups-tsi-topological-group} we choose to speak not about topological groups that
admit a compatible two-sided invariant metric, but rather about abstract groups with a two-sided invariant metric.  Note
that the class of metrizable groups that admit a compatible two-sided invariant metric is very small, but it includes
two important subclasses: abelian and compact metrizable groups.

The class of tsi groups is closed under taking factors by closed normal subgroups, and, moreover, there is a canonical
metric on the factor.

\begin{proposition}
  \label{sec:tsi-groups-factor-metric}
  If \( (G,d) \) is a tsi group and \( N < G \) is a closed normal subgroup, then the function
  \[ d_{0}(g_{1}N,g_{2}N) = \inf\{ d(g_{1}h_{1}, g_{2}h_{2}) : h_{1}, h_{2} \in N\} \]
  is a two-sided invariant metric on the factor group \( G/N \) and the factor map \( \pi : G \to G/N \) is a \( 1
  \)-\lipschitz surjection from \( (G,d) \) onto \( (G/N,d_{0}) \).
\end{proposition}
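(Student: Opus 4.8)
The plan is to rewrite the defining infimum in a one-variable form, deduce the metric axioms from it, and then read off two-sided invariance and the Lipschitz statement; closedness of \( N \) will enter in exactly one place. First I would simplify the formula: for \( g_{1},g_{2}\in G \) and \( h_{1},h_{2}\in N \), right invariance of \( d \) gives \( d(g_{1}h_{1},g_{2}h_{2})=d(g_{1}h_{1}h_{2}^{-1},g_{2}) \), where \( h_{1}h_{2}^{-1} \) runs over all of \( N \) as \( (h_{1},h_{2}) \) runs over \( N\times N \), and applying right invariance once more turns \( d(g_{1}h,g_{2}) \) into \( d(g_{1},g_{2}h^{-1}) \); hence
\[ d_{0}(g_{1}N,g_{2}N)=\inf\{d(g_{1}h,g_{2}):h\in N\}=\inf\{d(g_{1},g_{2}h):h\in N\}. \]
In particular the original expression is unchanged when \( g_{1} \) or \( g_{2} \) is replaced by another representative of its coset, so \( d_{0} \) is a well-defined function on \( G/N\times G/N \); symmetry is inherited from \( d \), and \( d_{0}(gN,gN)=0 \) by taking \( h=e \).

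Next I would verify the triangle inequality and positivity. Given cosets \( uN,vN,wN \) and \( \varepsilon>0 \), pick \( n,m\in N \) with \( d(u,vn)<d_{0}(uN,vN)+\varepsilon \) and \( d(v,wm)<d_{0}(vN,wN)+\varepsilon \); since \( d(vn,wmn)=d(v,wm) \) by right invariance,
\[ d(u,w(mn))\le d(u,vn)+d(vn,w(mn))<d_{0}(uN,vN)+d_{0}(vN,wN)+2\varepsilon, \]
and as \( mn\in N \) this bounds \( d_{0}(uN,wN) \); letting \( \varepsilon\to0 \) gives the triangle inequality. For positivity, if \( d_{0}(g_{1}N,g_{2}N)=0 \) there are \( n_{k}\in N \) with \( d(g_{1},g_{2}n_{k})\to0 \), hence \( d(g_{2}^{-1}g_{1},n_{k})\to0 \) by left invariance, so \( n_{k}\to g_{2}^{-1}g_{1} \); closedness of \( N \) then forces \( g_{2}^{-1}g_{1}\in N \), i.e.\ \( g_{1}N=g_{2}N \). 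This is the only use of the closedness hypothesis.

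Finally I would establish two-sided invariance and the Lipschitz property. Left invariance of \( d_{0} \) follows from the one-variable form and left invariance of \( d \), since \( d_{0}(gg_{1}N,gg_{2}N)=\inf_{h\in N}d(gg_{1},gg_{2}h)=\inf_{h\in N}d(g_{1},g_{2}h)=d_{0}(g_{1}N,g_{2}N) \). For right invariance one cancels a trailing \( g \) by right invariance of \( d \) and then uses normality of \( N \): \( d_{0}(g_{1}gN,g_{2}gN)=\inf_{h\in N}d(g_{1},g_{2}\,ghg^{-1})=d_{0}(g_{1}N,g_{2}N) \), because \( h\mapsto ghg^{-1} \) permutes \( N \). (Alternatively, one checks subadditivity of \( d_{0} \) for the multiplication on \( G/N \) and applies Proposition~\ref{sec:tsi-groups-tsi-criterion}.) Taking \( h=e \) in the infimum gives \( d_{0}(\pi(g_{1}),\pi(g_{2}))\le d(g_{1},g_{2}) \), so the factor map \( \pi \) is \( 1 \)-\lipschitz, and it is obviously onto. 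The argument is routine throughout; there is no real obstacle beyond keeping the coset representatives consistent in the triangle-inequality step and the single appeal to closedness of \( N \) for positivity.
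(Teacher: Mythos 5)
Your proof is correct and complete: the one-variable rewriting of the infimum, the $\varepsilon$-argument for the triangle inequality with consistent coset representatives, the use of closedness of $N$ exactly once for positivity, normality for right invariance, and the trivial $h=e$ substitution for the $1$-Lipschitz bound are all sound. The paper itself does not prove this proposition; Section~\ref{sec:tsi-groups} states it as a recalled standard fact with a reference to the literature, so there is no argument in the paper to compare against, but your proof is the standard one and would serve.
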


The metric \( d_{0} \) is called the \emph{factor metric}.

\begin{proposition}
  \label{sec:tsi-groups-completion-tsi-group}
  Let \( (G,d) \) be a tsi group.  Let \( (\overline{G},d) \) be the completion of \( G \) as a metric space; the
  extension of the metric \( d \) on \( G \) to the completion \( \overline{G} \) is again denoted by \( d \).  There is
  a unique extension of group operation from \( G \) to \( \overline{G} \).  This extension turns \( (\overline{G},d) \)
  into a tsi group.
\end{proposition}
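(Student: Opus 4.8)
The plan is to invoke the standard principle that a uniformly continuous map between metric spaces extends uniquely to a uniformly continuous map between their completions, together with the fact that completion commutes with finite products. First I would check that the relevant maps are uniformly continuous. By Proposition \ref{sec:tsi-groups-tsi-criterion}(ii), the multiplication \( m \colon G \times G \to G \), \( m(g_{1},g_{2}) = g_{1}g_{2} \), satisfies \( d(m(g_{1},g_{2}),m(f_{1},f_{2})) \le d(g_{1},f_{1}) + d(g_{2},f_{2}) \); equipping \( G \times G \) with the sum metric this says \( m \) is \( 1 \)-\lipschitz, hence uniformly continuous. Likewise inversion \( g \mapsto g^{-1} \) is an isometry of \( (G,d) \): using two-sided invariance, \( d(g^{-1},f^{-1}) = d(fg^{-1},e) = d(f,g) \). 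Since the completion of \( (G \times G,\text{sum metric}) \) is canonically \( (\overline{G} \times \overline{G},\text{sum metric}) \), the extension principle yields unique uniformly continuous maps \( \bar{m} \colon \overline{G} \times \overline{G} \to \overline{G} \) and \( j \colon \overline{G} \to \overline{G} \) extending multiplication and inversion.

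Next I would verify that \( \bar{m} \), with unit \( e \) and inversion \( j \), satisfies the group axioms, in each case observing that two continuous maps agreeing on a dense subset agree everywhere. For associativity, \( (x,y,z) \mapsto \bar{m}(\bar{m}(x,y),z) \) and \( (x,y,z) \mapsto \bar{m}(x,\bar{m}(y,z)) \) are continuous maps \( \overline{G}^{3} \to \overline{G} \) coinciding on the dense set \( G^{3} \). For the unit, \( x \mapsto \bar{m}(e,x) \), \( x \mapsto \bar{m}(x,e) \), and \( x \mapsto x \) all agree on the dense set \( G \). For inverses, \( x \mapsto \bar{m}(x,j(x)) \) and \( x \mapsto \bar{m}(j(x),x) \) are continuous and equal the constant \( e \) on \( G \). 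Hence \( (\overline{G},\bar{m}) \) is a group containing \( G \) as a subgroup, and since \( \bar{m} \) and \( j \) are continuous it is a topological group. For uniqueness: any extension of the group operation of \( G \) to \( \overline{G} \) that is continuous agrees with \( \bar{m} \) on \( G \times G \), which is dense in \( \overline{G} \times \overline{G} \), and hence equals \( \bar{m} \).

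Finally, two-sided invariance of \( d \) on \( \overline{G} \) follows by continuity: the identities \( d(gf_{1},gf_{2}) = d(f_{1},f_{2}) = d(f_{1}g,f_{2}g) \) hold on \( G^{3} \), and for each fixed arrangement of the variables both sides are continuous functions of \( (g,f_{1},f_{2}) \in \overline{G}^{3} \) (using continuity of \( \bar{m} \) and of \( d \)), so the equalities persist on \( \overline{G}^{3} \). Alternatively one extends the inequality \( d(g_{1}g_{2},f_{1}f_{2}) \le d(g_{1},f_{1}) + d(g_{2},f_{2}) \) from \( G^{4} \) to \( \overline{G}^{4} \) by continuity and combines left-invariance on \( \overline{G} \) with Proposition \ref{sec:tsi-groups-tsi-criterion}(i).

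I do not expect a genuinely hard step here; everything is an ``extend by density and continuity'' argument. The only point requiring care is that it is \emph{uniform} continuity of multiplication, not mere continuity, that licenses extension to the completion — and this is precisely what two-sided invariance provides (a merely left-invariant metric need not make multiplication uniformly continuous, and the metric completion of such a group need not carry a group structure).
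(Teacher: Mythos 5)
The paper does not give a proof of this proposition: it is stated as a recalled standard fact in Section~\ref{sec:tsi-groups}, with the reader referred to the literature for details. Your argument is correct and is precisely the standard one. The two essential observations you isolate are the right ones: two-sided invariance makes multiplication $1$-\lipschitz for the sum metric on $G\times G$ (via Proposition~\ref{sec:tsi-groups-tsi-criterion}(ii)) and makes inversion an isometry, so both operations extend uniquely to the completions by uniform continuity; then the group axioms and the two-sided invariance of $d$ all pass to $\overline{G}$ because each is an identity between continuous functions that holds on a dense subset. Your closing remark — that mere continuity of multiplication (as one has for a left-invariant metric) would not suffice to run the extension argument, and that it is precisely two-sided invariance that upgrades this to uniform continuity — correctly identifies the one place where the hypothesis is actually used, and explains why this proposition is special to tsi groups.
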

This proposition states that for tsi groups metric and group completions are the same.

\section{Graev metric groups}
\label{sec:graev-metric-groups}

Before going into the details of the construction of Graev metrics on free products we would like to recall the
definition of the Graev metrics on free groups.  The reader may consult \cite{MR0038357}, \cite{MR2332614},
\cite{MR2455198} or \cite{MR1288299} for the details and proofs.

Classically one starts with a pointed metric space \( (X,e,d) \), where \( d \) is a metric and \( e \in X \) is a
distinguished point.  Take another copy of this space, denote it by \( (X^{-1},d) \), and its elements are the formal
inverses of the elements in \( X \) with the agreement \( e^{-1} = e \) and \( X \cap X^{-1} = \{e\}\).  Then
\( X^{-1} \) is also a metric space and we can amalgamate \( (X,d) \) and \( (X^{-1},d) \) over the point \( e \).
Denote the resulting space by \( (\env{X},e, d) \).  Equivalently, \( \env{X} = X \cup X^{-1}, \) and for all
\( x, y \in X \)
\[ d(x^{-1},y^{-1}) = d(x,y), \quad d(x,y^{-1}) = d(x,e) + d(e,y).  \]

With the set \( \env{X} \) we associate two objects: the set of \emph{nonempty} words \( \word{\env{X}} \) over the
alphabet \( \env{X} \) and the free group \( F(X) \) over the basis \( X \).  There is a small issue with the second
object.  We want \( e \) to be the identity element of this group rather than an element of the basis.  In other words,
we formally have to write \( F(X \setminus\{e\}) \), but we adopt the convention that given a pointed metric space
\( (X,e,d) \), in \( F(X) \) the letter \( e \in X \) is interpreted as the identity element.  The inverse operation in
\( F(X) \) naturally extends the inverse operation on \( \env{X} \).  We have a natural map
\[ \widehat{ }\ : \word{\env{X}} \to F(X), \]
for \( u \in \word{\env{X}} \) its image \( \hat{u} \) is just the reduced form of \( u \).  For a word
\( u \in \word{\env{X}} \) its length is denoted by \( |u| \) and its \( i^{th} \) letter is denoted by \( u(i) \).  For
two words \( u, v \in \word{\env{X}} \) of the same length \( n \) we define a function
\[ \rho(u,v) = \sum_{i=1}^{n} d(u(i),v(i)).  \] And finally, we define a metric \( \dist \) by
\[ \dist(f,g) = \inf\{\rho(u,v) : |u| = |v| \textrm{ and } \hat{u} = f, \hat{v} = g\}.  \]
A theorem of Graev \cite{MR0038357} states that \( \dist \) is indeed a two-sided invariant metric on \( F(X) \), and
moreover, it extends the metric \( d \) on the amalgam \( \env{X} \).  It is straightforward to see that \( \dist \) is
a two-sided invariant \emph{pseudo}-metric and the hard part of the Graev's theorem is to show that it assigns a
non-zero distance to distinct elements.  Graev showed this by proving some restrictions on \( u \) and \( v \) in the
infimum in the definition of \( d \).  
The effective formula for the Graev metric was first suggested by O. Sipacheva and V. Uspenskij in \cite{MR913066} and
later, but independently, a similar result was obtained in \cite{MR2332614} by L. Ding and S. Gao.
In our presentation we follow \cite{MR2332614}.

\begin{definition}
  \label{sec:graev-metric-groups-match}
  Let \( I \) be an interval of natural numbers.  A bijection \( \theta : I \to I \) is called a \emph{match} if
  \begin{enumerate}[(i)]
  \item \( \theta \circ \theta = \id \);
  \item there are no \( i, j \in I \) such that \( i < j < \theta(i) < \theta(j) \).
  \end{enumerate}
\end{definition}

\begin{definition}
  \label{sec:graev-metric-groups-match-word}
  Let \( w \in \word{\env{X}} \) be a word of length \( n \), let \( \theta \) be a match on \( \seg{1}{n} \).  A word
  \( w^{\theta} \) has length \( n \) and is defined as
  \begin{displaymath}
    w^{\theta}(i) =
    \begin{cases}
      e & \textrm{if \( \theta(i) = i \)}; \\
      w(i) & \textrm{if \( \theta(i) > i \)};\\
      w \big( \theta(i) \big)^{-1} & \textrm{if \( \theta(i) < i \)}.
    \end{cases}
  \end{displaymath}
\end{definition}

It is not hard to check that for any word \( w \) and any match \( \theta \) on \( \seg{1}{|w|} \) the word
\( w^{\theta} \) is trivial, i.e. \( \widehat{w^{\theta}} = e \).

\begin{theorem}[Sipacheva--Uspenskij, Ding--Gao]
  \label{sec:graev-metric-groups-graev-metric-computation}
  If \( f \in F(X)\) and \( w \in \word{\env{X}} \) is the reduced form of \( f \), then
  \[ \dist(f,e) = \min\big\{\rho\big(w,w^{\theta}\big) : \textrm{\( \theta \) is a match on \( \seg{1}{|w|} \)}
  \big\}.  \]
\end{theorem}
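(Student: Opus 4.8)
The plan is to prove the two inequalities separately. The inequality $\dist(f,e) \le \min_\theta \rho(w,w^\theta)$ is immediate: for any match $\theta$ on $\seg{1}{|w|}$, the pair $(w, w^\theta)$ is admissible in the infimum defining $\dist(f,e)$, since $|w| = |w^\theta|$, $\hat w = f$, and $\widehat{w^\theta} = e$ (the latter being the easily-checked fact noted just before the theorem). Hence $\dist(f,e) \le \rho(w,w^\theta)$ for every $\theta$, and taking the minimum over matches gives the bound. (One should also observe that the minimum is attained: there are only finitely many matches on a fixed finite interval, so "$\min$" is legitimate.)

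The substance is the reverse inequality $\dist(f,e) \ge \min_\theta \rho(w,w^\theta)$. Here I would take an arbitrary admissible pair $(u,v)$ with $|u|=|v|=n$, $\hat u = f$, $\hat v = e$, and show that $\rho(u,v) \ge \rho(w,w^\theta)$ for some match $\theta$, where $w$ is the reduced form of $f$. The key structural input is that $v$ is a trivial word in the free group, and trivial words in free groups have a well-understood bracketing/cancellation structure — concretely, one can pair up the positions of $v$ into a "match-like" nesting of mutually inverse letters (this is the analogue, for the absolutely free case, of the evaluation-tree machinery developed in Section \ref{sec:triv-words-amalg}; in the free-group setting it degenerates to exactly a match on $\seg{1}{n}$ together with the information of which letters cancel which). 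The idea is then: transport this nesting structure of $v$ over to $u$ via the coordinatewise comparison. Using two-sided invariance of the metric $d$ on $\env X$ (or rather the triangle-inequality estimates it provides, as in Proposition \ref{sec:tsi-groups-tsi-criterion}) together with the identities $d(x^{-1},y^{-1}) = d(x,y)$ and $d(x,y^{-1}) = d(x,e)+d(e,y)$, one shows that replacing a cancelling pair of positions in $(u,v)$ by a single "reduced" configuration does not increase $\rho$. Iterating these reductions — driven by the nesting order of the match, innermost pairs first — one arrives at a pair $(u', v')$ where $u'$ is reduced (hence, after accounting for the positions that got collapsed to $e$, essentially $w$ padded with identity letters) and $v'$ has the form $w^\theta$ for the induced match $\theta$. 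Throughout, $\rho$ only decreases, so $\rho(u,v) \ge \rho(u',v') = \rho(w, w^\theta) \ge \min_{\theta'}\rho(w,w^{\theta'})$.

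The main obstacle is bookkeeping: one must verify that the nesting derived from the trivial word $v$ genuinely is a match (condition (ii) of Definition \ref{sec:graev-metric-groups-match}, the non-crossing property, is exactly what the cancellation structure of free-group words guarantees — two cancelling pairs are either disjoint or nested, never linked), and that the reductions carried out on the $u$-side are compatible with this nesting so that the limiting $u'$ is in fact the reduced form of $f$ with the collapsed positions filled by $e$ at precisely the positions fixed by $\theta$. One delicate point is that the infimum in the definition of $\dist$ ranges over pairs of \emph{arbitrary} equal length, so $u$ and $v$ may be far longer than $w$; the reduction must show that all the "excess" length is eliminated by cancellations that the match $\theta$ records, with no loss in $\rho$. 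A secondary subtlety is the case analysis in each elementary reduction step according to whether the relevant letters of $u$ lie in $X$ or in $X^{-1}$, which is where the explicit formulas for $d$ on $\env X$ are used; these are routine but must be organized carefully. Once the single-step reduction inequality is established and the combinatorics of the match is in place, the theorem follows by induction on $n - |w|$ (or on the number of cancellations), with the base case $u = w$ being trivial.
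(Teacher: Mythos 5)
The paper does not prove this theorem; it states it with attribution to Sipacheva--Uspenskij \cite{MR913066} and Ding--Gao \cite{MR2332614} and uses it as a black box, so there is no internal argument to compare against. Judging your outline on its own merits: the easy inequality and the overall strategy for the hard one are the standard route, but there are two points worth tightening.

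First, \( \env{X} \) is not a group, so ``two-sided invariance of the metric \( d \) on \( \env{X} \)'' is not a meaningful notion, and Proposition~\ref{sec:tsi-groups-tsi-criterion} does not apply there. What actually does the work are precisely the two identities you list, \( d(x^{-1},y^{-1})=d(x,y) \) and \( d(x,y^{-1})=d(x,e)+d(e,y) \), together with the ordinary triangle inequality on the metric space \( \env{X} \); no invariance beyond these is available or needed. Second, and more substantively, your iteration plan conflates two different matchings. The non-crossing pairing \( \sigma \) you extract from the trivial word \( v \) (which records which letters of \( v \) are mutually inverse) is used for the single move \( v \rightsquigarrow u^{\sigma} \): one verifies \( \rho(u,v)\ge\rho(u,u^{\sigma}) \) via those two identities, and this can be done coordinatewise for each matched pair, exactly as you say. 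But once you hold the pair \( (u,u^{\sigma}) \), the remaining reduction to \( (w,w^{\theta}) \) is driven by the cancellation structure of \( u \), not by the nesting of \( \sigma \). One repeatedly removes an \( e \)-letter or an adjacent inverse pair \( u(i),u(i+1) \) from \( u \) and simultaneously surgers \( \sigma \) into a match on the shorter index set, checking case by case (depending on whether the deleted positions are fixed by \( \sigma \), matched to each other, or matched outward) that \( \rho \) does not increase. As written, ``replacing a cancelling pair of positions in \( (u,v) \) by a single reduced configuration, innermost pairs of the match first'' would have you attempting to fuse positions \( i,i+1 \) of \( u \) at a place where \( u(i)u(i+1) \) is in general not a single letter of \( \env{X} \), so there is nothing to replace them with; it is only \( u \)'s own cancellations (which by definition produce \( e \)) that give legitimate shortening moves. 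With that reorganization, the residual case analysis is exactly the bookkeeping burden you correctly identify, and the plan goes through.
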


Here are some of the properties of the Graev metrics.  They are easy consequences of the definition of the Graev metric
and Theorem \ref{sec:graev-metric-groups-graev-metric-computation}.

\begin{proposition}
  \label{sec:graev-metric-groups-properties}
  Let \( (X,e,d) \) be a pointed metric space, and let \( \dist \) be the Graev metric on \( F(X) \).
  \begin{enumerate}[(i)]
  \item\label{prop:graev-metric-group-properties-item:extending-lipschitz} If \( (T,d_{T}) \) is a tsi group and
    \( \phi : X \to T \) is a \( K \)-\lipschitz map such that \( \phi(e) = e \), then this map extends uniquely to a
    \( K \)-\lipschitz homomorphism \( \phi : F(X) \to T \).
  \item\label{prop:graev-metric-group-properties-item:induced-metric} If \( Y \subseteq X \), \( e \in Y \) is a pointed
    subspace of \( X \) with the induced metric, then the natural embedding \( i : Y \to X \) extends uniquely to an
    isometric embedding
    \[ i : F(Y) \to F(X). \]
    Moreover, if \( Y \) is closed in \( X \), then \( F(Y) \) is closed in \( F(X) \).
  \item\label{prop:graev-metric-group-properties-item:maximality} If \( \delta \) is any tsi metric \( F(X) \) that
    extends \( d \), i.e., if \( d(x_{1},x_{2}) = \delta(x_{1},x_{2}) \) for all \( x_{1}, x_{2} \in X \), then
    \( \delta(u_{1}, u_{2}) \le \dist(u_{1},u_{2}) \) for all \( u_{1}, u_{2} \in F(X) \).  In other words, \( \dist \)
    is maximal among all the tsi metrics that extend \( d \).
  \item If \( X \ne \{e\} \), then
    \[ \chi(F(X)) = \max \{\aleph_{0}, \chi(X)\}.  \] In particular, if \( X \) is separable, then so is \( F(X) \).
  \end{enumerate}
\end{proposition}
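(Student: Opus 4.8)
The plan is to read off the universal property, the maximality clause and the \emph{isometry} part of the second clause directly from the word--formula defining $\dist$ together with Proposition~\ref{sec:tsi-groups-tsi-criterion}(ii); the density character needs in addition the effective formula of Theorem~\ref{sec:graev-metric-groups-graev-metric-computation}; and the one place where real work is required is the \emph{closedness} of $F(Y)$ in $F(X)$.

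\emph{Universal property and maximality.} First extend $\phi$ to $\env{X}$ by $\phi(x^{-1}):=\phi(x)^{-1}$ (legitimate since $\phi(e)=e$); two--sided invariance of $d_{T}$, which forces $d_{T}(s^{-1},t^{-1})=d_{T}(s,t)$ and $d_{T}(s,t^{-1})\le d_{T}(s,e)+d_{T}(e,t)$, keeps this map $K$--\lipschitz on $\env{X}$. Extend it to $F(X)$ by the algebraic universal property of the free group. For $f,g\in F(X)$ pick words $u,v$ over $\env{X}$ of equal length with $\hat{u}=f$, $\hat{v}=g$; Proposition~\ref{sec:tsi-groups-tsi-criterion}(ii) applied to $\phi(f)=\phi(u(1))\cdots\phi(u(|u|))$ and the analogous product for $g$ gives $d_{T}(\phi(f),\phi(g))\le K\rho(u,v)$, whence $d_{T}(\phi(f),\phi(g))\le K\dist(f,g)$ after the infimum; uniqueness is automatic since $X$ generates $F(X)$. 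Maximality is the same computation: if $\delta$ is a tsi metric on $F(X)$ extending $d$, invariance forces $\delta|_{\env{X}}\le d|_{\env{X}}$, so $\delta(\hat{u},\hat{v})\le\sum_{i}\delta(u(i),v(i))\le\rho(u,v)$ by Proposition~\ref{sec:tsi-groups-tsi-criterion}(ii), and the infimum gives $\delta\le\dist$.

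\emph{Isometry and closedness of $F(Y)$.} The metric that $\env{X}$ induces on $\env{Y}$ is exactly the one used to build $F(Y)$, so every pair of words over $\env{Y}$ admissible in the infimum defining $\dist_{F(Y)}(f,g)$ is also admissible for $\dist_{F(X)}(f,g)$ with the same value of $\rho$; this gives $\dist_{F(X)}\le\dist_{F(Y)}$ on $F(Y)$. For the reverse, reduce by invariance to $\dist(h,e)$ with $h\in F(Y)$: the reduced form $w$ of $h$ is a word over $\env{Y}$ and so is $w^{\theta}$ for each match $\theta$, so the minimum of Theorem~\ref{sec:graev-metric-groups-graev-metric-computation} ranges over the same set in $F(Y)$ and in $F(X)$. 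Closedness is the hard part: one must show $\dist(f,F(Y))>0$ whenever $f\in F(X)\setminus F(Y)$. Writing $\dist(f,F(Y))=\inf_{h_{1},h_{2}\in F(Y)}\dist(h_{1}fh_{2},e)$ and using that multiplying $f$ on the left or right by an element of $F(Y)$ can only cancel those letters of the reduced form of $f$ lying in $\env{Y}$ that precede its first, or follow its last, letter outside $\env{Y}$, one reduces to the case that the reduced form $\omega$ of $f$ begins and ends with a letter outside $\env{Y}$; then for every $g\in F(Y)$ the reduced form of $g^{-1}f$ is $\tau\concat\omega$ with $\tau$ a reduced word over $\env{Y}$, no cancellation occurring at the junction. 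By Theorem~\ref{sec:graev-metric-groups-graev-metric-computation}, $\dist(g^{-1}f,e)=\min_{\theta}\rho(\tau\concat\omega,(\tau\concat\omega)^{\theta})$; inspect the final coordinate, which carries $\omega(|\omega|)\notin\env{Y}$. If $\theta$ fixes it or matches it inside the $\tau$--block, that coordinate alone costs at least $\inf\{d(\omega(|\omega|),z):z\in\env{Y}\}$, which is positive because $Y$ is closed in $X$ and $\omega(|\omega|)\ne e$. Otherwise $\theta$ matches it with the coordinate carrying some $\omega(s)$, $s<|\omega|$, and the non--crossing condition forces the coordinates carrying $\omega(s+1),\dots,\omega(|\omega|-1)$ to be matched among themselves, so the coordinates from $s$ to $|\omega|$ cost at least $d\big(\omega(|\omega|),\omega(s)^{-1}\big)+\dist\big(\omega(s+1)\cdots\omega(|\omega|-1),e\big)$; this is positive because $\omega$ is reduced---if $s=|\omega|-1$ the second summand is $\dist(e,e)=0$ but the first is positive, and if $s<|\omega|-1$ then $\omega(s+1)\cdots\omega(|\omega|-1)$ is the value of a nonempty reduced word, hence nontrivial, so the second summand is positive. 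Minimising over the finitely many $s$ and over these alternatives gives $c>0$, depending only on $\omega$, with $\dist(f,g)\ge c$ for all $g\in F(Y)$; hence $F(Y)$ is closed. This last verification---tracking how left and right multiplication by $F(Y)$ act on reduced forms, and the non--crossing estimate bounding $\rho(\tau\concat\omega,(\tau\concat\omega)^{\theta})$ below uniformly in $g$---is the only step needing more than a line of routine checking.

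\emph{Density character.} $X$ sits isometrically in $F(X)$, and for $x\ne e$ the powers $\{x^{n}:n\in\mathbb{Z}\}$ are $d(x,e)$--separated, since a match on the all--$x$ reduced word must send its last coordinate to $e$ or to $x^{-1}$, each at distance $\ge d(x,e)$ from $x$; hence $\chi(F(X))\ge\max\{\aleph_{0},\chi(X)\}$. Conversely, take $D\ni e$ dense in $X$ with $|D|\le\max\{\aleph_{0},\chi(X)\}$; then $F(D)$ has density character $\le\max\{\aleph_{0},\chi(X)\}$ and is dense in $F(X)$, because any $f=x_{1}\cdots x_{m}$ with $x_{i}\in\env{X}$ lies within $\varepsilon$ of $d_{1}\cdots d_{m}\in F(D)$ once $d_{i}\in\env{D}$ satisfies $d(x_{i},d_{i})<\varepsilon/m$ (Proposition~\ref{sec:tsi-groups-tsi-criterion}(ii)). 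So $\chi(F(X))\le\max\{\aleph_{0},\chi(X)\}$.
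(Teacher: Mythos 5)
The paper does not actually prove this proposition: it presents it as a collection of known properties, ``easy consequences of the definition of the Graev metric and Theorem \ref{sec:graev-metric-groups-graev-metric-computation}'', with the original proofs in the cited references. So there is no in-paper proof to compare against. Your proof is correct, and each step holds up. The universal-property and maximality arguments are the standard infimum manipulations via Proposition~\ref{sec:tsi-groups-tsi-criterion}(ii), and the isometry of $F(Y) \hookrightarrow F(X)$ drops out of the match formula exactly as you say. The closedness argument is the one place that actually requires work, and you handle it correctly: after trimming $\env{Y}$-prefixes and -suffixes, the last coordinate of $\tau\concat\omega$ carries a letter bounded away from $\env{Y}$, and the non-crossing condition on a match plus Theorem~\ref{sec:graev-metric-groups-graev-metric-computation} applied to the interior block give a lower bound depending only on $\omega$. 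The density-character computation (powers of a single generator for the lower bound, $F(D)$ dense for the upper bound, using that $\dist$ extends $d$ on $\env{X}$) is also fine. It is worth noting that your closedness argument is the match-calculus analogue of what the paper proves in Proposition~\ref{sec:graev-metric-amalgam-properties}\eqref{prop:graev-metric-amalgam-properties-item:induced-metric} for amalgams, where the same trimming-and-lower-bounding strategy is executed via slim evaluation trees and symmetrizations because matches do not describe trivial words once amalgamation is present; your version is shorter precisely because you may invoke the Sipacheva--Uspenskij/Ding--Gao formula, which has no amalgam counterpart.
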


\subsection{Free groups over metric groups}
\label{sec:free-groups-over}
In this subsection we prove a technical result that will be used later in Section \ref{sec:prop-graev-metr}.

Suppose \( X \) is itself a group and \( e \in X \) is the identity element of that group.  Let \( \circ \) denote the
multiplication operation on \( X \), and let \( {x}^{\dagger} \) denote the group inverse of an element \( x \in X \).
Suppose also that \( d \) is a two sided invariant metric on \( X \).  For \( u \in \word{\env{X}} \) define a word
\( u^{\sharp} \) by
\begin{displaymath}
  u^{\sharp}(i) =
  \begin{cases}
    u(i) & \textrm{if \( u(i) \in X \)};\\
    (u(i)^{-1})^{\dagger} & \textrm{if \( u(i) \in X^{-1} \)}.
  \end{cases}
\end{displaymath}
For \( h \in F(X) \) let \( h^{\sharp} = \widehat{w^{\sharp}} \), where \( w \) is the reduced form of \( h \).

\begin{proposition}
  \label{sec:free-groups-over-1}
  Let \( f \in F(X) \), and let \( w \) be the reduced form of \( f \).  If \( w \in \word{X} \), then for any
  \( h \in F(X) \)
  \[ \dist(fh,e) \ge \dist(fh^{\sharp},e).  \]
\end{proposition}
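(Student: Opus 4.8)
The plan is to compute both distances with the effective formula of Theorem~\ref{sec:graev-metric-groups-graev-metric-computation} and to transport an optimal match from the $fh$ side to the $fh^{\sharp}$ side. Write $w$ for the reduced form of $f$ and $v$ for the reduced form of $h$. The first point is that $w\concat v^{\sharp}$ is the reduced form of $fh^{\sharp}$: every letter of $v^{\sharp}$ lies in $X\setminus\{e\}$ (a letter $z^{-1}\in X^{-1}$ of $v$ becomes $z^{\dagger}\in X\setminus\{e\}$, and a letter of $v$ already in $X$ is kept), and since $w\in\word{X}$ likewise consists of letters of $X\setminus\{e\}$, no cancellation occurs in $w\concat v^{\sharp}$. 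Hence by Theorem~\ref{sec:graev-metric-groups-graev-metric-computation},
\[
  \dist(fh^{\sharp},e)=\min_{\eta}\rho\bigl(w\concat v^{\sharp},(w\concat v^{\sharp})^{\eta}\bigr),
\]
the minimum over matches $\eta$ on $\seg{1}{|w|+|v|}$, so it suffices to produce a single match $\eta$ with $\rho\bigl(w\concat v^{\sharp},(w\concat v^{\sharp})^{\eta}\bigr)\le\dist(fh,e)$.

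Next I would describe the cancellation in $w\concat v$ explicitly. As $w$ is reduced and lies in $\word{X}$, the reduced form $r$ of $fh$ is obtained by deleting a prefix $v(1)\concat\cdots\concat v(k)$ of $v$ whose letters are exactly the formal inverses $w(|w|)^{-1},\dots,w(|w|-k+1)^{-1}$ of the tail of $w$ (so $v(1),\dots,v(k)\in X^{-1}$), together with that tail; thus $r=w(1)\concat\cdots\concat w(|w|-k)\concat v(k+1)\concat\cdots\concat v(|v|)$. Fixing, via Theorem~\ref{sec:graev-metric-groups-graev-metric-computation}, a match $\theta$ on $r$ with $\dist(fh,e)=\rho(r,r^{\theta})$, I would build $\eta$ as follows: the $2k$ positions of $w\concat v^{\sharp}$ carrying the ``doubled tail'' $w(|w|-k+1)\concat\cdots\concat w(|w|)\concat v^{\sharp}(1)\concat\cdots\concat v^{\sharp}(k)$ form one contiguous block, and I would pair those among themselves by the nested (hence non-crossing) pairing, the $i$-th letter of the first half with the $i$-th-from-the-end letter of the second half; on the remaining positions, which are in order-preserving bijection with the positions of $r$, I would let $\eta$ copy $\theta$. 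Because $\theta$ is non-crossing and the added pairs lie nested inside a single block, $\eta$ is a genuine match.

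Then I would estimate $\rho\bigl(w\concat v^{\sharp},(w\concat v^{\sharp})^{\eta}\bigr)$ against $\rho(r,r^{\theta})$ summand by summand. On the positions where $\eta$ copies $\theta$, the only difference is that a negative letter $v(j)\in X^{-1}$ of $r$ is replaced by $v^{\sharp}(j)\in X$; using the extension rule $d(x,y^{-1})=d(x,e)+d(e,y)$ on $\env{X}$ together with two-sided invariance of $d$ on $X$ one sees that each of these summands does not increase. On the doubled-tail block each pair contributes a term of the form $d\bigl(w(j)^{\dagger},w(j')^{-1}\bigr)$, which by the same rules equals $d(w(j),e)+d(w(j'),e)$, and the plan is to show that the nested pairing makes the total over the block small enough to be absorbed, so that the whole sum stays at most $\rho(r,r^{\theta})=\dist(fh,e)$, which is the asserted inequality.

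The step I expect to be the main obstacle — in fact the crux of the argument — is controlling this doubled-tail block: one must confirm that a non-crossing pairing of those $2k$ positions has $\rho$-cost dominated by what the cancellation removed in passing from $w\concat v$ to its reduced form $r$ (where pairing $w$'s tail with the matching prefix of $v$ costs nothing), and that this bound coexists with the copied part of $\theta$ without conflict. The hypothesis $w\in\word{X}$ enters precisely here: it forces the cancelled prefix of $v$ to consist of genuine formal inverses of letters of $X$, so that two-sided invariance of $d$ on $X$ governs all the distances among the symbols $z$, $z^{\dagger}$, $z^{-1}$ and $e$ that occur in the estimate.
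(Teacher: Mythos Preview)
Your route is more roundabout than the paper's. The paper never reduces $w\concat v$ (written $w\concat u$ there): it compares $\rho\bigl(w\concat v,(w\concat v)^{\theta}\bigr)$ with $\rho\bigl(w\concat v^{\sharp},(w\concat v^{\sharp})^{\theta}\bigr)$ for the \emph{same} match $\theta$, using that the minimum of the former over $\theta$ is still $\dist(fh,e)$ because any optimal match on the reduced form $r$ extends to $w\concat v$ at zero extra cost (the cancelled letters in $w\concat v$ are honest formal inverses of one another). This sidesteps your doubled-tail block entirely. In your setup that block has strictly positive cost after sharp---pairing a tail letter $a\in X$ of $w$ with the corresponding $a^{\dagger}\in X$ in $v^{\sharp}$ produces the summand $d(a^{\dagger},a^{-1})=2\,d(a,e)$---and nothing in $\rho(r,r^{\theta})$ is available to offset it, so the ``absorption'' you hope for cannot happen.

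More seriously, your claim that on the copied part ``each of these summands does not increase'' is wrong, and this cannot be repaired. If $\theta$ matches a position carrying $b\in X$ (the larger index) to one carrying $c^{-1}\in X^{-1}$ (the smaller index), then before sharp the summand is $d(b,c)$, a distance inside $X$; after sharp the partner becomes $c^{\dagger}\in X$, and the summand becomes $d\bigl(b,(c^{\dagger})^{-1}\bigr)=d(b,e)+d(c,e)$, which can be strictly larger. Concretely, take $X=(\mathbb{R},+)$, $f=1$, and $h$ with reduced form $5^{-1}\concat 3$; then $h^{\sharp}=(-5)\cdot 3$, and checking all matches on $\seg{1}{3}$ gives $\dist(fh,e)=3$ while $\dist(fh^{\sharp},e)=9$. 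So the asserted inequality fails in general. The paper's letter-by-letter argument has the same blind spot: its two displayed cases treat only positions whose \emph{own} letter lies in $X^{-1}$, not those whose $\theta$-partner does.
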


\begin{proof}
  Suppose \( w \in \word{X} \) and fix an \( h \in F(X) \).  Let \( u \in \word{\env{X}} \) be the reduced form of
  \( h \).  It is enough to show that
  \[ \rho \Big( w \concat u, \big( w \concat u \big)^{\theta} \Big) \ge \rho \Big( w \concat u^{\sharp}, \big( w \concat
  u^{\sharp} \big)^{\theta} \Big) \]
  for any match \( \theta \) on \( \seg{1}{|w|+|u|} \).  This follows from the following inequalities:
  \begin{itemize}
  \item if \( x, y \in X^{-1} \), then by the two-sided invariance of the metric \( d \)
    \begin{displaymath}
      \begin{aligned}
        d(x,y) = d(x^{-1},y^{-1}) = d\big((x^{-1})^{\dagger},(y^{-1})^{\dagger}\big);
      \end{aligned}
    \end{displaymath}
  \item if \( x \in X^{-1} \) and \( y \in X \), then by the two-sided invariance of the metric \( d \)
    \begin{displaymath}
      \begin{aligned}
        d(x,y) =&\ d(x,e) + d(e,y) = d(x^{-1},e) + d(e,y) = \\
        &\ d\big((x^{-1})^{\dagger},e \big) + d(e,y) \ge d\big((x^{-1})^{\dagger},y \big).
      \end{aligned}
    \end{displaymath}
  \end{itemize}
  Thus \( \dist(fh,e) \ge \dist(fh^{\sharp},e) \).
\end{proof}
\section{Metrics on amalgams}
\label{sec:metrics-amalgams}

\subsection{Basic set up}
\label{sec:basic-set-up}

Let \( (G_{\lambda},d_{\lambda}) \) be a family of \tsi groups, \( A < G_{\lambda}\) be a common closed subgroup,
\( G_{\lambda_{1}} \cap G_{\lambda_{2}} = A \), and assume additionally that the metrics \( \{d_{\lambda}\} \) agree on
\( A \):
\[ \quad d_{\lambda_{1}}(a_{1},a_{2}) = d_{\lambda_{2}}(a_{1},a_{2}) \quad \textrm{for all \( a_{1}, a_{2} \in A \) and
  all \( \lambda_{1}, \lambda_{2} \in \Lambda \)}.  \]
Our main goal is to define a metric on the free product of \( G_{\lambda} \) with amalgamation over \( A \) that extends
all the metrics \( d_{\lambda} \).  It will be an analog of the Graev metrics on free groups.

First of all, let \( d \) denote the amalgam metric on \( G = \bigcup_{\lambda} G_{\lambda} \) given by
\begin{displaymath}
  d(f_{1},f_{2}) = 
  \begin{cases}
    d_{\lambda}(f_{1},f_{2}) & \textrm{if \( f_{1}, f_{2} \in G_{\lambda} \) for some \( \lambda \in \Lambda \);}\\
    \inf\limits_{a \in A} \big\{ d_{\lambda_{1}}(f_{1},a) + d_{\lambda_{2}}(a,f_{2}) \big\} & \textrm{if
      \( f_{1} \in G_{\lambda_{1}} \), \( f_{2} \in G_{\lambda_{2}} \) for \( \lambda_{1} \ne \lambda_{2} \)}.
  \end{cases}
\end{displaymath}
If \( \alpha_{1} \) and \( \alpha_{2} \) are two words in \( \words \) of the same length \( n \), then the value
\( \rho(\alpha_{1}, \alpha_{2}) \) is defined by
\[ \rho(\alpha_{1},\alpha_{2}) = \sum_{i=1}^{n} d\big(\alpha_{1}(i),\alpha_{2}(i)\big).  \]
Finally, for elements \( f_{1}, f_{2} \in \amalgam \) the Graev metric on the free product with amalgamation
\( \amalgam \) is defined as
\[ \dist(f_{1},f_{2}) = \inf \big\{\rho(\alpha_{1},\alpha_{2}) : |\alpha_{1}| = |\alpha_{2}| \textrm{ and } \hat{\alpha}_{i} =
f_{i}\big\}.  \]

\begin{lemma}
  \label{sec:metrics-amalgams-tsi-pseudo-metric}
  \( \dist \) is a \tsi pseudo-metric.
\end{lemma}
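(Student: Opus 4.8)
The claim is that $\dist$ defined by
\[
\dist(f_1,f_2)=\inf\{\rho(\alpha_1,\alpha_2): |\alpha_1|=|\alpha_2|,\ \hat\alpha_i=f_i\}
\]
is a two-sided invariant pseudo-metric on $\amalgam$. Finiteness is clear since any two elements have reduced forms, which can be padded with letters $e$ to reach a common length. Symmetry is immediate from the symmetry of $\rho$ (which follows from the symmetry of $d$). So the real content is: (a) the triangle inequality, and (b) two-sided invariance. I would structure the proof around these two points, and I would actually get both at once by invoking Proposition \ref{sec:tsi-groups-tsi-criterion}(i): it suffices to show $\dist$ is \emph{left} invariant and satisfies the submultiplicative inequality $\dist(g_1g_2,f_1f_2)\le\dist(g_1,f_1)+\dist(g_2,f_2)$, since that proposition then yields two-sided invariance, and two-sided invariance together with the submultiplicative inequality (applied with $g_2=f_2$, or directly) gives the triangle inequality. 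Actually the cleanest route is: prove the submultiplicative inequality first, deduce the triangle inequality from it by taking $g_1=f$, $g_2=g$, $f_1=f$, $f_2=h$ composed appropriately — more precisely, $\dist(f_1,f_3)\le\dist(f_1,f_2)+\dist(f_2,f_3)$ needs invariance too, so I will do invariance and submultiplicativity, then triangle.

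\textbf{Step 1: Left invariance (and right invariance).} Fix $g\in\amalgam$. Given a reduced form (or any word) $\alpha$ with $\hat\alpha=f$, pick a word $\gamma$ with $\hat\gamma=g$; then $\gamma\concat\alpha$ evaluates to $gf$. The issue is that prepending $\gamma$ to $\alpha_1$ and $\alpha_2$ changes $\rho$ by $\rho(\gamma,\gamma)$, which need not be zero because a single element of $G$ can be written as a word in many ways with positive $\rho$-self-distance — wait, no: if we prepend the \emph{same} word $\gamma$ to both, then $\rho(\gamma\concat\alpha_1,\gamma\concat\alpha_2)=\rho(\gamma,\gamma)+\rho(\alpha_1,\alpha_2)=\rho(\alpha_1,\alpha_2)$ since $d(\gamma(i),\gamma(i))=0$. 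Hence $\dist(gf_1,gf_2)\le\dist(f_1,f_2)$, and applying this with $g^{-1}$ gives equality. Right invariance is symmetric (append $\gamma$ instead of prepend). This step is routine.

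\textbf{Step 2: The submultiplicative inequality.} Given $\varepsilon>0$, choose words $\alpha_1,\alpha_2$ of common length $n$ with $\hat\alpha_i=g_i$ and $\rho(\alpha_1,\alpha_2)<\dist(g_1,g_2)+\varepsilon$, and words $\beta_1,\beta_2$ of common length $m$ with $\hat\beta_i=f_i$ and $\rho(\beta_1,\beta_2)<\dist(f_1,f_2)+\varepsilon$. Then $\alpha_i\concat\beta_i$ has length $n+m$, evaluates to $g_if_i$, and
\[
\rho(\alpha_1\concat\beta_1,\alpha_2\concat\beta_2)=\rho(\alpha_1,\alpha_2)+\rho(\beta_1,\beta_2)<\dist(g_1,g_2)+\dist(f_1,f_2)+2\varepsilon.
\]
Taking the infimum over the defining family and letting $\varepsilon\to0$ gives $\dist(g_1g_2,f_1f_2)\le\dist(g_1,g_2)+\dist(f_1,f_2)$. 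This is the only slightly substantive computation, and it is still just concatenation plus additivity of $\rho$.

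\textbf{Step 3: Triangle inequality and conclusion.} From Step 2 with $g_1=f_1$, $g_2=f_2$, $f_1\rightsquigarrow f_2$, $f_2\rightsquigarrow f_3$: apply the submultiplicative inequality to the pairs $(f_1,f_2)$ and $(f_1^{-1}f_2,\ f_1^{-1}f_3)$ — no, simpler: $\dist(f_1,f_3)=\dist(f_1\cdot e,\ f_2\cdot(f_2^{-1}f_3))\le \dist(f_1,f_2)+\dist(e,f_2^{-1}f_3)=\dist(f_1,f_2)+\dist(f_2,f_3)$, the last equality by left invariance from Step 1. Finally, non-negativity is clear and $\dist(f,f)=0$ by using the same word twice; so $\dist$ is a two-sided invariant pseudo-metric. \textbf{Expected main obstacle:} there is essentially none here — every step reduces to the additivity of $\rho$ under concatenation and the invariance of the base metric $d$ on $G$; the genuinely hard part of the whole program (showing $\dist$ separates points, i.e. is a metric, not merely a pseudo-metric) is deferred to later sections and uses the structure theory of trivial words developed in Section \ref{sec:triv-words-amalg}. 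I would keep this proof to a few lines accordingly.
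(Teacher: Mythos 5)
Your proof is correct and is essentially the paper's proof: both establish left and right invariance by prepending/appending the same word $\gamma$ to both coordinates (which leaves $\rho$ unchanged since $d(\gamma(i),\gamma(i))=0$), and both obtain the triangle inequality from the additivity of $\rho$ under concatenation combined with invariance (the paper reduces the triangle inequality to $\dist(f_1f_2,e)\le\dist(f_1,e)+\dist(f_2,e)$ and proves that directly, while you prove the general submultiplicative inequality first and then specialize — the same computation either way).
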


\begin{proof}
  It is obvious that \( \dist \) is non-negative, symmetric and attains value zero on the diagonal.  We show that it is
  two-sided invariant.  Let \( f_{1}, f_{2}, h \in \amalgam \) be given. Let \( \gamma \in \words \) be any word such
  that \( \hat{\gamma} = h \).  For any \( \alpha_{1}, \alpha_{2} \in \words \) that have the same length and are such
  that \( \hat{\alpha}_{i} = f_{i} \) we get
  \[ \rho(\alpha_{1}, \alpha_{2}) = \rho(\gamma \concat \alpha_{1}, \gamma \concat \alpha_{2}), \]
  and therefore \( \dist(hf_{1}, hf_{2}) \le \dist(f_{1},f_{2}) \).  But similarly, if \( \beta_{1}, \beta_{2} \) are of
  the same length and \( \hat{\beta}_{i} = hf_{i} \), then
  \[ \rho(\beta_{1}, \beta_{2}) = \rho(\gamma^{-1} {}\concat \beta_{1}, \gamma^{-1} {}\concat \beta_{2}), \]
  where \( \gamma^{-1} = \gamma(|\gamma|)^{-1} \concat \ldots \concat \gamma(1)^{-1} \).  Hence
  \( \dist(f_{1}, f_{2}) = \dist(hf_{1},hf_{2}) \), i.e., \( \dist \) is left invariant.  Right invariance is shown
  similarly.

  We also need to check the triangle inequality.  By the two-sided invariance triangle inequality is equivalent to
  \[ \dist(f_{1}f_{2},e) \le \dist(f_{1},e) + \dist(f_{2},e) \quad \textrm{for all \( f_{1}, f_{2} \in \amalgam \)}.  \]
  The latter follows immediately from the observation that if \( \hat{\alpha}_{i} = f_{i} \),
  \( |\alpha_{i}| = |\zeta_{i}| \), and \( \hat{\zeta}_{1} = e = \hat{\zeta_{2}} \), then
  \( \widehat{\alpha_{1} {}\concat \alpha_{2}} = f_{1}f_{2} \), \( \widehat{\zeta_{1} \concat \zeta_{2}} = e \), and
  also
  \[ \rho(\alpha_{1} \concat \alpha_{2}, \zeta_{1} \concat \zeta_{2}) = \rho(\alpha_{1}, \zeta_{1}) + \rho(\alpha_{2},
  \zeta_{2}). \qedhere \]
\end{proof}

We will show eventually that, in fact, \( \dist \) is not only a pseudo-metric, but a genuine metric.  This will take us
a while though.

It will be convenient for us to talk about norms rather than about metrics.  For this we set \( \norm(f) = \dist(f,e)
\).  Then \( \norm \) is a \tsi pseudo-norm on \( G \) (again, it will turn out to be a norm).  Note that \( \dist \) is
a metric if and only if \( \norm \) is a norm, i. e., if and only if \( \norm(f) = 0 \) implies \( f = e \).

\subsection{Reductions}
\label{sec:reductions}

We start a series of reductions and will gradually simplify the structure of \( \alpha \) in the definition of the
pseudo-norm \( \norm \).

Using the notion of an \( f \)-pair the definition of \( \norm \) can be rewritten as
\[ \norm(f) = \inf \big\{\rho(\alpha,\zeta) : \textrm{\( (\alpha,\zeta) \) is an \( f \)-pair}\}.  \]
\begin{lemma}
  \label{sec:metrics-amalgams-congruent-reduction}
  For all \( f \in \amalgam \)
  \[ \norm(f) = \inf \big\{\rho(\alpha,\zeta) : (\alpha,\zeta) \textrm{ is a \multipliable \( f \)-pair}\}.  \]
\end{lemma}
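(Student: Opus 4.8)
The plan is to prove the nontrivial inequality only: since every \multipliable $f$-pair is in particular an $f$-pair, the infimum on the right is automatically $\ge \norm(f)$, so it suffices to show that for every $f$-pair $(\alpha,\zeta)$ and every $\varepsilon > 0$ there is a \multipliable $f$-pair $(\alpha',\zeta')$ with $\rho(\alpha',\zeta') < \rho(\alpha,\zeta) + \varepsilon$. Letting $\varepsilon \to 0$ and taking the infimum over all $f$-pairs then yields $\inf\{\rho(\alpha,\zeta) : (\alpha,\zeta)\text{ a \multipliable }f\text{-pair}\} \le \norm(f)$.

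First I would isolate the \emph{bad} coordinates of $(\alpha,\zeta)$, namely those $i$ with $\alpha(i) \not\cong \zeta(i)$. At such an $i$ neither letter can lie in $A$, since any element of $A$ is \multipliable with everything; hence $\alpha(i) \in G_{\lambda_{1}}\setminus A$ and $\zeta(i) \in G_{\lambda_{2}}\setminus A$ for some $\lambda_{1}\ne\lambda_{2}$ (both depending on $i$), and by definition of the amalgam metric $d(\alpha(i),\zeta(i)) = \inf_{a\in A}\big(d_{\lambda_{1}}(\alpha(i),a) + d_{\lambda_{2}}(a,\zeta(i))\big)$. If there are no bad coordinates we are done with $(\alpha',\zeta')=(\alpha,\zeta)$; otherwise, writing $N$ for their number, I choose for each bad $i$ an element $a_{i}\in A$ with $d_{\lambda_{1}}(\alpha(i),a_{i}) + d_{\lambda_{2}}(a_{i},\zeta(i)) < d(\alpha(i),\zeta(i)) + \varepsilon/N$.

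Next I would perform a coordinate split at each bad $i$, the same split in both words so that the lengths stay equal: in $\alpha$ replace the single letter $\alpha(i)$ by the two-letter block $\alpha(i) \concat e$, and in $\zeta$ replace $\zeta(i)$ by $a_{i} \concat a_{i}^{-1}\zeta(i)$. Call the resulting words $\alpha'$ and $\zeta'$. One checks routinely that $|\alpha'| = |\zeta'|$; that the products at the modified coordinates are unchanged ($\alpha(i)\cdot e = \alpha(i)$ and $a_{i}\cdot a_{i}^{-1}\zeta(i) = \zeta(i)$), so $\widehat{\alpha'} = f$ and $\widehat{\zeta'} = e$, i.e. $(\alpha',\zeta')$ is again an $f$-pair; and that $(\alpha',\zeta')$ is \multipliable, because at the first new coordinate $\alpha(i),a_{i}\in G_{\lambda_{1}}$, at the second new coordinate $e, a_{i}^{-1}\zeta(i)\in G_{\lambda_{2}}$, and the untouched coordinates were already \multipliable. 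Finally, using two-sided invariance of $d_{\lambda_{2}}$ to rewrite $d(e,a_{i}^{-1}\zeta(i)) = d_{\lambda_{2}}(a_{i},\zeta(i))$, the contribution of the two new coordinates to $\rho(\alpha',\zeta')$ equals exactly $d_{\lambda_{1}}(\alpha(i),a_{i}) + d_{\lambda_{2}}(a_{i},\zeta(i))$, so $\rho(\alpha',\zeta')$ exceeds $\rho(\alpha,\zeta)$ by $\sum_{\text{bad }i}\big(d_{\lambda_{1}}(\alpha(i),a_{i}) + d_{\lambda_{2}}(a_{i},\zeta(i)) - d(\alpha(i),\zeta(i))\big) < N\cdot(\varepsilon/N) = \varepsilon$, as desired.

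The only mildly delicate point is the choice of the \emph{asymmetric} split: the naive symmetric choices (splitting $\alpha(i)$ as $\alpha(i)a^{-1}\concat a$ and $\zeta(i)$ as $a\concat a^{-1}\zeta(i)$, say) leave a stray term like $d_{\lambda_{1}}(\alpha(i),a^{2})$ instead of $d_{\lambda_{1}}(\alpha(i),a)$; inserting the identity on the $\alpha$-side and the whole block $a_{i}\concat a_{i}^{-1}\zeta(i)$ on the $\zeta$-side is what makes the bookkeeping come out exactly right. Once that split is found, everything else is a direct verification.
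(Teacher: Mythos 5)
Your proposal is correct and follows essentially the same route as the paper: identify the coordinates where $\alpha(i)\not\cong\zeta(i)$, insert $\alpha(i)\concat e$ on the $\alpha$-side and $a_i\concat a_i^{-1}\zeta(i)$ on the $\zeta$-side with $a_i\in A$ chosen near-optimally via the definition of the amalgam metric, and sum the errors. The only (immaterial) differences are that the paper splits the budget as $\epsilon/n$ over all $n$ coordinates rather than $\varepsilon/N$ over the bad ones, and does not dwell on the verification details you spell out.
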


\begin{proof}
  Fix an \( f \in \amalgam \).  We need to show that for any \( f \)-pair \( (\alpha,\zeta) \) and for any
  \( \epsilon > 0 \) there is a \multipliable \( f \)-pair \( (\beta, \xi) \) such that
  \[ \rho(\beta,\xi) \le \rho(\alpha,\zeta) + \epsilon. \]
  Take an \( f \)-pair \( (\alpha,\zeta) \) and fix an \( \epsilon > 0 \).  Let \( n \) be the length of \( \alpha \).
  For an \( i \in \seg{1}{n} \) we define a pair of words \( \beta_{i}, \xi_{i} \) as follows: if
  \( \alpha(i) \cong \zeta(i) \), then \( \beta_{i} = \alpha(i) \), \( \xi_{i} = \zeta(i) \); if
  \( \alpha(i) \not \cong \zeta(i) \), then \( \beta_{i} = \alpha(i) \concat e \) and
  \( \xi_{i} = a_{i} \concat a_{i}^{-1}\zeta(i) \), where \( a_{i} \in A \) is any element such that
  \[ d\big(\alpha(i),\zeta(i)\big) + \frac{\epsilon}{n} \ge d\big(\alpha(i),a_{i}\big) + d\big(a_{i},\zeta(i)\big), \]
  which exists by the definition of the amalgam metric \( d \).  Then
  \[ \rho(\beta_{i},\xi_{i}) \le \rho\big(\alpha(i),\zeta(i)\big) + \frac{\epsilon}{n} \quad \textrm{for all \( i
    \)}.  \]
  Set \( \beta = \beta_{1} \concat \ldots \concat \beta_{n} \), \( \xi = \xi_{1} \concat \ldots \concat \xi_{n} \).  It
  is now easy to see that \( (\beta,\xi) \) is a \multipliable \( f \)-pair and that indeed
  \[ \rho(\beta,\xi) \le \rho(\alpha,\zeta) + \epsilon. \qedhere \]
\end{proof}

The next lemma follows immediately from the two-sided invariance of the metrics \( d_{\lambda} \).

\begin{lemma}
  \label{sec:metrics-amalgams-transfer-isometry}
  Let \( (\alpha,\zeta) \) be a \multipliable pair of length \( n \), and let \( \{I_{k}\}_{k=1}^{m} \) be a right [left]
  transfer admissible sequence of intervals. If \( (\beta,\xi) \) is the right [left] \( \{I_{k}\}_{k=1}^{m} \)-transfer
  of the pair \( (\alpha, \zeta) \), then
  \[ \rho(\alpha,\zeta) = \rho(\beta,\xi).  \]
\end{lemma}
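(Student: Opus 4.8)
The plan is to verify the equality $\rho(\alpha,\zeta) = \rho(\beta,\xi)$ by unwinding the inductive definition of the transfer and checking that it holds at each elementary step. Since $(\beta,\xi)$ is obtained from $(\alpha,\zeta)$ by a finite chain of single $(a,i)$-transfers (right transfers applied left-to-right along $M(I_1) < M(I_2) < \cdots$, or left transfers applied right-to-left along $m(I_m) < \cdots < m(I_1)$), it suffices to prove the statement when $(\beta,\xi)$ is a single right $(a,i)$-transfer $\transferr{\alpha}{\zeta}{a}{i}$ with $a \in A$, and likewise for a single left transfer; the general case then follows by composing these equalities along the inductive construction in the definition preceding Lemma~\ref{sec:metrics-amalgams-transfer-isometry}.

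First I would treat the single right-transfer case. Writing $(\gamma,\delta) = \transferr{\alpha}{\zeta}{a}{i}$, the two pairs agree in every coordinate $j \notin \{i, i+1\}$, so $d(\gamma(j),\delta(j)) = d(\alpha(j),\zeta(j))$ there, and it remains to compare the contributions of coordinates $i$ and $i+1$. In coordinate $i$ we have $\gamma(i) = \alpha(i)a^{-1}$ and $\delta(i) = \zeta(i)a^{-1}$; since $\alpha(i) \cong \zeta(i)$ (because the pair is \multipliable) and $a \in A \subseteq G_\lambda$ for the relevant $\lambda$, both $\gamma(i)$ and $\delta(i)$ lie in the same $G_\lambda$, and right invariance of $d_\lambda$ gives $d(\alpha(i)a^{-1}, \zeta(i)a^{-1}) = d_\lambda(\alpha(i)a^{-1}, \zeta(i)a^{-1}) = d_\lambda(\alpha(i),\zeta(i)) = d(\alpha(i),\zeta(i))$. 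In coordinate $i+1$ we similarly have $\gamma(i+1) = a\alpha(i+1)$, $\delta(i+1) = a\zeta(i+1)$ lying in a common $G_\mu$, and left invariance gives $d(a\alpha(i+1), a\zeta(i+1)) = d(\alpha(i+1),\zeta(i+1))$. Summing over all coordinates yields $\rho(\gamma,\delta) = \rho(\alpha,\zeta)$. The left-transfer case is entirely symmetric: there $\gamma(i) = a^{-1}\alpha(i)$, $\delta(i) = a^{-1}\zeta(i)$ (use left invariance) and $\gamma(i-1) = \alpha(i-1)a$, $\delta(i-1) = \zeta(i-1)a$ (use right invariance).

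The one point requiring a little care — and the only potential obstacle — is that each elementary transfer in the chain uses an element $a = \hat{\xi}_{k}[I_{k+1}]$ (resp. $\hat{\xi}_k[I_{m-k}]$) that was already shown to lie in $A$ in the discussion establishing well-definedness of the transfer, so the invariance arguments above apply verbatim at every step; moreover, by Lemma~\ref{sec:triv-words-amalg-transfer-preserves}\eqref{lem:transfer-preserves-item:congruent} each intermediate pair $(\beta_k,\xi_k)$ is again \multipliable, so the hypothesis $\alpha(i) \cong \zeta(i)$ needed in coordinate $i$ persists through the induction. Chaining the equalities $\rho(\beta_k,\xi_k) = \rho(\beta_{k+1},\xi_{k+1})$ for $k = 0, \ldots, m-1$ then gives $\rho(\alpha,\zeta) = \rho(\beta_m,\xi_m) = \rho(\beta,\xi)$, as claimed. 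This is precisely the ``immediate consequence of two-sided invariance'' advertised in the sentence introducing the lemma.
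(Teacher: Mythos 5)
Your proposal is correct and is essentially the paper's argument spelled out in full: the paper simply remarks that the lemma ``follows immediately from the two-sided invariance of the metrics $d_\lambda$,'' and you have unpacked exactly that observation, reducing to a single elementary $(a,i)$-transfer, applying right invariance of $d_\lambda$ in coordinate $i$ and left invariance in coordinate $i+1$ (or vice versa for the left transfer), and chaining the resulting equalities along the inductive definition of the $\{I_k\}$-transfer while noting that each intermediate pair stays \multipliable and each transferred element stays in $A$.
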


\begin{lemma}
  \label{sec:metrics-amalgams-slim-reduction}
  Let \( (\alpha,\zeta) \) be a \multipliable \( f \)-pair, and let \( T_{\zeta} \) be an evaluation tree for \( \zeta \).
  There is a slim \( f \)-pair \( (\beta,\xi) \) such that
  \begin{enumerate}[(i)]
  \item\label{lem:slim-reduction:item-same-length} \( |\alpha| = |\beta| \);
  \item\label{lem:slim-reduction:item-same-rho} \( \rho(\alpha,\zeta) = \rho(\beta,\xi) \);
  \item\label{lem:slim-reduction:item-same-eval-tree} \( T_{\zeta} \) is a slim evaluation tree for \( \xi \);
  \item\label{lem:slim-reduction:item-still-balanced} if \( T_{\zeta} \) is a balanced evaluation tree for \( \zeta \),
    then it is also balanced as an evaluation tree for \( \xi \).
  \end{enumerate}
\end{lemma}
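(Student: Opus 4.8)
The strategy is to work inductively on the evaluation tree $T_\zeta$, peeling off nodes from the leaves toward the root and at each step replacing a subword of $\zeta$ (and correspondingly of $\alpha$) by something whose $\widehat{\ }$-value is $e$ rather than merely an element of $A$, while keeping $\rho$ unchanged and the tree structure intact. Concretely, suppose $t\in T_\zeta$ is a leaf. Then $R_t=I_t$ and, by item \eqref{lem:structure-item:congruence} of the definition of an evaluation tree, all letters $\zeta(i)$ for $i\in I_t$ are pairwise \multipliable; say they all lie in $G_\lambda$. We know $\hat\zeta[I_t]=a\in A$. I want to push the discrepancy $a$ out of this block. The idea is to set $a=\hat\zeta[I_t]$ and perform a transfer: using a right transfer of the singleton interval family $\{I_t\}$ (or a left transfer, depending on whether $M(I_t)<n$ or $m(I_t)>1$ — since $I_t\subsetneq\seg{1}{n}$ one of these holds, and if $t=\emptyset$ so that $I_t=\seg{1}{n}$ and already $\hat\zeta[I_t]=e$, there is nothing to do for that node) we move $a$ into the neighboring letter. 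By Lemma~\ref{sec:triv-words-amalg-transfer-preserves} the transfer preserves $|\xi|$, keeps $(\beta,\xi)$ a \multipliable $f$-pair, keeps $T_\zeta$ a [balanced] evaluation tree for the new $\xi$, and makes $\hat\xi[I_t]=e$; by Lemma~\ref{sec:metrics-amalgams-transfer-isometry} it preserves $\rho$. Moreover transfers only alter letters at the two boundary positions $M(I_t),M(I_t)+1$ (or $m(I_t),m(I_t)-1$), so they leave the $\widehat{\ }$-values of all the other intervals $I_s$ undisturbed.

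The subtlety is that transferring $a$ into the neighboring letter changes that letter, and if the neighbor belongs to a different block $R_s$ this could in principle interfere with the hypothesis $\hat\xi[I_s]\in A$ we are maintaining for the ancestors. This is exactly what the \emph{balanced} conditions are designed to control, but even without them one checks the following bookkeeping fact: after the transfer, the value $\hat\xi[I_s]$ for $s=$ the parent of $t$ is unchanged, because the interval $I_s$ (being an ancestor) contains $I_t$ together with both its immediate neighbors in $\seg{1}{|\xi|}$ — indeed by item \eqref{lem:structure-item:strict-inclusion} we have $m(I_s)<m(I_t)\le M(I_t)<M(I_s)$ — so the element $a$ is merely redistributed inside $I_s$ and $\hat\xi[I_s]$ is the same product. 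For intervals $I_s$ disjoint from $I_t$, item \eqref{lem:transfer-preserves-item:indices-under-change} shows no letter of $I_s$ is touched. So the invariant is globally preserved. I would then iterate: once a leaf $t$ has $\hat\xi[I_t]=e$, delete $t$ from the tree and recurse on the pruned tree (which is again a [balanced] evaluation tree for the same $\xi$ in the obvious sense), being careful that the already-fixed equalities $\hat\xi[I_{t'}]=e$ at previously processed nodes are not spoiled — which holds for the same redistribution reason, since every node below a processed node had its interval fixed by a transfer whose boundary lies inside the processed node's interval, hence inside the current node's interval too.

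Carrying the induction to the root, I end with an $f$-pair $(\beta,\xi)$ with $|\beta|=|\alpha|$, $\rho(\beta,\xi)=\rho(\alpha,\zeta)$, and $\hat\xi[I_t]=e$ for every $t\in T_\zeta$; in particular $\hat\xi[I_\emptyset]=\hat\xi=e$, so $\xi$ is a slim trivial word with slim evaluation tree $T_\zeta$, and $(\beta,\xi)$ is by definition a slim $f$-pair. This gives \eqref{lem:slim-reduction:item-same-length}, \eqref{lem:slim-reduction:item-same-rho}, and \eqref{lem:slim-reduction:item-same-eval-tree}. For \eqref{lem:slim-reduction:item-still-balanced}, since every individual transfer step preserves the balanced conditions by Lemma~\ref{sec:triv-words-amalg-transfer-preserves}\eqref{lem:transfer-preserves-item:same-tree} (the "[balanced]" clause) — the conditions \eqref{lem:structure-item:non-trivial-interior} and \eqref{lem:structure-item:non-trivial-boundary} refer only to which positions carry a nonzero label, i.e.\ to membership $\xi(i)\in A$, which transfers do not change since $a\in A$ — balancedness survives to the end. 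The main obstacle I anticipate is the careful ordering of the induction and the verification that processing one node never un-fixes an already-processed node; this is where one must lean on item \eqref{lem:structure-item:strict-inclusion} (nested intervals are strictly nested at the endpoints, so a child's transfer operates strictly inside the parent) rather than on anything about $\rho$ or the metric itself.
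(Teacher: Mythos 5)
There is a genuine gap in your bookkeeping. The key incorrect step is the claim ``For intervals \( I_s \) disjoint from \( I_t \), item \eqref{lem:transfer-preserves-item:indices-under-change} shows no letter of \( I_s \) is touched.'' Item \eqref{lem:transfer-preserves-item:indices-under-change} says that the right \( \{I_t\} \)-transfer changes letters only at positions \( M(I_t) \) and \( M(I_t)+1 \); but \( M(I_t)+1 \) lies \emph{outside} \( I_t \), and it can perfectly well be the leftmost position of a disjoint interval \( I_s \) with \( m(I_s) = M(I_t)+1 \). If that \( I_s \) has already been processed (so \( \hat\xi[I_s] = e \)), pushing the residue \( a \) into \( m(I_s) \) changes \( \hat\xi[I_s] \) to \( a \ne e \) and undoes your work. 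Your redistribution argument is sound for \emph{ancestors} of \( t \) — they contain both \( M(I_t) \) and \( M(I_t)+1 \), so the product over their interval is unchanged — but it simply does not apply to incomparable nodes whose interval begins at \( M(I_t)+1 \). Since you also leave the processing order and the choice of right-vs-left transfer unspecified, nothing in your argument rules this interference out; with an adversarial order (e.g.\ process a right sibling before its left neighbor, or use a left transfer into an already-fixed right block) the invariant genuinely fails.

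This is precisely the difficulty the paper's proof is built to avoid, and it is why the proof is more elaborate than a naive leaf-by-leaf sweep. The paper processes all nodes at a fixed height simultaneously using a chained \( \{I_{t_r}\} \)-transfer, so that residues flowing into a not-yet-cleared interval at the same level are automatically reabsorbed; and it runs a trichotomy (Cases 1 and 2) to decide whether the chain of residues should exit to the right, to the left, or — when the level's intervals tile all of \( \seg{1}{n} \) — should flow rightward into the last interval, where triviality of the entire word forces the final residue to cancel. Your one-leaf-at-a-time scheme \emph{can} be repaired: observe that item \eqref{lem:structure-item:strict-inclusion} forces \( m(I_t) > 1 \) and \( M(I_t) < n \) for every non-root node, so right transfers always exist; if you process nodes in order of decreasing height and, among nodes of equal height, from left to right, always using right transfers, then the residue from each step lands either in an ancestor's \( R_s \) (harmless) or at the left endpoint of a not-yet-processed interval at the same or smaller height (one checks, via item \eqref{lem:structure-item:intervals-order} and \eqref{lem:structure-item:strict-inclusion}, that it cannot land in an interval at greater height). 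But this ordering and the correct invariant must be stated and proved; the argument as written does not close.
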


\begin{proof}
  Let \( (\alpha,\zeta) \) be a \multipliable \( f \)-pair, let \( T_{\zeta} \) be an evaluation tree for \( \zeta \), and
  let \( H_{T_{\zeta}} \) denote the height of the tree \( T_{\zeta} \).  We do an inductive construction of words
  \( (\beta_{k},\xi_{k}) \) for \( k = 0, \ldots, H_{T_{\zeta}} \) and claim that
  \( (\beta_{H_{T_{\zeta}}},\xi_{H_{T_{\zeta}}}) \) is as desired.  We start by setting
  \( (\beta_{0},\xi_{0}) = (\alpha,\zeta) \).

  Suppose the pair \( (\beta_{k},\xi_{k}) \) has been constructed.  Let \( t_{1}, \ldots, t_{m} \in T \) be all the
  nodes at the level \( H_{T_{\zeta}}-k \) listed in the increasing order: \( M(I_{t_{i}}) < m(I_{t_{i+1}}) \).  We
  define a relation \( \sim \) on \( \seg{1}{m} \) by setting \( k \sim l \) if for any
  \( i \in \seg{m(I_{t_{k}}\cup I_{t_{l}})}{M(I_{t_{k}} \cup I_{t_{l}})} \) there is \( j \in \seg{1}{m} \) such that
  \( i \in I_{t_{j}} \).  It is straightforward to check that \( \sim \) is an equivalence relation on \( \seg{1}{m} \).
  Note that any \( \sim \)-equivalence class is a sub-interval of \( \seg{1}{m} \).  Let \( J_{1}, \ldots, J_{p} \) be
  the increasing list of all the distinct equivalence classes, \( J_{1} < J_{2} < \ldots < J_{p} \).
  
  \setcounter{case}{0}
  \begin{case}
    \label{sec:reductions-case-1}
    \( p \ge 2 \).  Set \( (\gamma,\omega) \) to be the right \( \{I_{t_{r}}\}_{r=1}^{M(J_{p-1})} \)-transfer of
    \( (\beta_{k}, \xi_{k}) \), and define \( (\beta_{k+1}, \xi_{k+1}) \) to be the left
    \( \{I_{t_{r}}\}_{r=m(J_{p})}^{m} \)-transfer of \( (\gamma,\omega) \).
  \end{case}

  \begin{case}
    \label{sec:reductions-case-2}
    \( p =1 \).  Suppose there is only one equivalence class.  We have a trichotomy:
    \begin{itemize}
    \item if \( M(I_{M(J_{1})}) < n \), then set
      \[ (\beta_{k+1},\xi_{k+1}) = \textrm{the right } \{I_{t_{r}}\}_{r=1}^{m} \textrm{-transfer of }
      (\beta_{k},\xi_{k}); \]
    \item if \( M(I_{M(J_{1})}) = n \), but \( m(I_{m(J_{1})}) > 1 \), then set
      \[ (\beta_{k+1},\xi_{k+1}) = \textrm{the left } \{I_{t_{r}}\}_{r=1}^{m}\textrm{-transfer of }
      (\beta_{k},\xi_{k}); \]
    \item if \(m(I_{m(J_{1})}) = 1 \) and \( M(I_{M(J_{1})}) = n \), then set
      \[ (\beta_{k+1},\xi_{k+1}) = \textrm{the right } \{I_{t_{r}}\}_{r=1}^{m-1} \textrm{-transfer of }
      (\beta_{k},\xi_{k}).  \]
      Notice the difference from the first case: the last element of the transfer sequence is \( r = m-1 \), not \( m
      \).
    \end{itemize}
  \end{case}
 
  Denote \( (\beta_{H_{T_{\zeta}}}, \xi_{H_{T_{\zeta}}}) \) simply by \( (\beta,\xi) \).  We claim that this pair
  satisfies all the requirements.  Since \( (\beta,\xi) \) is obtained by the sequence of transfers, items
  \eqref{lem:slim-reduction:item-same-length} and \eqref{lem:slim-reduction:item-still-balanced} follow from Lemma
  \ref{sec:triv-words-amalg-transfer-preserves}.  Item \eqref{lem:slim-reduction:item-same-rho} is a consequence of
  Lemma \ref{sec:metrics-amalgams-transfer-isometry}.

  It remains to check that \( \hat{\xi}[I_{t}] = e \) for all \( t \in T_{\zeta} \).  By item
  \eqref{lem:transfer-preserves-item:triviality-of-transfer-intervals} of Lemma
  \ref{sec:triv-words-amalg-transfer-preserves} \( \hat{\xi}_{k+1}[I_{t}] = e \) for all \( t \in T_{\zeta} \) such that
  \( H_{T_{\zeta}}(t) = H_{T_{\zeta}} - k \).  Therefore it is enough to show that
  \( \hat{\xi}_{k+1}[I_{t}] = \hat{\xi}_{k}[I_{t}] \) for all \( t \in T_{\zeta} \) such that
  \( H_{T_{\zeta}}(t) > H_{T_{\zeta}} - k \).  This follows from item
  \eqref{lem:transfer-preserves-item:indices-under-change} of Lemma \ref{sec:triv-words-amalg-transfer-preserves} and
  item \eqref{lem:structure-item:strict-inclusion} of the definition of the evaluation tree.
\end{proof}

\begin{lemma}
  \label{sec:metrics-amalgams-simple-reduction}
  Let \( (\alpha,\zeta) \) be a slim \( f \)-pair, and let \( T_{\zeta} \) be a slim balanced evaluation tree for
  \( \zeta \).  There is a simple \( f \)-pair \( (\beta,\xi) \) such that
  \begin{enumerate}[(i)]
  \item\label{lem:simple-reduction:item-same-length} \( |\alpha| = |\beta| \);
  \item\label{lem:simple-reduction:item-same-rho} \( \rho(\alpha,\zeta) = \rho(\beta,\xi) \);
  \item\label{lem:simple-reduction:item-same-eval-tree} \( T_{\zeta} \) is a slim balanced evaluation tree for \( \xi
    \).
  \end{enumerate}
\end{lemma}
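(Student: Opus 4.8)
The plan is to make the trivial word \( \zeta \) ``simple'' by processing the slim balanced evaluation tree \( T_{\zeta} \) one node at a time, bottom-up, replacing the letters of \( \zeta \) in each region \( R_{t} \) by letters drawn from \( \alpha \) via the symmetrization operation of Definition \ref{sec:metrics-amalgams-symmetrization-for-simple}. The key point is that \( (\alpha,\zeta) \) is \multipliable, so for each \( i \in R_{t} \) we have \( \alpha(i) \cong \zeta(i) \), and by item \eqref{lem:structure-item:congruence} of the evaluation tree all letters of \( \zeta \) (hence of \( \alpha \)) indexed by \( R_{t} \) are pairwise \multipliable; thus they all lie in a single group \( G_{\lambda_{t}} \). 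This means any list \( \{i_{k}\} \subseteq R_{t} \) consisting of the indices where \( \zeta(i) \ne e \) is automatically symmetrization admissible.

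First I would handle a preliminary reduction: we may assume that whenever \( \zeta(i) \in A \) but \( \zeta(i) \ne e \), the index \( i \) still lies in some \( R_{t} \), and we want to zero it out. The strategy at node \( t \) is: let \( \{i_{k}\}_{k=1}^{m} \) enumerate all \( i \in R_{t} \) with \( \zeta(i) \ne e \); pick \( j_{0} = i_{k_{0}} \) to be any one of them where \( \alpha(i) \notin A \) if such exists (using balancedness, item \eqref{lem:structure-item:non-trivial-interior}, to know such an index exists in each maximal subinterval of \( R_{t} \) — actually we just need one in \( R_{t} \)), and apply \( \symmet{\alpha}{\zeta}{j_{0}}{\{i_{k}\}_{k=1}^{m}} \). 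After this, on \( R_{t} \) the new word \( \xi \) agrees with \( \alpha \) everywhere except at the single index \( j_{0} \), where it takes the value \( \alpha(i_{k_{0}-1})^{-1}\cdots\alpha(i_{1})^{-1}\alpha(i_{m})^{-1}\cdots\alpha(i_{k_{0}+1})^{-1} \). Crucially, since \( \hat{\zeta}[I_{t}] = e \) (slimness) and \( \hat{\zeta}[I_{s}] = e \) for \( s \prec t \), and the new letters on \( R_{t} \) are exactly \( \alpha(i_{1}),\ldots,\alpha(i_{m}) \) rearranged into a trivial product, we get \( \hat{\xi}[I_{t}] = e \) preserved, so \( T_{\zeta} \) remains a slim evaluation tree by Lemma \ref{sec:metrics-amalgams-symmetrization-properties}. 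Iterating over all nodes (the \( R_{t} \) partition \( \seg{1}{n} \), and symmetrization at \( t \) only touches indices in \( R_{t} \)) yields a word \( \xi \) in which, on each \( R_{t} \), all letters but possibly one come from \( \alpha \); the remaining letters need to be analyzed to see they lie outside \( A \) unless equal to \( e \).

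The main obstacle — and where balancedness earns its keep — is verifying item \( \zeta(i) \in A \implies \zeta(i) = e \) for the final word. On \( R_{t} \), the ``honest'' positions \( i_{k} \) with \( k \ne k_{0} \) now carry \( \alpha(i_{k}) \); if \( \alpha(i_{k}) \in A \) this is a problem unless it equals \( e \). Here I would argue that we should have chosen the list \( \{i_{k}\} \) more carefully: take it to enumerate not all non-\( e \) positions of \( \zeta \) but rather all positions where \emph{either} \( \zeta(i) \ne e \) or \( \alpha(i) \notin A \); this is still symmetrization admissible (it's a superset within \( R_{t} \), still inside one group, still lists all non-\( e \) positions of \( \zeta \)), and now every position of \( \xi \) in \( R_{t} \) other than \( j_{0} \) is either \( \alpha(i) \) with \( \alpha(i) \notin A \), or equals \( \zeta(i) = e \). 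For the single exceptional position \( j_{0} \): if its value lies in \( A \), then since \( \xi(j_{0}) \) is a product of inverses of the \( \alpha(i_{k}) \)'s, and \( \hat{\xi}[R_{t}] \) equals the product \( \widehat{\zeta}[R_t] \) computed in \( G_{\lambda_t} \)... one shows by the structure of trivial words in \( G_{\lambda_t} \) (just: a product in a group) that we may further absorb this \( A \)-valued letter into a neighbouring \( R_{s} \) using balancedness item \eqref{lem:structure-item:non-trivial-boundary}, which guarantees the boundary positions \( m(I_{s})-1, M(I_{s})+1 \) lying in \( R_{t} \) have non-trivial label, i.e.\ are among the \( i_{k} \). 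The bookkeeping that makes all the \( \rho \)-values, lengths, and the balanced-evaluation-tree structure survive every step follows from Lemma \ref{sec:metrics-amalgams-symmetrization-properties} applied repeatedly, giving items \eqref{lem:simple-reduction:item-same-length}, \eqref{lem:simple-reduction:item-same-rho}, and \eqref{lem:simple-reduction:item-same-eval-tree}; note \( \rho \) is unchanged because symmetrization never alters \( \alpha \), only \( \zeta \), and the displayed formula for \( \xi(j_0) \) doesn't enter \( \rho(\alpha,\xi) \) beyond the single term \( d(\alpha(j_0),\xi(j_0)) \) which — this needs a short check — is dominated appropriately, or rather one observes the symmetrization is designed so that \( \rho(\alpha,\xi) = \rho(\alpha,\zeta) \) exactly, as asserted in Lemma \ref{sec:metrics-amalgams-symmetrization-properties}.
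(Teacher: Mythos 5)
Your approach is built on the wrong tool, and the result cannot go through as you have set it up.

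The paper's proof reaches the simple pair by applying \emph{transfers} (the right/left \( \{I_{k}\} \)-transfer and Lemma \ref{sec:metrics-amalgams-transfer-isometry}), not symmetrization. Transfers modify \( \alpha \) and \( \zeta \) \emph{simultaneously}: an \( A \)-element of \( \zeta \) is peeled off one position and absorbed into a neighbouring one, and the same \( A \)-element is multiplied into the corresponding letters of \( \alpha \), so that each term \( d(\alpha(i),\zeta(i)) \) is unchanged by two-sided invariance. This is exactly what produces the \emph{equality} \( \rho(\alpha,\zeta) = \rho(\beta,\xi) \) required in item (ii). Symmetrization, by contrast, leaves \( \alpha \) fixed and only gives the one-sided estimate \( \rho(\alpha,\zeta) \ge \rho(\alpha,\xi) \) of Lemma \ref{sec:metrics-amalgams-symmet-rho-decreases}; it is not an isometry on \( \rho \). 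You cite Lemma \ref{sec:metrics-amalgams-symmetrization-properties} for ``\( \rho(\alpha,\xi) = \rho(\alpha,\zeta) \) exactly,'' but that lemma says nothing about \( \rho \) at all --- it only preserves the slim evaluation tree structure. With symmetrization there is no way to recover equality in general, so your argument cannot establish item (ii) as stated.

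There is also a gap in the admissibility claim. You assert that the list of all \( i\in R_{t} \) with \( \zeta(i)\ne e \), or your modified list, is automatically symmetrization admissible because ``all letters of \( \zeta \) (hence of \( \alpha \)) indexed by \( R_{t} \) are pairwise multipliable.'' The parenthetical ``hence of \( \alpha \)'' does not follow. Item \eqref{lem:structure-item:congruence} constrains \( \zeta \) on \( R_{t} \), which (via multipliability) controls \( \alpha(i) \) only at indices where \( \zeta(i)\notin A \). At an index with \( \zeta(i)\in A \), the relation \( \alpha(i)\cong\zeta(i) \) is vacuous, since \( A \) sits in every \( G_{\lambda} \), so \( \alpha(i) \) may lie in a different factor \( G_{\lambda} \). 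Consequently condition (iii) of Definition \ref{sec:metrics-amalgams-symmetrization-for-simple} can fail for your list, and the symmetrization is not even defined. Your final ``absorb the \( A \)-valued pivot into a neighbouring \( R_{s} \) using balancedness'' step is also not an operation that exists in the paper's toolkit and would itself need a transfer-type argument. The intended mechanism is: inside each maximal sub-interval of \( R_{t} \), push the \( A \)-letters of \( \zeta \) rightward into the position \( M(F(J^{t}_{i})) \) (the last non-\( A \) letter) via single-point right transfers, and push the ones to its right leftward via left transfers; item \eqref{lem:structure-item:non-trivial-interior} of balancedness guarantees there is such a non-\( A \) letter to absorb them, and the separate base case \( T_{\zeta}=\{\emptyset\} \) with all letters in \( A \) is handled by transferring everything into \( e \)'s.
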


\begin{proof}
  Let \( (\alpha,\zeta) \) be a slim \( f \)-pair of length \( n \), and let \( T_{\zeta} \) be a slim evaluation tree
  for \( \zeta \).  Sets \( \{R_{t}\}_{t \in T_{\zeta}} \) form a partition of \( \seg{1}{n} \).  For \( t \in T \) let
  \( J^{t}_{1}, \ldots, J^{t}_{q_{t}} \) be the maximal sub-intervals of \( R_{t} \).  Let \( \{i_{k}\}_{k=1}^{m} \) be
  the list of external letters in \( \zeta \).  Set
  \[ F(J^{t}_{i}) = \{i_{k}\} \cap J^{t}_{i}.  \]
  Assume first that \( F(J_{i}^{t}) \ne \emptyset \) for all \( t \in T_{\zeta} \) and all \( i \in \seg{1}{q_{t}} \).
  Note that by item \eqref{lem:structure-item:non-trivial-interior} of the definition of the balanced evaluation tree
  this is the case once \( T \ne \{\emptyset\} \).  Set
  \[ U = \Big( \bigcup_{t \in T_{\zeta}} \bigcup_{i = 1}^{q_{t}} \seg{m(J^{t}_{i})}{M(F(J^{t}_{i}))} \Big) \setminus
  \{i_{k}\}_{k=1}^{m}, \]
  \[ V = \Big( \bigcup_{t \in T_{\zeta}} \bigcup_{i=1}^{q_{t}} \seg{M(F(J^{t}_{i}))}{M(J^{t}_{i})} \Big) \setminus
  \{i_{k}\}_{k=1}^{m}. \]
  Now write \( U = \{u_{k}\}_{k=1}^{p_{u}} \), \( V = \{v_{k}\}_{k=1}^{p_{v}} \) as increasing sequences.  Set
  \( (\gamma,\omega) \) to be the right \( \{u_{k}\} \)-transfer of the pair \( (\alpha,\zeta) \) and \( (\beta,\xi) \)
  to be the left \( \{v_{k}\} \)-transfer of \( (\gamma,\omega) \) (we view \( u_{k} \)'s and \( v_{k} \)'s as intervals
  that consist of a single point).  We claim that the pair \( (\beta,\xi) \) satisfies all the assumptions of the lemma.

  Item \eqref{lem:simple-reduction:item-same-length} follows from item \eqref{lem:transfer-preserves-item:same-length}
  of Lemma \ref{sec:triv-words-amalg-transfer-preserves}.  The latter lemma also implies that \( T_{\zeta} \) is a
  balanced evaluation tree for \( \xi \).  Item \eqref{lem:simple-reduction:item-same-rho} follows from Lemma
  \ref{sec:metrics-amalgams-transfer-isometry}.

  \eqref{lem:simple-reduction:item-same-eval-tree}.  We show that \( T_{\zeta} \) is a slim evaluation tree for
  \( \xi \).  Let \( t \in T_{\zeta} \).  Since \( T_{\zeta} \) was slim for \( \zeta \), we have
  \( \hat{\zeta}[I_{t}] = e \).  Note that if \( u_{k} \in U \cap R_{t} \), then \( u_{k}+1 \in R_{t}\) (by the
  construction of \( U \)).  Similarly for \( v_{k} \in V \), \( v_{k} \in R_{t} \) implies \( v_{k}-1 \in R_{t}\).  It
  now follows from item \eqref{lem:transfer-preserves-item:indices-under-change} of Lemma
  \ref{sec:triv-words-amalg-transfer-preserves} that \( \hat{\xi}[I_{t}] = \hat{\zeta}[I_{t}] = e \) and therefore
  \( T_{\zeta} \) is slim.

  Finally, the simplicity of \( (\beta,\xi) \) is a consequence of items
  \eqref{lem:transfer-preserves-item:indices-under-change} and
  \eqref{lem:transfer-preserves-item:triviality-of-transfer-intervals} of Lemma
  \ref{sec:triv-words-amalg-transfer-preserves}.

  So have we proved the lemma under the assumption that \( F(J_{i}^{t}) \ne \emptyset \) for all \( t \in T_{\zeta} \)
  and all \( i \in \seg{1}{q_{t}} \).  Suppose this assumption was false.  By item
  \eqref{lem:structure-item:non-trivial-interior} of the definition of the balanced evaluation tree we get
  \( T_{\zeta} = \{\emptyset\} \) and \( F(I_{\emptyset}) = \emptyset \).  Therefore \( \zeta(i) \in A \) for all
  \( i \).  Set \( (\beta,\xi) \) to be the right \( (i)_{i=1}^{n-1} \)-transfer of \( (\alpha,\zeta) \).  Then
  \( \xi = e \concat \ldots \concat e \) and obviously \( (\beta,\xi) \) is a simple \( f \)-pair of the same length and
  \( T_{\zeta} = \{\emptyset\} \) is a simple balanced evaluation tree for \( \xi \).
\end{proof}

\begin{lemma}
  \label{sec:metrics-amalgams-symmet-rho-decreases}
  Let \( (\alpha,\zeta) \) be a slim \( f \)-pair of length \( n \) with a slim evaluation tree \( T_{\zeta} \).  Let
  \( t \in T_{\zeta}\) be given and let \( \{i_{k}\}_{k=1}^{m} \subseteq R_{t} \) be a symmetrization admissible
  list. If \( \xi = \symmet{\alpha}{\zeta}{i'}{\{i_{k}\}} \) for some \(i' \in \{i_{k}\}_{k=1}^{m} \), then
  \[ \rho(\alpha,\zeta) \ge \rho(\alpha,\xi).  \]
\end{lemma}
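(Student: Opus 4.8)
The plan is to cancel the terms that are obviously equal and then reduce everything to a single inequality inside one factor group. Write $j_{0}=i_{k_{0}}$ and let $\xi=\symmet{\alpha}{\zeta}{j_{0}}{\{i_{k}\}_{k=1}^{m}}$. If $m=1$ then $\xi=\zeta$ and there is nothing to prove, so assume $m\ge 2$. Since $\xi(i)=\zeta(i)$ for $i\notin\{i_{k}\}$ and $\xi(i_{k})=\alpha(i_{k})$ for $k\ne k_{0}$, the definition of $\rho$ gives
\[
  \rho(\alpha,\zeta)-\rho(\alpha,\xi)=\sum_{k=1}^{m} d\big(\alpha(i_{k}),\zeta(i_{k})\big)-d\big(\alpha(i_{k_{0}}),\xi(j_{0})\big),
\]
so it suffices to bound the last term by the sum.

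Next I would extract the relation $\zeta(i_{1})\zeta(i_{2})\cdots\zeta(i_{m})=e$. Because $T_{\zeta}$ is a slim evaluation tree, $\hat{\zeta}[I_{s}]=e$ for every node $s$; in particular $\hat{\zeta}[I_{t}]=e$ and $\hat{\zeta}[I_{s}]=e$ for every $s\prec t$. By items \eqref{lem:structure-item:intervals-order} and \eqref{lem:structure-item:strict-inclusion} of the evaluation tree, the intervals attached to the immediate children of $t$ are pairwise disjoint sub-intervals of $I_{t}$ whose union has complement $R_{t}$ in $I_{t}$; evaluating $\hat{\zeta}[I_{t}]$ block by block, and using that each child block evaluates to $e$, gives $\hat{\zeta}[R_{t}]=\hat{\zeta}[I_{t}]=e$. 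By condition (ii) of a symmetrization admissible list, $\zeta(i)=e$ for every $i\in R_{t}\setminus\{i_{1},\dots,i_{m}\}$, so $\zeta(i_{1})\cdots\zeta(i_{m})=\hat{\zeta}[R_{t}]=e$.

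Now I would pin down the ambient factors. By condition (iii) of a symmetrization admissible list the letters $\alpha(i_{1}),\dots,\alpha(i_{m})$ all lie in one factor $G_{\lambda}$, and by item \eqref{lem:structure-item:congruence} of the evaluation tree the letters $\zeta(i_{1}),\dots,\zeta(i_{m})$ all lie in one factor $G_{\nu}$ (if all the $\zeta(i_{k})$ lie in $A$, take $\nu=\lambda$). The element $\xi(j_{0})$ is a product of inverses of the $\alpha(i_{k})$, hence lies in $G_{\lambda}$, and by construction $\alpha(i_{1})\cdots\alpha(i_{k_{0}-1})\,\xi(j_{0})\,\alpha(i_{k_{0}+1})\cdots\alpha(i_{m})=e$ in $G_{\lambda}$; two-sided invariance of $d_{\lambda}$ then gives $d(\alpha(i_{k_{0}}),\xi(j_{0}))=d_{\lambda}(\alpha(i_{1})\alpha(i_{2})\cdots\alpha(i_{m}),e)$. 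To bound the right-hand side I would introduce an auxiliary word $h_{1},\dots,h_{m}$: put $h_{k}=\zeta(i_{k})$ if $\zeta(i_{k})\in G_{\lambda}$, and $h_{k}=\alpha(i_{k})$ otherwise. In the second case $\zeta(i_{k})\notin A$ together with multipliability of $(\alpha,\zeta)$ forces $\alpha(i_{k})\in G_{\lambda}\cap G_{\nu}=A$, so in both cases $h_{k}\in G_{\lambda}$ and $h_{k}\in G_{\nu}$, one of $d_{\lambda}(\alpha(i_{k}),h_{k})$ and $d_{\nu}(h_{k},\zeta(i_{k}))$ vanishes, and the other equals $d(\alpha(i_{k}),\zeta(i_{k}))$. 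Since $\prod_{k}h_{k}$ lies in $A=G_{\lambda}\cap G_{\nu}$ when $\lambda\ne\nu$ (and trivially in $G_{\lambda}$ otherwise) its two factor norms agree, so the triangle inequality gives
\[
  d_{\lambda}\Big(\prod_{k}\alpha(i_{k}),\,e\Big)\le d_{\lambda}\Big(\prod_{k}\alpha(i_{k}),\prod_{k}h_{k}\Big)+d_{\nu}\Big(\prod_{k}h_{k},\prod_{k}\zeta(i_{k})\Big);
\]
applying Proposition \ref{sec:tsi-groups-tsi-criterion}(ii) inside $G_{\lambda}$ to the first term and inside $G_{\nu}$ to the second (using $\prod_{k}\zeta(i_{k})=e$) bounds the right-hand side by $\sum_{k}\big(d_{\lambda}(\alpha(i_{k}),h_{k})+d_{\nu}(h_{k},\zeta(i_{k}))\big)=\sum_{k}d(\alpha(i_{k}),\zeta(i_{k}))$, which is the required bound.

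The only genuinely delicate point is this last step, namely the case where the $\zeta$-letters live in a factor $G_{\nu}$ different from the factor $G_{\lambda}$ of the $\alpha$-letters: the auxiliary word $h$ is engineered precisely so that the triangle inequality can be split across the two factors, and one has to check carefully that $\prod_{k}h_{k}$ indeed lands in $A$ in that case (so that its norm may be computed in either factor) and that the two applications of Proposition \ref{sec:tsi-groups-tsi-criterion}(ii) use the correct pairings. Everything else — the cancellation in the first display, the block evaluation of $\hat{\zeta}[I_{t}]$, and the identification $d(\alpha(i_{k_{0}}),\xi(j_{0}))=d_{\lambda}(\prod_{k}\alpha(i_{k}),e)$ — is routine manipulation with the definitions of $\rho$, the amalgam metric, and the symmetrization.
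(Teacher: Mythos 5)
Your proof is correct and follows the same outline as the paper's: reduce \( \rho(\alpha,\zeta)-\rho(\alpha,\xi) \) to \( \sum_{k} d(\alpha(i_{k}),\zeta(i_{k})) - d(\alpha(i_{1})\cdots\alpha(i_{m}),e) \), derive \( \zeta(i_{1})\cdots\zeta(i_{m})=e \) from slimness of \( T_{\zeta} \), and conclude via subadditivity of tsi distance under products. Where you go further is in justifying the inequality \( d\big(\prod_{k}\alpha(i_{k}),\prod_{k}\zeta(i_{k})\big)\le\sum_{k}d(\alpha(i_{k}),\zeta(i_{k})) \): the paper simply cites Proposition \ref{sec:tsi-groups-tsi-criterion}(ii), which concerns a single tsi group, whereas here \( d \) is the amalgam metric and the admissibility conditions only force the \( \alpha(i_{k}) \)'s into one factor \( G_{\lambda} \) and the \( \zeta(i_{k}) \)'s into one factor \( G_{\nu} \), with \( \lambda\ne\nu \) a genuine possibility. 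Your bridge word \( h_{k} \), which lands in \( A=G_{\lambda}\cap G_{\nu} \) where the metrics agree, is exactly what is needed to split the triangle inequality across the two factors and apply the proposition twice; you have supplied an argument for a step the paper treats as immediate.
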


\begin{proof}
  Since \( \zeta \) is slim, we have
  \[ \zeta(i_{1}) \cdot \zeta(i_{2}) \cdots \zeta(i_{m}) = e, \]
  and by Proposition \ref{sec:tsi-groups-tsi-criterion} we get
  \[ d(\alpha(i_{1}) \cdots \alpha(i_{m}), e) = d(\alpha(i_{1}) \cdots \alpha(i_{m}), \zeta(i_{1}) \cdots \zeta(i_{m}))
  \le \sum_{j=1}^{m} d(\alpha(i_{j}),\zeta(i_{j})).  \] If \( i' = i_{k} \), then
  \begin{eqnarray*}
    \begin{aligned}
      & \rho(\alpha,\zeta) - \rho(\alpha,\xi) = \\
      & \sum_{j=1}^{m} d\big(\alpha(i_{j}),\zeta(i_{j})\big) - d\big(\alpha(i_{k}),
      \alpha(i_{k-1})^{-1} \cdots \alpha(i_{1})^{-1} \cdot \alpha(i_{m})^{-1} \cdots \alpha(i_{k+1})^{-1}\big) = \\
      & \sum_{j=1}^{m} d\big(\alpha(i_{j}),\zeta(i_{j})\big) - d\big(\alpha(i_{1}) \cdots \alpha(i_{m}), e\big) \ge 0.
    \end{aligned}
  \end{eqnarray*}
  This proves the lemma.
\end{proof}

\begin{definition}
  \label{sec:metrics-amalgams-simple reduced}
  A simple \( f \)-pair \( (\alpha,\zeta) \) is called \emph{simple reduced} if \( \alpha \) is a reduced form of
  \( f \).
\end{definition}

\begin{lemma}
  \label{sec:metrics-amalgams-simple reduced-reduction}
  For any \( f \in \amalgam \)
  \[ \norm(f) = \inf\{\rho(\alpha,\zeta) : \textrm{\( (\alpha,\zeta) \) is a simple reduced \( f \)-pair}\}.  \]
\end{lemma}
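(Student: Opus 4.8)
The plan is to prove the equality by showing two inequalities. The inequality
\[
\norm(f) \le \inf\{\rho(\alpha,\zeta) : (\alpha,\zeta) \textrm{ is a simple reduced } f\textrm{-pair}\}
\]
is immediate, since every simple reduced \( f \)-pair is in particular an \( f \)-pair and \( \norm(f) \) is the infimum of \( \rho(\alpha,\zeta) \) over all \( f \)-pairs. So the content is the reverse inequality: given an arbitrary \( f \)-pair \( (\alpha,\zeta) \) and \( \epsilon > 0 \), I must produce a simple reduced \( f \)-pair \( (\beta,\xi) \) with \( \rho(\beta,\xi) \le \rho(\alpha,\zeta) + \epsilon \).

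The strategy is to chain together the reductions already established. First, by Lemma~\ref{sec:metrics-amalgams-congruent-reduction}, I may assume \( (\alpha,\zeta) \) is a \multipliable \( f \)-pair (at the cost of \( \epsilon \)). Next, since \( \zeta \) is a trivial word, Proposition~\ref{sec:triv-words-amalg-structure-of-the-trivial-word} furnishes a balanced evaluation tree \( T_{\zeta} \) for \( \zeta \) (using the canonical labeling, as per the convention fixed before Definition~\ref{sec:triv-words-amalg-slim-trivial-word}). Apply Lemma~\ref{sec:metrics-amalgams-slim-reduction} to replace \( (\alpha,\zeta) \) by a slim \( f \)-pair \( (\beta',\xi') \) of the same length and with \( \rho(\beta',\xi') = \rho(\alpha,\zeta) \), such that \( T_{\zeta} \) is a slim balanced evaluation tree for \( \xi' \). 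Then apply Lemma~\ref{sec:metrics-amalgams-simple-reduction} to pass to a simple \( f \)-pair \( (\beta'',\xi'') \), again preserving length and \( \rho \), with \( T_{\zeta} \) a slim balanced evaluation tree for \( \xi'' \). At this stage \( \zeta \) has been made simple; what remains is to arrange that the first coordinate is a reduced form of \( f \).

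The final step is to reduce the word \( \beta'' \) to a reduced form of \( f \) without increasing \( \rho \) and without destroying simplicity of the second coordinate. When \( \beta'' \) is not reduced, there is an index \( i \) with \( \beta''(i) \cong \beta''(i+1) \); I want to merge these two letters, simultaneously collapsing the corresponding two entries of \( \xi'' \). Since \( \xi'' \) is simple, its letters at positions \( i \) and \( i+1 \) are either \( e \) or external letters of \( \xi'' \); the key point is that merging two adjacent positions that lie in the same \multipliable block is exactly the kind of move controlled by symmetrization (Definition~\ref{sec:metrics-amalgams-symmetrization-for-simple}) together with a length-decreasing contraction. Concretely, if \( \beta''(i) \) and \( \beta''(i+1) \) are \multipliable then so is the pair \( (\xi''(i),\xi''(i+1)) \), and replacing the two positions \( i, i+1 \) by the single letter \( \beta''(i)\beta''(i+1) \) in \( \beta'' \) and by \( \xi''(i)\xi''(i+1) \) in \( \xi'' \) does not increase \( \rho \) by the triangle inequality for \( d \) (Proposition~\ref{sec:tsi-groups-tsi-criterion}), keeps the hatted products unchanged, and — after possibly a symmetrization move via Lemma~\ref{sec:metrics-amalgams-symmet-rho-decreases} and Lemma~\ref{sec:metrics-amalgams-symmetrization-properties} to restore the required form — yields a strictly shorter simple \( f \)-pair with no larger \( \rho \). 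Iterating, the length strictly decreases, so after finitely many steps we reach a simple reduced \( f \)-pair \( (\beta,\xi) \) with \( \rho(\beta,\xi) \le \rho(\beta'',\xi'') = \rho(\alpha,\zeta) \le \rho(\text{original}) + \epsilon \), as required.

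The main obstacle I anticipate is the bookkeeping in this last contraction step: one must verify carefully that after merging two adjacent \multipliable positions, the shortened second-coordinate word is still \emph{simple} — i.e. admits a slim evaluation tree — and that simplicity is compatible with the symmetrization normalization needed to invoke Lemma~\ref{sec:metrics-amalgams-symmet-rho-decreases}. In particular one needs that the merged position stays inside a single \( R_t \) of an appropriate evaluation tree and that its value can be taken in \( A \) (indeed equal to \( e \) after symmetrization), so that the contracted tree is again slim. Once this invariant is checked, the length induction closes the argument.
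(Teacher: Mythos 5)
Your overall chain --- Lemma~\ref{sec:metrics-amalgams-congruent-reduction} to get a \multipliable pair, Proposition~\ref{sec:triv-words-amalg-structure-of-the-trivial-word} for a balanced evaluation tree, Lemmas~\ref{sec:metrics-amalgams-slim-reduction} and~\ref{sec:metrics-amalgams-simple-reduction} to reach a simple pair, then a length-reducing contraction to make the first coordinate reduced --- mirrors the paper's argument, and the paper even runs the contraction by taking a minimal-length simple \( f \)-pair and showing it must already be reduced, which is logically equivalent to your iteration but avoids some bookkeeping. The genuine gap is in the pivotal claim of your last step: from \( \beta''(i) \cong \beta''(i+1) \) it does \emph{not} follow that \( \xi''(i) \cong \xi''(i+1) \). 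The relation \( \cong \) is not transitive. If \( \beta''(i), \beta''(i+1) \in A \), then \( \beta''(i) \cong \beta''(i+1) \) holds trivially, and the multipliability of \( (\beta'',\xi'') \) gives \( \beta''(i) \cong \xi''(i) \), \( \beta''(i+1) \cong \xi''(i+1) \) for \emph{any} placement of \( \xi''(i),\xi''(i+1) \); in particular these two letters may lie in distinct factors \( G_{\lambda_1} \ne G_{\lambda_2} \) (simplicity of \( \xi'' \) only forces letters in \( A \) to equal \( e \), it does not confine the external letters to a common factor). In that case \( \xi''(i)\cdot\xi''(i+1) \) is not a single letter of \( G \) and your proposed merge is undefined.

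The paper resolves exactly this point with a two-claim structure on a minimal-length simple \( f \)-pair \( (\gamma,\omega) \). Claim~1 shows there is no \( j \) with \( \gamma(j) \in A \): when \( \omega(j) \in A \) a direct merge of positions \( j,j+1 \) works; when \( \omega(j) \notin A \) one \emph{first} applies a symmetrization (Lemmas~\ref{sec:metrics-amalgams-symmetrization-properties} and~\ref{sec:metrics-amalgams-symmet-rho-decreases}) to replace \( \omega \) by \( \omega_2 \) with \( \omega_2(j) = \gamma(j) \in A \), and \emph{only then} merges. Claim~2 rules out \( \gamma(j) \cong \gamma(j+1) \); by Claim~1 both are outside \( A \), hence both in a single \( G_{\lambda_0} \), and multipliability of \( \omega(j),\omega(j+1) \) is then automatic. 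Your recipe --- merge, then possibly symmetrize ``to restore the required form'' --- inverts this order. In the \( A \)-letter cases the symmetrization is not a cosmetic normalization but the very thing that makes the contraction well-defined, so it must precede the merge; as written, the induction does not close.
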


\begin{proof}
  In view of Lemmas \ref{sec:metrics-amalgams-congruent-reduction}, \ref{sec:metrics-amalgams-slim-reduction}, and
  \ref{sec:metrics-amalgams-simple-reduction}, it is enough to show that for any simple \( f \)-pair
  \( (\alpha,\zeta) \) there is a simple reduced \( f \)-pair \( (\beta, \xi) \) such that
  \( \rho(\alpha,\zeta) \ge \rho(\beta,\xi) \).  Let \( (\alpha,\zeta) \) be a simple \( f \)-pair.  Let
  \( (\gamma, \omega) \) be a simple \( f \)-pair of the smallest length among all simple \( f \)-pairs
  \( (\gamma_{0},\omega_{0}) \) such that
  \[ \rho(\alpha,\zeta) \ge \rho(\gamma_{0},\omega_{0}).  \]
  It is enough to show that \( \gamma \) is a reduced form of \( f \).  If \( |\gamma| = 1 \) this is obvious.  Suppose
  \( |\gamma| = n \ge 2 \).

  \textbf{Claim 1.} There is no \( j \in \seg{1}{n} \) such that \( \gamma(j) \in A \).  Suppose this is false and there
  is such a \( j \in \seg{1}{n} \).
  
  \setcounter{case}{0}
  \begin{case}
    \label{sec:reductions-2}
    \( \omega(j) \in A \).  (In fact, since \( (\gamma,\omega) \) is simple, \( \omega(j) \in A \) implies
    \( \omega(j) = e \), but this is not used here.)  Suppose \( j < n \).  Since \( \gamma(j) \in A \),
    \( \omega(j) \in A \) and \( \gamma(j+1) \cong \omega(j+1) \), we have
    \( \gamma(j) \cdot \gamma(j+1) \cong \omega(j) \cdot \omega(j+1) \).  Define \( (\gamma_{1}, \omega_{1}) \) by
    \begin{displaymath}
      \gamma_{1}(i) = 
      \begin{cases}
        \gamma(i)                   & \textrm{if \( i < j \)};\\
        \gamma(j) \cdot \gamma(j+1) & \textrm{if \( i = j \)};\\
        \gamma(i+1) & \textrm{if \( i > j \)};
      \end{cases}
    \end{displaymath}
    \begin{displaymath}
      \omega_{1}(i) = 
      \begin{cases}
        \omega(i)                   & \textrm{if \( i < j \)};\\
        \omega(j) \cdot \omega(j+1) & \textrm{if \( i = j \)};\\
        \omega(i+1) & \textrm{if \( i > j \)}.
      \end{cases}
    \end{displaymath}
    It is easy to see that \( |\gamma_{1}| = |\gamma| - 1 \) and \( (\gamma_{1}, \omega_{1}) \) is a \multipliable \( f
    \)-pair. Moreover, since by the two-sided invariance
    \[ d(\gamma(j)\gamma(j+1),\omega(j)\omega(j+1)) \le d(\gamma(j),\omega(j)) + d(\gamma(j+1),\omega(j+1)), \]
    we also have \( \rho(\gamma,\omega) \ge \rho(\gamma_{1},\omega_{1}) \).  Since \( \gamma_{1}, \omega_{1} \) is a
    \multipliable \( f \)-pair, by Lemmas \ref{sec:metrics-amalgams-slim-reduction} and
    \ref{sec:metrics-amalgams-simple-reduction} there is a simple \( f \)-pair \( (\gamma_{0}, \omega_{0}) \) such that
    \( |\gamma_{0}| = |\gamma_{1}| = n-1 \) and \( \rho(\gamma_{0},\omega_{0}) = \rho(\gamma_{1},\omega_{1}) \).  This
    contradicts the choice of \( (\gamma,\omega) \).

    If \( j = n \), define
    \begin{displaymath}
      \gamma_{1}(i) = 
      \begin{cases}
        \gamma(i)                   & \textrm{if \( i < j-1 \)};\\
        \gamma(j-1) \cdot \gamma(j) & \textrm{if \( i = j-1 \)};\\
        \gamma(i+1) & \textrm{if \( i > j-1 \)};
      \end{cases}
    \end{displaymath}
    \begin{displaymath}
      \omega_{1}(i) = 
      \begin{cases}
        \omega(i)                   & \textrm{if \( i < j-1 \)};\\
        \omega(j-1) \cdot \omega(j) & \textrm{if \( i = j-1 \)};\\
        \omega(i+1) & \textrm{if \( i > j-1 \)},
      \end{cases}
    \end{displaymath}
    and proceed as before.
  \end{case}

  \begin{case}
    \label{sec:reductions-1}
    \( \omega(j) \not \in A \).  Let \( T_{\omega} \) be a slim evaluation tree for \( \omega \).  Let
    \( t \in T_{\omega} \) be such that \( j \in R_{t} \).  Let \( \{i_{k}\}_{k=1}^{m} \) be the list of external
    letters in \( R_{t} \); this list is symmetrization admissible.  Let \( j_{0} \in \{i_{k}\}_{k=1}^{m} \) be any such
    that \( j_{0} \ne j \), set \( \omega_{2} = \symmet{\gamma}{\omega}{j_{0}}{\{i_{k}\}} \).  By Lemma
    \ref{sec:metrics-amalgams-symmetrization-properties} \( (\gamma, \omega_{2}) \) is a slim \( f \)-pair and
    \( \omega_{2}(j) = \gamma(j) \in A \).  And we can decrease the length of the pair \( (\gamma,\omega_{2}) \) as in
    the previous case.  This proves the case and the claim.
  \end{case}

  \medskip
  
  \textbf{Claim 2.} There is no \( j \in \seg{1}{n-1} \) such that \( \gamma(j) \cong \gamma(j+1) \).  Suppose this is
  false and there is such a \( j \in \seg{1}{n-1} \).  Note that by the previous claim \( \gamma(j) \not \in A \) and
  \( \gamma(j+1) \not \in A \).  Hence there is \( \lambda_{0} \in \Lambda \) such that
  \[ \gamma(j),\ \gamma(j+1),\ \omega(j),\ \omega(j+1) \in G_{\lambda_{0}}. \]
  Therefore \( \gamma(j) \cdot \gamma(j+1) \cong \omega(j) \cdot \omega(j+1) \).  The rest of the proof is similar to
  what we have done in the previous claim.  Define \( (\gamma_{3}, \omega_{3}) \) by
  \begin{displaymath}
    \gamma_{3}(i) = 
    \begin{cases}
      \gamma(i)                   & \textrm{if \( i < j \)}\\
      \gamma(j) \cdot \gamma(j+1) & \textrm{if \( i = j \)}\\
      \gamma(i+1) & \textrm{if \( i > j \)}
    \end{cases}
  \end{displaymath}
  \begin{displaymath}
    \omega_{3}(i) = 
    \begin{cases}
      \omega(i)                   & \textrm{if \( i < j \)}\\
      \omega(j) \cdot \omega(j+1) & \textrm{if \( i = j \)}\\
      \omega(i+1) & \textrm{if \( i > j \)}
    \end{cases}
  \end{displaymath}
  Then \( |\gamma_{3}| = |\gamma| - 1 \), \( (\gamma_{3}, \omega_{3}) \) is a \multipliable \( f \)-pair, and
  \( \rho(\gamma,\omega) \ge \rho(\gamma_{1},\omega_{1}) \).  By Lemmas \ref{sec:metrics-amalgams-slim-reduction} and
  \ref{sec:metrics-amalgams-simple-reduction} there is a simple \( f \)-pair \( (\gamma_{0},\omega_{0}) \) such that
  \( |\gamma_{0}| = |\gamma_{3}| \) and \( \rho(\gamma_{3},\omega_{3}) = \rho(\gamma_{0},\omega_{0}) \), contradicting
  the choice of \( (\gamma,\omega) \).  The claim is proved.

  \medskip

  From the second claim it follows that \( \gamma(j) \not \cong \gamma(j+1) \) for any \( j \in \seg{1}{n-1} \) and
  therefore \( \gamma \) is reduced.
\end{proof}

\begin{proposition}
  \label{sec:metrics-amalgams-norm-lower-bound}
  Let \( f \in \amalgam \) be an element of length \( n \).  If \( \alpha \) is a reduced form of \( f \), then
  \[ \norm(f) \ge \min\{d(\alpha(i),A) : i \in \seg{1}{n}\}.  \]
\end{proposition}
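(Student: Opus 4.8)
The plan is to lean on Lemma~\ref{sec:metrics-amalgams-simple reduced-reduction}, which already expresses \( \norm(f) \) as the infimum of \( \rho(\beta,\zeta) \) over all simple reduced \( f \)-pairs \( (\beta,\zeta) \).  So it will suffice to prove that
\[ \rho(\beta,\zeta) \ge \min\{d(\alpha(i),A) : i \in \seg{1}{n}\} \]
for an arbitrary simple reduced \( f \)-pair \( (\beta,\zeta) \) --- note that \( |\beta| = |\zeta| = n \) since \( \beta \) is a reduced form of \( f \) --- after which the proposition follows by passing to the infimum.

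The first and main step will be to locate an index \( i \in \seg{1}{n} \) with \( \zeta(i) \in A \); since \( \zeta \) is simple, such a \( \zeta(i) \) is then forced to equal \( e \).  I would argue by contradiction.  If \( \zeta(i) \notin A \) for every \( i \), then each \( \zeta(i) \) lies in a unique \( G_{\mu_i} \setminus A \), and because \( (\beta,\zeta) \) is \multipliable the letter \( \beta(i) \) lies in \( G_{\mu_i} \) as well.  Reducedness of \( \beta \) gives \( \beta(i) \not\cong \beta(i+1) \), which rules out \( \mu_i = \mu_{i+1} \), and hence \( \zeta(i) \not\cong \zeta(i+1) \), for every \( i \in \seg{1}{n-1} \).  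Thus \( \zeta \) is a reduced word with \( \zeta \ne e \) (as \( \zeta(1) \notin A \)), so Lemma~\ref{sec:triv-words-amalg-non-triviality-of-reduced-words} gives \( \hat{\zeta} \ne e \), contradicting \( \hat{\zeta} = e \).  Hence such an index \( i \) exists.

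The remaining step is a short computation.  Fix \( i \) with \( \zeta(i) = e \).  Since \( \alpha \) and \( \beta \) are both reduced forms of \( f \), Lemma~\ref{sec:triv-words-amalg-reduced-forms} gives \( \beta(i) = a\,\alpha(i)\,b \) for some \( a, b \in A \), whence, by two-sided invariance of \( d \),
\[ d(\beta(i),\zeta(i)) = d\big(a\,\alpha(i)\,b,\, e\big) = d\big(\alpha(i),\, a^{-1}b^{-1}\big) \ge d(\alpha(i),A) \ge \min\{d(\alpha(j),A) : j \in \seg{1}{n}\}. \]
Since \( \rho(\beta,\zeta) = \sum_{j=1}^{n} d(\beta(j),\zeta(j)) \ge d(\beta(i),\zeta(i)) \), the required inequality follows, and taking the infimum over all simple reduced \( f \)-pairs finishes the argument.

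I expect no serious obstacle: the technical heavy lifting --- reducing the infimum defining \( \norm(f) \) to one over simple reduced \( f \)-pairs --- was carried out in Lemmas~\ref{sec:metrics-amalgams-congruent-reduction}--\ref{sec:metrics-amalgams-simple reduced-reduction}.  The one point that needs care is the first step: rather than arguing from \( \zeta \) alone, one must combine the reducedness of \( \beta \) with the fact that \( (\beta,\zeta) \) is \multipliable to conclude that consecutive letters of \( \zeta \) outside \( A \) lie in distinct factors \( G_{\lambda} \).
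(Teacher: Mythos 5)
Your proof is correct, and it reaches the key index by a genuinely different route from the paper.  Both proofs begin the same way, reducing to the claim that \( \rho(\beta,\zeta) \ge \min_i d(\alpha(i),A) \) for every simple reduced \( f \)-pair \( (\beta,\zeta) \), and both finish identically once an index \( i \) with \( \zeta(i) = e \) is found, using Lemma~\ref{sec:triv-words-amalg-reduced-forms} (writing \( \beta(i) = a\,\alpha(i)\,b \)) and two-sided invariance.  The difference is in locating that index.  You argue globally: if \( \zeta(i) \notin A \) for every \( i \), then reducedness of \( \beta \) together with multipliability forces consecutive letters of \( \zeta \) into distinct factors, so \( \zeta \) would be a nontrivial reduced word, contradicting \( \hat{\zeta} = e \) via Lemma~\ref{sec:triv-words-amalg-non-triviality-of-reduced-words}.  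The paper instead invokes the slim evaluation tree \( T_{\xi} \), picks a leaf \( t \), and uses \( \xi \)-multipliability of \( I_{t} \) together with reducedness of \( \beta \) to pin down an index in \( \{m(I_{t}),\,m(I_{t})+1\} \) where \( \xi \) equals \( e \).  Your version is lighter-weight at this step — it needs only multipliability, reducedness, and the simplicity condition \( \zeta(i)\in A \Rightarrow \zeta(i)=e \), not the tree itself — while the paper's local argument is the natural move given how heavily the surrounding lemmas lean on the evaluation-tree structure.  Either way the conclusion and the remaining computation are the same, so the proposal is a valid alternative proof.
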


\begin{proof}
  Fix a reduced form \( \alpha \) of \( f \), the word \( \alpha \) has length \( n \).  By Lemma
  \ref{sec:metrics-amalgams-simple reduced-reduction} it remains to show that for any simple reduced \( f \)-pair
  \( (\beta,\xi) \) we have
  \[ \rho(\beta,\xi) \ge \min\{d(\alpha(i),A) : i \in \seg{1}{n}\}.  \]
  Let \( (\beta,\xi) \) be a simple reduced \( f \)-pair.  Note that by Lemma \ref{sec:triv-words-amalg-reduced-forms}
  the length of \( \beta \) is \( n \).  Let \( T_{\xi} \) be a slim evaluation tree for \( \xi \), and let
  \( t \in T_{\xi} \) be a leaf (i.e., a node with no predecessors).  Since \( I_{t} \) is \( \xi \)-\multipliable and
  \( (\beta,\xi) \) is a simple reduced pair, it follows that there is \( i_{0} \in I_{t} \) such that
  \( \xi(i_{0}) = e \) (in fact, either \( \xi(m(I_{t})) = e \) or \( \xi(m(I_{t})+1) = e \)).  By Lemma
  \ref{sec:triv-words-amalg-reduced-forms} there are \( a_{1}, a_{2} \in A \) such that
  \( a_{1} \alpha(i_{0}) a_{2} = \beta(i_{0}) \).  By the two-sided invariance we get
  \[ \rho(\beta,\xi) \ge d(\beta(i_{0}),e) = d(a_{1}\alpha(i_{0})a_{2},e) = d(\alpha(i_{0}),a_{1}^{-1}a_{2}^{-1}) \ge
  d(\alpha(i_{0}),A). \qedhere \]
\end{proof}

We are now ready to prove that the pseudo-metric \( \dist \) is, in fact, a metric.

\begin{theorem}
  \label{sec:metrics-amalgams-MAIN-Graev-metric-on-products}
  If \( \dist \) is (as before) the pseudo-metric on \( \amalgam \) associated with the pseudo-norm \( \norm \),
  \( \dist(f,e) = \norm(f) \), then
  \begin{enumerate}[(i)]
  \item\label{thm:main-item:metric} \( \dist \) is a two-sided invariant metric on \( \amalgam \);
  \item\label{thm:main-item:extension} \( \dist \) extends \( d \).
  \end{enumerate}
\end{theorem}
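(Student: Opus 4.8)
The plan is to assemble Theorem~\ref{sec:metrics-amalgams-MAIN-Graev-metric-on-products} from the reduction machinery already in place. Item~\eqref{thm:main-item:metric} requires only that \( \norm(f) = 0 \) implies \( f = e \), since Lemma~\ref{sec:metrics-amalgams-tsi-pseudo-metric} has already established that \( \dist \) is a two-sided invariant pseudo-metric. So first I would take \( f \ne e \) and argue \( \norm(f) > 0 \). Fix a reduced form \( \alpha \) of \( f \) of length \( n \ge 1 \); since \( f \ne e \), each letter \( \alpha(i) \) lies outside \( A \), except in the degenerate case \( n = 1 \), \( f \in G_{\lambda} \), where one handles it directly (see below). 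Proposition~\ref{sec:metrics-amalgams-norm-lower-bound} then gives \( \norm(f) \ge \min\{ d(\alpha(i),A) : i \in \seg{1}{n} \} \). Because \( A \) is closed in each \( G_{\lambda} \) (and the amalgam metric \( d \) restricted to \( G_{\lambda} \) is just \( d_{\lambda} \)), each \( \alpha(i) \notin A \) satisfies \( d(\alpha(i),A) > 0 \); as the minimum is over finitely many strictly positive numbers, \( \norm(f) > 0 \), so \( f \ne e \) in the quotient by the kernel of \( \dist \) — i.e.\ \( \dist \) separates points.

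For the remaining case where some reduced form of \( f \) has length \( 1 \), so \( f \in G_{\lambda} \setminus \{e\} \) for a single \( \lambda \): here I want the stronger statement in item~\eqref{thm:main-item:extension}, namely \( \dist|_{G_{\lambda}} = d_{\lambda} \), which in particular yields \( \norm(f) = d_{\lambda}(f,e) > 0 \). So I would prove~\eqref{thm:main-item:extension} next and let it subsume this case. Take \( f_{1}, f_{2} \in G_{\lambda} \). The one-letter pair \( (f_{1}f_{2}^{-1}, e) \) is an \( (f_1 f_2^{-1}) \)-pair witnessing \( \norm(f_1 f_2^{-1}) \le d(f_1 f_2^{-1}, e) = d_{\lambda}(f_1, f_2) \), hence \( \dist(f_1,f_2) \le d_{\lambda}(f_1,f_2) \) by two-sided invariance. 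For the reverse inequality, apply Lemma~\ref{sec:metrics-amalgams-simple reduced-reduction}: \( \norm(f) \) for \( f = f_1 f_2^{-1} \in G_{\lambda} \) is the infimum of \( \rho(\beta,\xi) \) over simple reduced \( f \)-pairs. If \( f \ne e \), a reduced form of \( f \) has length \( 1 \), so by Lemma~\ref{sec:triv-words-amalg-reduced-forms} every simple reduced \( f \)-pair \( (\beta,\xi) \) has \( |\beta| = 1 \); then \( \xi = e \) (it is trivial of length one), \( \beta = \hat\beta = f \), and \( \rho(\beta,\xi) = d(f,e) = d_{\lambda}(f,e) \). Thus \( \norm(f) = d_{\lambda}(f,e) \), giving \( \dist(f_1,f_2) = d_{\lambda}(f_1,f_2) \). (The case \( f = e \), i.e.\ \( f_1 = f_2 \), is trivial.)

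With~\eqref{thm:main-item:extension} in hand, the one-letter instance of~\eqref{thm:main-item:metric} is settled, and combined with the length-\( \ge 2 \) argument above via Proposition~\ref{sec:metrics-amalgams-norm-lower-bound}, we conclude \( \norm(f) > 0 \) for all \( f \ne e \). Hence \( \dist \) is a genuine two-sided invariant metric, proving~\eqref{thm:main-item:metric}.

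\textbf{The main obstacle} is already absorbed into the earlier sections: it is the entire chain of reductions Lemma~\ref{sec:metrics-amalgams-congruent-reduction} \( \to \) Lemma~\ref{sec:metrics-amalgams-slim-reduction} \( \to \) Lemma~\ref{sec:metrics-amalgams-simple-reduction} \( \to \) Lemma~\ref{sec:metrics-amalgams-simple reduced-reduction}, culminating in Proposition~\ref{sec:metrics-amalgams-norm-lower-bound}, which controls the infimum defining \( \norm \) tightly enough to see it is bounded below by \( \min_i d(\alpha(i),A) \). Given those results, the theorem itself is a short bookkeeping argument: the only genuine content left is (a) invoking closedness of \( A \) to turn \( d(\alpha(i),A) > 0 \) into a strictly positive lower bound, and (b) the one-letter computation that pins down \( \dist|_{G_\lambda} \) exactly, for which Lemma~\ref{sec:triv-words-amalg-reduced-forms}'s uniqueness of reduced-form length is the key ingredient. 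I do not anticipate any technical difficulty here beyond citing the right lemma at each step.
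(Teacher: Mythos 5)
Your proof of item~(i) matches the paper's, and your one-letter computation for item~(ii) is correct. However, item~(ii) is not fully proved: you establish $\dist|_{G_\lambda} = d_\lambda$ for each $\lambda$, but the claim is that $\dist$ extends the amalgam metric $d$ on all of $G = \bigcup_\lambda G_\lambda$, which also covers pairs $f_1 \in G_{\lambda_1}$, $f_2 \in G_{\lambda_2}$ with $\lambda_1 \ne \lambda_2$, where by definition $d(f_1,f_2) = \inf_{a \in A}\bigl(d_{\lambda_1}(f_1,a) + d_{\lambda_2}(a,f_2)\bigr)$. You only take $f_1, f_2 \in G_\lambda$ for a single $\lambda$, so the case $f_1 \not\cong f_2$ is silently dropped.

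That case is not a trivial afterthought. When $g_1 \not\cong g_2$, neither can lie in $A$, so any reduced form $\alpha$ of $g_1g_2^{-1}$ has length exactly $2$, and by Lemma~\ref{sec:triv-words-amalg-reduced-forms} one has $\alpha(1) = g_1 a$, $\alpha(2) = a^{-1}g_2^{-1}$ for some $a \in A$. For a simple reduced pair $(\alpha,\zeta)$ the constraints $\alpha(1)\in G_{\lambda_1}\setminus A$, $\alpha(2)\in G_{\lambda_2}\setminus A$, multipliability, $\hat\zeta = e$, and simplicity force $\zeta = e \concat e$; then $\rho(\alpha,\zeta) = d(g_1,a^{-1}) + d(a^{-1},g_2)$, and taking the infimum over $a \in A$ recovers exactly the amalgam distance $d(g_1,g_2)$. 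Your argument needs this extra paragraph to close the gap; without it, (ii) as stated is not established and the degenerate case of (i) that you defer to (ii) is still covered (since it only needs $f_1 \cong f_2$), but the theorem itself is incomplete.
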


\begin{proof}
  \eqref{thm:main-item:metric} By Proposition \ref{sec:metrics-amalgams-tsi-pseudo-metric} we know that \( \dist \) is a
  tsi pseudo-metric.  It only remains to show that \( \dist(f,e) = 0 \) implies \( f = e \).  Let \( f \in \amalgam \)
  be such that \( \dist(f,e) = 0\), and let \( \alpha \) be a reduced form of \( f \).  Suppose first that
  \( |\alpha| \ge 2 \) and therefore \( \alpha(i) \not \in A \) for all \( i \) by the definition of the reduced form.
  By Proposition \ref{sec:metrics-amalgams-norm-lower-bound} and since \( A \) is closed in \( G_{\lambda} \) for all
  \( \lambda \), we have
  \[ \dist(f,e) \ge \min \big\{ d(\alpha(i),A) : i \in \seg{1}{|\alpha|} \big\} > 0. \]

  Suppose now \( |\alpha| = 1 \) and therefore \( \alpha = f\), \( f \in G \), and the reduced form of \( f \) is
  unique.  By Lemma \ref{sec:metrics-amalgams-simple reduced-reduction} the distance \( d(f,e) \) is given as the
  infimum over all simple reduced \( f \)-pairs, but there is only one such pair: \( (f,e) \), where \( f \) is viewed
  as a letter in \( G \).  Hence \( d(f,e) = 0 \) implies \( f = e \).

  \eqref{thm:main-item:extension} Fix \( g_{1}, g_{2} \in G \) and suppose first that \( g_{1} \not \cong g_{2} \).  Let
  \( (\alpha,\zeta) \) be a simple reduced \( g_{1}g_{2}^{-1} \)-pair.  We claim that there is \( a \in A \) such that
  \( g_{1}a = \alpha(1) \), and \( a^{-1}g_{2}^{-1} = \alpha(2) \).  Indeed,
  \begin{multline*}
    \alpha(1) \alpha(2) = g_{1} g_{2}^{-1} \implies g_{2}g_{1}^{-1}\alpha(1) \alpha(2) = e \implies
    g_{1}^{-1}\alpha(1) \in A \implies \\
    \exists a \in A \textrm{ such that } \alpha(1) = g_{1}a, \textrm{ and } \alpha(2) = a^{-1}g_{2}^{-1}.
  \end{multline*}

  Moreover, since \( g_{1} \not \cong g_{2} \) and since \( (\alpha,\zeta) \) is \multipliable, we get
  \( \zeta = e \concat e\) and thus
  \begin{displaymath}
    \begin{aligned}
      \dist(g_{1},g_{2}) =&\ \dist(g_{1}g_{2}^{-1},e) = \inf\{\rho(g_{1}a \concat a^{-1}g_{2}^{-1}, e \concat e) : a \in
      A\} = \\
      & \inf \{ d(g_{1},a^{-1}) + d(a^{-1},g_{2}) : a \in A \} = d(g_{1},g_{2}).
    \end{aligned}
  \end{displaymath}

  If \( g_{1} \cong g_{2} \), then there is only one simple reduced \( g_{1}g_{2}^{-1} \)-pair, namely
  \( (g_{1}g^{-1}, e) \) and the item follows.
\end{proof}

\section{Properties of Graev metrics}
\label{sec:prop-graev-metr}

Theorem \ref{sec:metrics-amalgams-MAIN-Graev-metric-on-products} allows us to make the following
definition: the metric \( \dist \) constructed in the previous section is called the \emph{Graev
  metric} on the free product of groups \( (G_{\lambda}, d_{\lambda}) \) with amalgamation over
\( A \).

Theorem \ref{sec:graev-metric-groups-graev-metric-computation} implies that the Graev metric on a
free group is, in some sense, computable, that is if one can compute the metric on the base, then to
find the norm of an element \( f \) in the free group one has to calculate the function \( \rho \)
for only \emph{finitely many} trivial words, moreover those words are constructable from the letters
of \( f \).  For the case of free products without amalgamation, i.e., when \( A = \{e\} \), we have
a similar result (see Corollary \ref{sec:prop-graev-metr-computability-graev-for-free-products}
below).

\begin{definition}
  \label{sec:metrics-amalgams-symmetric-word}
  Let \( (\alpha,\zeta) \) be a slim \( f \)-pair with a slim evaluation tree \( T_{\zeta} \).  The
  pair \( (\alpha,\zeta) \) is called \emph{symmetric with respect to the tree \( T_{\zeta} \)} if
  for each \( t \in T_{\zeta} \) there are a symmetrization admissible list
  \( \{i_{t,k}\}_{k=1}^{m_{t}} \) and \( j_{t} \in \{i_{t,k}\}_{k=1}^{m_{t}} \) such that
  \[ \zeta = \symmet{\alpha}{\zeta}{j_{t}}{\{i_{t,k}\}_{k=1}^{m_{t}}}. \]

  An \( f \)-pair \( (\alpha,\zeta) \) is called \emph{symmetric} if there is a slim evaluation tree
  \( T_{\zeta} \) such that \( (\alpha,\zeta) \) is a symmetric \( f \)-pair with respect to
  \( T_{\zeta} \).
\end{definition}

\begin{remark}
  \label{sec:metrics-amalgams-finitely-many-symmetric}
  Note that for any word \( \alpha \) there are only finitely many words \( \zeta \) such that
  \( (\alpha,\zeta) \) is symmetric.
\end{remark}

\begin{proposition}
  \label{sec:metrics-amalgams-graev-computability}
  If \( f \in \amalgam \), then
  \[ \norm(f) = \inf\{\rho(\alpha,\xi) : (\alpha,\xi) \textrm{ is a symmetric reduced \( f
    \)-pair}\}.  \]
\end{proposition}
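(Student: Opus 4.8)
The plan is to combine the characterization of $\norm$ by simple reduced $f$-pairs (Lemma~\ref{sec:metrics-amalgams-simple reduced-reduction}) with the symmetrization operation (Definition~\ref{sec:metrics-amalgams-symmetrization-for-simple}): starting from a simple reduced $f$-pair $(\alpha,\zeta)$ and a slim evaluation tree for $\zeta$, I will symmetrize the word $\zeta$ inside the set $R_{t}$ of each node $t$, without increasing $\rho$ and without altering the first coordinate $\alpha$, so that the resulting word $\xi$ becomes a fixed point of symmetrization at every node, and the pair $(\alpha,\xi)$ is thereby symmetric in the sense of Definition~\ref{sec:metrics-amalgams-symmetric-word}. (As with ``simple reduced'', a \emph{symmetric reduced} $f$-pair means a symmetric $f$-pair whose first coordinate is a reduced form of $f$.) One inequality is immediate: a symmetric reduced $f$-pair is in particular an $f$-pair, so $\rho(\alpha,\xi)\ge\norm(f)$ for every such pair, whence $\norm(f)$ is at most the infimum on the right-hand side.

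For the reverse inequality fix $\epsilon>0$. By Lemma~\ref{sec:metrics-amalgams-simple reduced-reduction} there is a simple reduced $f$-pair $(\alpha,\zeta)$ with $\rho(\alpha,\zeta)\le\norm(f)+\epsilon$. If $|\alpha|=1$ then $f\in G$, the only simple reduced $f$-pair is $(f,e)$, which is already symmetric (tree $\{\emptyset\}$, singleton list $\{1\}$, so that $\symmet{f}{e}{1}{\{1\}}=e$), and we are done; so assume $n:=|\alpha|\ge 2$, whence $\alpha(i)\notin A$ for all $i$ since $\alpha$ is reduced. Fix a slim evaluation tree $T_{\zeta}$ for $\zeta$ (it exists because $\zeta$ is simple) and recall that $\{R_{t}\}_{t\in T_{\zeta}}$ partitions $\seg{1}{n}$. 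For each node $t$ let $L_{t}$ be the increasing list of all $i\in R_{t}$ with $\zeta(i)\notin A$; if $L_{t}=\emptyset$ replace it with the singleton $\{m(R_{t})\}$, and if $R_{t}=\emptyset$ there is nothing to do at $t$. Each such $L_{t}$ is a symmetrization admissible list inside $R_{t}$: it is increasing; since $\zeta$ is simple, $\zeta(i)\ne e$ implies $\zeta(i)\notin A$, so $L_{t}$ contains every $i\in R_{t}$ with $\zeta(i)\ne e$; and by item~\eqref{lem:structure-item:congruence} of Definition~\ref{sec:struct-triv-words-def-of-evaluation-tree} the letters $\zeta(i)$, $i\in R_{t}$, are pairwise \multipliable, so the ones outside $A$ lie in a single $G_{\lambda_{0}}\setminus A$, and then $\alpha(i)\cong\zeta(i)$ together with $\alpha(i)\notin A$ forces $\alpha(i)\in G_{\lambda_{0}}$ for every $i\in L_{t}$, so the letters $\alpha(i)$, $i\in L_{t}$, are pairwise \multipliable (when $L_{t}$ is a singleton this is vacuous and the symmetrization below acts as the identity on $\zeta$).

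Now process the nodes of $T_{\zeta}$ one at a time in an arbitrary order, at node $t$ replacing the current second coordinate $\omega$ by $\symmet{\alpha}{\omega}{j_{t}}{L_{t}}$ with $j_{t}:=m(L_{t})$. A symmetrization at $t$ changes the word only at the positions in $L_{t}\subseteq R_{t}$, and the $R_{t}$ are disjoint, so steps at different nodes do not interfere and leave the other lists $L_{s}$ and their admissibility intact; by Lemma~\ref{sec:metrics-amalgams-symmetrization-properties} each step keeps the pair a slim $f$-pair with the same slim evaluation tree $T_{\zeta}$ and fixed $\alpha$, and by Lemma~\ref{sec:metrics-amalgams-symmet-rho-decreases} it does not increase $\rho$. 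Let $(\alpha,\xi)$ be the final pair; it is a slim $f$-pair, $\alpha$ is still a reduced form of $f$, and $\rho(\alpha,\xi)\le\rho(\alpha,\zeta)\le\norm(f)+\epsilon$. For symmetry of $(\alpha,\xi)$ with respect to $T_{\zeta}$, fix $t$ with $R_{t}\ne\emptyset$: on $R_{t}$ one has $\xi=\symmet{\alpha}{\zeta}{j_{t}}{L_{t}}$, so $\xi(i)=\alpha(i)\notin A$ for $i\in L_{t}\setminus\{j_{t}\}$, $\xi(i)=\zeta(i)=e$ for $i\in R_{t}\setminus L_{t}$, and $\xi(j_{t})$ is the particular product of the $\alpha(i)^{-1}$ ($i\in L_{t}\setminus\{j_{t}\}$) prescribed by the third clause of Definition~\ref{sec:metrics-amalgams-symmetrization-for-simple}; hence $\{i\in R_{t}:\xi(i)\ne e\}\subseteq L_{t}$, so $L_{t}$ is symmetrization admissible for $(\alpha,\xi)$ as well, and inspecting the definition coordinate by coordinate (the product at $j_{t}$ depends only on $\alpha$ and $L_{t}$) shows $\symmet{\alpha}{\xi}{j_{t}}{L_{t}}=\xi$. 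Thus $(\alpha,\xi)$ is a symmetric reduced $f$-pair realizing a value $\le\norm(f)+\epsilon$; letting $\epsilon\to 0$ and combining with the first inequality proves the proposition.

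The step I expect to be the main obstacle is the verification that each $L_{t}$ is a symmetrization admissible list --- in particular its third condition, which requires the letters $\alpha(i)$, $i\in L_{t}$, to be pairwise \multipliable, and which rests on combining the fact that the letters $\zeta(i)$, $i\in R_{t}$, are pairwise \multipliable with the absence of $A$-letters in a reduced word of length at least two --- together with the bookkeeping showing that the symmetrizations performed at the several nodes neither interfere with one another nor spoil the fixed-point property at the remaining nodes.
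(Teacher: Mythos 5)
Your proof is correct, and it follows the same overall strategy as the paper: reduce via Lemma~\ref{sec:metrics-amalgams-simple reduced-reduction} to simple reduced \( f \)-pairs, then symmetrize once at each node of a slim evaluation tree using Lemmas~\ref{sec:metrics-amalgams-symmetrization-properties} and~\ref{sec:metrics-amalgams-symmet-rho-decreases}, and read off symmetry from the fact that symmetrization is idempotent on the resulting word. Where you diverge is in how you make the symmetrization lists admissible. The paper first performs a tree surgery: it builds a refined slim tree \( T^{*}_{\zeta} \) in which every index \( i \) with \( \zeta(i)=e \) has been split off into its own singleton leaf, so that every remaining \( R^{*}_{t} \) consists entirely of non-identity letters; only then does it symmetrize over \( F^{*}_{t}=R^{*}_{t} \). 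You instead keep the original tree and restrict the list at node \( t \) to \( L_{t}=\{i\in R_{t}:\zeta(i)\notin A\} \), observing (as you correctly flag as the crux) that condition (iii) of admissibility --- all \( \alpha(i) \), \( i\in L_{t} \), lying in one factor \( G_{\lambda_{0}} \) --- follows from the multipliability of the \( \zeta(i) \) on \( R_{t} \) together with \( \alpha(i)\notin A \) when \( |\alpha|\ge 2 \), and that the definition of admissibility does not require the list to contain the positions where \( \zeta(i)=e \). This sidesteps the paper's case analysis for inserting new singleton nodes (minimal, maximal, or interior position in \( R^{(k)}_{t_{0}} \)) and is correspondingly shorter, while resolving the same underlying problem: preventing \( e \)-positions, at which \( \alpha(i) \) is unconstrained, from contaminating the symmetrization list. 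Your treatment of the degenerate cases (\( |\alpha|=1 \); \( L_{t}=\emptyset \) via the singleton fallback \( \{m(R_{t})\} \)) and the non-interference of symmetrizations at distinct nodes (since the \( R_{t} \) partition \( \seg{1}{n} \)) is also correct.
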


\begin{proof}
  By Lemma \ref{sec:metrics-amalgams-simple reduced-reduction} it is enough to show that for any
  simple reduced \( f \)-pair \( (\alpha,\zeta) \) there is a symmetric reduced \( f \)-pair
  \( (\alpha,\xi) \) such that
  \[ \rho(\alpha,\zeta) \ge \rho(\alpha,\xi).  \]
  Let \( (\alpha,\zeta) \) be a simple reduced \( f \)-pair, and let \( T_{\zeta} \) be a slim
  evaluation tree for \( \zeta \).  We construct a new slim evaluation tree \( T^{*}_{\zeta} \) for
  \( \zeta \) with the following property: for any \( t \in T^{*}_{\zeta} \) and any
  \( i \in R^{*}_{t} \) if \( \zeta(i) = e \), then \( t \) is a leaf and, moreover,
  \( R^{*}_{t} = I^{*}_{t} = \{ i \} \).

  Let \( \{j_{k}\}_{k=1}^{m} \) be such that \( \zeta(j_{k}) = e \) for all \( k \) and
  \( \zeta(j) = e \) implies \( j = j_{k} \) for some \( k \in \seg{1}{m} \).  We construct a
  sequence of slim evaluation trees \( T^{(k)}_{\zeta} \) for \( \zeta \) and claim that
  \( T_{\zeta}^{(m)} \) is as desired.  Set \( T^{(0)}_{\zeta} = T_{\zeta} \).  Suppose
  \( T^{(k)}_{\zeta} \) has been constructed.  Let \( t_{0} \in T^{(k)}_{\zeta} \) be such that
  \( j_{k+1} \in R^{(k)}_{t_{0}} \).  If \( |R_{t_{0}}^{(k)}| = 1 \), that is if
  \( R_{t_{0}}^{(k)} = I^{(k)} = \{j_{k+1}\} \), then do nothing: set
  \( T^{(k+1)}_{\zeta} = T^{(k)}_{\zeta} \).
  
  Suppose \( |R_{t_{0}}^{(k)}| > 1 \). Let \( s \) be a symbol for a new node.  For all
  \( t \in T_{\zeta}^{(k)} \setminus \{t_{0}\} \) set
  \[ T^{(k+1)}_{\zeta} = T^{(k)}_{\zeta} \cup \{s\},\ I^{(k+1)}_{t} = I^{(k)}_{t},\ I^{(k+1)}_{s} =
  \seg{j_{k+1}}{j_{k+1}} = \{j_{k+1}\}. \]
  We need to turn the set \( T_{\zeta}^{(k+1)} \) into a tree.  For that let the ordering of the
  nodes in \( T^{(k+1)}_{\zeta} \) extend the ordering of the nodes of \( T^{(k)}_{\zeta} \).  To
  finish the construction it remains to define the place for the node \( s \) inside
  \( T_{\zeta}^{(k+1)} \) and an interval \( I^{(k+1)}_{t_{0}} \).
  \begin{itemize}
  \item If \( j_{k+1} \) is the minimal element of \( R^{(k)}_{t_{0}} \), i.e., if
    \( j_{k+1} = m(R^{(k)}_{t_{0}}) \), then set
    \( I^{(k+1)}_{t_{0}} = \seg{m(I_{t_{0}}^{(k)})+1}{M(I_{t_{0}}^{(k)})} \).  Let
    \( t_{1} \in T^{(k)}_{\zeta}\) be such that \( (t_{0},t_{1}) \in E(T^{(k)}_{\zeta}) \).  Set
    \((s,t_{1}) \in E(T^{(k+1)}_{\zeta}) \), or in other words, \( s \prec t_{1} \) in \( T_{\zeta}^{(k+1)} \).
  \item If \( j_{k+1} \) is the maximal element of \( R^{(k)}_{t_{0}} \), i.e., if
    \( j_{k+1} = M(R^{(k)}_{t_{0}}) \), then set
    \( I^{(k+1)}_{t_{0}} = \seg{m(I_{t_{0}}^{(k)})}{M(I_{t_{0}}^{(k)})-1} \).  Let
    \( t_{1} \in T^{(k)}_{\zeta}\) be such that \( (t_{0},t_{1}) \in E(T^{(k)}_{\zeta}) \).  Set
    \((s,t_{1}) \in E(T^{(k+1)}_{\zeta}) \), or in other words, \( s \prec t_{1} \) in \( T_{\zeta}^{(k+1)} \).
  \item If \( j_{k+1} \) is neither maximal nor minimal element of \( R^{(k)}_{t_{0}} \), then set
    \( I^{(k+1)}_{t_{0}} = I^{(k)}_{t_{0}} \) and \( (s,t_{0}) \in E(T^{(k+1)}_{\zeta}) \).
  \end{itemize}
  It is straightforward to check that \( T_{\zeta}^{(k+1)} \) is a slim evaluation tree for
  \( \zeta \).

  Finally, we define \( T^{*}_{\zeta} = T_{\zeta}^{m} \).  Then \( T^{*}_{\zeta} \) is a slim
  evaluation tree for \( \zeta \) and, by construction, if \( j \) is such that \( \zeta(j) = e \),
  then \( I_{t_{0}}^{*} = \{j\} \) for some \( t_{0} \in T^{*}_{\zeta}\).

  Let \( \{i_{k}\}_{k=1}^{p} \) be the list of external letters of \( \zeta \).  Set
  \begin{displaymath}
    F^{*}_{t} =
    \begin{cases}
      R^{*}_{t} \cap \{i_{k}\}_{k=1}^{p} & \textrm{if \( R^{*}_{t} \cap \{i_{k}\}_{k=1}^{p} \ne \emptyset \)};\\
      I^{*}_{t} & \textrm{otherwise}.
    \end{cases}
  \end{displaymath}

  Note that \( F^{*}_{t} \) is symmetrization admissible for all \( t \).  Let
  \( \{t_{j}\}_{j=1}^{N} \) be the list of nodes of \( T_{\zeta}^{*} \).  For any
  \( j \in \seg{1}{N} \) pick some \( l_{j} \) such that \( l_{j} \in F_{t_{j}} \).  Set
  \( \xi_{0} = \zeta \) and construct inductively
  \[ \xi_{k+1} = \symmet{\alpha}{\xi_{k}}{l_{k+1}}{F_{t_{k+1}}}. \]
  Finally, set \( \xi = \xi_{N} \).  It follows from Lemma
  \ref{sec:metrics-amalgams-symmetrization-properties} that \( (\alpha,\xi) \) is a slim \( f
  \)-pair and is symmetric with respect to \( T_{\zeta}^{*} \) by construction.  Lemma
  \ref{sec:metrics-amalgams-symmet-rho-decreases} implies
  \[ \rho(\alpha,\zeta) \ge \rho(\alpha,\xi) \] as desired.
\end{proof}

If \( A = \{e\} \), that is we have a free product without amalgamation, then for any
\( f \in \amalgam \) there is exactly one reduced word \( \alpha \in \words \) such that
\( \hat{\alpha} = f \).  This observation together with Remark
\ref{sec:metrics-amalgams-finitely-many-symmetric} gives us the following
\begin{corollary}
  \label{sec:prop-graev-metr-computability-graev-for-free-products}
  If \( A = \{e\} \), then for any \( f \in \amalgam \)
  \[ N(f) = \min\{\rho(\alpha,\xi) : (\alpha,\xi) \textrm{ is a symmetric reduced \( f
    \)-pair}\}.  \]
\end{corollary}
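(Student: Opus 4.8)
The plan is to read the corollary directly off Proposition \ref{sec:metrics-amalgams-graev-computability} and Remark \ref{sec:metrics-amalgams-finitely-many-symmetric}, exploiting the fact that in a free product \emph{without} amalgamation reduced forms are unique.

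First I would invoke Proposition \ref{sec:metrics-amalgams-graev-computability}: for every \( f \in \amalgam \) the pseudo-norm \( \norm(f) \) equals the infimum of \( \rho(\alpha,\xi) \) over all symmetric reduced \( f \)-pairs \( (\alpha,\xi) \), and by definition the first coordinate \( \alpha \) of any such pair is a reduced form of \( f \). Next I would use the hypothesis \( A = \{e\} \): as observed immediately before the corollary, each element of the free product has exactly one reduced word representing it, say \( \alpha_{0} \). Hence every symmetric reduced \( f \)-pair is of the form \( (\alpha_{0},\xi) \) for some word \( \xi \), and the infimum in Proposition \ref{sec:metrics-amalgams-graev-computability} is really taken only over such pairs with a single fixed first coordinate \( \alpha_{0} \).

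Then I would apply Remark \ref{sec:metrics-amalgams-finitely-many-symmetric}, which guarantees that for the fixed word \( \alpha_{0} \) there are only finitely many words \( \xi \) making \( (\alpha_{0},\xi) \) symmetric. Thus \( \norm(f) \) is an infimum over a \emph{finite} collection of real numbers. This collection is nonempty, since \( \norm(f) < \infty \): indeed \( (\alpha_{0}, e \concat \cdots \concat e) \), with the trivial word of length \( |\alpha_{0}| \), is an \( f \)-pair of finite \( \rho \)-value, so the equality in Proposition \ref{sec:metrics-amalgams-graev-computability} forces the family of symmetric reduced \( f \)-pairs to be nonempty. A finite nonempty set of reals attains its least element, so the infimum is in fact a minimum, which is exactly the assertion.

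There is no genuine obstacle here: all of the substance lives in Proposition \ref{sec:metrics-amalgams-graev-computability} and Remark \ref{sec:metrics-amalgams-finitely-many-symmetric}, and the only step requiring a moment's care is noting that the relevant family of symmetric reduced \( f \)-pairs is nonempty — which follows from the finiteness of \( \norm(f) \) — together with the uniqueness of reduced forms when \( A = \{e\} \).
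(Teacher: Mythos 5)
Your proposal is correct and follows essentially the same route the paper intends: uniqueness of reduced forms when \( A = \{e\} \) fixes the first coordinate, Remark \ref{sec:metrics-amalgams-finitely-many-symmetric} makes the set of admissible second coordinates finite, and Proposition \ref{sec:metrics-amalgams-graev-computability} turns the infimum into a minimum. (For nonemptiness you could also note directly that \( (\alpha_{0}, e^{\concat |\alpha_{0}|}) \) is itself a symmetric reduced \( f \)-pair, using the trivial one-node evaluation tree.)
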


We can now prove an analog of Proposition \ref{sec:graev-metric-groups-properties} for the Graev
metrics on the free products with amalgamation.

\begin{proposition}
  \label{sec:graev-metric-amalgam-properties}
  The Graev metric \( \dist \) has the following properties:
  \begin{enumerate}[(i)]
  \item\label{prop:graev-metric-amalgam-properties-item:extending-lipschitz} if \( (T,d_{T}) \) is a
    tsi group, \( \phi_{\lambda} : G_{\lambda} \to T \) are \( K \)-\lipschitz homomorphisms
    (\( K \) does not depend on \( \lambda \)) such that for all \( a \in A \) and all
    \( \lambda_{1}, \lambda_{2} \in \Lambda \)
    \[ \phi_{\lambda_{1}}(a) = \phi_{\lambda_{2}}(a), \]
    then there exist a unique \( K \)-\lipschitz homomorphism \( \phi : \amalgam \to T \) that
    extends \( \phi_{\lambda} \);
  \item\label{prop:graev-metric-amalgam-properties-item:induced-metric} let
    \( H_{\lambda} < G_{\lambda} \) be subgroups such that \( A < H_{\lambda} \) for all
    \( \lambda \) and think of \( \amalgamH \) as being a subgroup of \( \amalgam \).  Endow
    \( H_{\lambda} \) with the metric induced from \( G_{\lambda} \).  The Graev metric on
    \( \amalgamH \) is the same as the induced Graev metric from \( \amalgam \).  Moreover, if
    \( H_{\lambda} \) are closed subgroups, then \( \amalgamH \) is a closed subgroup \( \amalgam
    \);
  \item\label{prop:graev-metric-amalgam-properties-item:maximality} let \( \delta \) be any other
    tsi metric on the amalgam \( \amalgam \).  If \( \delta \) extends \( d \), then
    \( \delta(f_{1},f_{2}) \le \dist(f_{1},f_{2}) \) for all \( f_{1}, f_{2} \in \amalgam \), i.e.,
    \( \dist \) is maximal among all the tsi metrics that extend \( d \);
  \item\label{prop:graev-metric-amalgam-properties-item:density-character} if
    \( \Lambda' = \{\lambda \in \Lambda : G_{\lambda} \ne A\} \) and \( |\Lambda'| \ge 2 \), then
    \[ \chi(\amalgam) = \max \big\{ \aleph_{0}, \sup\{\chi(G_{\lambda}) : \lambda \in \Lambda\},
    |\Lambda'| \big\}.  \]
    In particular, if \( \Lambda \) is at most countable and \( G_{\lambda} \) are all separable,
    then the amalgam is also separable.
  \end{enumerate}
\end{proposition}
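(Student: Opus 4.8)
The plan is to prove the four items in turn, deducing \eqref{prop:graev-metric-amalgam-properties-item:extending-lipschitz} and \eqref{prop:graev-metric-amalgam-properties-item:maximality} directly from the definition of \( \norm \) together with Proposition \ref{sec:tsi-groups-tsi-criterion}, item \eqref{prop:graev-metric-amalgam-properties-item:induced-metric} from the reduction machinery of this section, and item \eqref{prop:graev-metric-amalgam-properties-item:density-character} by a counting/separation argument. For \eqref{prop:graev-metric-amalgam-properties-item:extending-lipschitz}: the homomorphism \( \phi \) exists and is unique by the universal property of the amalgamated free product, since the \( \phi_{\lambda} \) agree on \( A \) and therefore glue. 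To see \( \phi \) is \( K \)-\lipschitz it suffices, by two-sided invariance of \( d_{T} \), to bound \( d_{T}(\phi(f),e) \le K\,\norm(f) \). Given an \( f \)-pair \( (\alpha,\zeta) \) of length \( n \), we have \( \phi(f) = \prod_{i}\phi(\alpha(i)) \) and \( e = \prod_{i}\phi(\zeta(i)) \), so by Proposition \ref{sec:tsi-groups-tsi-criterion} \( d_{T}(\phi(f),e) \le \sum_{i} d_{T}(\phi(\alpha(i)),\phi(\zeta(i))) \); each summand is \( \le K\,d(\alpha(i),\zeta(i)) \) — directly by the \( K \)-\lipschitz property of \( \phi_{\lambda} \) when \( \alpha(i),\zeta(i) \) lie in a common factor, and otherwise by inserting \( \phi_{\lambda_{1}}(a) = \phi_{\lambda_{2}}(a) \) for \( a \in A \), using the triangle inequality, and taking the infimum over \( a \), which recovers the amalgam metric. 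Summing and taking the infimum over \( f \)-pairs gives the bound. Item \eqref{prop:graev-metric-amalgam-properties-item:maximality} is literally the same computation with \( T = \amalgam \), \( \phi = \id \), \( K = 1 \), and \( \delta \) in place of \( d_{T} \): again \( \delta \) satisfies Proposition \ref{sec:tsi-groups-tsi-criterion} and \( \delta(\alpha(i),\zeta(i)) \le d(\alpha(i),\zeta(i)) \) letter by letter because \( \delta \) restricts to \( d_{\lambda} \) on each \( G_{\lambda} \).

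For the metric equality in \eqref{prop:graev-metric-amalgam-properties-item:induced-metric}, write \( \norm^{H} \) for the norm built from \( (H_{\lambda}, d_{\lambda}|_{H_{\lambda}}) \) and identify \( \amalgamH \) with a subgroup of \( \amalgam \) by the standard normal-form fact that a word over \( \bigcup H_{\lambda} \) which is reduced in the \( H \)-sense is still reduced in the \( G \)-sense (so the induced map is injective by Lemma \ref{sec:triv-words-amalg-non-triviality-of-reduced-words}). The inequality \( \dist|_{\amalgamH} \le \dist^{H} \) is immediate, as every word over \( \bigcup H_{\lambda} \) is a word over \( \bigcup G_{\lambda} \) evaluating to the same element with the same value of \( \rho \). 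For the reverse inequality I would invoke Proposition \ref{sec:metrics-amalgams-graev-computability}: for \( f \in \amalgamH \), \( \norm(f) \) is the infimum of \( \rho(\alpha,\xi) \) over symmetric reduced \( f \)-pairs. In such a pair the reduced form \( \alpha \) has all letters in \( \bigcup H_{\lambda} \) (by Lemma \ref{sec:triv-words-amalg-reduced-forms}, since \( A \subseteq H_{\lambda} \), all reduced forms have \( A\alpha(i)A \subseteq H_{\lambda_{i}} \)), and then every letter of \( \xi \) also lies in \( \bigcup H_{\lambda} \): by Definition \ref{sec:metrics-amalgams-symmetric-word} and the admissibility conditions of Definition \ref{sec:metrics-amalgams-symmetrization-for-simple}, each position where \( \xi \) is non-trivial belongs to some symmetrization list, where \( \xi \) is either equal to an \( \alpha \)-letter or a product of inverses of \( \alpha \)-letters from a single factor. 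Since \( \cong \), evaluation trees, slimness and symmetrization-admissibility all refer only to \( A \) and to the factor containing a letter, such a pair is also a symmetric reduced \( f \)-pair for \( \amalgamH \) with \( \rho \) computed by the same metrics, giving \( \norm^{H}(f) \le \norm(f) \) and hence equality.

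The closedness assertion in \eqref{prop:graev-metric-amalgam-properties-item:induced-metric} is the part I expect to be the main obstacle. Having shown \( \dist^{H} = \dist|_{\amalgamH} \), the inclusion \( \amalgamH \hookrightarrow \amalgam \) is an isometric embedding, so the closure of \( \amalgamH \) in \( \overline{\amalgam} \) is a copy of \( \overline{\amalgamH} \), and it suffices to show an element of \( \amalgam \) that is a limit of elements of \( \amalgamH \) already lies in \( \amalgamH \). Equivalently, for \( f \notin \amalgamH \) with reduced form \( \alpha \) — so some \( \alpha(i) \in G_{\lambda_{i}} \setminus H_{\lambda_{i}} \), whence \( \delta := \min\{ d(\alpha(i), H_{\lambda_{i}}) : \alpha(i) \notin H_{\lambda_{i}} \} > 0 \) by closedness of the \( H_{\lambda} \) — one wants \( \dist(f,\amalgamH) \ge c(\delta) > 0 \). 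For \( g \in \amalgamH \) the element \( fg^{-1} \) is again outside \( \amalgamH \), so in any simple reduced \( (fg^{-1}) \)-pair the reduced form still contains a letter outside the relevant \( H_{\lambda} \); the delicate step is to turn this into a lower bound on \( \rho \) of order \( \delta \) uniformly in \( g \). I expect this to require a normal-form (Britton-type) analysis showing that the syllables of \( f \) lying outside \( \bigcup H_{\lambda} \) can only be altered, under right multiplication by elements of \( \amalgamH \), by \( H_{\lambda} \)-elements on each side inside a fixed factor — so their distance to \( H_{\lambda} \) stays \( \ge \delta \) — combined with a refinement of the bookkeeping in the proof of Proposition \ref{sec:metrics-amalgams-norm-lower-bound} (which on its own only yields a bound in terms of \( d(\cdot,A) \) and is not tight enough here).

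Finally, for \eqref{prop:graev-metric-amalgam-properties-item:density-character} set \( \kappa = \max\{\aleph_{0}, \sup_{\lambda}\chi(G_{\lambda}), |\Lambda'|\} \). For \( \chi(\amalgam) \le \kappa \): choose dense \( D_{\lambda} \subseteq G_{\lambda} \) with \( |D_{\lambda}| \le \max(\aleph_{0},\chi(G_{\lambda})) \), using a single \( D_{A} \) dense in \( A \) for all \( \lambda \notin \Lambda' \), so \( D := \bigcup_{\lambda} D_{\lambda} \) has size \( \le \kappa \); then the set of values of words over \( D \) has size \( \le \kappa \) and is dense, since a reduced form of any \( f \) is approximable letter-by-letter within \( \epsilon/n \) and \( \dist(f,\cdot) \) is bounded by \( \rho \) of the approximating pair. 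For \( \chi(\amalgam) \ge \kappa \): it is \( \ge \aleph_{0} \) because \( \amalgam \) is infinite (here \( |\Lambda'| \ge 2 \) is used, via powers \( (g_{1}g_{2})^{k} \)); it is \( \ge \chi(G_{\lambda}) \) because \( G_{\lambda} \hookrightarrow \amalgam \) isometrically (Theorem \ref{sec:metrics-amalgams-MAIN-Graev-metric-on-products}) and density character is monotone under passing to subspaces of metric spaces; and it is \( \ge |\Lambda'| \) by a separation argument: pick \( g_{\lambda} \in G_{\lambda} \setminus A \) for \( \lambda \in \Lambda' \) and put \( c_{\lambda} = d(g_{\lambda},A) > 0 \); for \( \lambda \ne \mu \) the word \( g_{\lambda} \concat g_{\mu}^{-1} \) is the reduced form of \( g_{\lambda}g_{\mu}^{-1} \), so \( \dist(g_{\lambda},g_{\mu}) = \norm(g_{\lambda}g_{\mu}^{-1}) \ge \min(c_{\lambda},c_{\mu}) \) by Proposition \ref{sec:metrics-amalgams-norm-lower-bound}; writing \( \Lambda'_{n} = \{\lambda : c_{\lambda} \ge 1/n\} \) we have \( \Lambda' = \bigcup_{n}\Lambda'_{n} \), each \( \{g_{\lambda} : \lambda \in \Lambda'_{n}\} \) is \( (1/n) \)-separated, and for every cardinal \( \theta < |\Lambda'| \) some \( \Lambda'_{n} \) has size \( > \theta \), forcing \( \chi(\amalgam) > \theta \); hence \( \chi(\amalgam) \ge |\Lambda'| \).
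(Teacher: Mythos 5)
Items (i), (iii), (iv), and the metric-equality part of (ii) are correct and follow essentially the paper's route. In (i) and (iii) the paper first restricts to \multipliable $f$-pairs (justified by Lemma~\ref{sec:metrics-amalgams-congruent-reduction}), so the extra step you describe --- inserting an $a \in A$ and recovering the amalgam metric by an infimum --- is not needed, but the calculation is otherwise the same. Your lower bound in (iv) is in fact slightly more careful than the paper's: the paper asserts a single $\epsilon_0 > 0$ with $|\{\lambda : \sup_{g \in G_\lambda} d(g,A) > \epsilon_0\}| = |\Lambda'|$, which can fail when $|\Lambda'|$ has countable cofinality, whereas your argument (``for every cardinal $\theta < |\Lambda'|$ some $\Lambda'_n$ has size $> \theta$, so $\chi > \theta$'') avoids this. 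The first part of (ii) correctly unpacks the paper's one-line appeal to Proposition~\ref{sec:metrics-amalgams-graev-computability}: for $f \in \amalgamH$ the symmetric reduced $f$-pairs computed inside $\amalgam$ already have all their letters in $\bigcup_\lambda H_\lambda$, and the two infima coincide.

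The genuine gap is the closedness assertion of (ii), which you flag as the hard part and then only outline. Your diagnosis that Proposition~\ref{sec:metrics-amalgams-norm-lower-bound} alone is too weak is correct, but the paper's fix is not a Britton-type normal-form refinement of it; it is a direct reuse of the slim evaluation tree and symmetrization machinery. Concretely: take $f \in \overline{\amalgamH} \setminus \amalgamH$ with reduced form $\alpha$ of length $n$; set $\epsilon_1 = \min_i d(\alpha(i),A)$, $\epsilon_2 = \min\{d(\alpha(i),H) : \alpha(i) \notin H\}$ where $H = \bigcup_\lambda H_\lambda$, and $\epsilon = \min(\epsilon_1,\epsilon_2) > 0$; let $i_0$ be the largest index with $\alpha(i_0) \notin H$ (all independent of the choice of reduced form by Lemma~\ref{sec:triv-words-amalg-reduced-forms}). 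Pick $h \in \amalgamH$ with $\dist(f,h) < \epsilon$ and a simple reduced $fh^{-1}$-pair $(\beta,\xi)$ with $\rho(\beta,\xi) < \epsilon$ and slim evaluation tree $T_\xi$; after renormalizing the reduced form one may assume $\alpha(i) = \beta(i)$ for $i < i_0$, $\alpha(i_0) = \beta(i_0)h_0$ for some $h_0 \in H$, and $\beta(i) \in H$ for $i > i_0$. Taking $t_0 \in T_\xi$ with $i_0 \in R_{t_0}$, one shows first $i_0 = m(R_{t_0})$ --- otherwise some index $i_1 < i_0$ in the gap has $\xi(i_1) \in A$, forcing $\rho(\beta,\xi) \ge d(\beta(i_1),\xi(i_1)) \ge d(\alpha(i_1),A) \ge \epsilon_1$. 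Then symmetrize: with $F_{t_0}$ the external letters of $\xi$ in $R_{t_0}$ and $\xi' = \symmet{\beta}{\xi}{i_0}{F_{t_0}}$, Lemma~\ref{sec:metrics-amalgams-symmet-rho-decreases} gives $\rho(\beta,\xi) \ge \rho(\beta,\xi')$; now $\xi'(i) \in H_{\lambda_0}$ for $i \in R_{t_0} \setminus \{i_0\}$, and since $\hat{\xi}'[R_{t_0}] = e$ also $\xi'(i_0) \in H_{\lambda_0}$, so $\rho(\beta,\xi) \ge d(\beta(i_0),\xi'(i_0)) \ge d(\alpha(i_0),H_{\lambda_0}) \ge \epsilon_2$, a contradiction. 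You need to carry this through; until then the proof of the ``moreover'' clause is missing.
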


\begin{proof}
  \eqref{prop:graev-metric-amalgam-properties-item:extending-lipschitz} By the universal property
  for the free products with amalgamation there is a unique extension of the homomorphisms
  \( \phi_{\lambda} \) to a homomorphism \( \phi : \amalgam \to T \), it remains to check that
  \( \phi \) is \( K \)-\lipschitz.  Let \( (\alpha,\zeta) \) be a \multipliable \( f \)-pair of length
  \( n \).  Then
  \begin{displaymath}
    \begin{aligned}
      K\rho(\alpha,\zeta) = & \sum_{i=1}^{n} K d(\alpha(i),\zeta(i)) \ge \sum_{i=1}^{n}d_{T}\big(
      \phi(\alpha(i)),\phi(\zeta(i)) \big)
      \ge \\
      & d_{T}(\phi(\hat{\alpha}),\phi(\hat{\zeta})) = d_{T}(\phi(f),e).
    \end{aligned}
  \end{displaymath}
  And therefore
  \[ K \dist(f,e) = \inf\{ K \rho(\alpha,\zeta) : (\alpha,\zeta) \textrm{ is a \multipliable \( f
    \)-pair}\} \ge d_{T}(\phi(f),e). \] Hence \( \phi \) is \( K \)-\lipschitz.

  \eqref{prop:graev-metric-amalgam-properties-item:induced-metric} Let \( \dist_{H} \) be the Graev
  metric on \( \amalgamH \) and \( \dist \) be the Graev metric on \( \amalgam \).  From Proposition
  \ref{sec:metrics-amalgams-graev-computability} it follows that
  \( \dist_{H} = \dist |_{\amalgamH} \).

  For the moreover part suppose that \( H_{\lambda} \) are closed in \( G_{\lambda} \) for all \( \lambda \in \Lambda
  \).  Set \( H = \bigcup_{\lambda \in \Lambda} H_{\lambda} \).  Note that \( H \) is a closed subset of \( G \) by the
  definition of the metric on \( G \).  Suppose towards a contradiction that there exists \( f \in \amalgam \) such that
  \( f \not \in \amalgamH \), but \( f \in \overline{\amalgamH} \).  Let \( \alpha \in \words \) be a reduced form of
  \( f \), and let \( n = |\alpha| \).  Set
  \begin{align*}
    \label{eq:1}
    &\epsilon_{1} = \min \big\{ d(\alpha(i), A) : i \in \seg{1}{n} \big\}, \\
    &\epsilon_{2} = \min \big\{ d(\alpha(i), H) : i \in \seg{1}{n},\ \alpha(i) \not \in H \big \}.
  \end{align*}
  Note that \( \epsilon_{1} > 0 \) and \( \epsilon_{2} > 0 \).  Let \( i_{0} \in \seg{1}{n} \) be
  the largest such that \( \alpha(i_{0}) \not \in H \).  By Lemma
  \ref{sec:triv-words-amalg-reduced-forms} the numbers \( \epsilon_{i} \) and \( i_{0} \) are
  independent of the choice of the reduced form \( \alpha \).  Set
  \( \epsilon = \min\{\epsilon_{1},\epsilon_{2}\}. \) Let \( h \in \amalgamH \) be such that
  \( \dist(f,h) < \epsilon \).  By Lemma \ref{sec:metrics-amalgams-simple reduced-reduction} there
  is a simple reduced \( fh^{-1} \)-pair \( (\beta, \xi) \) such that
  \( \rho(\beta,\xi) < \epsilon \).  Let \( T_{\xi} \) be a slim evaluation tree for \( \xi \), and
  let \( t_{0} \in T_{\xi} \) be such that \( i_{0} \in R_{t_{0}} \).  It is easy to see that there
  is a word \( \alpha' \) such that \( \alpha' \) is a reduced form of \( f \),
  \( \alpha'(i) = \beta(i) \) for all \( i \in \seg{1}{i_{0}-1} \), and
  \( \alpha'(i_{0}) = \beta(i_{0}) \cdot h_{0} \) for some \( h_{0} \in H \).  Without loss of
  generality assume that \( \alpha' = \alpha \).  Note that \( \beta(i) \in H \) for all \( i > i_{0} \).

  We claim that \( i_{0} = m(R_{t_{0}}) \).  Suppose not.  Let \( j_{0} \in R_{t_{0}} \) be such that
  \( j_{0} < i_{0} \) and \( \seg{j_{0}+1}{i_{0}-1} \cap R_{t_{0}} = \emptyset \) (i.e., \( j_{0} \)
  is the predecessor of \( i_{0} \) in \( R_{t_{0}} \)). Let \( I = \seg{j_{0}+1}{i_{0}-1} \).
  Because \( T_{\xi} \) is slim, \( \hat{\xi}[I] = e \).  Since \( \beta \) is reduced and \(
  (\beta,\xi) \) is \multipliable, there is
  \( i_{1} \in I \) such that \( \xi(i_{1}) \in A \) (in fact, \( \xi(i_{1}) = e \)).  But then
  \[ \rho(\beta,\xi) \ge d(\beta(i_{1}),\xi(i_{1})) \ge d(\alpha(i_{1}),A) \ge \epsilon,  \]
  contradicting the assumption \( \rho(\beta,\xi) < \epsilon \).  The claim is proved.

  Therefore \( i_{0} = m(R_{t_{0}}) \).  Let \( \{j_{k}\}_{k=1}^{m} \) be the list of external
  letters of \( \xi \), and let \( F_{t_{0}} = R_{t_{0}} \cap \{j_{k}\}_{k=1}^{m} \).  We know that
  \( \xi(i_{0}) \not \in A \), since otherwise \( \rho(\beta,\xi) \ge \epsilon \).  Thus
  \( i_{0} \in F_{t_{0}} \).  Let \( \xi' = \symmet{\beta}{\xi}{i_{0}}{F_{t_{0}}} \).  By Lemma
  \ref{sec:metrics-amalgams-symmet-rho-decreases} \( \rho(\beta,\xi) \ge \rho(\beta,\xi') \).
  Since \( \beta(i) \in H \) for all \( i > i_{0} \), we get \( \xi'(i) \in H \) for all \( i \in
  R_{t_{0}} \setminus \{i_{0}\} \).  Let \( \lambda_{0} \) be such
  that \( \xi'(i) \in H_{\lambda_{0}} \) for all \( i \in R_{t_{0}} \setminus \{i_{0}\} \).  
  Since \( \hat{\xi}'[R_{t_{0}}] = e \), it follows
  that \( \xi'(i_{0}) \in H_{\lambda_{0}} \) as well.  Finally, we get
  \[ \rho(\beta,\xi) \ge \rho(\beta,\xi') \ge d(\beta(i_{0}),\xi'(i_{0})) \ge d(\alpha(i_{0}),H_{\lambda_{0}}) \ge
  \epsilon,  \] contradiction the choice of \( (\beta,\xi) \).  Therefore there is no \( f \in
  \overline{\amalgamH} \) such that \( f \not \in \amalgamH \).

  \eqref{prop:graev-metric-amalgam-properties-item:maximality} Let \( f \in \amalgam \) be given,
  let \( (\alpha,\zeta) \) be a \multipliable \( f \)-pair of length \( n \).  Since \( \delta \)
  extends \( d \), we get
  \[ \delta(f,e) \le \sum_{i=1}^{n} \delta(\alpha(i), \zeta(i)) = \sum_{i=1}^{n}
  d(\alpha(i),\zeta(i)). \]
  By taking the infimum over all such pairs \( (\alpha,\zeta) \) we get
  \( \delta(f,e) \le \dist(f,e) \).  By the left invariance
  \( \delta(f_{1},f_{2}) \le \dist(f_{1},f_{2}) \) for all \( f_{1}, f_{2} \in \amalgam \).

  \eqref{prop:graev-metric-amalgam-properties-item:density-character} If \( |\Lambda'| \ge 2 \), then
  \( \amalgam \) is an infinite metric space, therefore \( \chi(\amalgam) \ge \aleph_{0} \).  Since
  \( G_{\lambda} < \amalgam \), it follows that \( \chi(\amalgam) \ge \chi(G_{\lambda}) \).  We now
  show that \( \chi(\amalgam) \ge |\Lambda'| \).  It is enough to consider the case
  \( |\Lambda'| \ge \aleph_{0} \).  There is an \( \epsilon_{0} > 0 \) such that
  \[ \big| \big\{ \lambda \in \Lambda : \sup\{d(g,A) : g \in G_{\lambda}\} > \epsilon_{0} \big\} \big| = |\Lambda'|.  \]
  For any such \( \lambda \) choose a \( g_{\lambda} \in G_{\lambda} \) such that
  \( d(g_{\lambda}, A) > \epsilon_{0}\).  The family \( \{g_{\lambda}\}_{\lambda \in \Lambda} \) is
  \( 2 \epsilon_{0} \)-separated and hence \( \chi(\amalgam) \ge |\Lambda'| \).

  Finally, for the reverse inequality, let \( F_{\lambda} \subseteq G_{\lambda} \) be dense sets
  such that \( |F_{\lambda}| = \chi(G_{\lambda}) \) and \( F_{\lambda_{1}} \cap A = F_{\lambda_{2}} \cap A \) for all \(
  \lambda_{1}, \lambda_{2} \in \Lambda \).  The set
  \[ \Big\{ \hat{\alpha} : \alpha \in \word{\bigcup_{\lambda \in
      \Lambda}F_{\lambda}} \Big\} \] 
  is dense in
  \( \amalgam \) and
  \[ \Big| \word{\bigcup_{\lambda \in \Lambda} F_{\lambda}} \Big| = 
  \max \Big\{\aleph_{0}, \sup\{\chi(G_{\lambda}) : \lambda \in \Lambda\}, |\Lambda'| \Big\}.  \qedhere \]
\end{proof}

\subsection{Factors of Graev metrics.}
\label{sec:graev-metr-amalg-factors}

Note that one can naturally view \( G \) as a pointed metric space \( (G,e,d) \), and the identity
map \( G \to \amalgam \) is \( 1 \)-\lipschitz  (in fact, we have shown in Theorem
\ref{sec:metrics-amalgams-MAIN-Graev-metric-on-products} that it is an isometric embedding).  We can
construct the Graev metric on the free group \( (F(G),d_{F}) \), and by item
(\ref{prop:graev-metric-group-properties-item:extending-lipschitz}) of Proposition
\ref{sec:graev-metric-groups-properties} there is a \( 1 \)-\lipschitz homomorphism
\[ \phi : F(G) \to \amalgam \]
such that \( \phi(g) = g \) for all \( g \in G \).  Since \( G \) generates \( \amalgam \), the map
\( \phi \) is onto.  Let \( \nsbg = \mathrm{ker}(\phi) \) be the kernel of this homomorphism.  If
\( d_{0} \) is the factor metric on \( F(G) / \nsbg \) (see the remark after Proposition
\ref{sec:tsi-groups-factor-metric}), then \( (F(G) / \nsbg, d_{0}) \) is a tsi group and
\( F(G) / \nsbg \) is isomorphic to \( \amalgam \) as an abstract group.

\begin{proposition}
  \label{sec:metrics-amalgams-factor-isometry}
  In the above setting \( (F(G)/\nsbg, d_{0}) \) is \emph{isometrically} isomorphic to
  \( (\amalgam, \dist) \).
\end{proposition}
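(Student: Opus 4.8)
The plan is to reduce the statement to a single identity between the Graev norm \( \norm \) on \( \amalgam \) and the factor norm on \( F(G)/\nsbg \), and then to read that identity off from the word-level definitions of the two metrics. Since \( \phi \) induces a group isomorphism \( F(G)/\nsbg \to \amalgam \) and both \( d_{0} \) and \( \dist \) are two-sided invariant, it suffices to prove that for every \( f \in \amalgam \)
\[ \norm(f) = \inf\{\, d_{F}(g',e) : g' \in F(G),\ \phi(g') = f \,\}. \]
Indeed, granting this, for \( g_{1},g_{2} \in F(G) \) two-sided invariance of \( d_{F} \) gives \( d_{0}(g_{1}\nsbg,g_{2}\nsbg) = \inf\{ d_{F}(gh,e) : h \in \nsbg \} \) with \( g = g_{2}^{-1}g_{1} \); the set \( \{ gh : h \in \nsbg \} \) is exactly \( \phi^{-1}\big(\phi(g_{2})^{-1}\phi(g_{1})\big) \), while on the amalgam side \( \dist(\phi(g_{1}),\phi(g_{2})) = \norm\big(\phi(g_{2})^{-1}\phi(g_{1})\big) \). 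So the displayed identity yields \( d_{0}(g_{1}\nsbg,g_{2}\nsbg) = \dist(\phi(g_{1}),\phi(g_{2})) \), i.e.\ the induced isomorphism is isometric.

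For the inequality \( \ge \) I would use that \( \phi \) is \( 1 \)-\lipschitz (this is how \( \phi \) was obtained, via Proposition~\ref{sec:graev-metric-groups-properties}\eqref{prop:graev-metric-group-properties-item:extending-lipschitz}): for any \( g' \in \phi^{-1}(f) \), \( \norm(f) = \dist(\phi(g'),e) \le d_{F}(g',e) \). For the inequality \( \le \) I would take any \( f \)-pair \( (\alpha,\zeta) \), so \( \alpha,\zeta \) are words over the alphabet \( G \) of equal length with \( \hat\alpha = f \) and \( \hat\zeta = e \), the evaluation being in \( \amalgam \). View \( \alpha \) and \( \zeta \) as words over \( \env{G} \) (legitimate since \( G \subseteq \env{G} \)) and let \( g_{\alpha}, g_{\zeta} \in F(G) \) denote their images under the evaluation map \( \word{\env{G}} \to F(G) \). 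Since \( \phi \) is a homomorphism with \( \phi|_{G} = \id \), we have \( \phi(g_{\alpha}) = f \) and \( \phi(g_{\zeta}) = e \), so \( g' := g_{\alpha}g_{\zeta}^{-1} \in \phi^{-1}(f) \). Testing the definition of the Graev metric on \( F(G) \) with the pair of words \( (\alpha,\zeta) \) gives \( d_{F}(g',e) = d_{F}(g_{\alpha},g_{\zeta}) \le \rho(\alpha,\zeta) \); here the functional \( \rho \) is literally the one appearing in the definition of \( \norm \), because the pointed metric space underlying \( F(G) \) is \( (G,e,d) \) with \( d \) the amalgam metric on \( G \). Taking the infimum over all \( f \)-pairs, and using the \( f \)-pair form of the definition of \( \norm \) from Section~\ref{sec:reductions}, gives \( \inf\{ d_{F}(g',e) : \phi(g') = f \} \le \norm(f) \).

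I do not expect a genuine obstacle: once the reduction to the displayed identity is set up, both inequalities are short. The only point requiring care is the bookkeeping around the two distinct evaluation maps — into \( \amalgam \) and into \( F(G) \) — together with the observation that the \( \rho \)-functional used in the Graev construction on \( F(G) \) agrees, on words over \( G \), with the one used to define \( \norm \) on \( \amalgam \) (both are sums of \( d \)-distances between corresponding letters, with \( d \) the amalgam metric). Making these identifications explicit reduces the rest to routine verification.
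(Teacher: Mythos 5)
Your proof is correct, and it takes a genuinely different and noticeably shorter route than the paper's. For the inequality \( \dist(f,e)\le d_{0}(f'\nsbg,\nsbg) \) the paper replaces an arbitrary \( h\in\nsbg \) by \( h^{\sharp} \) via Proposition~\ref{sec:free-groups-over-1}, and then unpacks the Graev metric on \( F(G) \) through the match formula of Theorem~\ref{sec:graev-metric-groups-graev-metric-computation}; you obtain the same inequality in one line from the \( 1 \)-\lipschitz property of \( \phi \), which the paragraph preceding the proposition already records. For \( d_{0}\le\dist \) the paper appeals to maximality (Proposition~\ref{sec:graev-metric-amalgam-properties}\eqref{prop:graev-metric-amalgam-properties-item:maximality}), which is applicable only once the first inequality has been used to confirm that \( d_{0} \) restricts to \( d \) on \( G \); you instead produce, from each \( f \)-pair \( (\alpha,\zeta) \), an explicit preimage \( g_{\alpha}g_{\zeta}^{-1}\in\phi^{-1}(f) \) with \( d_{F}(g_{\alpha}g_{\zeta}^{-1},e)\le\rho(\alpha,\zeta) \), using only the definition of the Graev metric on \( F(G) \) and the agreement of the two \( \rho \)-functionals on words over \( G\subseteq\env{G} \). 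The upshot is that your argument avoids Subsection~\ref{sec:free-groups-over} and Theorem~\ref{sec:graev-metric-groups-graev-metric-computation} entirely and is self-contained in a way the paper's is not. The only slip is cosmetic: the labels ``\( \ge \)'' and ``\( \le \)'' on your two inequalities are interchanged (the Lipschitz step yields \( \norm(f)\le\inf \), the \( f \)-pair step yields \( \inf\le\norm(f) \)); the mathematics underneath each label is right.
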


\begin{proof}
  We recall the definition of the factor metric: for \( f_{1}\nsbg, f_{2} \nsbg \in F(G)/\nsbg \)
  \[ d_{0}(f_{1}\nsbg, f_{2}\nsbg) = \inf\{d_{F}(f_{1}h_{1},f_{2}h_{2}) : h_{1}, h_{2}\in
  \nsbg\}.  \]
  Of course, by construction \( F(G)/\nsbg \) is isomorphic to \( \amalgam \) and we check that the
  natural isomorphism is an isometry.  

  Let \( f \in \amalgam \), and let \( w \in \words \) be a reduced form of \( f \).  We can
  naturally view \( w \) as a reduced form of the element in \( F(G) \), call it \( f' \).  
  It is enough to show
  that for any such \( f \) and \( f' \) we have
  \[ d_{0}(f'\nsbg, \nsbg) = \dist(f,e).  \] 
  Note that if \( h \in \nsbg \), then \( h^{\sharp} \in \nsbg\) (for the definition of \( h^{\sharp} \) see
  Subsection \ref{sec:free-groups-over}).  Therefore by Proposition \ref{sec:free-groups-over-1}
     \[ d_{0}(f'\nsbg, \nsbg) = \inf\{ d_{F}(f'h,e) : h \in \nsbg \} = \inf\{d_{F}(f'h^{\sharp},e) :
     h \in \nsbg \}.\]
  Let \( h \in \nsbg \) and \( \gamma \in \word{G} \) be the reduced form of \( h^{\sharp} \in F(G) \), we claim that
  \[ d_{F}(f'h^{\sharp},e) = \inf \Big\{ \rho \big( w \concat \gamma, (w \concat \gamma)^{\theta} \big) :
  \textrm{\( \theta \) is a match on \( \seg{1}{|w \concat \gamma|} \)} \Big\}. \]
  In general, \( w \concat \gamma \) may not be reduced, so let \( w = w_{0} \concat \alpha \),
  \( \gamma = \alpha^{-1} \concat \gamma_{0} \) be such that \( w_{0} \concat \gamma_{0} \) is reduced.  By Theorem
  \ref{sec:graev-metric-groups-graev-metric-computation}
  \[ d_{F}(f'h^{\sharp},e) = \inf \Big\{ \rho \big( w_{0} \concat \gamma_{0}, (w_{0} \concat \gamma_{0})^{\theta}
  \big) : \textrm{\( \theta \) is a match on \( \seg{1}{|w_{0} \concat \gamma_{0}|} \)} \Big\}. \]
  To see the claim it remains to note that for any match \( \theta \) on \( \seg{1}{|w_{0} \concat \gamma_{0}|} \) there
  is a canonical match \( \theta' \) on \( \seg{1}{|w \concat \gamma|} \) such that
  \[ \rho \big(w_{0} \concat \gamma_{0}, (w_{0} \concat \gamma_{0})^{\theta} \big) = \rho\big(w \concat \gamma, (w \concat
  \gamma)^{\theta'}\big). \] 
  The match \( \theta' \) can formally be defined by
  \begin{displaymath}
    \theta'(i) =
    \begin{cases}
      \theta(i)            & \textrm{if \( i \le |w_{0}| \) and \( \theta(i) \le |w_{0}| \)},\\
      \theta(i)+2|\alpha| & \textrm{if \( i \le |w_{0}| \) and \( \theta(i) >  |w_{0}| \)},\\
      2|w| - i + 1        & \textrm{if \( |w_{0}| < i \le |w_{0}|+2|\alpha| \)},\\
      \theta(i-2|\alpha|) & \textrm{if \( i  > |w_{0}| + 2|\alpha| \)  and \( \theta(i-2|\alpha|) \le |w_{0}| \)},\\
      \theta(i-2|\alpha|)+2|\alpha| & \textrm{if \( i  > |w_{0}| + 2|\alpha| \) and \( \theta(i-2|\alpha|) >  |w_{0}| \)}.\\
    \end{cases}
  \end{displaymath}
  Therefore
  \[ d_{F}(f'h^{\sharp},e) = \inf \Big\{ \rho \big( w \concat \gamma, (w \concat \gamma)^{\theta} \big) :
  \textrm{\( \theta \) is a match on \( \seg{1}{|w \concat \gamma|} \)} \Big\}. \]
  Since \( w, \gamma \in \word{G} \) and since \( \hat{\gamma} = e \), we get
  \( \dist(f,e) \le d_{0}(f'\nsbg,\nsbg). \) Since \( f \) was arbitrary and because of the
  left invariance of the metrics \( \dist \) and \( d_{0} \), we get \( \dist \le d_{0} \).

  For the reverse inequality note that \( d_{0} \) is a two-sided invariant metric on \( \amalgam \)
  and it extends the metric \( d \) on \( G \), therefore by item
  (\ref{prop:graev-metric-amalgam-properties-item:maximality}) of Proposition
  \ref{sec:graev-metric-amalgam-properties} we have \( d_{0} \le \dist \) and hence
  \( d_{0} = \dist \).
\end{proof}

\subsection{Graev metrics for products of Polish groups}
\label{sec:graev-metr-prod}

We would like to note that the construction of metrics on the free products with amalgamation works
well with respect to group completions.  Let us be more precise.  Suppose we start with tsi groups
\( (G_{\lambda}, d_{\lambda}) \) and a common closed subgroup \( A < G_{\lambda} \), assume
additionally that all the groups \( G_{\lambda} \) are complete as metrics spaces.  The group
\( (\amalgam, \dist) \), in general, is not complete, so let's take its group completion (for tsi
groups this is the same as the metric completion), which we denote by
\( (\overline{\amalgam}, \dist) \).  We have an analog of item
(\ref{prop:graev-metric-amalgam-properties-item:extending-lipschitz}) of Proposition
\ref{sec:graev-metric-amalgam-properties} for complete tsi groups.  But first we need a simple
lemma.

\begin{lemma}
  \label{sec:further-remarks-extension-to-complete-group}
  Let \( (H_{1}, d_{1}) \) and \( (H_{2}, d_{2}) \) be complete tsi groups, \( \Lambda < H_{1}\) be
  a dense subgroup and \( \phi : \Lambda \to H_{2} \) be a \( K \)-\lipschitz homomorphism.  Then
  \( \phi \) extends uniquely to a \( K \)-\lipschitz homomorphism
  \[ \psi : H_{1} \to H_{2}.  \]
\end{lemma}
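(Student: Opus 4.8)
The plan is to extend $\phi$ by continuity in the standard way, using completeness of $H_2$ and density of $\Lambda$. First I would note that, being $K$-\lipschitz, $\phi$ is uniformly continuous and in particular maps Cauchy sequences to Cauchy sequences: if $(\lambda_n)$ is Cauchy in $\Lambda$, then $d_2(\phi(\lambda_n),\phi(\lambda_m)) \le K d_1(\lambda_n,\lambda_m)$ shows $(\phi(\lambda_n))$ is Cauchy in $H_2$, hence convergent. Given $h \in H_1$, pick a sequence $\lambda_n \to h$ in $\Lambda$ and set $\psi(h) = \lim_n \phi(\lambda_n)$. This is well defined, because if $\lambda_n' \to h$ is another such sequence then $d_1(\lambda_n,\lambda_n') \to 0$, so $d_2(\phi(\lambda_n),\phi(\lambda_n')) \to 0$ and the two limits agree ($H_2$ being metric, hence Hausdorff). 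Taking constant sequences shows $\psi|_\Lambda = \phi$.

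Next I would check that $\psi$ is $K$-\lipschitz. Given $h, h' \in H_1$, choose $\lambda_n \to h$ and $\lambda_n' \to h'$ in $\Lambda$. Using continuity of the metrics $d_1$ and $d_2$,
\[ d_2(\psi(h),\psi(h')) = \lim_n d_2\big(\phi(\lambda_n),\phi(\lambda_n')\big) \le \lim_n K d_1(\lambda_n,\lambda_n') = K d_1(h,h'). \]

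Then I would verify that $\psi$ is a homomorphism. Let $g, h \in H_1$ and choose $\lambda_n \to g$, $\mu_n \to h$ in $\Lambda$. By two-sided invariance of $d_1$,
\[ d_1(\lambda_n \mu_n, gh) \le d_1(\lambda_n\mu_n, g\mu_n) + d_1(g\mu_n, gh) = d_1(\lambda_n, g) + d_1(\mu_n, h) \longrightarrow 0, \]
so $\lambda_n\mu_n \to gh$; likewise, by two-sided invariance of $d_2$, $\phi(\lambda_n)\phi(\mu_n) \to \psi(g)\psi(h)$. Since $\phi$ is a homomorphism, $\phi(\lambda_n\mu_n) = \phi(\lambda_n)\phi(\mu_n)$ for every $n$, and passing to the limit yields $\psi(gh) = \psi(g)\psi(h)$.

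Finally, for uniqueness: any continuous homomorphism $H_1 \to H_2$ extending $\phi$ agrees with $\psi$ on the dense subgroup $\Lambda$, hence everywhere since $H_2$ is Hausdorff. There is no serious obstacle in this argument; the only points needing (minor) care are the joint continuity of multiplication — which here is immediate from two-sided invariance of the metrics rather than from general topological-group considerations — and the routine bookkeeping of moving limits through $d_1$, $d_2$, and the group operations.
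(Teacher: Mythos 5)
Your proof is correct and takes essentially the same approach as the paper: extend $\phi$ by continuity using density of $\Lambda$ and completeness of $H_2$, with the Lipschitz condition guaranteeing Cauchy images. The paper's version is terser (it declares the homomorphism property and uniqueness ``easy to see''), whereas you fill in those details explicitly — in particular the useful observation that two-sided invariance of the metrics makes convergence of products $\lambda_n\mu_n\to gh$ immediate — but there is no substantive difference in method.
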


\begin{proof}
  Let \( h \in H_{1} \) and let \( \{b_{n}\}_{n=1}^{\infty} \subseteq \Lambda \) be such that
  \( b_{n} \to h \).  Since \( \psi \) is \( K \)-\lipschitz, we have
  \[  d_{2}(\psi(b_{n}),\psi(b_{m})) \le K d_{1}(b_{n},b_{m}).  \]
  Hence \( \{\psi(b_{n})\}_{n=1}^{\infty} \) is a \( d_{2} \)-Cauchy sequence, and thus there is \(
  f \in H_{2}\) such that \( \psi(b_{n}) \to f \).  Set \( \psi(h) = f \).  This extends \( \psi \)
  to a map \( \psi : H_{1} \to H_{2} \) and it is easy to see that is extension is still \( K \)-\lipschitz.
\end{proof}

Combining the above result with item
\eqref{prop:graev-metric-amalgam-properties-item:extending-lipschitz} of Proposition
\ref{sec:graev-metric-amalgam-properties} we get

\begin{proposition}
  \label{sec:further-remarks-extension-lipschitz-comlete-groups}
  Let \( (T,d_{T}) \) be a complete tsi group, let \( \phi_{\lambda} : G_{\lambda} \to T \) be \( K
  \)-\lipschitz homomorphisms such that for all \( a \in A \) and all
  \( \lambda_{1}, \lambda_{2} \in \Lambda \)
  \[ \phi_{\lambda_{1}}(a) = \phi_{\lambda_{2}}(a).  \]
  There exist a unique \( K \)-\lipschitz homomorphism \( \phi : \overline{\amalgam} \to T \) such that
  \( \phi \) extends \( \phi_{\lambda} \) for all \( \lambda \).
\end{proposition}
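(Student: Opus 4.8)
The plan is simply to compose the two extension results already established. First I would invoke item \eqref{prop:graev-metric-amalgam-properties-item:extending-lipschitz} of Proposition \ref{sec:graev-metric-amalgam-properties}: since the homomorphisms \( \phi_{\lambda} : G_{\lambda} \to T \) are \( K \)-\lipschitz and agree on \( A \), they assemble into a unique \( K \)-\lipschitz homomorphism \( \phi_{0} : \amalgam \to T \) with \( \phi_{0}|_{G_{\lambda}} = \phi_{\lambda} \) for every \( \lambda \). This step consumes exactly the hypothesis that the \( \phi_{\lambda} \) coincide on the amalgamated subgroup.

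Next I would pass to the completion. By Proposition \ref{sec:tsi-groups-completion-tsi-group} the metric completion \( \overline{\amalgam} \) is a tsi group containing \( \amalgam \) as a dense subgroup, and \( T \) is complete by assumption. Applying Lemma \ref{sec:further-remarks-extension-to-complete-group} with \( H_{1} = \overline{\amalgam} \), \( H_{2} = T \), the dense subgroup \( \amalgam < \overline{\amalgam} \), and the homomorphism \( \phi_{0} \), I obtain a \( K \)-\lipschitz homomorphism \( \phi : \overline{\amalgam} \to T \) extending \( \phi_{0} \), hence extending every \( \phi_{\lambda} \). For uniqueness: any \( K \)-\lipschitz (in particular continuous) homomorphism \( \overline{\amalgam} \to T \) extending all the \( \phi_{\lambda} \) must agree with \( \phi \) on the subgroup generated by \( \bigcup_{\lambda} G_{\lambda} \), which is \( \amalgam \); being continuous and agreeing on the dense set \( \amalgam \), it agrees with \( \phi \) on all of \( \overline{\amalgam} \).

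There is no genuine obstacle here — the statement is essentially a concatenation of Proposition \ref{sec:graev-metric-amalgam-properties}\eqref{prop:graev-metric-amalgam-properties-item:extending-lipschitz} with Lemma \ref{sec:further-remarks-extension-to-complete-group}. The only point demanding any care is checking that the hypotheses of the lemma are met (density of \( \amalgam \) in \( \overline{\amalgam} \) and completeness of \( T \)), and noting that the conclusion of that lemma already delivers a \emph{homomorphism} defined on all of \( \overline{\amalgam} \) which restricts to \( \phi_{0} \); both are immediate, so the proof is short.
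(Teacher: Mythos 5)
Your proof is correct and follows exactly the paper's intended argument: the paper simply states that the proposition follows by combining Lemma \ref{sec:further-remarks-extension-to-complete-group} with item \eqref{prop:graev-metric-amalgam-properties-item:extending-lipschitz} of Proposition \ref{sec:graev-metric-amalgam-properties}, which is precisely your two-step composition. Your added remarks on verifying the hypotheses of the lemma and on uniqueness by density are sound and just flesh out what the paper leaves implicit.
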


This proposition together with item \eqref{prop:graev-metric-amalgam-properties-item:density-character} of Proposition
\ref{sec:graev-metric-amalgam-properties} shows that there are countable
coproducts in the category of tsi Polish metric groups and \( 1 \)-\lipschitz homomorphisms.

\subsection{Tsi groups with no Lie sums and Lie brackets}
\label{sec:tsi-groups-with-no-Lie}

In \cite{MR2541347} L. van den Dries and S. Gao gave an example of a group, which they denote by \( F \),
and a two-sided invariant metric \( d \) on \( F \) such that the completion \( (\overline{F}, d) \)
of this group has neither Lie sums nor Lie brackets.  More precisely, they constructed two
one-parameter subgroups
\[ A_{i} = \Big (f_{t}^{(i)} \Big)_{t \in \mathbb{R}} < \overline{F} \quad i =1,2, \]
such that neither Lie sum nor Lie bracket of \( A_{1} \) and \( A_{2} \) exist.  

Their group can be nicely explained in out setting.  It turns out that the group \( F \) that they
have constructed is isometrically isomorphic to the group \( \mathbb{Q} * \mathbb{Q} \) with the
Graev metric (and the metrics on the copies of the rationals are the usual absolute-value metrics).
The group completion of \( \mathbb{Q} * \mathbb{Q} \) is then the same as the group completion of
the group \( \mathbb{R} * \mathbb{R} \) with the Graev metric.  And moreover, \( A_{1} \) and
\( A_{2} \) are just the one-parameter subgroups given by the \( \mathbb{R} \) factors.

\section{Metrics on SIN groups}
\label{sec:free-prod-topol}

Recall that topological group is SIN if for every open neighborhood of the identity there is
a smaller open neighborhood \( V \subseteq G \) such that \( gVg^{-1} = V \) for all \( g \in G \).
SIN stands for Small Invariant Neighborhoods.  It is well-knows that a metrizable topological group
admits a compatible two-sided invariant metric if and only if it is a SIN group.

Suppose \( G_{\lambda} \) are metrizable topological groups that admit compatible two-sided
invariant metrics and \( A < G_{\lambda}\) is a common closed subgroup.  It is natural to ask whether one
can find compatible tsi metrics \( d_{\lambda} \) that agree on \( A \).  

\begin{question}
  \label{sec:further-remarks-metrics-agree-on-subgroup}
  Let \( G_{1} \) and \( G_{2} \) be metrizable SIN topological groups, and let \( A < G_{i} \) be a
  common closed subgroup.  Are there compatible tsi metrics \( d_{i} \) on \( G_{i} \) such that
  \[ d_{1}(a_{1},a_{2}) = d_{2}(a_{1},a_{2}) \] 
  for all \( a_{1}, a_{2} \in A \)?
\end{question}
We do not know the answer to this question.  Before discussing some partial results let us recall the
notion of a \birkhoff-Kakutani family of neighborhoods.

\begin{definition}
  \label{sec:further-remarks-birkhoff-kakutani-family}
  Let \( G \) be a topological group.  A family \( \{U_{i}\}_{i=0}^{\infty} \) of open
  neighborhoods of the identity \(e \in G \) is called \emph{\birkhoff-Kakutani} if the following
  conditions are met:
  \begin{enumerate}[(i)]
  \item \( U_{0} = G \);
  \item \( \bigcap_{i} U_{i} = e\);
  \item \( U_{i}^{-1} = U_{i} \);
  \item \( U_{i+1}^{3} \subseteq U_{i} \).
  \end{enumerate}
  If additionally
  \begin{enumerate}[(i)]
  \item[(v)] \( gU_{i}g^{-1} = U_{i} \) for all \( g \in G \),
  \end{enumerate}
  then the sequence is called \emph{conjugacy invariant}.
\end{definition}

It is well known (see, for example, \cite{MR2455198}) that a topological group \( G \) admits a
\birkhoff-Kakutani family if and only if it is metrizable.  Moreover, let
\( \{U_{i}\}_{i=0}^{\infty} \) be a \birkhoff-Kakutani family in a group \( G \), for
\( g_{1}, g_{2} \in G \) set
\[ \eta(g_{1},g_{2}) = \inf\{ 2^{-n} : g_{2}^{-1}g_{1} \in U_{n}\}, \]
\[ d(g_{1},g_{2}) = \inf\Big\{ \sum_{i=1}^{n-1} \eta(f_{i},f_{i+1}) : \{f_{i}\}_{i=1}^{n} \subseteq
G,\ f_{1} = g_{1}, f_{n} = g_{2}\Big\}.  \]
Then the function \( d \) is a compatible left invariant metric on \( G \) and for all \( g_{1}, g_{2} \in G \)
\begin{displaymath}
  \frac{1}{2} \eta(g_{1},g_{2}) \le d(g_{1},g_{2}) \le \eta(g_{1},g_{2}).  
\end{displaymath}
We call this metric \( d \) a \emph{\birkhoff-Kakutani metric} associated with the family \( \{U_{i}\} \).

A metrizable topological group admits a compatible tsi metric if and only if there is a conjugacy
invariant \birkhoff-Kakutani family, and moreover, if \( \{U_{i}\} \) is conjugacy invariant, then the metric \( d \)
constructed above is two-sided invariant.

\begin{proposition}
  \label{sec:further-remarks-common-bilipschitz-metric}
  Let \( G_{1} \) and \( G_{2} \) be metrizable SIN groups, let \( A < G_{i} \) be a common
  subgroup.  There are compatible tsi metrics \( d_{i} \) on \( G_{i} \) such that \( d_{1}|_{A} \)
  is bi-\lipschitz equivalent to \( d_{2}|_{A} \), i.e, there is \( K > 0 \) such that
  \[  \frac{1}{K} d_{1}(a_{1},a_{2}) \le d_{2}(a_{1},a_{2}) \le K d_{1}(a_{1},a_{2})  \]
  for all \( a_{1}, a_{2} \in A \).
\end{proposition}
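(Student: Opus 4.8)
The plan is to work through the explicit \birkhoff-Kakutani machinery recalled just before the proposition. I would construct, by a single simultaneous recursion, conjugacy invariant \birkhoff-Kakutani families $\{U^{(1)}_n\}_{n\ge 0}$ in $G_1$ and $\{U^{(2)}_n\}_{n\ge 0}$ in $G_2$ that are \emph{interleaved along $A$ with an index shift of one}, i.e.
\[ U^{(1)}_{n+1}\cap A\subseteq U^{(2)}_{n} \qquad\text{and}\qquad U^{(2)}_{n+1}\cap A\subseteq U^{(1)}_{n}\quad\text{for all }n\ge 0, \]
and then to take $d_i$ to be the \birkhoff-Kakutani metric associated with $\{U^{(i)}_n\}$, which is a compatible tsi metric on $G_i$. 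The one preliminary remark I would record is that $A$ carries a single topology, the one making it a topological subgroup of each $G_i$, so ``open in $A$'' is unambiguous and every open subset of $A$ is simultaneously the trace $O\cap A$ of an open $O\subseteq G_1$ and of an open $O'\subseteq G_2$; this is the bridge that lets the two recursions communicate.

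For the recursion I would also fix arbitrary compatible metrics $\rho_i$ on $G_i$, used only to force $\bigcap_n U^{(i)}_n=\{e\}$. Start with $U^{(i)}_0=G_i$. Given $U^{(i)}_0,\dots,U^{(i)}_n$ for $i=1,2$ (symmetric, $G_i$-conjugacy invariant, with $(U^{(i)}_{k})^3\subseteq U^{(i)}_{k-1}$, with $U^{(i)}_k$ inside the $\rho_i$-ball of radius $2^{-k+1}$, and the interleaving holding for indices $\le n$), I would produce $U^{(1)}_{n+1}$ as follows: choose an open $O\subseteq G_1$ with $O\cap A=U^{(2)}_n\cap A$; inside the open neighborhood $U^{(1)}_n\cap O\cap\{g:\rho_1(g,e)<2^{-n}\}$ of $e$ pick an open neighborhood $M$ with $M^3\subseteq U^{(1)}_n$ (continuity of multiplication), shrink $M$ to a $G_1$-conjugacy invariant open neighborhood using that $G_1$ is SIN, and intersect with its inverse to symmetrize; call the result $U^{(1)}_{n+1}$. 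Then $U^{(1)}_{n+1}$ is symmetric, $G_1$-conjugacy invariant, has $(U^{(1)}_{n+1})^3\subseteq U^{(1)}_n$, lies in the $\rho_1$-ball of radius $2^{-n}$, and, being contained in $O$, satisfies $U^{(1)}_{n+1}\cap A\subseteq O\cap A=U^{(2)}_n\cap A\subseteq U^{(2)}_n$. I would choose $U^{(2)}_{n+1}$ symmetrically, this time with an open $O'\subseteq G_2$ meeting $A$ in $U^{(1)}_n\cap A$. This keeps the invariants; the $\rho_i$-ball constraints make $\bigcap_n U^{(i)}_n=\{e\}$, so each $\{U^{(i)}_n\}$ is a conjugacy invariant \birkhoff-Kakutani family and $d_i$ is a compatible tsi metric.

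Finally I would deduce the bi-\lipschitz estimate from the inequalities $\tfrac12\eta_i(g_1,g_2)\le d_i(g_1,g_2)\le\eta_i(g_1,g_2)$, where $\eta_i(g,e)=\inf\{2^{-n}:g\in U^{(i)}_n\}$. By left invariance it suffices to compare $d_i(a,e)$ for $a\in A$, hence to compare $\eta_1|_A$ and $\eta_2|_A$. If $a\in A\cap U^{(1)}_n$ with $n\ge 1$, then $a\in U^{(1)}_n\cap A\subseteq U^{(2)}_{n-1}$, so $\eta_2(a,e)\le 2\cdot 2^{-n}$; taking the infimum over such $n$ (the cases $a=e$ and $a\in U^{(1)}_0\setminus U^{(1)}_1$ being immediate since $\eta_2(a,e)\le 2^0$) gives $\eta_2|_A\le 2\,\eta_1|_A$, and symmetrically $\eta_1|_A\le 2\,\eta_2|_A$. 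Chaining with $\tfrac12\eta_i\le d_i\le\eta_i$ yields $\tfrac14 d_1(a_1,a_2)\le d_2(a_1,a_2)\le 4\,d_1(a_1,a_2)$ for all $a_1,a_2\in A$, i.e.\ $K=4$ works.

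The step I expect to need the most care is the single recursion step: one must simultaneously impose the trace condition $U^{(1)}_{n+1}\cap A\subseteq U^{(2)}_n$, $G_1$-conjugacy invariance, symmetry, and the cube condition. The reason this goes through cleanly is that only a \emph{containment} of $U^{(1)}_{n+1}$ in a $G_1$-open set realizing the desired $A$-trace is required --- not an equality of traces, which would be incompatible with conjugacy invariance in general --- and intersecting the finitely many resulting open neighborhoods of $e$ still leaves an open neighborhood of $e$ to which the SIN property applies. I would double-check only that shrinking the neighborhood does not destroy the cube condition (it does not, since $(U')^3\subseteq U^3$ when $U'\subseteq U$) and that the $\rho_i$-ball constraints genuinely give a neighborhood basis at $e$, so that $d_i$ is compatible.
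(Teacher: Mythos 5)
Your proof is correct and follows essentially the same strategy as the paper: build conjugacy invariant \birkhoff-Kakutani families in $G_1$ and $G_2$ whose traces on $A$ are interleaved with a shift of one index, and read off the bi-\lipschitz bound from $\tfrac12\eta_i\le d_i\le\eta_i$. The only (cosmetic) difference is that you impose both trace containments $U^{(1)}_{n+1}\cap A\subseteq U^{(2)}_n$ and $U^{(2)}_{n+1}\cap A\subseteq U^{(1)}_n$ at every step, which is perfectly feasible since both $U^{(1)}_n$ and $U^{(2)}_n$ are already in hand, whereas the paper alternates which group receives the trace constraint; your variant yields $K=4$ where the paper's gives $K=8$, an irrelevant improvement since only some finite $K$ is claimed.
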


\begin{proof}
  Since \( G_{1} \) and \( G_{2} \) are metrizable, we can fix two compatible metrics \( \mu_{1} \)
  and \( \mu_{2} \) on \( G_{1} \) and \( G_{2} \) respectively such that \( \mu_{i} \)-\(\diam{G_{i}} < 1 \).
  We construct conjugacy invariant \birkhoff-Kakutani families \( \{U_{i}^{(j)}\}_{i=0}^{\infty} \)
  for \( G_{j} \), \( j = 1,2 \), such that
  \begin{enumerate}[(i)]
  \item \( U_{2i+1}^{(1)} \cap A \subseteq U_{2i}^{(2)} \cap A \);
  \item \( U_{2i+2}^{(2)} \cap A \subseteq U_{2i+1}^{(1)} \cap A \).
  \end{enumerate}
  For the base of construction let \( U_{0}^{j} = G_{j} \).  Suppose we have constructed
  \( \{U_{i}^{(j)}\}_{i=1}^{N} \) and suppose \( N \) is even (if \( N \) is odd, switch the roles of
  \( G_{1} \) and \( G_{2} \)).  If \( V = U_{N}^{(2)} \cap A \), then \( V \) is an open
  neighborhood of the identity in \( A \) and therefore there is an open set \( U \subseteq G_{1} \) such
  that \( U \cap A = V \).  Let \( U_{N+1}^{(1)} \subseteq G_{1} \) be any open neighborhood of the
  identity such that \( (U_{N+1}^{(1)})^{-1} = U_{N+1}^{(1)} \),
  \( gU_{N+1}^{(1)}g^{-1} = U_{N+1}^{(1)} \) for all \( g \in G_{1}\), \(
  \mu_{1}\)-\(\diam{U^{(1)}_{N+1}} < 1/N \) and
  \[ (U_{N+1}^{(1)})^{3} \subseteq U \cap U_{N}^{(1)}.  \]
  Such a \( U_{N+1}^{(1)} \) exists because \( G_{1} \) is SIN.  Set
  \( U_{N+1}^{(2)} \) to be any open symmetric neighborhood of \( e \in G_{2} \) such that
  \( (U_{N+1}^{(2)})^{3} \subseteq U_{N}^{(2)} \).

  It is straightforward to check that such sequences \( \{U_{i}^{(j)}\}_{i=1}^{\infty} \) indeed
  satisfy all the requirements.  If \( d_{j} \) are the \birkhoff-Kakutani metrics that
  correspond to the families \( \{U_{i}^{(j)}\} \), then for all \( a_{1}, a_{2} \in A \)
  \[ \frac{1}{4}\eta_{1}(a_{1},a_{2}) \le \eta_{2}(a_{1}, a_{2}) \le 4 \eta_{1}(a_{1},a_{2}), \]
  whence
  \[ \frac{1}{8}d_{1}(a_{1},a_{2}) \le d_{2}(a_{1}, a_{2}) \le 8 d_{1}(a_{1},a_{2}),  \]
  and therefore \( d_{1}|_{A} \) and \( d_{2}|_{A} \) are bi-\lipschitz equivalent with a constant
  \( K = 8 \).
\end{proof}

\begin{remark}
  \label{sec:further-remarks-infinite-bilipschitz}
  It is, of course, straightforward to generalize the above construction to the case of finitely
  many groups \( G_{j} \), but we do not know if the result is true for infinitely many
  groups \( G_{j} \).
\end{remark}

\begin{remark}
  \label{sec:further-remarks-constant-multiply}
  Note that one can always multiply the metric \( d_{2} \) by a suitable constant (which is \( 8 \) in
  the above construction) to assure that \( d_{1}|_{A} \le d_{2}|_{A} \).  We use this observation later in
  Remark \ref{sec:further-remarks-normal-subgroup}.
\end{remark}

\begin{proposition}
  \label{sec:further-remarks-extension-left-invariant}
  Let \( G \) be a topological group, \( A < G \) be a closed subgroup of \( G \), \( N_{G} \) be a
  tsi norm on \( G \), \( N_{A} \) be a tsi norm on \( A \) and suppose that for all \( a \in A \)
  \[ N_{A}(a) \le N_{G}(a).  \] 
  There exists a compatible norm \( N \) on \( G \) such that
  \begin{enumerate}[(i)]
  \item\label{ext-left-inv:extension} \( N \) extends \( N_{A} \), that is \( N_{A}(a) = N(a) \) for
    all \( a \in A \);
  \item\label{ext-left-inv:domin} \( N \le N_{G} \).
  \end{enumerate}
  If, moreover, \( A \) is a normal subgroup of \( G \), then \( N \) is two-sided invariant.
\end{proposition}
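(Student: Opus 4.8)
The plan is to take $N$ to be the infimal convolution of $N_G$ and $N_A$, that is, to define
\[ N(g) = \inf\{\, N_G(ga^{-1}) + N_A(a) : a \in A \,\} \quad \text{for } g \in G, \]
and then to verify that this $N$ has all the listed properties. First I would check that $N$ is a norm. Non-negativity and $N(e)=0$ are immediate (take $a=e$ in the latter). Symmetry and the triangle inequality are where the hypotheses enter: both use the two-sided invariance of $N_G$ in the form $N_G(xy)=N_G(yx)$ (equivalently, conjugation invariance), which is a consequence of Proposition \ref{sec:tsi-groups-tsi-criterion}. For symmetry one rewrites $N_G(g^{-1}a^{-1})=N_G((ag)^{-1})=N_G(ag)=N_G(ga)$ and re-indexes $a\mapsto a^{-1}$, using $N_A(a)=N_A(a^{-1})$. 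For subadditivity, given $a,b\in A$ one writes $N_G(gh(ab)^{-1})=N_G\big((ga^{-1})(ahb^{-1}a^{-1})\big)\le N_G(ga^{-1})+N_G(hb^{-1})$, adds $N_A(ab)\le N_A(a)+N_A(b)$, and takes infima over $a$ and $b$ independently.

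Next I would establish \eqref{ext-left-inv:extension}, \eqref{ext-left-inv:domin}, and positive-definiteness. Taking $a=e$ gives $N\le N_G$, which is \eqref{ext-left-inv:domin}. For $a_0\in A$ the same choice gives $N(a_0)\le N_A(a_0)$; conversely, for any $a\in A$ one has $a_0a^{-1}\in A$, so $N_G(a_0a^{-1})+N_A(a)\ge N_A(a_0a^{-1})+N_A(a)\ge N_A(a_0)$ — this is the only place the assumption $N_A\le N_G|_A$ is needed — whence $N|_A=N_A$, which is \eqref{ext-left-inv:extension}. For positive-definiteness suppose $N(g)=0$ and pick $a_n\in A$ with $N_G(ga_n^{-1})+N_A(a_n)\to 0$; then $ga_n^{-1}\to e$ in $G$, so $a_n\to g$, and since $A$ is closed we get $g\in A$, so $N(g)=N_A(g)=0$ forces $g=e$.

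Then I would prove that $N$ is compatible. On one hand $\{g:N(g)<\epsilon\}\supseteq\{g:N_G(g)<\epsilon\}$, an open neighbourhood of $e$. On the other hand, given any neighbourhood $U$ of $e$, choose a symmetric neighbourhood $V=\{N_G<\eta\}$ with $V^2\subseteq U$, and — using that $N_A$ and $N_G|_A$ induce the same topology on $A$ — an $\epsilon\le\eta$ with $\{a\in A:N_A(a)<\epsilon\}\subseteq\{a\in A:N_G(a)<\eta\}$; then $N(g)<\epsilon$ yields $a\in A$ with $ga^{-1},a\in V$, so $g=(ga^{-1})a\in U$. Thus the $N$-balls form a neighbourhood basis at $e$, and $N$ is a compatible norm on $G$.

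Finally, the normal case: if $A\triangleleft G$ then, for fixed $g\in G$, the map $a\mapsto gag^{-1}$ is a bijection of $A$ preserving $N_A$, so substituting $a=gbg^{-1}$ in the definition and using $N_G\big(ghg^{-1}(gbg^{-1})^{-1}\big)=N_G(ghb^{-1}g^{-1})=N_G(hb^{-1})$ gives $N(ghg^{-1})=N(h)$; together with left invariance this means the metric $d(x,y)=N(x^{-1}y)$ is two-sided invariant. I expect the main obstacle to be twofold: keeping the conjugation identities for $N_G$ straight throughout (they are the engine of symmetry, subadditivity, and the last clause), and, in that last clause, the fact that normality of $A$ is exactly what turns $a\mapsto gag^{-1}$ into an $N_A$-preserving bijection of $A$ on which the argument rests — one should make sure the two-sided invariance being used for $N_A$ when $A$ is normal is invariance under conjugation by all of $G$, since this is what the conclusion requires.
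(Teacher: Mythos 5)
Your construction is exactly the paper's: the paper defines $N(g)=\inf\{N_A(a)+N_G(a^{-1}g):a\in A\}$, and your $N(g)=\inf\{N_G(ga^{-1})+N_A(a):a\in A\}$ is the same thing since $N_G(a^{-1}g)=N_G(ga^{-1})$ by conjugation-invariance of $N_G$. The verifications of the pseudo-norm axioms, of items (\ref{ext-left-inv:extension}) and (\ref{ext-left-inv:domin}), of positive-definiteness via closedness of $A$, of compatibility, and of the normal case all run along the same lines as in the paper. (One phrasing slip: ``the same choice'' $a=e$ gives $N(a_0)\le N_G(a_0)$, not $N(a_0)\le N_A(a_0)$; the correct choice for that inequality is $a=a_0$, which is clearly what you meant.)

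The concern you raise at the end is not merely hygiene --- it is a genuine gap, and it is present in the paper's proof as well. The step $N(g_1gg_1^{-1})=\inf\{N_A(g_1^{-1}ag_1)+N_G(g_1^{-1}a^{-1}g_1g):a\in A\}$ silently uses $N_A(a)=N_A(g_1^{-1}ag_1)$ for $g_1\in G$. But ``$N_A$ is a tsi norm on $A$'' only gives invariance of $N_A$ under conjugation by elements of $A$; normality of $A$ in $G$ makes $a\mapsto g_1^{-1}ag_1$ a bijection of $A$ but does not make it $N_A$-preserving. In fact the ``moreover'' clause as stated is false without an extra hypothesis: take $G=S_4$, $A=V_4$ (normal), $N_G$ the discrete norm with value $3$ off the identity, and $N_A$ the tsi norm on $V_4\cong(\mathbb{Z}/2)^2$ with $N_A((12)(34))=1$ and $N_A((13)(24))=N_A((14)(23))=2$; then $N_A\le N_G|_A$, but conjugation by $(123)$ sends $(12)(34)$ to $(14)(23)$ and a direct computation gives $N((12)(34))=1\ne 2=N((14)(23))$, so $N$ is not conjugation-invariant. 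The conclusion does hold if one adds the hypothesis that $N_A$ is invariant under conjugation by all of $G$ (e.g.\ if $A$ is central, or if $N_A$ is the restriction of a tsi norm on $G$). Your flag is therefore well-placed; note that it also affects the application of this proposition in Remark \ref{sec:further-remarks-normal-subgroup}, since the norm $d_2|_A$ coming from the other factor has no reason to be $G_1$-conjugation-invariant.
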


\begin{proof}
  For \( g \in G \) set
  \[ N(g) = \inf\{N_{A}(a) + N_{G}(a^{-1}g) : a \in A\}.  \]
  We claim that \( N \) is a pseudo-norm on \( G \).
  \begin{itemize}
  \item \( N(e) = 0 \) is obvious.
  \item For any \( g \in G \) and any \( a \in A \) by the two-sided invariance of \( N_{G} \)
    \[ N_{A}(a) + N_{G}(a^{-1}g) = N_{A}(a^{-1}) + N_{G}(g^{-1}a) = N_{A}(a^{-1}) + N_{G}(ag^{-1}) \]
    and therefore \( N(g) = N(g^{-1}) \).
  \item If \( g_{1}, g_{2} \in G \), then
    \begin{displaymath}
      \begin{aligned}
        N(g_{1}g_{2}) =& \inf\{N_{A}(a) + N_{G}(a^{-1}g_{1}g_{2}) : a \in A\} = \\
        &\inf\{N_{A}(a_{1}a_{2}) + N_{G}(a_{2}^{-1}a_{1}^{-1}g_{1}g_{2}) : a_{1}, a_{2} \in A\} \le \\
        &\inf\{N_{A}(a_{1}) + N_{A}(a_{2}) + N_{G}(a_{1}^{-1}g_{1}) + N_{G}(g_{2}a_{2}^{-1}): a_{1}, a_{2} \in A\} =\\
        &\inf\{N_{A}(a_{1}) + N_{G}(a_{1}^{-1}g_{1}) : a_{1} \in A\} + \\
        &\inf\{N_{A}(a_{2}) + N_{G}(a_{2}^{-1}g_{2}) : a_{2} \in A\} = \\
        &N(g_{1}) + N(g_{2}).
      \end{aligned}
    \end{displaymath}
  \end{itemize}
  Next we show that \( N \) is a compatible pseudo-norm.  For a sequence
  \( \{g_{n}\}_{n=1}^{\infty} \subseteq G \) we have
  \begin{displaymath}
    \begin{aligned}
      N(g_{n}) \to 0 \iff & \exists \{a_{n}\}_{n=1}^{\infty} \subseteq A \quad N_{A}(a_{n}) +
      N_{G}(a_{n}^{-1}g_{n}) \to 0
      \iff \\
      & \exists \{a_{n}\}_{n=1}^{\infty} \subseteq A \quad a_{n} \to e \textrm{ and } a_{n}^{-1}g_{n} \to e \iff\\
      & g_{n} \to e.
    \end{aligned}
  \end{displaymath}
  In particular, \( N \) is a norm.

  \eqref{ext-left-inv:extension} Now we claim that \( N \) extends \( N_{A} \).  Let
  \( b \in A \).  Using \( N_{G} \ge N_{A} \) we get
  \begin{displaymath}
    \begin{aligned}
      N(b) = &\inf\{N_{A}(a) + N_{G}(a^{-1}b) : a \in A\} \ge\\
      &\inf\{N_{A}(a) + N_{A}(a^{-1}b) : a \in A\} \ge N_{A}(b).
    \end{aligned}
  \end{displaymath}
  On the other hand
  \[ N(b) \le N_{A}(b) + N_{G}(b^{-1}b) = N_{A}(b), \] and therefore \( N(b) = N_{A}(b) \).

  \eqref{ext-left-inv:domin} Finally, for any \( g \in G \) we have
  \begin{displaymath}
    \begin{aligned}
      N(g) = & \inf\{N_{A}(a) + N_{G}(a^{-1}g) : a \in A\} \le\\
      & \inf\{N_{G}(a) + N_{G}(a^{-1}g) : a \in A\} \le\\
      & N_{G}(e) + N_{G}(g) = N_{G}(g),
    \end{aligned}
  \end{displaymath}
  and therefore \( N \le N_{G} \).

  For the moreover part suppose that \( A \) is a normal subgroup.  If \( g_{1} \in G \), then
  \begin{displaymath}
    \begin{aligned}
      N(g_{1} g g_{1}^{-1}) = & \inf\{ N_{A}(a) + N_{G}(a^{-1}g_{1} g g_{1} ^{-1}) : a \in A\} = \\
      &\inf\{N_{A}(g_{1}^{-1}ag_{1}) + N_{G}(g_{1}^{-1}a^{-1}g_{1} g) : a \in A\} = N(g),
    \end{aligned}
  \end{displaymath}
  and so \( N \) is two-sided invariant.
\end{proof}

\begin{remark}
  \label{sec:further-remarks-normal-subgroup}
  Proposition \ref{sec:further-remarks-common-bilipschitz-metric} (with Remark
  \ref{sec:further-remarks-constant-multiply}) and Proposition
  \ref{sec:further-remarks-extension-left-invariant} together yield a positive answer to Question
  \ref{sec:further-remarks-metrics-agree-on-subgroup} when \( A \) is a normal subgroup of one of
  \( G_{j} \).
\end{remark}

It is natural to ask whether it is really necessarily to assume in Proposition
\ref{sec:further-remarks-extension-left-invariant} the existence of a norm \( N_{G} \) such that
\( N_{A} \le N_{G} \).  The following example shows that this assumption cannot be dropped.

\begin{example-nn}
  \label{sec:free-prod-topol-heisenberg}
  Let \( G \) be the discrete Heisenberg group
  \[ G = \left \{
    \begin{pmatrix}
      1 & a & b\\
      0 & 1 & c\\
      0 & 0 & 1
    \end{pmatrix} : a, b, c \in \mathbb{Z} \right \},
  \]
  and let \( A \) be the center of \( G \)
  \[ A = \left \{
    \begin{pmatrix}
      1 & 0 & b\\
      0 & 1 & 0\\
      0 & 0 & 1
    \end{pmatrix} : b \in \mathbb{Z} \right \}.
  \]
  The subgroup \( A \) is, of course, isomorphic to the group of integers \( \mathbb{Z} \).  Let
  \( d \) be a metric on \( A \) given by the absolute value: \( d(b_{1}, b_{2}) = |b_{1} - b_{2}| \).
  We claim that this tsi metric can not be extended to a tsi (in fact, even to a left invariant)
  metric on \( G \).  Indeed, suppose there is such an extension \( \dist \).  The group \( G \)
  is generated by the three matrices:
  \[ x =
  \begin{pmatrix}
    1 & 1 & 0\\
    0 & 1 & 0\\
    0 & 0 & 1
  \end{pmatrix}, \ y =
  \begin{pmatrix}
    1 & 0 & 0\\
    0 & 1 & 1\\
    0 & 0 & 1
  \end{pmatrix}, \ \textrm{and } z =
  \begin{pmatrix}
    1 & 0 & -1\\
    0 & 1 & 0\\
    0 & 0 & 1
  \end{pmatrix}. \]
  It is easy to check that \( z^{n^{2}} = [x^{n},y^{n}] = x^{n}y^{n}x^{-n}y^{-n}. \)  Therefore 
  \[ n^{2} = d(z^{(n^{2})},e) = \dist(z^{n^{2}},e) = \dist(x^{n}y^{n}x^{-n}y^{-n},e) \le 2n \big(\dist(x,e) + \dist(y,e)
  \big), \] 
  for all \( n \), which is absurd.

\end{example-nn}

\section{Induced metrics}
\label{sec:hnn-extensions}

In this section \( (G,d) \) denotes a tsi group, and \( A < G \) is a closed subgroup.  This section
is a preparation for the HNN construction, which is given in the next section.  Let
\( \langle t \rangle \) denote a copy of the free group on one element \( t \), i.e., a copy of the
integers, with the usual metric \( d(t^{m}, t^{n}) = |m-n| \).  The Graev metric on the free product
\( G * \langle t \rangle \) is denoted again by the letter \( d \).  Consider the subgroup of the
free product generated by \( G \) and \( tAt^{-1} \); it not hard to check that, in fact, as an
abstract group it is isomorphic to the free product \( G * tAt^{-1} \).  Thus we have two metrics on
the group \( G * tAt^{-1} \): one is just the metric \( d \), the other one is the Graev metric on
this free product; denote the latter by \( \dist \).  When are these two metrics the same?  It turns
out that they are the same if and only if the diameter of \( A \) is at most \( 1 \).  The proof of
this fact is the core of this section.

We can naturally view \( \word{G \cup tAt^{-1}} \) as a subset of
\( \word{G \cup \langle t \rangle} \) by treating a letter \( tat^{-1} \in tAt^{-1} \) as a word
\( t \concat a \concat t^{-1} \in \word{G \cup \langle t \rangle} \).  In what follows we identify
\( \word{G \cup tAt^{-1}} \) with a subset of \( \word{G \cup \langle t \rangle} \).

Let \( f \in G * tAt^{-1} \) be given and let \( \alpha \in \word{G \cup tAt^{-1}}\) be the reduced
form of \( f \).  Note that since we have a free product (no amalgamation), reduced form is unique.
The word \( \alpha \in \word{G \cup \langle t \rangle} \) can be written as
\[  \alpha = g_{1} \concat t \concat a_{1} \concat t^{-1} g_{2} \concat t \concat a_{2} \concat t^{-1} \concat \cdots
\concat t \concat a_{n} \concat t^{-1} \concat g_{n+1},   \]
where \( g_{i} \in G \), \( a_{i} \in A \), and also \( g_{1}\) or \( g_{n+1} \) may be absent. 

Lemma \ref{sec:metrics-amalgams-simple reduced-reduction} implies
\[ d(f,e) = \inf\{ \rho(\alpha,\zeta) : (\alpha,\zeta) \textrm{ is a \multipliable \( f \)-pair}\}, \]
and notice that the infimum is taken over all pairs with the same first coordinate \( \alpha \) --- the
reduced form of \( f \).  We can also impose some restrictions on \( \zeta \) and change the infimum to
a minimum, but we do not need this for a moment.

\textit{In the rest of the section \( \zeta, \xi, \delta \) denote words in the alphabet \( G \cup \langle t \rangle \)}.

\subsection{Hereditary words}
\begin{definition}
  \label{sec:hereditary-pair}
  A trivial word \( \zeta \in \word{G \cup \langle t \rangle} \) is called \emph{hereditary} if
  \( \zeta(i) \in \langle t \rangle \setminus \{e\} \) implies \( \zeta(i) = t^{\pm 1} \) for all \( i \in \seg{1}{n}
  \).  A \multipliable \( f \)-pair \( (\alpha,\zeta) \), where \( f \in G* tAt^{-1} \), is called \emph{hereditary} if
  \( \alpha \) is the reduced form of \( f \), \( \zeta \) is hereditary, and moreover,
  \[ \zeta(i) = t^{\pm 1} \implies \zeta(i) = \alpha(i).  \]
\end{definition}

\begin{lemma}
  \label{sec:reduction-to-hereditary-pair}
  Let \( f \in G * tAt^{-1} \), and let \( \alpha \in \word{G \cup \langle t \rangle } \) be the reduced form of \( f
  \).  If \( (\alpha,\zeta) \) is a \multipliable \( f \)-pair, then there exists a trivial word
  \( \xi \in \word{G \cup \langle t \rangle} \) such that \( (\alpha,\xi) \) is a hereditary \( f \)-pair and
  \( \rho(\alpha,\xi) \le \rho(\alpha,\zeta) \).
\end{lemma}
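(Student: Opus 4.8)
The plan is to prove the following slightly stronger statement by induction: for every \multipliable \( f \)-pair \( (\alpha,\zeta) \), with \( \alpha \) the reduced form of \( f \), there is a hereditary \( f \)-pair \( (\alpha,\xi) \) which agrees with \( \zeta \) at every \( G \)-position and satisfies \( \rho(\alpha,\xi)\le\rho(\alpha,\zeta) \). The first step is to record the rigid syllable pattern of the pair. From the displayed form of \( \alpha \), the word \( \alpha \) — and hence any word \multipliable with it — strictly alternates between \( G \)-letters and \( \langle t\rangle \)-letters, the \( \langle t\rangle \)-letters of \( \alpha \) being \( t,t^{-1},t,t^{-1},\dots \); so I would write \( \zeta=b_{1}\concat t^{z_{1}}\concat b_{2}\concat\cdots\concat t^{z_{k}}\concat b_{k+1} \) accordingly (the outermost \( b \)'s possibly absent) and let \( \epsilon_{j}\in\{+1,-1\} \) be the exponent of the corresponding \( \langle t\rangle \)-letter of \( \alpha \). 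On a \( \langle t\rangle \)-position the amalgam metric on \( G\cup\langle t\rangle \) is simply \( d(t^{\epsilon_{j}},t^{z_{j}})=|\epsilon_{j}-z_{j}| \); consequently \( \zeta \) is hereditary exactly when \( z_{j}\in\{0,\epsilon_{j}\} \) for all \( j \), and any word sharing the \( G \)-letters of \( \zeta \) and having \( \langle t\rangle \)-exponents in \( \{0,\epsilon_{j}\} \) is automatically hereditary and \multipliable with \( \alpha \).

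Next I would run an induction on \( \sum_{j}|z_{j}| \). If all \( z_{j}\in\{0,\epsilon_{j}\} \) we are done with \( \xi=\zeta \). Otherwise fix \( j_{0} \) with \( z_{j_{0}}\notin\{0,\epsilon_{j_{0}}\} \); in particular \( z_{j_{0}}\ne 0 \). Since \( \hat\zeta=e \), expanding each syllable \( t^{z_{j}} \) into \( |z_{j}| \) copies of \( t^{\sign{z_{j}}} \) gives a trivial word over \( G\cup\langle t\rangle \), and the structure of trivial words in a free product — in the language of Section \ref{sec:triv-words-amalg}, the balanced evaluation tree of Proposition \ref{sec:triv-words-amalg-structure-of-the-trivial-word} specialised to the trivial amalgamated subgroup — yields a non-crossing perfect matching of its \( \langle t\rangle \)-occurrences, with matched occurrences of opposite sign and with trivial evaluation on the subword strictly between each matched pair. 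Let \( \tau \) be an occurrence inside the syllable \( t^{z_{j_{0}}} \) and \( \tau' \) its partner, in some syllable \( t^{z_{j_{1}}} \) with \( j_{1}\ne j_{0} \) (all occurrences of a single syllable have the same sign). Deleting \( \{\tau,\tau'\} \) keeps the matching non-crossing, hence keeps the word trivial; in the unexpanded word this replaces \( z_{j_{0}} \) by \( z_{j_{0}}-\sign{z_{j_{0}}} \) and \( z_{j_{1}} \) by \( z_{j_{1}}-\sign{z_{j_{1}}} \) and changes no \( G \)-letter. Writing \( \zeta' \) for the result, \( (\alpha,\zeta') \) is again a \multipliable \( f \)-pair and \( \sum_{j}|z_{j}| \) has dropped by \( 2 \).

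It then remains to check \( \rho(\alpha,\zeta')\le\rho(\alpha,\zeta) \), after which the inductive hypothesis applied to \( \zeta' \) finishes the argument. Only the \( j_{0} \)- and \( j_{1} \)-terms of \( \rho \) change. For the \( j_{1} \)-term, moving \( z_{j_{1}} \) one step toward \( 0 \) changes \( |z_{j_{1}}-\epsilon_{j_{1}}| \) by at most \( 1 \) (triangle inequality), so this term increases by at most \( 1 \). For the \( j_{0} \)-term, a short case check using \( z_{j_{0}}\notin\{0,\epsilon_{j_{0}}\} \) — either \( z_{j_{0}} \) and \( \epsilon_{j_{0}} \) have the same sign and \( |z_{j_{0}}|\ge 2 \), or they have opposite signs — shows that moving \( z_{j_{0}} \) one step toward \( 0 \) decreases \( |z_{j_{0}}-\epsilon_{j_{0}}| \) by exactly \( 1 \). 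Hence the net change in \( \rho \) is \( \le 0 \), as required, and the induction terminates (at a hereditary word, since that is the only stopping condition).

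I expect the main obstacle to be the structural input invoked in the second paragraph: making precise, at the level of generality needed, the non-crossing matching of the \( \langle t\rangle \)-occurrences of a trivial word over \( G\cup\langle t\rangle \) extracted from its (slim) evaluation tree, and checking honestly that deleting a single matched pair of occurrences again produces a trivial word — so the bookkeeping of Proposition \ref{sec:triv-words-amalg-structure-of-the-trivial-word} has to be carried out rather than merely cited. Once that is in hand, the sign analysis behind the \( \rho \)-estimate and the termination of the induction are entirely routine.
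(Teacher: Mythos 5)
Your proof is essentially correct but takes a genuinely different route from the paper's. The paper works directly with a slim evaluation tree for \( \zeta \) itself (no expansion) and, at each node \( s \), performs a \emph{single} symmetrization \( \symmet{\alpha}{\zeta}{i_{1}}{\{i_{k}\}} \) to concentrate all of the \( t \)-exponent imbalance of \( R_{s} \) into one position \( i_{1} \), and then does a one-shot recombination (the three cases on \( N \) versus \( \epsilon_{1} \)) that keeps \( \rho \) exactly unchanged while landing on a hereditary word; the \( \rho \)-drop occurs only in the symmetrization step, via Lemma~\ref{sec:metrics-amalgams-symmet-rho-decreases}. You instead expand each \( t^{z_{j}} \) into \( |z_{j}| \) copies of \( t^{\pm 1} \), extract a non-crossing matching of the \( \langle t\rangle \)-occurrences, and peel off a single matched pair \( \{\tau,\tau'\} \) per step, dropping \( \sum_{j}|z_{j}| \) by exactly \( 2 \). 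The arithmetic (\( j_{0} \)-term drops by \( 1 \), \( j_{1} \)-term rises by at most \( 1 \), so \( \rho \) does not increase) is correct, as is the observation that \( \tau' \) must lie in a different syllable. Two points to tighten. First, the structural input you invoke should not be attributed to the balanced evaluation tree of Proposition~\ref{sec:triv-words-amalg-structure-of-the-trivial-word}: an evaluation tree yields \( \zeta \)-\multipliable sets \( R_{t} \) with \( \hat\zeta[I_{t}]=e \), but it does not by itself pair off opposite-sign \( t \)-occurrences or guarantee the ``trivial strictly between'' property. What you need is exactly the \emph{structure tree} of Definition~\ref{sec:structure-tree} and Lemma~\ref{sec:hnn-extensions-existence-of-structure-tree}, applied to the expanded (hereditary) word: there the pairs \( m(I_{s})\leftrightarrow M(I_{s}) \) give a non-crossing perfect matching with opposite signs, and triviality between the pair follows from \( \hat\zeta[I_{s}]=e \) together with \( \zeta(M(I_{s}))=\zeta(m(I_{s}))^{-1} \). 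That lemma appears after the one you are proving, but its proof is independent, so there is no circularity --- it is only a forward reference. Second, ``deleting \( \{\tau,\tau'\} \) keeps the matching non-crossing, hence keeps the word trivial'' is the wrong justification; triviality of the shortened word follows from \( \hat\zeta[\seg{\tau+1}{\tau'-1}]=e \) plus \( \zeta(\tau)\zeta(\tau')=e \), giving \( \hat\zeta[\seg{\tau}{\tau'}]=e \) and hence \( \hat\zeta[\seg{1}{\tau-1}]\cdot\hat\zeta[\seg{\tau'+1}{n}]=e \), not from non-crossingness of what remains. With those two repairs your step-by-step argument is a clean alternative; the paper's version avoids the later structure-tree machinery and is tighter (exact equality after symmetrization), while yours is more elementary in spirit (pure bookkeeping, one pair at a time) once the matching is in hand.
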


\begin{proof}
  Let \( T_{\zeta} \) be an evaluation tree for \( \zeta \).  Fix \( s \in T_{\zeta} \).  Suppose there exists
  \( j \in R_{s} \) such that \( \alpha(j) = t^{\pm 1} \) and neither \( \zeta(j) = \alpha(j) \) nor \( \zeta(j) = e \).
  Since \( \zeta(j) \ne e \) and because the pair \( (\alpha,\zeta) \) is \multipliable, it must be the case that
  \( \zeta(j) = t^{M} \) for some \( M \ne 0 \).  Let \( \{i_{k}\}_{k=1}^{m} \subseteq R_{s} \) be the complete list of
  external letters of \( \zeta \) in \( R_{s} \), note that \( j \in \{i_{k}\}_{k=1}^{m} \).  Since \( R_{s} \) is
  \( \zeta \)-\multipliable, we have \( \zeta(i_{k}) \cong t \) for all \( k \in \seg{1}{m} \).  Note that since we have a
  free product, any evaluation tree is, in fact, slim, and any \multipliable \( f \)-pair is, in fact, a simple \( f
  \)-pair.  So we can perform a symmetrization.  Set
  \[ \delta = \symmet{\alpha}{\zeta}{i_{1}}{\{i_{k}\}}.  \]
  By Lemma \ref{sec:metrics-amalgams-symmet-rho-decreases} \( \rho(\alpha,\delta) \le \rho(\alpha,\zeta) \) and also for
  all \( i \in R_{s} \) we have
  \[ (\alpha(i) = \delta(i)) \textrm{ or } (\delta(i) = e) \textrm{ or } (i = i_{1}).  \]
  Let \( \epsilon_{k} \in \{-1,+1\} \) be such that \( \alpha(i_{k}) = t^{\epsilon_{k}} \).  For all
  \( k \in \seg{2}{m} \)
  \[ \delta(i_{k}) = \alpha(i_{k}) = t^{\epsilon_{k}}. \]
  Let \( N \) be such that \( \delta(i_{1}) = t^{N} \).  Note that since \( \hat{\delta}[I_{s}] = e \),
  \[ N + \epsilon_{2} + \ldots + \epsilon_{m} = 0.\] We now construct a word \( \bar{\xi} \) as follows.

  \setcounter{case}{-1}
  \begin{case}
    If \( N = 0 \) or \( N = \epsilon_{1} \), then set \( \bar{\xi} = \delta \).
  \end{case}

  In cases below we assume \( N \not \in \{0,\epsilon_{1}\} \).

  \begin{case}
    Suppose \( \sign{N} = \sign{\epsilon_{1}} \).  Find different indices \( k_{1}, \ldots, k_{|N|-1} \) such that
    \( \sign{N} = -\sign{\epsilon_{k_{p}}} \) for all \( p \in \seg{1}{|N|-1} \).  Set
    \begin{displaymath}
      \bar{\xi}(i) =
      \begin{cases}
        \delta(i) & \textrm{if \( i \not \in \{i_{k_{p}}\}_{p=1}^{|N|-1} \) and \( i \ne i_{1} \)} ;\\
        \alpha(i_{1}) & \textrm{if \( i = i_{1} \)}; \\
        e & \textrm{if \( i \in \{i_{k_{p}}\}_{p=1}^{|N|-1} \)}.
      \end{cases}
    \end{displaymath}
  \end{case}

  \begin{case}
    Suppose \( \sign{N} = -\sign{\epsilon_{1}} \).  Find different indices \( k_{1}, \ldots, k_{|N|} \) such that
    \( \sign{N} = -\sign{\epsilon_{k_{p}}} \) for all \( p \in \seg{1}{|N|} \).  Set
    \begin{displaymath}
      \bar{\xi}(i) =
      \begin{cases}
        \delta(i) & \textrm{if \( i \not \in  \{i_{k_{p}}\}_{p=1}^{|N|} \) and \( i \ne i_{1} \)};\\
        e & \textrm{if \( i \in \{i_{k_{p}}\}_{p=1}^{|N|} \) or \( i = i_{1} \)}.
      \end{cases}
    \end{displaymath}
  \end{case}

  It is easy to check that \( \rho(\alpha,\delta) = \rho(\alpha,\bar{\xi}) \) and \( \hat{\bar{\xi}} = e \).  Moreover, for
  all \( i \in R_{s} \) either \( \bar{\xi}(i) = \alpha(i) \) or
  \( \bar{\xi}(i) = e \).

  Now apply the same procedure for all \( s \in T_{\zeta} \) and denote the result by \( \xi \).  The word \( \xi \) is
  as desired.
\end{proof}

To analyze the structure of hereditary words we introduce the following notion of a structure tree.

\begin{definition}
  \label{sec:structure-tree}
  Let \( \zeta \) be a hereditary word of length \( n \).  A tree \( T_{\zeta} \) together with a function that assigns
  to a node \( s \in T_{\zeta} \) an interval \( I_{s} \subseteq \seg{1}{n} \) is called a \emph{structure tree for
    \( \zeta \)} if for all \( s', s \in T_{\zeta} \) the following conditions are met:
  \begin{enumerate}[(i)]
  \item \( I_{\emptyset} = \seg{1}{n} \);
  \item \( \hat{\zeta}[I_{s}] = e \);
  \item if \( s \ne \emptyset \), then \( \zeta(m(I_{s})) = t^{\pm 1} \) and \( \zeta(M(I_{s})) = t^{\mp 1} \) (in
    particular \( \zeta(m(I_{s})) = \zeta(M(I_{s}))^{-1} \)).
  \end{enumerate}
  Set \( R_{s} = I_{s} \setminus \bigcup_{s' \prec s} I_{s'} \); then also
  \begin{enumerate}[(i)]
    \setcounter{enumi}{4}
  \item for all \( i \in R_{s} \) if \( i \not \in \{m(I_{s}), M(I_{s})\} \), then \( \zeta(i) \in G \) (in particular
    \( R_{s} \setminus\{m(I_{s}), M(I_{s})\}\) is \( \zeta \)-\multipliable);
  \item \( \zeta(i) \in G \) for all \( i \in R_{\emptyset} \) (in general \( R_{\emptyset} \) may be empty);
  \item\label{lem:structure-item:intervals-order-str} if \( H(s) \le H(s') \) and \( I_{s'} \cap I_{s} \ne \emptyset \),
    then \( s' \prec s \) or \( s'=s \);
  \item\label{lem:structure-item:strict-inclusion-str} if \( s' \prec s \) and \( s \ne \emptyset \), then
    \[ m(I_{s}) < m(I_{s'}) < M(I_{s'}) < M(I_{s}). \]
  \end{enumerate}
\end{definition}

\begin{lemma}
  \label{sec:hnn-extensions-symmetry-of-heriditary-pair}
  If \( \zeta \) is a hereditary word of length \( n \), then
  \[ |\{i \in \seg{1}{n} : \zeta(i) = t\}| = |\{i \in \seg{1}{n} : \zeta(i) = t^{-1}\}|.  \]
\end{lemma}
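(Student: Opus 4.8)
The plan is to count, as is standard for HNN extensions, the signed total exponent of the stable letter \( t \). Since \( \mathbb{Z} \) is abelian, the trivial homomorphism \( G \to \mathbb{Z} \) together with the homomorphism \( \langle t \rangle \to \mathbb{Z} \) sending \( t \) to \( 1 \) induce, by the universal property of the free product, a homomorphism
\[ \pi : G * \langle t \rangle \to \mathbb{Z}, \qquad \pi|_{G} \equiv 0, \quad \pi(t) = 1. \]

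Now apply \( \pi \) to \( \hat{\zeta} \). On the one hand \( \zeta \) is trivial, so \( \hat{\zeta} = e \) and \( \pi(\hat{\zeta}) = 0 \). On the other hand, since \( \pi \) is a homomorphism and \( \hat{\zeta} = \zeta(1) \cdots \zeta(n) \),
\[ \pi(\hat{\zeta}) = \sum_{i=1}^{n} \pi(\zeta(i)). \]
Because \( \zeta \) is hereditary, every letter \( \zeta(i) \) is either an element of \( G \) (in which case \( \pi(\zeta(i)) = 0 \); this includes the case \( \zeta(i) = e \)) or is equal to \( t \) (contributing \( 1 \)) or to \( t^{-1} \) (contributing \( -1 \)) --- no other values are allowed. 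Hence
\[ 0 = \pi(\hat{\zeta}) = |\{i \in \seg{1}{n} : \zeta(i) = t\}| - |\{i \in \seg{1}{n} : \zeta(i) = t^{-1}\}|, \]
which is exactly the claimed identity.

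Strictly speaking there is no serious obstacle here: the only point worth isolating is that the ``\( t \)-exponent'' is a genuine homomorphism on the whole free product, which is immediate from the universal property, and that the hereditary hypothesis is precisely what guarantees that each letter contributes to the exponent sum only through the value \( 0 \), \( +1 \), or \( -1 \). In particular the structure tree of Definition \ref{sec:structure-tree} is not needed for this lemma; it will become relevant only for the finer analysis of hereditary words that follows.
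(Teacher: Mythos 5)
Your proof is correct and is essentially the same argument as the paper's: the paper lists the positions carrying a $t^{\pm1}$ and asserts that since $\hat\zeta = e$ the exponents sum to zero, which is exactly the $t$-exponent homomorphism fact you spell out explicitly. Making the homomorphism $\pi : G * \langle t \rangle \to \mathbb{Z}$ explicit is, if anything, a clearer way to justify the step the paper leaves implicit.
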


\begin{proof}
  Let \( \{i_{k}\}_{k=1}^{m} \) be the list of letters such that
  \begin{enumerate}[(i)]
  \item \( \zeta(i_{k}) = t^{\epsilon_{k}} \) for some \( \epsilon_{k} \in \{-1,1\} \);
  \item \( \zeta(i) = t^{\epsilon} \), \( \epsilon \in \{-1,1\} \), implies \( i = i_{k} \) for some \( k \).
  \end{enumerate}
  Since \( \hat{\zeta} = e \), we get
  \[ \epsilon_{1} + \ldots + \epsilon_{m} = 0, \] and therefore
  \[ |\{i \in \seg{1}{n} : \zeta(i) = t\}| = |\{i \in \seg{1}{n} : \zeta(i) = t^{-1}\}|. \qedhere \]
\end{proof}

\begin{lemma}
  \label{sec:hnn-extensions-induction-step-in-structure-tree}
  Let \( \zeta \) be a hereditary word of length \( n \).  If there is \( i \in \seg{1}{n} \) such that
  \( \zeta(i) = t \), then there is an interval \( I \subseteq \seg{1}{n} \) such that
  \begin{enumerate}[(i)]
  \item \( \zeta(m(I)) = t^{\pm 1} \) and \( \zeta(M(I)) = t^{\mp 1} \);
  \item \( \zeta(i) \in G \) for all \( i \in I \setminus \{m(I), M(I)\} \);
  \item \( \hat{\zeta}[I] = e \).
  \end{enumerate}
\end{lemma}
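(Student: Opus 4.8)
The plan is to decompose $\zeta$ along its $t$-letters and to single out two consecutive $t$-letters which cancel across a trivial $G$-block. Since $\zeta$ is hereditary, each of its letters is either $t^{\pm 1}$ or an element of $G$ (the identity $e$ allowed). List the positions of the letters of $\zeta$ lying in $\langle t \rangle \setminus \{e\}$ as $p_1 < \cdots < p_k$, with $\zeta(p_\ell) = t^{\epsilon_\ell}$, $\epsilon_\ell \in \{-1,1\}$, and write
\[ \zeta = B_0 \concat t^{\epsilon_1} \concat B_1 \concat t^{\epsilon_2} \concat \cdots \concat t^{\epsilon_k} \concat B_k, \]
where each $B_\ell$ is the (possibly empty) block of $\zeta$ lying strictly between consecutive $t$-letters, so $B_\ell \in \word{G}$ when nonempty; set $g_\ell = \hat{B_\ell} \in G$, with $g_\ell = e$ when $B_\ell$ is empty. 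By hypothesis $k \ge 1$, and triviality of $\zeta$ reads $g_0 t^{\epsilon_1} g_1 t^{\epsilon_2} \cdots t^{\epsilon_k} g_k = e$ in $G * \langle t \rangle$.

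The heart of the matter is the claim that there is $i \in \seg{1}{k-1}$ with $g_i = e$ and $\epsilon_i = -\epsilon_{i+1}$. Granting it, $I = \seg{p_i}{p_{i+1}}$ is the interval sought: its endpoints are $\zeta(p_i) = t^{\epsilon_i}$ and $\zeta(p_{i+1}) = t^{-\epsilon_i}$; the letters of $I$ different from its endpoints are exactly those of $B_i$, hence lie in $G$; and $\hat\zeta[I] = t^{\epsilon_i} g_i t^{-\epsilon_i} = t^{\epsilon_i} t^{-\epsilon_i} = e$. To prove the claim I would argue by contradiction: suppose that for every $i \in \seg{1}{k-1}$ either $g_i \ne e$ or $\epsilon_i = \epsilon_{i+1}$. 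Call $i$ and $i+1$ equivalent whenever $g_i = e$; the equivalence classes are subintervals $\{a, \ldots, b\} \subseteq \seg{1}{k}$, and along such a class the assumption forces $\epsilon_a = \cdots = \epsilon_b$, so $t^{\epsilon_a} \cdots t^{\epsilon_b} = t^{\epsilon_a(b-a+1)} \ne e$. Collapsing each such run of $t$-letters to this single nonzero power of $t$ and erasing every block $B_\ell$ with $g_\ell = e$, one obtains from $\zeta$ a word $\eta$ with $\hat\eta = \hat\zeta = e$; by construction $\eta$ is reduced — its letters alternate between nonzero powers of $t$ and elements of $G \setminus \{e\}$ — and nonempty, since $k \ge 1$ guarantees at least one $t$-power. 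This contradicts Lemma~\ref{sec:triv-words-amalg-non-triviality-of-reduced-words}, which proves the claim.

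The only point needing genuine care is verifying, inside the contradiction argument, that the word $\eta$ just described is really reduced and represents the same element as $\zeta$: one checks that between two consecutive runs of $t$-letters there sits a block $B_j$ with $g_j \ne e$ (by maximality of the classes), that $B_0$ and $B_k$ are likewise erased when trivial so that $\eta$ has no identity letter, and that neither collapsing a run of equal $t^{\pm 1}$'s nor erasing a block of trivial product changes the evaluation in $G * \langle t \rangle$. Everything else is routine; in particular the degenerate case in which $B_i$ is empty simply makes condition (ii) of the statement vacuous and needs no separate treatment, and no induction is required. (The sign-balance $\sum_\ell \epsilon_\ell = 0$ recorded in Lemma~\ref{sec:hnn-extensions-symmetry-of-heriditary-pair} is subsumed by the above but not needed.)
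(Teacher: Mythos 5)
Your proof is correct and follows essentially the same route as the paper's: decompose $\zeta$ along its $t^{\pm 1}$-letters and, assuming no canceling pair $t^{\pm 1}\cdots t^{\mp 1}$ sits over a trivial $G$-block, collapse to a nonempty reduced word to contradict Lemma~\ref{sec:triv-words-amalg-non-triviality-of-reduced-words}. The one presentational difference is that the paper first enumerates the candidate intervals with alternating-sign endpoints and cites Lemma~\ref{sec:hnn-extensions-symmetry-of-heriditary-pair} to show the list is nonempty before running the contradiction, while your argument absorbs the nonemptiness into the same contradiction (if the $\epsilon_\ell$ never alternate, the collapsed word still has a $t$-power), which is a slight streamlining.
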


\begin{proof}
  Let \( I_{1}, \ldots, I_{m} \) be the list of intervals such that
  \begin{enumerate}[(i)]
  \item \( \zeta(m(I_{k})) = t^{\pm 1} \), \( \zeta(M(I_{k})) = t^{\mp 1} \);
  \item \( \zeta(i) \in G \) for all \( i \in I_{k} \setminus\{m(I_{k}), M(I_{k})\} \);
  \item \( M(I_{k}) \le m(I_{k+1}) \);
  \item if \( I \) is an interval that satisfies (i) and (ii) above, then \( I = I_{k} \) for some
    \( k \in \seg{1}{m} \).
  \end{enumerate}
  It follows from Lemma \ref{sec:hnn-extensions-symmetry-of-heriditary-pair} that the list of such intervals is
  nonempty.  Let \( J_{0}, \ldots, J_{m} \) be the complementary intervals:
  \[ J_{0} = \seg{1}{m(J_{1})-1}, \quad J_{m} = \seg{M(J_{m})+1}{n}, \]
  \[ J_{k} = \seg{M(I_{k})+1}{m(I_{k+1}) +1} \quad \textrm{for \( k \in \seg{2}{m-1} \)}. \]
  Some (and even all) of the intervals \( J_{k} \) may be empty.  If for some \( j_{1}, j_{2} \in J_{k} \) we have
  \( \zeta(j_{1}) = t^{\epsilon_{1}}\), \( \zeta(j_{2}) = t^{\epsilon_{2}} \), then \( \epsilon_{1} = \epsilon_{2} \),
  and moreover, \( \zeta(M(I_{k})) = \zeta(j_{1}) = \zeta(m(I_{k+1})) \).  It is now easy to see that
  \(\hat {\zeta}[I_{k}] \ne e \) for all \( k \in \seg{1}{m} \) implies \( \hat{\zeta} \ne e\), contradicting the
  assumption that \( \zeta \) is trivial.
\end{proof}

\begin{lemma}
  \label{sec:hnn-extensions-existence-of-structure-tree}
  If \( \zeta \) is a hereditary word of length \( n \), then there is a structure tree \( T_{\zeta} \) for \( \zeta \).
\end{lemma}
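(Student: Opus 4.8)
The plan is to argue by induction on the length \( n \) of \( \zeta \), building the structure tree by a contraction entirely analogous to the one in the proof of Proposition \ref{sec:triv-words-amalg-structure-of-the-trivial-word}. If no letter of \( \zeta \) equals \( t^{\pm 1} \) — in particular if \( n = 1 \) — then \( \zeta(i) \in G \) for every \( i \), and triviality of \( \zeta \) gives \( \hat{\zeta}[\seg{1}{n}] = \hat{\zeta} = e \); one sets \( T_{\zeta} = \{\emptyset\} \), \( I_{\emptyset} = \seg{1}{n} \), so that \( R_{\emptyset} = \seg{1}{n} \subseteq G \), and all the conditions of Definition \ref{sec:structure-tree} hold — (iii), (vii), (viii) vacuously and (ii), (v), (vi) because \( R_{\emptyset} \subseteq G \).

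For the inductive step suppose some letter of \( \zeta \) equals \( t \). By Lemma \ref{sec:hnn-extensions-symmetry-of-heriditary-pair} some letter also equals \( t^{-1} \), so Lemma \ref{sec:hnn-extensions-induction-step-in-structure-tree} produces an interval \( I \subseteq \seg{1}{n} \) with \( \zeta(m(I)) = t^{\pm 1} \), \( \zeta(M(I)) = t^{\mp 1} \), \( \zeta(i) \in G \) for every \( i \in I \setminus \{m(I), M(I)\} \), and \( \hat{\zeta}[I] = e \). Write \( p = |I| \); since \( \zeta(m(I)) \) and \( \zeta(M(I)) \) are distinct letters, \( p \ge 2 \). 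I would now contract \( I \) to a single position: let \( \xi \) be the word of length \( n - p + 1 \) with \( \xi(i) = \zeta(i) \) for \( i < m(I) \), \( \xi(m(I)) = e \), and \( \xi(i) = \zeta(i + p - 1) \) for \( i > m(I) \). Then \( \xi \) is hereditary, \( \hat{\xi} = e \) (because \( \hat{\zeta}[I] = e \)), and \( 1 \le |\xi| < n \) since \( p \ge 2 \). By the inductive hypothesis \( \xi \) has a structure tree \( T_{\xi} \), with intervals \( \{J_{s}\}_{s \in T_{\xi}} \).

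Finally I would reconstruct \( T_{\zeta} \) from \( T_{\xi} \) exactly as in Proposition \ref{sec:triv-words-amalg-structure-of-the-trivial-word}. Since \( m(I) \in J_{\emptyset} \), condition (vii) of Definition \ref{sec:structure-tree} gives a smallest node \( t_{0} \in T_{\xi} \) with \( m(I) \in J_{t_{0}} \), whence \( m(I) \in R_{t_{0}} \). Set \( T_{\zeta} = T_{\xi} \cup \{s_{0}\} \), where \( s_{0} \) is a new leaf with \( s_{0} \prec t_{0} \), put \( I_{s_{0}} = I \), and for \( t \in T_{\xi} \) let \( I_{t} = \seg{m(J_{t})}{M(J_{t})} \) if \( M(J_{t}) < m(I) \), \( I_{t} = \seg{m(J_{t})}{M(J_{t}) + p - 1} \) if \( m(J_{t}) \le m(I) \le M(J_{t}) \), and \( I_{t} = \seg{m(J_{t}) + p - 1}{M(J_{t}) + p - 1} \) if \( m(I) < m(J_{t}) \) — the map that expands the single position \( m(I) \) into the block \( I \). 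One then checks the conditions of Definition \ref{sec:structure-tree}. For nodes of \( T_{\xi} \): (i), (vii), (viii) hold because this expansion preserves inclusions, intersections and endpoints of intervals; (ii) holds because \( \hat{\zeta}[I_{t}] = \hat{\xi}[J_{t}] \), the contracted block contributing only \( \hat{\zeta}[I] = e \); and (iii), (v), (vi) hold because \( \xi(m(I)) = e \in G \), so \( m(I) \) is never an endpoint of any \( J_{t} \) with \( t \ne \emptyset \), and the expansion matches \( R_{t}^{T_{\zeta}} \) with \( R_{t}^{T_{\xi}} \) (with the point \( m(I) \) deleted in the case \( t = t_{0} \)), preserving the letters of the word. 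For the new leaf \( s_{0} \): (ii), (iii), (v) are exactly the conclusions of Lemma \ref{sec:hnn-extensions-induction-step-in-structure-tree}, using \( R_{s_{0}} = I_{s_{0}} = I \); and the instances of (vii), (viii) involving \( s_{0} \) reduce, via the chain of equivalences \( s_{0} \prec s \iff (s = t_{0} \ \text{or}\ t_{0} \prec s) \iff m(I) \in J_{s} \iff I_{s_{0}} \cap I_{s} \ne \emptyset \), to the strict inclusions \( m(I_{s}) < m(I) < M(I) < M(I_{s}) \) for every non-root \( s \in T_{\xi} \) that is \( t_{0} \) or an ancestor of \( t_{0} \), which hold because \( m(I) \) lies strictly inside \( J_{t_{0}} \) (again since \( \xi(m(I)) = e \ne t^{\pm 1} \)). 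The whole verification parallels that of Proposition \ref{sec:triv-words-amalg-structure-of-the-trivial-word}; the one genuinely delicate point — which I expect to be the main obstacle — is tracking how the sets \( R_{t} \) transform under the contraction and re-expansion, so that (v) and (vi) are not broken at the node \( t_{0} \) where the new leaf is grafted.
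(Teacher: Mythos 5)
Your proof is correct, and its skeleton — induct, invoke Lemma \ref{sec:hnn-extensions-induction-step-in-structure-tree} to locate the interval \( I \), shrink \( \zeta \), apply the induction hypothesis, then graft a new leaf \( s_{0} \) under the deepest node whose interval meets the shrunk copy of \( I \), and inflate the intervals back — is the same as the paper's. The one genuine technical difference is how you shrink: you contract \( I \) to a single placeholder letter \( e \), producing a word \( \xi \) of length \( n - p + 1 \), whereas the paper deletes \( I \) entirely, producing a word \( \delta \) of length \( n - p \) with no marker left behind. (Your induction is therefore on length; the paper's is on the number of occurrences of \( t \), but either parameter drops.) Your contraction imports the device of Proposition \ref{sec:triv-words-amalg-structure-of-the-trivial-word} verbatim; the paper's deletion is leaner but then has to handle the boundary cases \( m(I) = 1 \) and \( M(I) = n \) explicitly, because after deletion there is no position for the contracted block to live at and the graft target has to be taken to be the root directly. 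In your version those cases are absorbed uniformly: the placeholder \( e \) always occupies some position of \( \xi \), it always lies in some \( R_{t_{0}} \), and the grafting and expansion proceed identically.

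The verification you flag as the delicate point is indeed the crux and your treatment of it is right: because \( \xi(m(I)) = e \notin \{t, t^{-1}\} \), the position \( m(I) \) can never be an endpoint of \( J_{s} \) for any \( s \ne \emptyset \) (item (iii) of Definition \ref{sec:structure-tree}), so under the expansion map the endpoints of every \( J_{s} \), \( s \ne \emptyset \), land on positions of \( \zeta \) carrying the same letter, and \( R_{t_{0}}^{T_{\zeta}} \) is exactly the image of \( R_{t_{0}}^{T_{\xi}} \setminus \{m(I)\} \), with the removed point now accounted for by \( I_{s_{0}} = I \). Together with the observation that item (viii) only constrains pairs \( s' \prec s \) with \( s \ne \emptyset \) (so nothing is demanded when \( t_{0} = \emptyset \)), this closes the argument.
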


\begin{proof}
  We prove the lemma by induction on \( |\{i \in \seg{1}{n} : \zeta(i) = t\}| \).  For the base of induction suppose
  that \( \zeta(i) \ne t \) for all \( i \).  By the definition of a hereditary word and by Lemma
  \ref{sec:hnn-extensions-symmetry-of-heriditary-pair} we have \( \zeta(i) \in G \) for all \( i \in \seg{1}{n} \).  Set
  \( T_{\zeta} = \{\emptyset\} \) and \( I_{\emptyset} = \seg{1}{n} \).  It is easy to see that this gives a structure
  tree.

  Suppose now there is \( i \in \seg{1}{n} \) such that \( \zeta(i) = t \).  Apply Lemma
  \ref{sec:hnn-extensions-induction-step-in-structure-tree} and let \( I \) denote an interval granted by this lemma.
  Let \( m \) be the length of \( I \).  If \( m = n \), that is if \( I = \seg{1}{n} \), then set
  \( T_{\zeta} = \{\emptyset, s\} \) with \( s \prec \emptyset \) and \( I_{s} = I_{\emptyset} = \seg{1}{n} \).  One
  checks that this is a structure tree.  Assume now that \( m < n \) .  Define a word \( \delta \) of length \( n-m \)
  by
  \begin{displaymath}
    \delta(i) =
    \begin{cases}
      \zeta(i) & \textrm{if \( i < m(I) \)} \\
      \zeta(i+m) & \textrm{if \( i \ge m(I) \)}.
    \end{cases}
  \end{displaymath}
  The word \( \delta \) is a hereditary word and
  \[ |\{ i \in \seg{1}{|\delta|} : \delta(i) = t\}| < |\{i \in \seg{1}{n} : \zeta(i) = t\}|. \]
  Therefore, by induction hypothesis, there is a structure tree \( T_{\delta} \) and intervals \( J_{s} \),
  \( s \in T_{\delta} \), for the word \( \delta \).  Let \( s' \) be a symbol for a new node.  Set
  \( T_{\zeta} = T_{\delta} \cup \{s'\}\).  If \( m(I) = 1 \) or \( M(I) = n \), set
  \( (s', \emptyset) \in E(T_{\delta}) \).  Otherwise let \( s \in T_{\delta} \) be the minimal node such that
  \( m(J_{s}) < m(I) \le M(J_{s}) \) (\( s \) may still be the root \( \emptyset \)) and set
  \( (s',s) \in E(T_{\delta}) \).  Finally, define for \( s \in T_{\delta} \)
  \begin{displaymath}
    I_{s} =
    \begin{cases}
      J_{s} & \textrm{if \( M(J_{s}) < m(I) \)}; \\
      \seg{m(J_{s})}{M(J_{s}) + m} & \textrm{if \( m(J_{s}) < m(I) \le M(J_{s}) \)}; \\
      \seg{M(J_{s})+m}{M(J_{s})+m} & \textrm{if \( m(I) \le m(J_{s}) \)}.
    \end{cases}
  \end{displaymath}
  and set \( I_{s'} = I \).

  It is now straightforward to check that \( T_{\zeta} \) is a structure tree for \( \zeta \).
\end{proof}

\subsection{From hereditary to rigid words}
\label{sec:from-hered-rigid}

\textit{From now on \( A \) will denote a closed subgroup of \( G \) of diameter \( \diam{A} \le 1 \),} unless stated
otherwise.

\begin{lemma}
  \label{sec:hnn-extensions-a-cancellation-error}
  If \( (G,d) \) is a tsi group, then for all \( g_{1}, \ldots, g_{n-1} \in G \), for all
  \( a_{1}, \ldots, a_{n} \in A \) such that \( d(a_{i}, e) \le 1 \)
  \[ d(g_{1} \cdots g_{n-1}, a_{1}g_{1}a_{2} \cdots a_{n-1}g_{n-1}a_{n}) \le n \]
\end{lemma}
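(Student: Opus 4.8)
The plan is to reduce this directly to the polygonal inequality for tsi metrics, namely part (ii) of Proposition \ref{sec:tsi-groups-tsi-criterion}. The idea is to rewrite both sides as products of the \emph{same number} of factors and then compare them factor by factor.

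First I would pad the left-hand argument with copies of the identity:
\[ g_{1} \cdots g_{n-1} = e \cdot g_{1} \cdot e \cdot g_{2} \cdot e \cdots e \cdot g_{n-1} \cdot e, \]
a product of \( 2n-1 \) elements in which the \( n \) odd-numbered factors are all \( e \) and the \( n-1 \) even-numbered factors are \( g_{1}, \ldots, g_{n-1} \). The right-hand argument \( a_{1} g_{1} a_{2} g_{2} a_{3} \cdots a_{n-1} g_{n-1} a_{n} \) is likewise a product of \( 2n-1 \) elements, with odd-numbered factors \( a_{1}, \ldots, a_{n} \) and even-numbered factors \( g_{1}, \ldots, g_{n-1} \).

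Next I would apply Proposition \ref{sec:tsi-groups-tsi-criterion}(ii) with \( k = 2n-1 \), pairing the \( j \)-th factor of the first product with the \( j \)-th factor of the second. In the even positions the two paired factors coincide, so those terms contribute \( d(g_{i}, g_{i}) = 0 \); in the \( n \) odd positions we get \( d(e, a_{i}) = d(a_{i}, e) \le 1 \) by hypothesis. Hence
\[ d\big(g_{1} \cdots g_{n-1},\ a_{1}g_{1}a_{2}\cdots a_{n-1}g_{n-1}a_{n}\big) \le \sum_{i=1}^{n} d(a_{i}, e) \le n, \]
which is the claim.

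There is essentially no obstacle here beyond the bookkeeping of the padding; the only point to keep an eye on is the degenerate case \( n = 1 \), where the left side is the empty product \( e \) and the statement reduces to \( d(e, a_{1}) \le 1 \), which is exactly the hypothesis, so the argument above still applies verbatim.
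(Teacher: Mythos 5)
Your proof is correct. The key device --- padding the left-hand product with identity elements so that both arguments become products of $2n-1$ factors of the same ``shape,'' and then invoking the polygonal inequality of Proposition~\ref{sec:tsi-groups-tsi-criterion}(ii) term by term --- is sound, and the arithmetic $\sum_{i=1}^{n} d(e,a_i) + \sum_{i=1}^{n-1} d(g_i,g_i) \le n$ checks out, including the degenerate case $n=1$.

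The paper proves the lemma instead by a direct induction on $n$: the base case $n=2$ is two triangle-inequality steps, and the inductive step peels off $a_n$ using $d(g_1\cdots g_{n-1},\,a_1g_1\cdots g_{n-1}a_n) \le d(e,a_n) + d(g_1\cdots g_{n-2},\,a_1g_1\cdots g_{n-2}a_{n-1})$ via two-sided invariance. The two arguments are closely related --- your approach in effect packages that same induction into a single invocation of the general polygonal bound, which is itself established by a parallel induction --- but yours is cleaner as a presentation: it reduces the bookkeeping to a one-line pairing and makes the structure (``$n$ insertions, each costing at most $1$'') immediate, whereas the paper's induction hides this behind the recursive peeling. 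Neither approach is more general here; they trade one induction for another.
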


\begin{proof}
  By induction.  For \( n=2 \) we have
  \[ d(g_{1},a_{1}g_{1}a_{2}) \le d(g_{1},a_{1}g_{1}) + d(a_{1}g_{1},a_{1}g_{1}a_{2}) = d(e,a_{1}) + d(e,a_{2}) \le
  2.  \] For the step of induction
  \begin{displaymath}
    \begin{aligned}
      & d(g_{1} \cdots g_{n-1}, a_{1}g_{1}a_{2} \cdots a_{n-1}g_{n-1}a_{n}) \le\\
      & d(g_{1} \cdots g_{n-1}, g_{1} \cdots g_{n-1}a_{n}) + d(g_{1} \cdots g_{n-1}a_{n}, a_{1}g_{1}a_{2} \cdots
      a_{n-1}g_{n-1}a_{n}) = \\
      & d(e,a_{n}) + d(g_{1} \cdots g_{n-2}, a_{1}g_{1}a_{2} \cdots g_{n-2}a_{n-1}) \le 1 + (n-1)= n.
    \end{aligned}
  \end{displaymath}
  And the lemma follows.
\end{proof}

Let \( \beta \) be a word of the form
\[ \beta = g_{0} \concat t \concat a_{1} \concat t^{-1} \concat g_{1} \concat t \concat a_{2} \concat t^{-1} \concat
\cdots \concat g_{n-1} \concat t \concat a_{n} \concat t^{-1} \concat g_{n}, \]
where \( g_{i} \in G \) and \( a_{i} \in A \).

Define a word \( \delta \) by setting for \( i \in \seg{1}{|\beta|} \)
\begin{displaymath}
  \delta(i) =
  \begin{cases}
    e & \textrm{if \( i = 1 \mod 4 \)}; \\
    t & \textrm{if \( i = 2 \mod 4 \)}; \\
    e & \textrm{if \( i = 3 \mod 4 \)}; \\
    t^{-1} & \textrm{if \(i= 0 \mod 4 \)}.
  \end{cases}
\end{displaymath}
Or, equivalently,
\[ \delta = e \concat t\concat e \concat t^{-1} e \concat \cdots \concat e \concat t \concat e \concat t^{-1} \concat
e.  \]
If \( T_{\delta} = \{\emptyset, s_{1}, \ldots, s_{n}, s_{1}', \ldots, s_{n}'\} \) with \( s_{k} \prec \emptyset \),
\( s_{k}' \prec s_{k} \), \( I_{s_{k}} = \seg{4k-2}{4k} \), \( I_{{s_{k}'}} = \{4k-1\} \), then \( T_{\delta} \) is a
slim evaluation tree.  Set 
\[ \xi = \symmet{\beta}{\delta}{1}{\{4k+1\}_{k=0}^{n}} = \symmet{\beta}{\delta}{1}{R_{\emptyset}}. \]
\begin{lemma}
  \label{sec:hnn-extensions-even-subword-reduction-type-g}
  Let \( \beta, \xi \) be as above.  If \( \zeta \) is a trivial word of length \( |\beta| \), \( \zeta \) and
  \( \beta \) are \multipliable and \( \zeta(i) \in G \) for all \( i \), in other words if
  \[ \zeta = h_{0} \concat e \concat h_{1} \concat e \concat h_{2} \concat e \concat h_{3} \concat e \concat \cdots
  \concat h_{2n-2} \concat e \concat h_{2n-1} \concat e \concat h_{2n}, \]
  where \( h_{i} \in G \), then \( \rho(\beta,\xi) \le \rho(\beta,\zeta) \).
\end{lemma}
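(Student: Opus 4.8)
The plan is to evaluate $\rho(\beta,\xi)$ and $\rho(\beta,\zeta)$ explicitly, entry by entry, and then bridge the difference using Proposition~\ref{sec:tsi-groups-tsi-criterion} together with the standing hypothesis $\diam{A}\le 1$. Write
\[ \beta = g_0 \concat t \concat a_1 \concat t^{-1} \concat g_1 \concat \cdots \concat g_{n-1} \concat t \concat a_n \concat t^{-1} \concat g_n, \qquad \zeta = h_0 \concat e \concat h_1 \concat e \concat \cdots \concat e \concat h_{2n}, \]
so that triviality of $\zeta$ gives $h_0 h_1 \cdots h_{2n} = e$. First I would unwind Definition~\ref{sec:metrics-amalgams-symmetrization-for-simple} applied to $\xi = \symmet{\beta}{\delta}{1}{\{4k+1\}_{k=0}^{n}}$: the word $\xi$ coincides with $\beta$ at every $t$- and $t^{-1}$-entry and at every entry occupied by some $g_k$ with $k\ge 1$, carries $e$ at every entry occupied by some $a_k$, and carries $(g_1 g_2 \cdots g_n)^{-1}$ in the first entry. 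Since $d(t^{\pm1},e)=1$ in $\langle t\rangle$, this yields
\[ \rho(\beta,\xi) = d\big(g_0,(g_1\cdots g_n)^{-1}\big) + \sum_{k=1}^{n} d(a_k,e), \qquad \rho(\beta,\zeta) = d(g_0,h_0) + \sum_{k=1}^{n} d(g_k,h_{2k}) + \sum_{k=1}^{n} d(a_k,h_{2k-1}) + 2n. \]

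Next I would rewrite the leading term of $\rho(\beta,\xi)$ using right invariance and then triviality of $\zeta$:
\[ d\big(g_0,(g_1\cdots g_n)^{-1}\big) = d(g_0 g_1 \cdots g_n, e) = d\big(g_0 g_1 \cdots g_n,\ h_0 h_1 \cdots h_{2n}\big), \]
group the second word into the $n+1$ blocks $h_0h_1,\ h_2h_3,\ \ldots,\ h_{2n-2}h_{2n-1},\ h_{2n}$, and apply Proposition~\ref{sec:tsi-groups-tsi-criterion}(ii) to get
\[ d(g_0\cdots g_n, e) \le \sum_{j=0}^{n-1} d\big(g_j,\, h_{2j}h_{2j+1}\big) + d(g_n,h_{2n}). \]
For each $j$, two-sided invariance and the triangle inequality give $d(g_j, h_{2j}h_{2j+1}) \le d(g_j,h_{2j}) + d(e,h_{2j+1})$, and since $a_{j+1}\in A$ and $\diam{A}\le 1$ we get $d(e,h_{2j+1}) \le d(e,a_{j+1}) + d(a_{j+1},h_{2j+1}) \le 1 + d(a_{j+1},h_{2j+1})$. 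Summing over $j$ and reindexing the $a$-terms gives
\[ d(g_0\cdots g_n,e) \le d(g_0,h_0) + \sum_{k=1}^{n} d(g_k,h_{2k}) + \sum_{k=1}^{n} d(a_k,h_{2k-1}) + n. \]

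Finally, combining this with $\sum_{k=1}^{n} d(a_k,e) \le n$ (once more from $\diam{A}\le 1$) produces exactly $\rho(\beta,\xi) \le \rho(\beta,\zeta)$. The one delicate point is the arithmetic of the constant: the $2n$ units contributed to $\rho(\beta,\zeta)$ by the entries $d(t^{\pm1},e)$ have to be spent precisely — $n$ of them covering the $d(a_k,e)$ terms already present in $\rho(\beta,\xi)$, and $n$ of them covering the error terms $d(e,a_{j+1})\le 1$ incurred when replacing $h_{2j}h_{2j+1}$ by $h_{2j}$. This is where $\diam{A}\le 1$ is used, in two separate places, and it is what makes the estimate tight. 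The remaining work — deriving the two displayed formulas for $\rho$ directly from the definition of the symmetrization, and recalling via Lemma~\ref{sec:metrics-amalgams-symmetrization-properties} that $(\beta,\xi)$ is a slim $f$-pair — is routine.
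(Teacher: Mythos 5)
Your proof is correct and takes essentially the same approach as the paper's: both compute $\rho(\beta,\xi)$ and $\rho(\beta,\zeta)$ explicitly, and both spend the $2n$ units coming from the $d(t^{\pm 1},e)$ terms against (a) the $n$ terms $d(a_k,e)\le 1$ appearing in $\rho(\beta,\xi)$ and (b) an additional $n$ units of slack from deleting the $a_k$'s, using $\diam{A}\le 1$ in both places. The only organizational difference is that the paper routes the comparison through the intermediate element $g_0 a_1 g_1\cdots a_n g_n$ and bounds the resulting gap $d(g_1\cdots g_{n-1},\,a_1 g_1\cdots g_{n-1}a_n)\le n$ by invoking the auxiliary Lemma~\ref{sec:hnn-extensions-a-cancellation-error}, whereas you compare $g_0\cdots g_n$ directly to $h_0\cdots h_{2n}$ via the block grouping $h_{2j}h_{2j+1}$ and absorb the error inline; this makes your argument self-contained but is the same estimate up to factoring.
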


\begin{proof}
  By the two-sided invariance
  \[ \rho(\beta,\zeta) \ge d(g_{0}a_{1}g_{1}a_{2}\cdots g_{n-1}a_{n}g_{n},e) + 2n.  \] On the other hand
  \begin{displaymath}
    \begin{aligned}
      \rho(\beta,\xi) =& \sum_{i=1}^{n} d(a_{i},e) + d(g_{0}g_{1} \cdots g_{n},e) \le\\
      & n + d(g_{0}g_{1} \cdots g_{n}, e) \le \\
      & n + d(g_{0}g_{1} \cdots g_{n}, g_{0}a_{1}g_{1}a_{2} \cdots g_{n-1}a_{n}g_{n}) + d(g_{0}a_{1}g_{1}a_{2} \cdots
      g_{n-1}a_{n}g_{n},e) = \\
      & n + d(g_{1} \cdots g_{n-1}, a_{1}g_{1} \cdots g_{n-1}a_{n}) + d(g_{0}a_{1}g_{1}a_{2} \cdots g_{n-1}a_{n}g_{n},e)
      \le \\
      & \textrm{[by Lemma \ref{sec:hnn-extensions-a-cancellation-error}] } 2n + d(g_{0}a_{1}g_{1}a_{2} \cdots
      g_{n-1}a_{n}g_{n},e).
    \end{aligned}
  \end{displaymath}
  Hence \( \rho(\beta,\xi) \le \rho(\beta,\zeta). \)
\end{proof}

Suppose we have words
\begin{eqnarray*}
  \begin{aligned}
    \nu_{k} = g_{(k,1)} \concat \cdots \concat g_{(k,q_{k})}, \quad 
    \textrm{where \( g_{(k,j)} \in G \) and \(k \in \seg{0}{n}, \)}\\
    \mu_{k} = a_{(k,1)} \concat \cdots \concat a_{(k,p_{k})}, \quad \textrm{where \( a_{(k,j)} \in A \) and
      \(k \in \seg{1}{n}. \)}
  \end{aligned}
\end{eqnarray*}

And let \( \bar{\beta} \) be the word
\[ \bar{\beta} = \nu_{0} \concat t \concat \mu_{1} \concat t^{-1} \nu_{1} \concat \cdots \concat \nu_{n-1} \concat t
\concat \mu_{n} \concat t^{-1} \concat \nu_{n}.  \]
Let \( \{i_{k}\}_{k=1}^{n} \), \( \{i'_{k}\}_{k=1}^{n} \) be indices such that
\begin{enumerate}[(i)]
\item \( i_{k} < i_{k+1} \), \( i'_{k} < i'_{k+1} \);
\item \( \beta(i_{k}) = t \), \( \beta(i'_{k}) = t^{-1} \);
\item if \( \beta(i) = t \), then \( i = i_{k} \) for some \( k \in \seg{1}{n} \); if \( \beta(i) = t^{-1} \), then
  \( i = i'_{k} \) for some \( k \in \seg{1}{n} \).
\end{enumerate}
In other words
\[ i_{k} = \sum_{l=0}^{k-1} q_{l} + \sum_{l=1}^{k-1} p_{k} + 2(k-1) + 1, \quad i'_{k} = i_{k} + p_{k} + 1.\]
Define the word \( \delta \) of length \( |\bar{\beta}| \) by
\begin{displaymath}
  \delta(i) =
  \begin{cases}
    e & \textrm{if \( \bar{\beta}(i) \in G \)};\\
    \bar{\beta}(i) & \textrm{if \( \bar{\beta}(i) = t^{\pm 1} \)}.\\
  \end{cases}
\end{displaymath}
If \( T_{\delta} = \{\emptyset, s_{1}, \ldots, s_{n}, s_{1}', \ldots, s_{n}'\} \), \( s_{k} \prec \emptyset \),
\( s_{k}' \prec s_{k} \) and \( I_{s_{k}} = \seg{i_{k}}{i_{k}'} \), \( I_{s_{k}'} = \seg{i_{k}+1}{i_{k}'-1} \) 
(in other words \( I_{s_{k}} \) and \( I_{s_{k}'} \) are such that \( \bar{\beta}[I_{s_{i}}] = t \concat \mu_{i} \concat t^{-1} \), \( \bar{\beta}[I_{s_{i}}] = \mu_{i} \)), then
\( T_{\delta} \) is a slim evaluation tree.  
Let \( \{j_{k}\}_{k=1}^{m} \) be the enumeration of the set
\[ \seg{1}{|\bar{\beta}|} \setminus \bigcup_{k=1}^{n} \seg{i_{k}}{i'_{k}}.  \] 
Set inductively
\begin{eqnarray*}
  \begin{aligned}
    \xi_{0} &= \symmet{\bar{\beta}}{\delta}{j_{1}}{\{j_{k}\}} = \symmet{\bar{\beta}}{\delta}{j_{1}}{R_{\emptyset}},  \\
    \xi_{l}&= \symmet{\bar{\beta}}{\xi_{l-1}}{j^{(l)}_{1}}{\{j^{(l)}_{k}\}} =
    \symmet{\bar{\beta}}{\xi_{l-1}}{j^{(l)}_{1}}{R_{s_{l}'}},
  \end{aligned}
\end{eqnarray*}
where \( j^{(l)}_{k} = i_{l} + k \), \( l \in \seg{1}{n} \), \( k \in \seg{1}{p_{k}} \).  Finally set
\( \bar{\xi} = \xi_{n} \).

\begin{example-nn}
  \label{sec:from-hered-rigid-beta}
  For example, if
  \[ \bar{\beta} = g_{1} \concat g_{2} \concat t \concat a_{1} \concat a_{2} \concat a_{3} \concat t^{-1} \concat
  g_{3}, \] then
  \begin{displaymath}
    \begin{aligned}
      \delta &= e \concat e \concat t \concat e \concat e \concat e \concat t^{-1} \concat e,\\
      \xi_{0} &= x \concat g_{2}
      \concat t \concat e \concat e \concat e \concat t^{-1} \concat g_{3}, \quad x = g_{3}^{-1}g_{2}^{-1}, \\
      \xi_{1} &= x \concat g_{2} \concat t \concat y \concat a_{2} \concat a_{3} \concat t^{-1} \concat g_{3}, \quad y =
      a_{3}^{-1}a_{2}^{-1}.
    \end{aligned}
  \end{displaymath}
\end{example-nn}

\begin{lemma}
  \label{sec:hnn-extensions-even-subword-reduction-type-g-general}
  Let \( \bar{\beta}, \bar{\xi} \) be as above.  If \( \zeta \) is a trivial word of length \( |\bar{\beta}| \), \(
  \zeta \) and \( \bar{\beta} \) are \multipliable and \( \zeta(i) \in G \) for all \( i \), then
  \( \rho(\bar{\beta},\bar{\xi}) \le \rho(\bar{\beta},\zeta) \).
\end{lemma}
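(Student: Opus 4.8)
The plan is to compute \( \rho(\bar{\beta},\bar{\xi}) \) exactly and then bound it against \( \rho(\bar{\beta},\zeta) \) by means of two-sided invariance, Lemma~\ref{sec:hnn-extensions-a-cancellation-error}, and the standing hypothesis \( \diam{A}\le 1 \). For a word \( \mu\in\word{G} \) write \( \widehat{\mu} \) for its evaluation in \( G \); thus \( \widehat{\nu_{0}},\dots,\widehat{\nu_{n}}\in G \), each \( \widehat{\mu_{l}}\in A \), and put \( P=\widehat{\nu_{0}}\,\widehat{\mu_{1}}\,\widehat{\nu_{1}}\cdots\widehat{\mu_{n}}\,\widehat{\nu_{n}} \), the ordered product of the letters of \( \bar{\beta} \) that are not equal to \( t^{\pm1} \).

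First I would pin down the shape of \( \bar{\xi} \). The word \( \delta \) agrees with \( \bar{\beta} \) precisely at the positions carrying \( t^{\pm1} \) and equals \( e \) elsewhere, and the tree \( T_{\delta} \) exhibited above is a slim evaluation tree. Applying Lemma~\ref{sec:metrics-amalgams-symmetrization-properties} inductively along \( \xi_{0},\xi_{1},\dots,\xi_{n}=\bar{\xi} \), every \( \xi_{l} \) is trivial, every \( (\bar{\beta},\xi_{l}) \) is a slim \( f \)-pair, and \( T_{\delta} \) stays a slim evaluation tree. Unwinding Definition~\ref{sec:metrics-amalgams-symmetrization-for-simple}: the symmetrization producing \( \xi_{0} \) alters \( \delta \) only on \( R_{\emptyset} \) (the positions occupied by the letters of \( \nu_{0},\dots,\nu_{n} \)), restoring the letter of \( \bar{\beta} \) at each such position except the least one, where it installs the unique element making the ordered product of the \( \nu \)-letters of \( \xi_{0} \) equal \( e \); each later step \( \xi_{l-1}\mapsto\xi_{l} \) alters only the positions occupied by \( \mu_{l} \), in the same fashion, and these blocks of positions are pairwise disjoint. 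Hence \( \bar{\xi} \) coincides with \( \bar{\beta} \) at every \( t^{\pm1} \)-position, at all but one of the positions occupied by the \( \nu \)-letters, and at all but one of the positions occupied by each \( \mu_{l} \); reading off the single discrepancy in each of these blocks and using two-sided invariance to turn it into a distance from \( e \) gives
\[ \rho(\bar{\beta},\bar{\xi})=d\big(\widehat{\nu_{0}}\,\widehat{\nu_{1}}\cdots\widehat{\nu_{n}},\,e\big)+\sum_{l=1}^{n}d\big(\widehat{\mu_{l}},\,e\big). \]

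Next I would bound \( \rho(\bar{\beta},\zeta) \) from below. Since \( (\bar{\beta},\zeta) \) is \multipliable with \( \zeta(i)\in G \), at each of the \( 2n \) positions where \( \bar{\beta}(i)=t^{\pm1} \) we have \( \zeta(i)\in G\cap\langle t\rangle=\{e\} \), so these positions contribute exactly \( 2n \); at the remaining positions \( \bar{\beta} \) lists, in increasing order, the letters of \( \nu_{0} \), then of \( \mu_{1} \), then of \( \nu_{1} \), and so on, and because \( \widehat{\zeta}=e \) while the \( \zeta \)-letters at the \( t^{\pm1} \)-positions are trivial, the ordered product of the \( \zeta \)-letters at the remaining positions is \( e \). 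Proposition~\ref{sec:tsi-groups-tsi-criterion}(ii) now gives \( \rho(\bar{\beta},\zeta)\ge 2n+d(P,e) \). It therefore suffices to verify
\[ d\big(\widehat{\nu_{0}}\cdots\widehat{\nu_{n}},\,e\big)+\sum_{l=1}^{n}d\big(\widehat{\mu_{l}},\,e\big)\ \le\ 2n+d(P,e). \]
Cancelling \( \widehat{\nu_{0}} \) on the left and \( \widehat{\nu_{n}} \) on the right and applying Lemma~\ref{sec:hnn-extensions-a-cancellation-error} with group elements \( \widehat{\nu_{1}},\dots,\widehat{\nu_{n-1}} \) and elements \( \widehat{\mu_{1}},\dots,\widehat{\mu_{n}}\in A \) (each of \( d \)-norm \( \le\diam{A}\le 1 \)) gives \( d\big(\widehat{\nu_{0}}\cdots\widehat{\nu_{n}},P\big)\le n \), hence \( d\big(\widehat{\nu_{0}}\cdots\widehat{\nu_{n}},e\big)\le n+d(P,e) \), while \( \sum_{l}d(\widehat{\mu_{l}},e)\le n \) since each \( \widehat{\mu_{l}}\in A \). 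Adding these yields the displayed inequality and hence \( \rho(\bar{\beta},\bar{\xi})\le\rho(\bar{\beta},\zeta) \).

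I expect the only delicate point to be the first step — determining exactly which coordinates of \( \bar{\xi} \) differ from those of \( \bar{\beta} \), and by how much, after the nested symmetrizations of Definition~\ref{sec:metrics-amalgams-symmetrization-for-simple}; once one observes that the successive symmetrizations act on pairwise disjoint blocks of positions, this becomes routine. Degenerate situations (some \( \nu_{k} \) or \( \mu_{k} \) empty, or \( n=0 \)) cause no trouble: the displayed identities and the cancellation estimate remain valid verbatim once empty words are read as evaluating to \( e \).
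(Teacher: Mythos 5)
Your proof is correct, and it is essentially the paper's own argument unrolled into a single pass. The paper proves this lemma by collapsing each block $\nu_{k}$, $\mu_{l}$ of $\bar{\beta}$, $\bar{\xi}$, $\zeta$ to a single letter (two-sided invariance gives $\rho(\bar{\beta},\zeta)\ge\rho(\beta,\zeta')$ and $\rho(\bar{\beta},\bar{\xi})=\rho(\beta,\xi)$) and then citing Lemma~\ref{sec:hnn-extensions-even-subword-reduction-type-g}; the proof of that single-letter lemma is exactly your direct computation of $\rho(\bar{\beta},\bar{\xi})$, your lower bound $\rho(\bar{\beta},\zeta)\ge 2n+d(P,e)$, and the final estimate via Lemma~\ref{sec:hnn-extensions-a-cancellation-error} together with $\diam{A}\le 1$. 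In short, you inlined the reduction and transcribed the one-letter computation to the block setting; same ingredients, same key lemma, just not factored into two steps.
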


\begin{proof}
  Set
  \begin{displaymath}
    \begin{aligned}
      \beta &= \hat{\nu}_{0} \concat t \concat \hat{\mu}_{1} \concat t^{-1} \concat \ldots \concat \hat{\mu}_{n} \concat
      t^{-1} \concat \hat{\nu}_{n},\\
      \xi' &= \hat{\bar{\xi}}[1,i_{1} - 1] \concat t \concat \hat{\bar{\xi}}[i_{1} + 1, i_{1}' - 1] \concat t ^{-1}
      \concat \ldots \concat \hat{\bar{\xi}}[i_{n} + 1, i_{n}' - 1] \concat t^{-1} \concat \hat{\bar{\xi}}[i_{n}' +1,
      n],\\
      \zeta' &= \hat{\zeta}[1,i_{1} - 1] \concat t \concat \hat{\zeta}[i_{1} + 1, i_{1}' - 1] \concat t ^{-1} \concat
      \ldots \concat \hat{\zeta}[i_{n} + 1, i_{n}' - 1] \concat t^{-1} \concat \hat{\zeta}[i_{n}' +1, n].
    \end{aligned}
  \end{displaymath}
  If \( \xi \) is as in Lemma \ref{sec:hnn-extensions-even-subword-reduction-type-g}, then \( \xi' = \xi \) and
  \begin{displaymath}
    \rho(\bar{\beta}, \zeta) \ge \textrm{[by tsi]}\ \rho(\beta,\zeta') \ge \textrm{[by Lemma
      \ref{sec:hnn-extensions-even-subword-reduction-type-g}]}\ \rho(\beta,\xi) = \rho(\beta,\xi') =
    \rho(\bar{\beta},\bar{\xi}). \qedhere
  \end{displaymath}
\end{proof}

\begin{lemma}
  \label{sec:from-hered-subword-keep-alternating-beta}
  Let \( \bar{\beta} \) be a word of the form 
  \[ \bar{\beta} = \nu_{0} \concat t \concat \mu_{1} \concat t^{-1} \nu_{1} \concat \cdots \concat \nu_{n-1} \concat t
  \concat \mu_{n} \concat t^{-1} \concat \nu_{n},\]
  for some words \( \mu_{i} \in \word{A} \), \( \nu_{i} \in \word{G} \).  If
  \( j_{0}, j_{1} \in \seg{1}{|\bar{\beta}|} \) are such that \( j_{0} < j_{1} \),
  \( \bar{\beta}(j_{0}), \bar{\beta}(j_{1}) \in \{t, t^{-1}\} \) and
  \( \bar{\beta}(j_{0}) = \bar{\beta}(j_{1})^{-1} \), then
  \( \bar{\beta}\Big[\seg{1}{|\bar{\beta}|} \setminus \seg{j_{0}}{j_{1}}\Big] \) can be written as
  \[ \bar{\beta}\Big[\seg{1}{|\bar{\beta}|} \setminus \seg{j_{0}}{j_{1}}\Big] =
  \nu'_{0} \concat t \concat \mu'_{1} \concat t^{-1} \nu'_{1} \concat \cdots \concat \nu'_{m-1} \concat t
  \concat \mu'_{m} \concat t^{-1} \concat \nu'_{m},\]
  for \( \mu'_{i} \in \word{A} \), \( \nu'_{i} \in \word{G} \) and \( m \le n \). 
\end{lemma}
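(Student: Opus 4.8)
The plan is to reduce everything to tracking the occurrences of $t^{\pm 1}$ in $\bar{\beta}$ and then simply reading off the prefix and suffix that survive after deleting the block $\seg{j_0}{j_1}$. First I would record the structural observation that, since each $\nu_i$ is a word over $G$ and each $\mu_i$ a word over $A \subseteq G$, while $G \cap \langle t\rangle = \{e\}$ inside $G * \langle t\rangle$, the only letters of $\bar{\beta}$ equal to $t$ or $t^{-1}$ are the $2n$ displayed ones; reading them from left to right they form the alternating pattern $t, t^{-1}, t, t^{-1}, \dots, t, t^{-1}$. I would write $p_i$ for the position of the $i$-th $t$ and $q_i$ for that of the $i$-th $t^{-1}$, so $p_1 < q_1 < p_2 < q_2 < \cdots < p_n < q_n$. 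By hypothesis $\bar{\beta}(j_0), \bar{\beta}(j_1) \in \{t, t^{-1}\}$ are mutually inverse, so exactly one of them is $t$, which splits the argument into two cases. Note also that $\seg{1}{|\bar{\beta}|}\setminus\seg{j_0}{j_1}$ is the disjoint union $\seg{1}{j_0-1}\cup\seg{j_1+1}{|\bar{\beta}|}$ with $\seg{1}{j_0-1} < \seg{j_1+1}{|\bar{\beta}|}$, so the word in question is $\bar{\beta}[\seg{1}{j_0-1}]\concat\bar{\beta}[\seg{j_1+1}{|\bar{\beta}|}]$.

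In the case $\bar{\beta}(j_0) = t$, $\bar{\beta}(j_1) = t^{-1}$, we have $j_0 = p_a$ and $j_1 = q_b$ for some $a,b$, and $j_0 < j_1$ together with the ordering of the $p$'s and $q$'s forces $a \le b$. Reading off the displayed form of $\bar{\beta}$, the surviving prefix is $\nu_0 \concat t \concat \mu_1 \concat t^{-1}\concat\cdots\concat t^{-1}\concat\nu_{a-1}$ and the surviving suffix is $\nu_b \concat t \concat \mu_{b+1}\concat\cdots\concat t^{-1}\concat\nu_n$. Since $\nu_{a-1}\concat\nu_b$ is again a word over $G$, setting $\nu'_{a-1} = \nu_{a-1}\concat\nu_b$ and relabelling the remaining blocks exhibits the concatenation in the required form, with $m = (a-1)+(n-b)$; as $a\le b$ this gives $m \le n-1 \le n$. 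The symmetric case $\bar{\beta}(j_0) = t^{-1}$, $\bar{\beta}(j_1) = t$ is handled identically: now $j_0 = q_a$, $j_1 = p_b$ with $a < b$, the surviving prefix ends in the block $\mu_a$ and the surviving suffix begins with $\mu_b$, and since $\mu_a\concat\mu_b$ is a word over $A$, gluing them produces a word of the required form with $m = a + (n-b) \le n-1 \le n$. The degenerate possibilities — some of $\nu_0,\nu_n$ absent, or some $\nu_i$ or $\mu_i$ empty — are absorbed by the same formulas.

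I do not expect a genuine obstacle here: the entire content is bookkeeping about the positions of the $t^{\pm 1}$-letters. The only points that need a little care are verifying that the alternation of the $t^{\pm 1}$-letters really forces the hypothesis into exactly the two cases above, and keeping the index shifts between $\bar{\beta}$ and its subword consistent when relabelling the blocks $\nu'_i$, $\mu'_i$.
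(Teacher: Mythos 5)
Your proof is correct and takes essentially the same approach as the paper: both identify the prefix ending just before $j_0$ and the suffix starting just after $j_1$, observe that the two blocks at the seam (both in $\word{G}$ or both in $\word{A}$, according to the case) merge into a single block of the same type, and read off the count $m \le n$. The paper treats only the case $\bar{\beta}(j_0) = t$ explicitly and dismisses the other as "similar," so your more explicit handling of both cases is if anything slightly fuller.
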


\begin{proof}
  Suppose for definiteness that \( \bar{\beta}(j_{0}) = t \) (the case \( \bar{\beta}(j_{0}) = t^{-1} \) is similar).
  For some \( k,l \) we can write
  \( \bar{\beta} = \bar{\beta}_{0} \concat \nu_{k} \concat t \concat \bar{\beta}_{1} \concat t^{-1} \concat \nu_{l}
  \concat \bar{\beta}_{2} \), where \( |\bar{\beta}_{0}| + |\nu_{k}| = j_{0} -1 \),
  \( |\bar{\beta}_{2}| + |\nu_{l}| = |\bar{\beta}|-j_{1} \) and \( \bar{\beta}_{0} \) is either empty or ends with
  \( t^{-1} \), \( \bar{\beta}_{2} \) is either empty or starts with \( t \).  Then
  \[ \bar{\beta}\Big[\seg{1}{|\bar{\beta}|} \setminus \seg{j_{0}}{j_{1}}\Big] = \bar{\beta}_{0} \concat \nu_{k} \concat
  \nu_{l} \concat \bar{\beta}_{2}. \qedhere\]
\end{proof}

\medskip

Let \( \gamma \) be a word of the form
\[ \gamma = a_{0} \concat t^{-1} \concat g_{0} \concat t \concat a_{1} \concat t^{-1} \concat g_{1} \concat t \concat
\cdots \concat a_{n-1} \concat t^{-1} \concat g_{n-1} \concat t \concat a_{n}, \]
where \( g_{i} \in G \) and \( a_{i} \in A \).  Let \( \zeta \) be a trivial word such that
\( \zeta \) and \( \gamma \) are \multipliable and \( \zeta(i) \in G \) for all \( i \).  In other words
\[ \zeta = h_{0} \concat e \concat h_{1} \concat e \concat h_{2} \concat e \concat h_{3} \concat e \concat \cdots
\concat h_{2n-2} \concat e \concat h_{2n-1} \concat e \concat h_{2n}, \]
where \( h_{i} \in G \).  Define a word \( \delta \) by
\begin{displaymath}
  \delta(i) =
  \begin{cases}
    a_{0} & \textrm{if \( i = 1 \)};\\
    e & \textrm{if \( i = 1 \mod 4 \) and \( 1 < i < 4n+1 \)}; \\
    t^{-1} & \textrm{if \( i = 2 \mod 4 \)}; \\
    e & \textrm{if \( i = 3 \mod 4 \)}; \\
    t & \textrm{if \(i= 0 \mod 4 \)};\\
    a_{0}^{-1} & \textrm{if \( i = 4n+1 \)}.
  \end{cases}
\end{displaymath}
Or, equivalently,
\[ \delta = a_{0} \concat t^{-1}\concat e \concat t \concat e \concat \cdots \concat e \concat t^{-1} \concat e \concat
t \concat a_{0}^{-1}.  \]
If \( T_{\delta} = \{\emptyset, u,s, s_{1}, \ldots, s_{n-1}, s_{1}', \ldots, s_{n-1}'\} \) with \( u \prec \emptyset \),
\( s \prec u \), \( s_{k} \prec s \), \( s_{k}' \prec s_{k} \), \( I_{u} = \seg{2}{n-1} \), \( I_{s} = \seg{3}{n-2} \),
\( I_{s_{k}} = \seg{4k}{4k+2} \), \( I_{{s_{i}'}} = \{4k+1\} \), then \( T_{\delta} \) is a slim evaluation tree.  Set
\[ \xi = \symmet{\gamma}{\delta}{3}{\{4k-1\}_{k=1}^{n}} = \symmet{\gamma}{\delta}{3}{R_{s}}. \]
\begin{example-nn}
  \label{sec:from-hered-rigid-gamma}
  For example, if
  \[ \gamma = a_{0} \concat t^{-1} \concat g_{0} \concat t \concat a_{1} \concat t^{-1} \concat g_{1} \concat t \concat
  a_{2}, \] then
  \begin{displaymath}
    \begin{aligned}
      &\delta = a_{0} \concat t^{-1} \concat e \concat t \concat e \concat t^{-1} \concat e \concat t \concat
      a_{0}^{-1},\\
      &\xi = a_{0} \concat t^{-1} \concat g_{1}^{-1} \concat t \concat e \concat t^{-1} \concat g_{1} \concat t \concat
      a_{0}^{-1}.
    \end{aligned}
  \end{displaymath}
\end{example-nn}

\begin{lemma}
  \label{sec:hnn-extensions-even-subsword-reduction-type-a}
  If \( \gamma, \zeta,\xi \) are as above, then \( \rho(\gamma,\xi) \le \rho(\gamma,\zeta) \).
\end{lemma}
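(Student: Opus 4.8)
The plan is to evaluate $\rho(\gamma,\zeta)$ and $\rho(\gamma,\xi)$ letter by letter and then compare them by a telescoping estimate, being economical about how the letters of $A$ are charged.

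First I would record the explicit forms. Since $\zeta$ must carry $e$ on the $2n$ positions where $\gamma$ has $t^{\pm1}$ (the only way a letter of $G$ can be multipliable with $t^{\pm1}$) and $\zeta(2m+1)=h_m$ with $h_0h_1\cdots h_{2n}=e$, using $d(t^{\pm1},e)=1$ one gets
\[ \rho(\gamma,\zeta)=2n+\sum_{k=0}^{n}d(a_k,h_{2k})+\sum_{k=0}^{n-1}d(g_k,h_{2k+1}). \]
On the other side, $\xi$ coincides with $\delta$ off $R_s=\{4k-1:1\le k\le n\}$, where $\xi(4k-1)=\gamma(4k-1)=g_{k-1}$ for $k\ge2$ and $\xi(3)=g_{n-1}^{-1}\cdots g_1^{-1}$. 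Comparing with $\gamma$ and applying two-sided invariance ($d(g_0,g_{n-1}^{-1}\cdots g_1^{-1})=d(g_0g_1\cdots g_{n-1},e)$ and $d(a_n,a_0^{-1})=d(a_0a_n,e)$) I obtain
\[ \rho(\gamma,\xi)=d(g_0g_1\cdots g_{n-1},e)+\sum_{k=1}^{n-1}d(a_k,e)+d(a_0a_n,e), \]
so everything reduces to bounding $d(g_0g_1\cdots g_{n-1},e)$ in terms of the $h_i$'s.

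Since $g_0\cdots g_{n-1}$ and $h_1h_3\cdots h_{2n-1}$ are products of $n$ letters each, Proposition \ref{sec:tsi-groups-tsi-criterion} gives $d(g_0\cdots g_{n-1},h_1h_3\cdots h_{2n-1})\le\sum_{k=0}^{n-1}d(g_k,h_{2k+1})$. To estimate $d(h_1h_3\cdots h_{2n-1},e)$ I would multiply by $h_0$ on the left and $h_{2n}$ on the right: by two-sided invariance it equals $d\big(h_0h_1h_3\cdots h_{2n-1}h_{2n},\,h_0h_{2n}\big)$, hence by the triangle inequality it is at most $d\big(h_0h_1h_3\cdots h_{2n-1}h_{2n},e\big)+d(h_0h_{2n},e)$. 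The first summand I bound by comparing $h_0h_1h_3\cdots h_{2n-1}h_{2n}$ with $h_0h_1h_2h_3\cdots h_{2n}=e$, inserting the inner even letters $h_2,h_4,\dots,h_{2n-2}$ as $e$'s; Proposition \ref{sec:tsi-groups-tsi-criterion} gives the bound $\sum_{k=1}^{n-1}d(e,h_{2k})$. For the second summand, $d(h_0h_{2n},e)\le d(h_0,a_0)+d(h_{2n},a_n)+d(a_0a_n,e)$. Using also $d(e,h_{2k})\le d(a_k,e)+d(a_k,h_{2k})$ and assembling, I get
\[ d(g_0\cdots g_{n-1},e)\le\sum_{k=0}^{n-1}d(g_k,h_{2k+1})+\sum_{k=0}^{n}d(a_k,h_{2k})+\sum_{k=1}^{n-1}d(a_k,e)+d(a_0a_n,e). \]
Feeding this into the formula for $\rho(\gamma,\xi)$ yields
\[ \rho(\gamma,\xi)\le\sum_{k=0}^{n}d(a_k,h_{2k})+\sum_{k=0}^{n-1}d(g_k,h_{2k+1})+2\Big(\sum_{k=1}^{n-1}d(a_k,e)+d(a_0a_n,e)\Big), \]
and now $\diam{A}\le1$ forces $d(a_k,e)\le1$ and $d(a_0a_n,e)\le1$, so the bracket is at most $(n-1)+1=n$ and $\rho(\gamma,\xi)\le2n+\sum_{k=0}^{n}d(a_k,h_{2k})+\sum_{k=0}^{n-1}d(g_k,h_{2k+1})=\rho(\gamma,\zeta)$.

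The delicate point is that the naive estimate — padding $g_0\cdots g_{n-1}$ with $e$'s in all $n+1$ of the $A$-slots of $\gamma$ — overshoots by exactly one unit, since $\gamma$ contains $n+1$ letters of $A$ but only $2n$ units of slack are produced by the $t^{\pm1}$'s. Grouping the two boundary letters into the single term $d(a_0a_n,e)\le1$ (instead of $d(a_0,e)+d(a_n,e)$) and using the triviality relation $h_0h_1\cdots h_{2n}=e$ to absorb $h_0$ and $h_{2n}$ onto the two ends of the odd-index product is precisely what recovers the missing unit, and this is the main obstacle. I should also note that when $n=1$ the symmetrization defining $\xi$ degenerates (it sets $\xi=\delta$, so $\xi(3)=e$), but with the usual conventions for empty sums and products the displays above specialize correctly, so no separate argument is needed.
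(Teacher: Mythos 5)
Your proof is correct. It reaches the inequality by a differently organized route from the paper's: the paper first bounds
\[
\rho(\gamma,\xi) \le 2n + d\bigl(a_{0}g_{0}a_{1}g_{1}\cdots a_{n-1}g_{n-1}a_{n},e\bigr),
\]
using Lemma~\ref{sec:hnn-extensions-a-cancellation-error} on the inner string \(g_{1}\cdots g_{n-2}\) and absorbing the two boundary \(A\)-letters into \(d(a_{0}a_{n},e)\) and \(d(a_{0}^{-1}a_{n}^{-1},e)\), and then separately observes \(\rho(\gamma,\zeta)\ge 2n+d(a_{0}g_{0}\cdots a_{n},e)\). You never form the product \(a_{0}g_{0}a_{1}\cdots a_{n}\); instead you compare \(g_{0}\cdots g_{n-1}\) to the odd-index product \(h_{1}h_{3}\cdots h_{2n-1}\) of the letters of \(\zeta\), and exploit the triviality \(h_{0}h_{1}\cdots h_{2n}=e\) to absorb \(h_{0}\) and \(h_{2n}\) onto the two ends, which lets you track each term of \(\rho(\gamma,\zeta)\) explicitly and sidestep Lemma~\ref{sec:hnn-extensions-a-cancellation-error}. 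Both proofs resolve the essential ``\(n+1\) letters of \(A\) versus only \(2n\) units of slack from the \(t^{\pm1}\)'s'' imbalance in the same way, by grouping the two boundary \(A\)-letters into the single bounded quantity \(d(a_{0}a_{n},e)\le1\); this is the one non-obvious move and you identify it correctly. The paper's version is slightly more economical in structure, since Lemma~\ref{sec:hnn-extensions-a-cancellation-error} is shared with the \(\bar\beta\)-analogue Lemma~\ref{sec:hnn-extensions-even-subword-reduction-type-g}; your version is self-contained at the cost of a somewhat longer chain of triangle inequalities.
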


\begin{proof}
  By the two-sided invariance
  \[ \rho(\gamma,\zeta) \ge d(a_{0}g_{0}a_{1}g_{1}\cdots a_{n-1}g_{n-1}a_{n},e) + 2n.  \] On the other hand
  \begin{displaymath}
    \begin{aligned}
      \rho(\gamma,\xi) =& d(a_{0}a_{n},e) + \sum_{i=1}^{n-1}d(a_{i},e) + d(g_{0}g_{1} \cdots g_{n}, e) \le \\
      &n + d(g_{0}g_{1}\ldots g_{n-1}, a_{0}^{-1}a_{n}^{-1}) + d(a_{0}^{-1}a_{n}^{-1},e) \le \\
      &n+1 + d(g_{0}g_{1} \cdots g_{n-1}, a_{0}^{-1}a_{n}^{-1}) \le \\
      &n+1 + d(a_{0}g_{0}g_{1} \cdots g_{n-2}g_{n-1}a_{n}, a_{0}g_{0}a_{1}g_{1} \cdots a_{n-1}g_{n-1}a_{n}) +\\
      & \qquad d(a_{0}g_{0}a_{1}g_{1}\cdots a_{n-1}g_{n-1}a_{n},e) = \\
      &n+1 + d(g_{1} \cdots g_{n-2}, a_{1}g_{1} \cdots g_{n-2}a_{n-1}) +\\
      & \qquad d(a_{0}g_{0}a_{1}g_{1}\cdots a_{n-1}g_{n-1}a_{n},e) \le \textrm{[by Lemma
        \ref{sec:hnn-extensions-a-cancellation-error}] }\\
      &n+1 + n-1 + d(a_{0}g_{0}a_{1}g_{1}\cdots a_{n-1}g_{n-1}a_{n},e) \le \rho(\gamma,\zeta).
    \end{aligned}
  \end{displaymath}
  And the lemma follows.
\end{proof}

Suppose we have words
\begin{eqnarray*}
  \begin{aligned}
    \mu_{k} = a_{(k,1)} \concat \cdots \concat a_{(k,p_{k})}, \quad \textrm{where \( a_{(k,j)} \in A \) and \(k \in
      \seg{0}{n}, \)} \\
    \nu_{k} = g_{(k,1)} \concat \cdots \concat g_{(k,q_{k})}, \quad \textrm{where \( g_{(k,j)} \in G \) and
      \(k \in \seg{1}{n}, \)}
  \end{aligned}
\end{eqnarray*}
and let \( \bar{\gamma} \) be the word
\[ \bar{\gamma} = \mu_{0} \concat t^{-1} \concat \nu_{0} \concat t \concat \mu_{1} \concat \cdots \concat \mu_{n-1}
\concat t^{-1} \concat \nu_{n-1} \concat t \concat \mu_{n}.  \]
Let \( \{i_{k}\}_{k=1}^{n} \), \( \{i'_{k}\}_{k=1}^{n} \) be indices such that
\begin{enumerate}[(i)]
\item \( i_{k} < i_{k+1} \), \( i'_{k} < i'_{k+1} \);
\item \( \gamma(i_{k}) = t^{-1} \), \( \gamma(i'_{k}) = t \);
\item if \( \gamma(i) = t^{-1} \), then \( i = i_{k} \) for some \( k \in \seg{1}{n} \); if \( \gamma(i) = t\), then
  \( i = i'_{k} \) for some \( k \in \seg{1}{n} \).
\end{enumerate}
Define the word \( \delta \) of length \( |\bar{\gamma}| \) by
\begin{displaymath}
  \delta(i) =
  \begin{cases}
    e & \textrm{if \( \bar{\gamma}(i) \in G \)};\\
    \bar{\gamma}(i) & \textrm{if \( \bar{\gamma}(i) = t^{\pm 1} \)}.\\
  \end{cases}
\end{displaymath}
If \( T_{\delta} = \{\emptyset, u, s, s_{1}, \ldots, s_{n-1}, s_{1}', \ldots, s_{n-1}'\} \), \( u \prec \emptyset \),
\( s \prec u \), \( s_{k} \prec s \), \( s_{k}' \prec s_{k} \) and \( I_{u} = \seg{i_{1}}{i_{n}'} \),
\( I_{s} = \seg{i_{1}+1}{i_{n}'-1} \), \( I_{s_{k}} = \seg{i_{k}'}{i_{k+1}} \),
\( I_{s_{k}'} = \seg{i_{k}'+1}{i_{k+1}'-1} \) (in other words \( I_{s_{k}} \) and \( I_{s_{k}'} \) are such that
\( \bar{\gamma}[I_{s_{i}}] = t \concat \mu_{i} \concat t^{-1} \), \( \bar{\gamma}[I_{s_{i}}] = \mu_{i} \)), then
\( T_{\delta} \) is a slim evaluation tree.

Let \( \{j_{k}\}_{k=1}^{m} \) be the enumeration of the set
\[ \bigcup_{k=1}^{n} \seg{i_{k} + 1}{i'_{k} - 1}.  \] Set inductively
\begin{displaymath}
  \begin{aligned}
    \xi_{0} &= \symmet{\bar{\gamma}}{\delta}{j_{1}}{\{j_{k}\}} =\symmet{\bar{\gamma}}{\delta}{j_{1}}{R_{s}},\\
    \xi_{l} &= \symmet{\bar{\gamma}}{\xi_{l-1}}{j^{(l)}_{1}}{\{j^{(l)}_{k}\}} =
    \symmet{\bar{\gamma}}{\xi_{l-1}}{j^{(l)}_{1}}{R_{s_{l}'}},
  \end{aligned}
\end{displaymath}
where \( j^{(l)}_{k} = i'_{l} + k \) and \( l \in \seg{1}{n-1} \), \( k \in \seg{1}{p_{k}} \).  Finally set
\[ \bar{\xi} = \symmet{\bar{\gamma}}{\xi_{n}}{1}{\seg{1}{i_{1}-1} \cup \seg{i'_{n}+1}{n}} = \symmet{\bar{\gamma}}{\xi_{n}}{1}{R_{\emptyset}}.\]
\begin{example-nn}
  \label{sec:from-hered-rigid-gamma-bar}
  For example, if
  \[ \bar{\gamma} = a_{1} \concat a_{2} \concat t^{-1} \concat g_{1} \concat t \concat a_{3} \concat a_{4} \concat
  t^{-1} \concat g_{2} \concat g_{3} \concat t \concat a_{5}, \] then
  \begin{displaymath}
    \begin{aligned}
      \delta &= e \concat e \concat t^{-1} \concat e \concat t \concat e \concat e \concat t^{-1} \concat e \concat e
      \concat t \concat e,\\
      \xi_{0} &= e \concat e \concat t^{-1}\concat x \concat t \concat e \concat e \concat t^{-1} \concat g_{2} \concat g_{3}
      \concat t \concat e, \quad x= g_{3}^{-1}g_{2}^{-1}\\
      \xi_{1} &= e \concat e \concat t^{-1} \concat x \concat t \concat a_{4}^{-1} \concat a_{4} \concat t^{-1} \concat
      g_{2} \concat g_{3} \concat t \concat e,\\
      \bar{\xi} &= y \concat a_{2} \concat t^{-1} \concat x \concat t \concat a_{4}^{-1} \concat a_{4} \concat t^{-1} \concat
      g_{2} \concat g_{3} \concat t \concat a_{5}, \quad y = a_{5}^{-1}a_{2}^{-1}.
    \end{aligned}
  \end{displaymath}
\end{example-nn}

\begin{lemma}
  \label{sec:hnn-extensions-even-subword-reduction-type-a-general}
  Let \( \bar{\gamma}, \bar{\xi} \) be as above.  If \( \zeta \) is a trivial word of length \( |\bar{\gamma}| \), \(
  \zeta \) and \( \bar{\gamma} \) are \multipliable and \( \zeta(i) \in G \) for all \( i \), then
  \( \rho(\bar{\gamma},\bar{\xi}) \le \rho(\bar{\gamma},\zeta) \).
\end{lemma}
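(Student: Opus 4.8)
The plan is to mimic the proof of Lemma~\ref{sec:hnn-extensions-even-subword-reduction-type-g-general}: collapse each block of $\bar\gamma$ to a single letter so as to land in the situation of Lemma~\ref{sec:hnn-extensions-even-subsword-reduction-type-a}. Let $\gamma$ be obtained from $\bar\gamma$ by replacing the run of letters forming $\mu_k$ with the single letter $\hat\mu_k\in A$ and the run forming $\nu_k$ with $\hat\nu_k\in G$, keeping each $t^{\pm1}$ in place; thus
\[
\gamma \;=\; \hat\mu_0 \concat t^{-1} \concat \hat\nu_0 \concat t \concat \hat\mu_1 \concat \cdots \concat t^{-1} \concat \hat\nu_{n-1} \concat t \concat \hat\mu_n
\]
has exactly the form required by Lemma~\ref{sec:hnn-extensions-even-subsword-reduction-type-a}. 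Applying the same collapse to $\zeta$ and to $\bar\xi$ gives words $\zeta'$ and $\xi'$ of length $|\gamma|$. Since $\zeta$ is multipliable with $\bar\gamma$ and $G$-valued, it carries $e$ at every $t^{\pm1}$-position; hence $\zeta'$ is trivial (its letters multiply out, after regrouping, to $\hat\zeta=e$), is multipliable with $\gamma$, and is $G$-valued, so Lemma~\ref{sec:hnn-extensions-even-subsword-reduction-type-a} applies to the pair $(\gamma,\zeta')$.

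I would then run the chain
\[
\rho(\bar\gamma,\zeta)\;\ge\;\rho(\gamma,\zeta')\;\ge\;\rho(\gamma,\xi)\;=\;\rho(\bar\gamma,\bar\xi),
\]
where $\xi$ is the word built from $\gamma$ just before Lemma~\ref{sec:hnn-extensions-even-subsword-reduction-type-a}. The first inequality is two-sided invariance used inside each block: if the $\mu_k$-block of $\zeta$ multiplies out to some $c_k$, then by Proposition~\ref{sec:tsi-groups-tsi-criterion}(ii) its contribution to $\rho(\bar\gamma,\zeta)$ is at least $d(\hat\mu_k,c_k)$, which is exactly its contribution to $\rho(\gamma,\zeta')$ (and similarly for the $\nu_k$-blocks), while each $t^{\pm1}$-position contributes $d(t^{\pm1},e)$ on both sides. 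The second inequality is Lemma~\ref{sec:hnn-extensions-even-subsword-reduction-type-a} applied to $(\gamma,\zeta')$.

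The final equality is the delicate point, and here the $a$-case departs slightly from the $g$-case. Unwinding the symmetrizations $\xi_0,\dots,\xi_{n-1},\bar\xi$ shows that $\bar\xi$ agrees with $\bar\gamma$ on every $\mu_k$- and $\nu_k$-block except at one ``balancing'' position per block (at $m(\nu_0)$ for the $\nu$-part, at $m(\mu_l)$ for each interior $\mu_l$, at $m(\mu_0)=1$ for the $\mu_0\cup\mu_n$-part), which forces the block-products of $\bar\xi$ on $\nu_0$, on $\nu_k$ $(k\ge1)$, on $\mu_l$ $(1\le l\le n-1)$, on $\mu_0$, and on $\mu_n$ to equal $\hat\nu_{n-1}^{-1}\cdots\hat\nu_1^{-1}$, $\hat\nu_k$, $e$, $\hat\mu_n^{-1}$, and $\hat\mu_n$ respectively. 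Consequently $\xi'$ differs from $\xi$ only in its two terminal $A$-letters ($\xi$ carrying $\hat\mu_0,\hat\mu_0^{-1}$, $\xi'$ carrying $\hat\mu_n^{-1},\hat\mu_n$), but the contributions of those positions to $\rho(\gamma,\cdot)$ are $d(\hat\mu_n\hat\mu_0,e)$ for $\xi$ and $d(\hat\mu_0\hat\mu_n,e)$ for $\xi'$, which are equal by conjugation-invariance; so $\rho(\gamma,\xi)=\rho(\gamma,\xi')$. And $\rho(\gamma,\xi')=\rho(\bar\gamma,\bar\xi)$ block by block: on a block $B$ the words $\bar\gamma$ and $\bar\xi$ differ at one position only, so by two-sided invariance the block contributes $d\big(\hat\gamma[B]\,\hat{\bar\xi}[B]^{-1},e\big)=d\big(\hat\gamma[B],\hat{\bar\xi}[B]\big)$, which is the contribution of the collapsed pair of letters; and on $t^{\pm1}$-positions both words agree.

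The main obstacle I expect is precisely this bookkeeping for $\bar\xi$: the first symmetrization acts on the union $\nu_0\cup\cdots\cup\nu_{n-1}$ (several blocks, least element $m(\nu_0)$), the later $\xi_l$ act on the interior $\mu_l$ separately, and the last symmetrization acts on $\mu_0\cup\mu_n$ --- the two ends of $\bar\gamma$ --- so one must track, block by block, which single letter becomes a balancing product and which letters retain their $\bar\gamma$-values before reading off the block-products of $\bar\xi$ listed above. Once these are in hand, both equalities in the last step follow at once from Proposition~\ref{sec:tsi-groups-tsi-criterion} and the two-sided invariance of $d$.
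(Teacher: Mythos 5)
Your proof follows the paper's intended route (collapse each block of $\bar{\gamma}$, $\zeta$, and $\bar{\xi}$ to a single letter and invoke Lemma~\ref{sec:hnn-extensions-even-subsword-reduction-type-a}), and it is correct. You have moreover noticed a genuine wrinkle that the paper's terse one-line proof leaves implicit: unlike in the $\bar{\beta}$-case, the collapse $\xi'$ of $\bar{\xi}$ does not coincide with the word $\xi$ of Lemma~\ref{sec:hnn-extensions-even-subsword-reduction-type-a} --- they differ at the two terminal $A$-slots, carrying $\hat{\mu}_{0}, \hat{\mu}_{0}^{-1}$ versus $\hat{\mu}_{n}^{-1}, \hat{\mu}_{n}$ --- and your conjugation-invariance observation $d(\hat{\mu}_{n}\hat{\mu}_{0},e)=d(\hat{\mu}_{0}\hat{\mu}_{n},e)$ is exactly what is needed to close the chain $\rho(\bar{\gamma},\zeta)\ge\rho(\gamma,\zeta')\ge\rho(\gamma,\xi)=\rho(\gamma,\xi')=\rho(\bar{\gamma},\bar{\xi})$.
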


\begin{proof}
  Proof is similar to the proof of Lemma \ref{sec:hnn-extensions-even-subword-reduction-type-g-general} using Lemma
  \ref{sec:hnn-extensions-even-subsword-reduction-type-a} instead of Lemma
  \ref{sec:hnn-extensions-even-subword-reduction-type-g}.
\end{proof}

\begin{lemma}
  \label{sec:from-hered-subword-keep-alternating}
  Let \( \bar{\gamma} \) be a word of the form 
  \[ \bar{\gamma} = \mu_{0} \concat t^{-1} \concat \nu_{0} \concat t \concat \mu_{1} \concat \cdots \concat \mu_{n-1}
  \concat t^{-1} \concat \nu_{n-1} \concat t \concat \mu_{n}, \]
  for some words \( \mu_{i} \in \word{A} \), \( \nu_{i} \in \word{G} \).  If
  \( j_{0}, j_{1} \in \seg{1}{|\bar{\gamma}|} \) are such that \( j_{0} < j_{1} \),
  \( \bar{\gamma}(j_{0}), \bar{\gamma}(j_{1}) \in \{t, t^{-1}\} \) and
  \( \bar{\gamma}(j_{0}) = \bar{\gamma}(j_{1})^{-1} \), then
  \( \bar{\gamma}\Big[\seg{1}{|\bar{\gamma}|} \setminus \seg{j_{0}}{j_{1}}\Big] \) can be written as
  \[ \bar{\gamma}\Big[\seg{1}{|\bar{\gamma}|} \setminus \seg{j_{0}}{j_{1}}\Big] =
 \mu'_{0} \concat t^{-1} \concat \nu'_{0} \concat t \concat \mu'_{1} \concat \cdots \concat \mu'_{m-1}
  \concat t^{-1} \concat \nu'_{m-1} \concat t \concat \mu'_{m}, \]
  for \( \mu_{i} \in \word{A} \), \( \nu_{i} \in \word{G} \) and \( m \le n \). 
\end{lemma}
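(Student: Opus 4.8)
The plan is to mirror the proof of Lemma~\ref{sec:from-hered-subword-keep-alternating-beta}, tracking the block structure of $\bar{\gamma}$ as the interval $\seg{j_0}{j_1}$ is deleted. First I would record the elementary observation that in $\bar{\gamma}$ the letters equal to $t^{-1}$ are exactly those separating a block $\mu_i$ from the following block $\nu_i$ (for $0 \le i \le n-1$), while the letters equal to $t$ are exactly those separating a block $\nu_i$ from the following block $\mu_{i+1}$ (for $0 \le i \le n-1$); read from left to right, the $t^{\pm 1}$-letters of $\bar{\gamma}$ thus form the alternating sequence $t^{-1}, t, t^{-1}, t, \dots, t^{-1}, t$ with $n$ occurrences of each sign. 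Since $j_0$ and $j_1$ are positions of $t^{\pm 1}$-letters, they are block separators, so the letter of $\bar{\gamma}$ immediately preceding position $j_0$ and the letter immediately following position $j_1$ belong to blocks of the same type.

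Next I would split into the two cases $\bar{\gamma}(j_0) = t$ and $\bar{\gamma}(j_0) = t^{-1}$. Suppose $\bar{\gamma}(j_0) = t$, so that it is the $t$ between $\nu_a$ and $\mu_{a+1}$ for some $a$; since $\bar{\gamma}(j_1) = t^{-1}$ lies to its right, $\bar{\gamma}(j_1)$ is the $t^{-1}$ between $\mu_b$ and $\nu_b$ for some $b \ge a+1$. Writing $\bar{\gamma} = \bar{\gamma}_0 \concat \nu_a \concat t \concat \bar{\gamma}_1 \concat t^{-1} \concat \nu_b \concat \bar{\gamma}_2$ with $|\bar{\gamma}_0| + |\nu_a| = j_0 - 1$ and $|\bar{\gamma}_2| + |\nu_b| = |\bar{\gamma}| - j_1$ — concretely $\bar{\gamma}_0 = \mu_0 \concat t^{-1} \concat \nu_0 \concat t \concat \cdots \concat \mu_a \concat t^{-1}$ ends with $t^{-1}$ and $\bar{\gamma}_2 = t \concat \mu_{b+1} \concat t^{-1} \concat \cdots \concat t \concat \mu_n$ begins with $t$ — deletion of $\seg{j_0}{j_1}$ yields $\bar{\gamma}\big[\seg{1}{|\bar{\gamma}|} \setminus \seg{j_0}{j_1}\big] = \bar{\gamma}_0 \concat \nu_a \concat \nu_b \concat \bar{\gamma}_2$. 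Since $\nu_a \concat \nu_b \in \word{G}$ is flanked by the terminal $t^{-1}$ of $\bar{\gamma}_0$ and the initial $t$ of $\bar{\gamma}_2$, this word again has the required shape, with $m = a + (n - b) \le n - 1$. The case $\bar{\gamma}(j_0) = t^{-1}$ is symmetric: now $\bar{\gamma}(j_0)$ lies between $\mu_a$ and $\nu_a$, $\bar{\gamma}(j_1) = t$ lies between $\nu_b$ and $\mu_{b+1}$ with $b \ge a$, and deletion merges the two $A$-blocks flanking the removed interval, $\mu_a$ and $\mu_{b+1}$, into the single block $\mu_a \concat \mu_{b+1} \in \word{A}$, leaving a word of the required form with $m = a + (n - 1 - b) \le n - 1$.

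There is no genuine obstacle here; the part needing care rather than insight is the boundary bookkeeping. One must allow all the blocks $\mu_i, \nu_i$ — both the original ones and the merged ones — to be empty, check that the displayed decompositions degenerate correctly when $a = 0$, when $b$ attains its largest admissible value, or when $\bar{\gamma}_0$ or $\bar{\gamma}_2$ is empty, and verify in each case that the resulting word still begins with a (possibly empty) $A$-block and alternates $t^{-1}, t, \dots$ as prescribed. Finally, since the deleted interval contains at least the two $t^{\pm 1}$-letters at positions $j_0$ and $j_1$, one of each sign, the number of $t$-letters strictly decreases, so in fact $m \le n - 1$, and a fortiori $m \le n$.
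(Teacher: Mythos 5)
Your argument is correct and matches the paper's intended proof, which is explicitly stated to be ``similar to the proof of Lemma~\ref{sec:from-hered-subword-keep-alternating-beta}'': in both cases one locates the $A$- or $G$-blocks flanking the deleted interval, observes that they merge into a single block of the same type, and notes that the $t^{\pm 1}$-letters at the boundary of the remaining word keep the required alternating pattern. Your case analysis, boundary bookkeeping, and the observation that in fact $m \le n-1$ are all accurate.
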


\begin{proof}
  The proof is similar to the proof of Lemma \ref{sec:from-hered-subword-keep-alternating-beta}.
\end{proof}

\begin{definition}
  \label{sec:hnn-extensions-rigid-pair}
  Let \( (\alpha,\zeta) \) be a hereditary \( f \)-pair of length \( n \).  It is called \emph{rigid} if for all
  \( i \in \seg{1}{n} \)
  \[ \alpha(i) = t^{\pm 1} \implies \zeta(i) = \alpha(i).  \]
\end{definition}
Here is an example of a rigid pair:
\begin{displaymath}
  \begin{aligned}
    &\alpha = g_{0} \concat t \concat a_{1} \concat t^{-1} \concat g_{1} \concat t \concat a_{2} \concat t^{-1} \concat
    g_{2},\\
    &\zeta = g_{2}^{-1}g_{1}^{-1} \concat t \concat e \concat t^{-1} \concat g_{1} \concat t \concat e \concat t^{-1}
    \concat g_{2}.
  \end{aligned}
\end{displaymath}

\begin{lemma}
  \label{sec:induced-metrics-existence-of-a-good-pair}
  Let \( f \in G*tAt^{-1} \), and let \( \alpha \in \word{G \cup \langle t \rangle} \) be the reduced form of \( f \).
  If \( (\alpha,\zeta) \) is a hereditary \( f \)-pair, then there exists a rigid \( f \)-pair \( (\alpha,\xi) \) such
  that \( \rho(\alpha,\zeta) \ge \rho(\alpha,\xi) \).  Moreover, if for some \( i \) one has \( \alpha(i) = t \), then
  \( \xi(i+1) \in A \).
\end{lemma}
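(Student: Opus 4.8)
The plan is to start with the hereditary $f$-pair $(\alpha,\zeta)$ and eliminate, one at a time, the letters $i$ where $\alpha(i) = t^{\pm 1}$ but $\zeta(i) \ne \alpha(i)$. By Lemma~\ref{sec:reduction-to-hereditary-pair} we already know $\zeta(i) \in \{e, t^{\pm 1}\}$ at such positions, so the offending cases are exactly those with $\zeta(i) = e$ where $\alpha(i) = t^{\pm 1}$, and also those with $\zeta(i) = t^{\mp 1} = \alpha(i)^{-1}$ (if $\zeta(i)=t^{\epsilon}$ with $\epsilon\ne 0$, the multipliability of the pair forces $\zeta(i)\cong t$, so $\zeta(i)=t^{\pm1}$). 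The first reduction step will be to note that whenever $\alpha(i) = t^{\pm 1}$ and $\zeta(i) = e$, we have $d(\alpha(i),\zeta(i)) = d(t^{\pm1},e) = 1$, whereas flipping $\zeta(i)$ to $\alpha(i)$ costs $0$; the difficulty is that this change destroys triviality of $\zeta$, so it must be compensated by a symmetrization elsewhere. This is precisely what the machinery of Lemmas~\ref{sec:hnn-extensions-even-subword-reduction-type-g-general} and~\ref{sec:hnn-extensions-even-subword-reduction-type-a-general} is designed to handle: they take a word $\bar\beta$ (resp.\ $\bar\gamma$) that is a concatenation of $\langle t\rangle$-blocks and $G$-blocks arranged alternately, together with any trivial word $\zeta$ multipliable with it and supported in $G$, and produce a word $\bar\xi$ with $\rho(\bar\beta,\bar\xi) \le \rho(\bar\beta,\zeta)$ in which all the $t^{\pm1}$-entries of $\bar\beta$ are matched exactly.

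So the main step is as follows. Let $T_{\zeta}$ be a structure tree for the hereditary word $\zeta$ (it exists by Lemma~\ref{sec:hnn-extensions-existence-of-structure-tree}). I would induct on the height of $T_{\zeta}$, processing the tree level by level from the leaves upward, exactly as in the proof of Lemma~\ref{sec:metrics-amalgams-slim-reduction}. At a leaf $s \ne \emptyset$, by property (iii) of the structure tree $\zeta(m(I_s)) = t^{\pm1}$, $\zeta(M(I_s)) = t^{\mp1}$, and by property (v) the interior $R_s \setminus \{m(I_s), M(I_s)\}$ has all letters in $G$; moreover $\hat{\zeta}[I_s] = e$. Now $\alpha$ restricted to $I_s$ has the form $g_0 \concat t \concat a_1 \concat t^{-1} \concat \cdots \concat t^{-1} \concat g_n$ (or the $t^{-1}/t$-transposed form) after grouping the $G$-runs of $\alpha$ into blocks $\nu_k$ and the $A$-runs into blocks $\mu_k$ — this is where I invoke that $\alpha$ is the reduced form of an element of $G * tAt^{-1}$, so it has the syntactic shape displayed in the section preamble. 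Then I apply Lemma~\ref{sec:hnn-extensions-even-subword-reduction-type-g-general} (or its type-$A$ analogue, Lemma~\ref{sec:hnn-extensions-even-subword-reduction-type-a-general}) to the subword $\alpha[I_s]$ and $\zeta[I_s]$ to replace $\zeta[I_s]$ by a word $\bar\xi_s$ that does not increase $\rho$, that is still trivial on $I_s$, and in which every position carrying a $t^{\pm1}$ in $\alpha$ now carries the same letter. One subtlety: on the interior $R_s$ the word $\zeta$ is supported in $G$ automatically (property (v)), but on the boundary positions $m(I_s), M(I_s)$ we may have $\zeta$ carrying a $t^{\pm1}$ that disagrees with $\alpha$; I handle these by first symmetrizing as in Lemma~\ref{sec:reduction-to-hereditary-pair} to push any such mismatched $t$-entry to $e$, reducing to the case where $\zeta[R_s]$ is $G$-supported, and then feeding it to the type-$g$/type-$a$ lemmas. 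After processing all leaves, the boundary $t$'s of each $I_s$ with $s$ a leaf are correctly matched, the $I_s$-interiors become simple (all $e$ or genuine $A$-letters), and the structure tree of the resulting word on the complement is the tree with those leaves pruned; this is exactly the inductive hypothesis for a shorter tree. (Lemmas~\ref{sec:from-hered-subword-keep-alternating-beta} and~\ref{sec:from-hered-subword-keep-alternating} guarantee that after deleting a matched $t\cdots t^{-1}$ pair the remaining word keeps the required alternating block form, so the induction closes cleanly.)

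For the ``moreover'' clause — if $\alpha(i) = t$ then $\xi(i+1) \in A$ — I observe that in the reduced form $\alpha$ of an element of $G * tAt^{-1}$, a letter $\alpha(i) = t$ is always immediately followed by a letter $\alpha(i+1) \in A$, since the syntactic shape is $\cdots \concat t \concat a_j \concat t^{-1} \concat \cdots$. Now in the final pair $(\alpha,\xi)$ we have $\xi(i) = \alpha(i) = t$ (rigidity), so $\xi(i) \not\in G$; tracking the construction, the entry $\xi(i+1)$ arises from a symmetrization inside some $R_s$ whose interior letters all come from $A$ (property (v) of the structure tree together with the fact that $\alpha(i+1) \in A$), hence $\xi(i+1)$ is a product of $A$-letters and $A$-inverses, so $\xi(i+1) \in A$. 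I expect the bookkeeping in this last step — keeping precise track of which $R_s$ contains the index $i+1$ and that its interior stays $A$-valued through all the symmetrizations — to be the main technical nuisance, but it is genuinely routine given that all the substantive estimates are already packaged in the type-$g$ and type-$a$ lemmas above.
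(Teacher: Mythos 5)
Your main idea — use the structure tree from Lemma \ref{sec:hnn-extensions-existence-of-structure-tree} and then apply the type-$g$/type-$a$ symmetrization Lemmas \ref{sec:hnn-extensions-even-subword-reduction-type-g-general} and \ref{sec:hnn-extensions-even-subword-reduction-type-a-general} on each node's residual set, with Lemmas \ref{sec:from-hered-subword-keep-alternating-beta} and \ref{sec:from-hered-subword-keep-alternating} guaranteeing that the residual set retains the alternating block form — is exactly the paper's approach. But there is a genuine confusion in how you handle the boundary positions, and this needs to be corrected for the argument to close.

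You write that ``on the boundary positions $m(I_s), M(I_s)$ we may have $\zeta$ carrying a $t^{\pm1}$ that disagrees with $\alpha$,'' and propose to ``symmetrize to push any such mismatched $t$-entry to $e$.'' Both halves of this are off. First, no such mismatch can exist: property (iii) of the structure tree forces $\zeta(m(I_s)), \zeta(M(I_s)) \in \{t,t^{-1}\}$, and the defining condition of a hereditary \emph{pair} says that whenever $\zeta(i) = t^{\pm1}$ we already have $\zeta(i) = \alpha(i)$. (For the same reason, your earlier enumeration of ``offending cases'' is wrong: $\zeta(i) = \alpha(i)^{-1}$ is impossible for a hereditary pair with $\alpha$ reduced.) Second, pushing a $t$-entry to $e$ is the wrong move even if a mismatch existed — it would make the pair \emph{less} rigid at that position, while the goal is to end with $\xi(m(I_s)) = \alpha(m(I_s)) = t^{\pm1}$. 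The actual obstruction to feeding $\zeta[R_s]$ into the symmetrization lemmas is different: those lemmas require the trivial comparison word to be $G$-supported, and $\zeta(m(I_s)), \zeta(M(I_s)) \notin G$. The correct step is simply to strip the two boundary positions and apply the lemmas to $Q_s := R_s \setminus \{m(I_s), M(I_s)\}$, which is $G$-supported by property (v) and trivial because $\zeta(m(I_s))$ and $\zeta(M(I_s))$ are inverse $t$-powers and so cancel; then set $\xi(m(I_s)) := \alpha(m(I_s))$, $\xi(M(I_s)) := \alpha(M(I_s))$ by fiat. Once this is corrected, your level-by-level induction is a valid (if heavier) bookkeeping scheme, but it is not needed: the assignments $\xi[Q_s] := \bar\xi_s$ can be made for all $s \in T_\zeta$ simultaneously, since the sets $Q_s$ together with the boundary pairs $\{m(I_s), M(I_s)\}$ partition $\seg{1}{n}$, and both the $\rho$-inequality and the rigidity of $(\alpha,\xi)$ fall out in a single pass. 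The moreover clause then indeed follows by inspecting $\bar\xi_s$ as you suggest, since at positions immediately following a $t$ inside a given $Q_s$ the type-$g$/type-$a$ constructions only ever produce products of $A$-letters and their inverses.
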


\begin{proof}
  Let \( (\alpha,\zeta) \) be hereditary and let \( T_{\zeta} \) be a structure tree for \( \zeta \).  Let
  \( s \in T_{\zeta} \) and set \( Q_{s} = R_{s} \setminus\{m(I_{s}),M(I_{s})\} \).  Let \( s_{1}, \ldots, s_{N} \in
  T_{\zeta} \) be such that \( R_{s} = I_{s} \setminus \bigcup_{i=1}^{N}I_{s_{i}} \).  Then using for each \( s_{i} \) Lemma
  \ref{sec:from-hered-subword-keep-alternating-beta} or Lemma \ref{sec:from-hered-subword-keep-alternating}
  (depending on whether
  \( \zeta(m(I_{s})) = t^{-1} \) or \( \zeta(m(I_{s})) = t \)), we get
  \begin{multline*}
    \alpha[Q_{s}] = \bar{\beta} = g_{(0,1)}\concat \cdots \concat g_{(0,q_{0})} \concat t \concat a_{(1,1)} \concat
    \cdots
    \concat a_{(1,p_{1})} \concat t^{-1} \concat \cdots \\
    \cdots \concat t \concat a_{(n,1)} \concat \cdots \concat a_{(n,p_{n})} \concat t^{-1} \concat g_{(n,1)} \cdots
    g_{(n,q_{n})},
  \end{multline*}
  or
  \begin{multline*}
    \alpha[Q_{s}] = \bar{\gamma} = a_{(0,1)}\concat \cdots \concat a_{(0,p_{0})} \concat t^{-1} \concat g_{(0,1)}
    \concat \cdots
    \concat g_{(0,q_{1})} \concat t \concat \cdots \\
    \cdots \concat t^{-1} \concat g_{(n-1,1)} \concat \cdots \concat g_{(n-1,q_{n})} \concat t \concat a_{(n,1)} \cdots
    a_{(n,p_{n})},
  \end{multline*}
  where \( a_{(i,j)} \in A \) and \( g_{(i,j)} \in G \).
  
  Let \( \bar{\xi}_{s} \) be as in Lemma \ref{sec:hnn-extensions-even-subword-reduction-type-g-general} or in Lemma
  \ref{sec:hnn-extensions-even-subword-reduction-type-a-general} depending on whether \( \alpha[Q_{s}] = \bar{\beta} \)
  or \( \alpha[Q_{s}] = \bar{\gamma} \) and set
  \[ \xi[Q_{s}] := \bar{\xi}_{s}, \quad \xi(m(I_{s})) = \alpha(m(I_{s})), \quad \xi(M(I_{s})) = \alpha(M(I_{s})) \quad
  \textrm{if \( s \ne \emptyset \)}, \]
  \[ \xi[R_{\emptyset}] := \bar{\xi}_{\emptyset}, \quad \textrm{if \( s = \emptyset \)}. \]
  Do this for all \( s \in T_{\zeta} \).  Then \( (\alpha,\xi) \) is a rigid \( f \)-pair and
  \[ \rho(\alpha,\zeta) \ge \rho(\alpha,\xi)\ \textrm{[by Lemma
    \ref{sec:hnn-extensions-even-subword-reduction-type-g-general} and Lemma
    \ref{sec:hnn-extensions-even-subword-reduction-type-a-general}]}.  \]
  The moreover part follows immediately from the construction of \( \xi \).
\end{proof}

\begin{theorem}
  \label{sec:induced-metrics-equality-of-induced-metrics}
  Let \( (G,d) \) be a tsi group, \( A < G \) be a closed subgroup, \underline{\emph{not}} necessarily of diameter at
  most one.  If \( d \) and \( \dist \) are as before (see the beginning of Section \ref{sec:hnn-extensions}), then
  \( d = \dist \) if and only if \( \diam{A} \le 1 \).
\end{theorem}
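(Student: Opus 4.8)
The plan is to prove $d \le \dist$ for every closed subgroup $A$, to exhibit an element witnessing $d \ne \dist$ when $\diam{A} > 1$, and to establish the reverse inequality $\dist \le d$ under the hypothesis $\diam{A} \le 1$; together these give the stated equivalence. One preliminary computation is used throughout: for all $a, b \in A$ one has $d(tat^{-1}, tbt^{-1}) = d(a,b)$. Indeed $tat^{-1}(tbt^{-1})^{-1} = t(ab^{-1})t^{-1}$, whose reduced form over $G \cup \langle t \rangle$ is $t \concat ab^{-1} \concat t^{-1}$; since $G * \langle t \rangle$ is an honest free product, Lemma~\ref{sec:metrics-amalgams-simple reduced-reduction} applied there reduces $d(t(ab^{-1})t^{-1}, e)$ to an infimum over trivial words $\zeta$ of length three that are \multipliable with $t \concat ab^{-1} \concat t^{-1}$, and the only such $\zeta$ are the words $t^{m} \concat e \concat t^{-m}$, so $d(t(ab^{-1})t^{-1}, e) = \min_{m}\bigl(2|m-1| + d(ab^{-1}, e)\bigr) = d(a,b)$. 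In particular the metric that $tAt^{-1}$ carries as a factor of $G * tAt^{-1}$ is, via $tat^{-1} \mapsto a$, isometric to $(A, d|_{A})$, and $\diam(tAt^{-1}) = \diam{A} = \sup_{a \in A} d(a,e)$.

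\emph{The inequality $d \le \dist$ (no restriction on $\diam{A}$).} Fix $f \in G * tAt^{-1}$ and a \multipliable $f$-pair $(\alpha, \zeta)$ over the alphabet $G \cup tAt^{-1}$. Form a pair of words over $G \cup \langle t \rangle$ by replacing, position by position, each letter $tat^{-1}$ by the block $t \concat a \concat t^{-1}$ and each letter $e$ that stands opposite a letter of $tAt^{-1}$ by the block $t \concat e \concat t^{-1}$, leaving all other letters unchanged. The result is a \multipliable $f$-pair $(\widetilde{\alpha}, \widetilde{\zeta})$ over $G \cup \langle t \rangle$ of the same length, and by the preliminary computation each expanded block contributes exactly $d(tat^{-1}, tbt^{-1})$ (respectively $d(tat^{-1}, e)$) to $\rho$, so $\rho(\widetilde{\alpha}, \widetilde{\zeta}) = \rho(\alpha, \zeta)$. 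Hence $d(f,e) \le \rho(\alpha, \zeta)$; taking the infimum over all \multipliable $f$-pairs over $G \cup tAt^{-1}$ and using two-sided invariance gives $d \le \dist$.

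\emph{If $\diam{A} > 1$ then $d \ne \dist$.} Choose $c \in A$ with $d(c,e) > 1$ and set $f = c^{-1} \concat tct^{-1}$, an element of $G * tAt^{-1}$ already in reduced form. As the free product has no amalgamation, the only simple reduced $f$-pair over $G \cup tAt^{-1}$ is $(c^{-1} \concat tct^{-1},\ e \concat e)$, so Lemma~\ref{sec:metrics-amalgams-simple reduced-reduction} gives $\dist(f,e) = d(c^{-1},e) + d(tct^{-1},e) = 2\,d(c,e) > 2$. On the other hand $(c^{-1} \concat t \concat c \concat t^{-1},\ c^{-1} \concat e \concat c \concat e)$ is a \multipliable $f$-pair over $G \cup \langle t \rangle$ whose second word is trivial, so $d(f,e) \le d(c^{-1},c^{-1}) + d(t,e) + d(c,c) + d(t^{-1},e) = 2 < \dist(f,e)$. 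Thus $d \ne \dist$.

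\emph{If $\diam{A} \le 1$ then $\dist \le d$.} Let $f \in G * tAt^{-1}$ with reduced form $\alpha = g_{1} \concat t \concat a_{1} \concat t^{-1} \concat \cdots \concat t \concat a_{n} \concat t^{-1} \concat g_{n+1}$ over $G \cup \langle t \rangle$ (with $g_{1}$ or $g_{n+1}$ possibly absent). By Lemma~\ref{sec:metrics-amalgams-simple reduced-reduction}, then Lemma~\ref{sec:reduction-to-hereditary-pair}, then Lemma~\ref{sec:induced-metrics-existence-of-a-good-pair} — the last two being exactly where $\diam{A} \le 1$ is invoked — one gets $d(f,e) = \inf\{\rho(\alpha, \xi) : (\alpha, \xi)\ \textrm{a rigid}\ f\textrm{-pair}\}$. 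Fix a rigid $f$-pair $(\alpha, \xi)$. Rigidity places $t^{\pm 1}$ in $\xi$ exactly where it occurs in $\alpha$; the pair being \multipliable then puts a letter of $G$ at every $g_{i}$-position of $\xi$; and the ``moreover'' clause of Lemma~\ref{sec:induced-metrics-existence-of-a-good-pair} puts a letter of $A$ at every $a_{i}$-position (those immediately following a $t$). Hence $\xi$ is the block-expansion of a unique word over $G \cup tAt^{-1}$, which is trivial because block-expansion of words is injective and commutes with the two evaluation maps while $G * tAt^{-1} \hookrightarrow G * \langle t \rangle$; likewise $\alpha$ is the block-expansion of the reduced form of $f$ over $G \cup tAt^{-1}$. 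These two contracted words form a \multipliable $f$-pair over $G \cup tAt^{-1}$ with the same value of $\rho$ (again by the preliminary computation, the $t^{\pm 1}$'s contributing nothing), so $\dist(f,e) \le \rho(\alpha, \xi)$; taking the infimum over rigid $f$-pairs gives $\dist \le d$, and with the previous paragraph $d = \dist$. The genuinely hard work — reducing an arbitrary \multipliable pair first to a hereditary and then to a rigid one under the hypothesis $\diam{A} \le 1$ — is already carried out in Lemmas~\ref{sec:reduction-to-hereditary-pair}--\ref{sec:induced-metrics-existence-of-a-good-pair}; inside the proof of the theorem the only points requiring care are the verification that a rigid $\xi$ genuinely contracts to a legitimate trivial word over $G \cup tAt^{-1}$, and, in the proof of $d \le \dist$, the slightly non-obvious device of expanding an isolated $e$ opposite a $tAt^{-1}$-letter as the block $t \concat e \concat t^{-1}$, which keeps word-lengths aligned without changing $\rho$.
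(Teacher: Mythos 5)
Your proof is correct, and it follows essentially the same route as the paper's: the necessity direction is the same small calculation (the paper uses $ata^{-1}t^{-1}$ where you use $c^{-1}tct^{-1}$, an immaterial difference), and the sufficiency direction invokes the same chain of reductions — multipliable $\to$ hereditary $\to$ rigid via Lemmas~\ref{sec:reduction-to-hereditary-pair} and \ref{sec:induced-metrics-existence-of-a-good-pair} — followed by the same contraction of a rigid pair over $G\cup\langle t\rangle$ to a multipliable pair over $G\cup tAt^{-1}$. The one genuine (if minor) divergence is in the easy inequality $d\le\dist$: the paper simply cites the maximality property of Graev metrics, item~\eqref{prop:graev-metric-amalgam-properties-item:maximality} of Proposition~\ref{sec:graev-metric-amalgam-properties}, whereas you re-derive that special case by hand via block-expansion of multipliable pairs (together with the preliminary isometry $d(tat^{-1},tbt^{-1})=d(a,b)$). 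Both are fine; the citation is shorter, your version is self-contained and makes the mechanism of the inequality visible. One tiny imprecision: you say $\diam{A}\le 1$ is used in both Lemma~\ref{sec:reduction-to-hereditary-pair} and Lemma~\ref{sec:induced-metrics-existence-of-a-good-pair}, but in fact the hereditary reduction works for any $A$; the diameter bound enters only in the passage from hereditary to rigid, through Lemma~\ref{sec:hnn-extensions-a-cancellation-error}.
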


\begin{proof}
  First we show that the condition \( \diam{A} \le 1 \) is necessary.  Suppose \( \diam{A} > 1 \) and let \( a \in A \)
  be such that \( d(a,e) > 1 \).  Then
  \begin{displaymath}
    \begin{aligned}
      &\dist(ata^{-1}t^{-1},e) = d(a,e) + d(ta^{-1}t^{-1},e) = d(a,e) + d(a^{-1},e) = 2d(a,e) > 2,\\
      &d(ata^{-1}t^{-1},e) = d(ata^{-1}t^{-1},aea^{-1}e) \le\\
      & \qquad d(a,a) + d(t,e) + d(a^{-1},a^{-1}) + d(t^{-1},e) = 2.
    \end{aligned}
  \end{displaymath}
  And so \( \dist \ne d \).
 
  Suppose now \( \diam{A} \le 1 \).  Let \( f \in G*tAt^{-1} \) be given and let \( \alpha \) be the reduced form of
  \( f \).  If \( (\alpha,\zeta) \) is a \multipliable \( f \)-pair, then by Lemma \ref{sec:reduction-to-hereditary-pair}
  and Lemma \ref{sec:induced-metrics-existence-of-a-good-pair} there is a rigid \( f \)-pair \( (\alpha,\xi) \) such
  that \( \rho(\alpha,\xi) \le \rho(\alpha,\zeta) \) and \( \alpha(i)=t \) implies \( \xi(i+1) \in A \).  Hence we can
  view \( \xi \) as an element in \( \word{G \cup tAt^{-1}} \).  Since \( \zeta \) was arbitrary, it follows that
  \( \dist(f,e) \le d(f,e) \).  The inverse inequality \( d(f,e) \le \dist(f,e) \) follows from item
  \eqref{prop:graev-metric-amalgam-properties-item:maximality} of Proposition \ref{sec:graev-metric-amalgam-properties}.
  Thus \( \dist(f,e) = d(f,e) \), and, by the left invariance, \( \dist(f_{1},f_{2}) = d(f_{1},f_{2}) \) for all
  \( f_{1},f_{2} \in G*tAt^{-1} \).
\end{proof}

\begin{proposition}
  \label{sec:metr-hnn-extens-free-product-closed}
  Let \( (G,d) \) be a tsi group, \( A < G \) be a subgroup and \( d \) be the Graev metric on the free product
  \( G * \langle t \rangle \).  We can naturally view \( G * tAt^{-1} \) as a subgroup of \( G * \langle t \rangle\).
  If \( A \) is closed in \( G \), then \( G * tAt^{-1} \) is closed in \( G * \langle t \rangle \).
\end{proposition}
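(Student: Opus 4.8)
The plan is a proof by contradiction that mimics the argument for the closedness part of Proposition~\ref{sec:graev-metric-amalgam-properties}\eqref{prop:graev-metric-amalgam-properties-item:induced-metric}; the genuinely new point is that \( G*tAt^{-1} \) is \emph{not} literally a sub-coproduct of \( G*\langle t\rangle \), so besides controlling which letters occur in reduced words we must also control which powers of \( t \) occur and in which positions. Suppose towards a contradiction that \( f\in\overline{G*tAt^{-1}}\setminus(G*tAt^{-1}) \). Let \( \Psi\colon G*\langle t\rangle\to\mathbb Z \) be the homomorphism with \( \Psi|_{G}\equiv 0 \) and \( \Psi(t)=1 \). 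For any \( g\in G*\langle t\rangle \) and any \( h\in\ker\Psi \), a \multipliable \( gh^{-1} \)-pair exists by Lemma~\ref{sec:metrics-amalgams-congruent-reduction}, and comparing the sums of \( t \)-exponents of its two coordinates gives \( d(g,h)\ge|\Psi(g)| \); hence \( \{g:\Psi(g)\ne 0\} \) is at distance \( \ge 1 \) from \( \ker\Psi \), so \( \ker\Psi \) is clopen. Since \( G*tAt^{-1}\subseteq\ker\Psi \), we conclude \( \Psi(f)=0 \).

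Because \( G*\langle t\rangle \) is a free product, the reduced form \( \alpha \) of \( f \) is unique and its letters alternate strictly between \( G \) and \( \langle t\rangle \), so \( \alpha=[\,h_{0}\,]\concat t^{k_{1}}\concat h_{1}\concat t^{k_{2}}\concat\cdots\concat t^{k_{r}}\concat[\,h_{r}\,] \) with \( k_{i}\ne 0 \), \( h_{i}\in G\setminus\{e\} \), and \( \sum_{i}k_{i}=0 \) (bracketed end letters possibly absent). By the normal form theorem for free products (Lemma~\ref{sec:triv-words-amalg-non-triviality-of-reduced-words}), \( f\in G*tAt^{-1} \) precisely when \( r \) is even, \( (k_{1},k_{2},k_{3},\dots)=(1,-1,1,\dots) \), and \( h_{2j-1}\in A \) for all \( j\le r/2 \). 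Since \( f\notin G*tAt^{-1} \), one of two things happens: either (a) the pattern of \( t \)-letters is wrong, i.e.\ some \( |k_{i}|\ge 2 \), or else all \( |k_{i}|=1 \) but some partial sum \( k_{1}+\dots+k_{i} \) leaves \( \{0,1\} \); or (b) the pattern is correct but \( h_{2j_{0}-1}\notin A \) for some \( j_{0} \). In case (a) let \( i_{0} \) be the position in \( \alpha \) of the last offending \( t \)-letter; in case (b) let \( i_{0} \) be the position of \( h_{2j_{0}-1} \), choosing \( j_{0} \) maximal. Put \( \varepsilon_{0}=1 \) in case (a) and \( \varepsilon_{0}=d(\alpha(i_{0}),A) \) in case (b) (positive because \( A \) is closed), and set \( \varepsilon=\min\bigl(\varepsilon_{0},\ \min\{\,d(\alpha(i),e):\alpha(i)\in G\,\}\bigr)>0 \).

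Now choose \( h\in G*tAt^{-1} \) with \( d(f,h)<\varepsilon \), and by Lemma~\ref{sec:metrics-amalgams-simple reduced-reduction} a simple reduced \( fh^{-1} \)-pair \( (\beta,\xi) \) with \( \rho(\beta,\xi)<\varepsilon \); let \( \eta\in\word{G\cup\langle t\rangle} \) be the reduced form of \( h \), which has the rigid special shape just described (its \( t \)-letters are \( t^{\pm1} \), grouped into blocks \( t\concat a\concat t^{-1} \) with \( a\in A \), separated by single \( G \)-letters). Since \( \beta\concat\eta \) reduces back to \( \alpha \) and all cancellation occurs at the junction, the letters of \( \beta \) past position \( i_{0} \) are of the kind admissible in a reduced form of an element of \( G*tAt^{-1} \) (in case (a) the running \( t \)-exponent-sum they contribute lies in \( \{0,1\} \) with all increments \( \pm1 \); in case (b) they sit in \( A \)-slots or are arbitrary \( G \)-letters), while the letter \( \beta(i_{0}) \) still carries the offending feature up to multiplication by an element originating in \( \eta \). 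Passing to a slim evaluation tree for \( \xi \), localising at the node \( t_{0} \) with \( i_{0}\in R_{t_{0}} \), noting \( i_{0}=m(R_{t_{0}}) \), and performing a symmetrization at \( t_{0} \) (which does not increase \( \rho \) by Lemma~\ref{sec:metrics-amalgams-symmet-rho-decreases}) forces \( \xi(i_{1}) \), at some position \( i_{1}\in R_{t_{0}} \), either to be a power of \( t \) whose exponent differs by a nonzero integer from that of \( \beta(i_{1}) \) (case (a)), or to lie in a single free factor of \( G \) disjoint from the coset of \( \beta(i_{1}) \) modulo \( A \) (case (b)); either way \( d(\beta(i_{1}),\xi(i_{1}))\ge\varepsilon \), contradicting \( \rho(\beta,\xi)<\varepsilon \). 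Hence \( \overline{G*tAt^{-1}}=G*tAt^{-1} \).

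I expect the real obstacle to be the middle step of the last paragraph: showing that the offending feature of \( \alpha \) cannot be destroyed by right multiplication by an element of \( G*tAt^{-1} \) --- equivalently, that the cancellation between \( \beta \) and \( \eta \) stops to the right of the block of \( i_{0} \), and that after symmetrization the resulting exponent mismatch (case (a)) or factor separation (case (b)) is bounded below by \( \varepsilon \) rather than collapsing. For case (b) this is essentially the bookkeeping already carried out in Proposition~\ref{sec:graev-metric-amalgam-properties}\eqref{prop:graev-metric-amalgam-properties-item:induced-metric} with \( A \) playing the role of the amalgam subgroup; for case (a) it rests on the observation used in the first paragraph, that every element of \( G*tAt^{-1} \), hence \( \eta \) and every prefix of \( \beta\concat\eta \) that originates in \( \eta \), contributes a running \( t \)-exponent-sum lying in \( \{0,1\} \) with increments \( \pm1 \), so that a prefix of \( \alpha \) exhibiting \( |k_{i}|\ge 2 \) or a partial sum outside \( \{0,1\} \) must survive into \( \beta \).
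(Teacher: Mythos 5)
Your outline follows the same general strategy as the paper's: pass to a simple reduced \( fh^{-1} \)-pair, work with a slim evaluation tree, and exploit the rigid block structure of reduced forms of elements of \( G*tAt^{-1} \). Your \( \Psi \)-argument in the first paragraph is the paper's base case \( n=1 \) in disguise. The two proofs diverge where you flag ``the real obstacle'', and that obstacle is a genuine gap rather than deferred bookkeeping. The paper proceeds by induction on \( |\alpha| \): it proves only that the \emph{last} block of \( \alpha \) must be \( t\concat a\concat t^{-1} \) with \( a\in A \), multiplies \( f \) by \( ta^{-1}t^{-1}\in G*tAt^{-1} \) on the right to shorten the reduced form, and recurses. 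You want a one-shot contradiction at a ``last offending'' position \( i_{0} \) that may sit well in the interior of \( \alpha \).

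Two pieces are missing. First, you need control over what \( \xi \) looks like on the set \( R_{t_{0}} \) containing \( i_{0} \). That set need not be an interval and may straddle the cancellation junction between \( \alpha \) and \( \gamma = \eta^{-1} \); some of its positions index letters of \( \beta \) equal to letters of \( \alpha \), others index letters coming from \( \gamma \), and possibly one indexes a merged letter. The paper carries out precisely this analysis (the deduction \( n=m(R_{s_{0}}) \), the values \( \xi(j_{2})=t,\ \xi(j_{3})=t^{-1},\dots \), the case split on \( \gamma(1)\in G \) versus \( \gamma(1)=t \)), but only at the tail of the word, where the combinatorics is tractable because everything to the right of position \( n \) lies in \( \gamma \). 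For an interior \( i_{0} \) no corresponding analysis is supplied. Second, the assertion that after a symmetrization at \( t_{0} \) there is some \( i_{1}\in R_{t_{0}} \) with \( d(\beta(i_{1}),\xi(i_{1}))\ge\varepsilon \) is exactly the content that needs proof; Lemma~\ref{sec:metrics-amalgams-symmet-rho-decreases} only says symmetrization does not increase \( \rho \), not that the result exhibits a large coordinate. In fact the paper does not use symmetrization here at all: it reads off the shape of \( \xi \) on \( R_{s_{0}} \) directly from \( \hat{\xi}[I_{k}]=e \) together with the alternating pattern of \( \gamma \). Until these two pieces are filled in the argument does not close, and the paper's inductive strategy is designed precisely to avoid having to fill them for interior \( i_{0} \).
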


\begin{proof}
  The proof is similar in spirit to the proof of item \eqref{prop:graev-metric-amalgam-properties-item:induced-metric}
  of Proposition \ref{sec:graev-metric-amalgam-properties}, but requires some additional work.  Suppose the statement is
  false and there is \( f \in G * \langle t \rangle \) such that \( f \not \in G* tAt^{-1} \), but
  \( f \in \overline{G * tAt^{-1}} \).  Let \( \alpha \in \word{G \cup \langle t \rangle} \) be the reduced form of
  \( f \), \( n = |\alpha| \).  We show that this is impossible and \( f \in G * tAt^{-1} \).  The proof goes by
  induction on \( n \).

  \textbf{Base of induction}.  For the base of induction we consider cases \( n \in \{1,2\} \).  If \( n = 1 \), then
  either \( f \in G \) or \( f = t^{k} \) for some \( k \ne 0 \).  Since \( G < G * tAt^{-1} \), it must be the case
  that \( f = t^{k}\).  Let \( h \in G * tAt^{-1} \) be such that \( d(f,h) < 1 \), where \( d \) is the Graev
  metric on \( G * \langle t \rangle \).  Let \( \phi_{1} : G \to \mathbb{Z} \) be the trivial homomorphism:
  \( \phi_{1}(g) = 0 \) for all \( g \in G \); and let \( \phi_{2} : \langle t \rangle \to \mathbb{Z} \) be the natural
  isomorphism: \( \phi_{2}(t^{k}) = k \).  By item \eqref{prop:graev-metric-amalgam-properties-item:extending-lipschitz}
  of Proposition \ref{sec:graev-metric-amalgam-properties} \( \phi_{1} \) and \( \phi_{2} \) extend to a \( 1
  \)-\lipschitz homomorphism \( \phi : G* \langle t \rangle \to \mathbb{Z} \).  But \( d_{\mathbb{Z}}(\phi(f),\phi(h)) = |k| \ge 1
  \).  We get a contradiction with the assumption \( d(f,h) < 1 \).

  Note that for any \( h \in G * tAt^{-1} \)
  \begin{multline*}
    f \in \left ( \overline{G* tAt^{-1}} \right) \setminus G*tAt^{-1} \implies fh, hf \in \left( \overline{G* tAt^{-1}}
    \right) \setminus G*tAt^{-1}.
  \end{multline*}
  Using this observation the case \( n = 2 \) follows from the case \( n = 1 \).  Indeed, \( n = 2 \) implies
  \( \alpha = g \concat t^{k} \) or \( \alpha = t^{k} \concat g \) for some \( g \in G \), \( k \ne 0 \).  Multiplying
  \( f \) by \( g^{-1} \) from either left or right brings us to the case \( n = 1 \).

  \textbf{Step of induction}. Without loss of generality we may assume that \( \alpha(n) = t^{k} \) for some
  \( k \ne 0 \).  Indeed, if \( \alpha(n) = g \) for some \( g \in G \), then we can substitute \( fg^{-1} \) for
  \( f \).  Assume that \(\alpha = \alpha_{0} \concat t^{k_{1}} \concat g \concat t^{k_{2}} \), where
  \( k_{1}, k_{2} \ne 0 \) and \( g \in G \).  We claim that \( k_{1} = 1 \), \( k_{2} = -1 \), and \( g \in A \).  Set
  \begin{align*}
    &\epsilon_{1} = \min\{d(\alpha(i),e) : i \in \seg{1}{n} \},\\
    &\epsilon_{2} =
    \begin{cases}
      1 & \textrm{if \( \forall i\ \alpha(i) \in G \implies \alpha(i) \in A \)},\\
      \min\{d(\alpha(i),A) : \alpha(i) \in G \setminus A\} & \textrm{otherwise}.
    \end{cases}
  \end{align*}
  And let \( \epsilon = \min\{1, \epsilon_{1},\epsilon_{2}\} \).  Note that \( \epsilon > 0 \).

  Since \( f \in \overline{G * tAt^{-1}} \), there is \( h \in G * tAt^{-1} \) such that \( d(f,h) < \epsilon \).
  Therefore there is a reduced simple \( fh^{-1} \)-pair \( (\beta,\xi) \) such that \( \rho(\beta,\xi) < \epsilon\).
  Let \( \gamma \) be the reduced form of \( h^{-1 }\).  Suppose first that \( k_{2} \ne -1 \).  Assume for simplicity
  that \( \beta = \alpha \concat \gamma\) (in general the first letter of \( \gamma \) may get canceled; the proof for
  the general case is the same, it is just notationally simpler to assume that \( \beta = \alpha \concat \gamma \)).
  Let \( T_{\xi} \) be the slim evaluation tree for \( \xi \), and let \( s_{0} \in T_{\xi} \) be such that
  \( n \in R_{s_{0}} \).

  We claim that \( n = m(R_{s_{0}}) \).  If this is not the case, then there is \( i_{0} \in R_{s_{0}} \) such that
  \( i_{0} < n \) and \( \seg{i_{0}+1}{n-1} \cap R_{s_{0}} = \emptyset \).  Since \( \alpha \) is reduced,
  \( i_{0} < n-1\).  If \( I = \seg{i_{0}+1}{n-1} \), then \( \hat{\xi}[I] = e \) and so there is \( j_{0} \in I \) such
  that \( \xi(j_{0}) = e \) (since otherwise \( \xi[I] \) would be reduced).  Therefore
  \[\rho(\beta,\xi) \ge d(\beta(j_{0}),\xi(j_{0})) = d(\alpha(j_{0}), e) \ge \epsilon_{1} \ge \epsilon. \]
  Contradicting the choice of the pair \( (\beta,\xi) \).

  Thus \( n = m(R_{s_{0}}) \).  Let \( j_{1}, \ldots, j_{p} \) be such that
  \begin{enumerate}[(i)]
  \item \( j_{k} \in R_{s_{0}} \) for all \( k \in \seg{1}{p} \);
  \item \( j_{k} < j_{k+1} \);
  \item \( \xi(j_{k}) \ne e \);
  \item \( \xi(j) \ne e \) and \( j \in R_{s_{0}} \) implies \( j = j_{k} \) for some \( k \).
  \end{enumerate}
  In fact, we can always modify the tree to assure that \( \xi(j) \ne e \) for all \( j \in R_{s_{0}} \), but this is
  not used here. In this notation \( j_{1} = n \).  Since \( \rho(\beta,\xi) < 1 \), we get
  \( \beta(j_{k}) = \xi(j_{k}) = t^{\pm 1} \) for all \( k \in \seg{2}{p} \).  If \( I_{k} = \seg{j_{k}+1}{j_{k+1}-1} \)
  for \( k \in \seg{1}{p-1} \), then \( \hat{\xi}[I_{k}] = e \) for all \( k \), whence for any \( k \in \seg{1}{p-1} \)
  \[ |\{ i \in I_{k} : \xi(i) = t\}| = |\{ i \in I_{k} : \xi(i) = t^{-1} \} |.\] 
  We claim that \( \xi(j_{2}) = t \).  Suppose not.  Then \( \xi(j_{2}) = t^{-1} \) and we can write \( \gamma =
  \gamma_{0}  \concat t^{-1} \concat \gamma_{1} \), 
  \[ \beta = \alpha_{0} \concat t^{k_{1}} \concat g \concat t^{k_{2}} \concat \gamma_{0} \concat t^{-1} \concat
  \gamma_{1}, \]
  with \( |\alpha| + |\gamma_{0}| = j_{2} - 1 \).  Since \( \hat{\gamma}_{0} = e \) we must have
  \[ |\{ i \in \seg{1}{|\gamma_{0}|} : \gamma_{0}(i) = t\}| = |\{ i \in \seg{1}{|\gamma_{0}|} : \gamma_{0}(i) = t^{-1}
  \} |.\] 
  On the other hand 
  \[  \gamma_{0} = g_{0}' \concat t \concat a_{1}' \concat t^{-1} \concat \cdots \concat t \concat a'_{m},  \]
  (\( g_{0}' \) may be absent) and each \( t \) is paired with \( t^{-1} \) except for the last one.  Therefore
  \[ |\{ i \in \seg{1}{|\gamma_{0}|} : \gamma_{0}(i) = t\}| = |\{ i \in \seg{1}{|\gamma_{0}|} : \gamma_{0}(i) = t^{-1}
  \} | + 1.\] 
  Contradiction.  Therefore \( \xi(j_{2}) = t \).  Similarly,  it is now easy to see that
  \[ \xi(j_{2}) = t,\ \xi(j_{3}) = t^{-1},\ \xi(j_{4}) = t,\ldots,\ \xi(j_{p}) = t^{((-1)^{p})}.  \]
  Finally, since \( \hat{\xi}[R_{s_{0}}] = e \), we get \( \xi(j_{1}) = t^{-1}\) or \( \xi(j_{1}) = e\), depending on
  whether \( p \) is even or odd.  But since by assumption \( k_{2} \ne 0 \) we get \( k_{2} = -1 \).

  We have proved that \( k_{2} = -1 \).  The next step is to show that \( g \in A \).  We have two cases.
  
  \setcounter{case}{0}
  \begin{case}
    \label{sec:metr-hnn-extens-1}
    \( \gamma(1) \in G \).  In this case we have \( \beta = \alpha \concat \gamma \).  Let \( s_{1} \in T_{\xi} \) be
    such that \( n-1 \in R_{s_{1}} \).  Similarly to the previous step one shows that \( n-1 = m(R_{s_{1}}) \).  Let
    \( R_{s_{1}} = \{j_{k}\}_{k=1}^{p }\), where \( j_{k} < j_{k+1} \).  In particular, \( n-1 = j_{1} \).  Set
    \( I_{k} = \seg{j_{k}+1}{j_{k+1}-1} \).  From \( \hat{\xi}[I_{k}] = e \) it follows
    \[ |\{ i \in I_{k} : \xi(i) = t\}| = |\{ i \in I_{k} : \xi(i) = t^{-1} \} |.\]
    Therefore \( \xi(j_{k}) \in A \) for all \( k \in \seg{2}{p} \).  And so \( \xi(j_{1}) \in A \) as well.  Finally,
    if \( g \not \in A \), then
    \[\rho(\beta,\xi) \ge d(\beta(n-1),\xi(n-1)) \ge d(g,A) \ge \epsilon_{2} \ge \epsilon.  \]
    And again we have a contradiction with the choice of \( (\beta,\xi) \).
  \end{case}

  \begin{case}
    \label{sec:metr-hnn-extens-2}
    \( \gamma(1) = t \).  In this case \( \alpha = \alpha_{0} \concat t^{k_{1}} \concat g \concat t^{-1} \) and
    \( \gamma = t \concat a \concat t^{-1} \concat \gamma_{0} \), for some \( a \in A \) and a word \( \gamma_{0} \).
    If \( g \not \in A \) then \( \beta = \alpha_{0} \concat t^{k_{1}} \concat ga \concat t^{-1} \concat \gamma_{0} \).
    And we are essentially in Case 1.  Therefore by the proof of Case 1 we get \( ga \in A \), but then \( g \in A \).
  \end{case}

  Thus \( g \in A \).  The proof of \( k_{1} = 1 \) is similar to the proof of \( k_{2} = -1 \) given earlier, and we
  omit the details.

  We have shown that \( \alpha = \alpha_{0} \concat t \concat a \concat t^{-1} \).  If \( f' = f t a^{-1} t^{-1} \),
  then \( \alpha_{0} \) is the reduced form of \( f' \) and \( f' \in \overline{G * tAt^{-1}} \setminus G*tAt^{-1} \).
  We proceed by induction on the length of \( \alpha \).
\end{proof}

\section{HNN extensions of groups with tsi metrics}

We now turn to the HNN construction itself.  There are several ways to build an HNN extension.  We will follow the
original construction of G. Higman, B. H. Neumann and H. Neumann from \cite{MR0032641}, because their approach hides a
lot of complications into the amalgamation of groups, and we have already constructed Graev metrics on amalgams in the
previous sections.

Let us briefly remind what an HNN extension is.  Let \( G \) be an abstract group, \( A, B < G \) be isomorphic
subgroups and \( \phi : A \to B \) be an isomorphism between them.  An HNN extension of \( (G,\phi) \) is a pair
\( (H, t) \), where \( t \) is a new symbol and \( H = \langle G, t | tat^{-1} = \phi(a), a \in A \rangle \).  The
element \( t \) is called a \emph{stable letter} of the HNN extension.

\subsection{Metrics on HNN extensions}
\label{sec:metr-hnn-extens}

\begin{theorem}
  \label{sec:hnn-extens-class-existence-of-hnn-extension}
  Let \( (G,d) \) be a tsi group, \( \phi : A \to B \) be a \( d \)-isometric isomorphism between the closed subgroups
  \( A, B \).  Let \( H \) be the HNN extension of \( (G,\phi) \) in the abstract sense, and let \( t \) be the stable
  letter of the HNN extension.  If \( \diam{A} \le K \), then there is a tsi metric \( \dist \) on \( H \) such that
  \( \dist|_{G} = d \) and \( \dist(t,e) = K \).
\end{theorem}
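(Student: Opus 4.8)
The plan is to build $\dist$ by imitating the Graev metric on amalgams, now letting the ``trivial words'' be words over $G\cup\langle t\rangle$ that evaluate to $e$ in $H$. First I would reduce to $K=1$: replacing $d$ by $d/K$ we may assume $\diam{A}\le 1$, and from a tsi metric $\dist'$ on $H$ with $\dist'|_G=d/K$ and $\dist'(t,e)=1$ we recover the desired metric as $K\dist'$. Recall next that $H$ is a quotient of $P:=G*\langle t\rangle$ by the normal closure of $\{tat^{-1}\phi(a)^{-1}:a\in A\}$, and that, following Higman--Neumann--Neumann, $H$ is obtained from the amalgamated free product $\mathcal A$ of countably many copies $\{G_n\}_{n\in\mathbb Z}$ of $G$, glued so that $B_n<G_n$ is identified along $\phi$ with $A_{n+1}<G_{n+1}$, as $H=\mathcal A\rtimes_\sigma\langle t\rangle$ for the index shift $\sigma$; in particular $G=G_0\hookrightarrow H$. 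Since $\phi$ is a $d$-isometry, the copies of $d$ on the $G_n$ agree on these (closed) glued subgroups, so the amalgam construction of Section~\ref{sec:metrics-amalgams} supplies a Graev metric $D$ on $\mathcal A$ extending each copy of $d$, and by its maximality (Proposition~\ref{sec:graev-metric-amalgam-properties}\eqref{prop:graev-metric-amalgam-properties-item:maximality}) $D$ is $\sigma$-invariant.

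With this in hand I would set, for $f\in H$,
\[
  \norm_H(f)=\inf\bigl\{\rho(\alpha,\zeta):\alpha,\zeta\in\word{G\cup\langle t\rangle},\ |\alpha|=|\zeta|,\ \hat\alpha=f,\ \hat\zeta=e\text{ in }H\bigr\},
\]
and $\dist(f_1,f_2)=\norm_H(f_1f_2^{-1})$, with $\rho$ computed using $d$ on $G$ and the standard metric on $\langle t\rangle$. The argument of Lemma~\ref{sec:metrics-amalgams-tsi-pseudo-metric} shows verbatim that $\dist$ is a tsi pseudo-metric, and taking $\alpha=f$, $\zeta=e\concat\cdots\concat e$ for $f\in G$ and for $f=t$ gives at once $\dist|_G\le d$ and $\dist(t,e)\le 1$. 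The homomorphism $H\to\mathbb Z$ killing $G$ and sending $t\mapsto 1$ is well defined (all relators die) and $1$-Lipschitz by the Lipschitz extension property (Proposition~\ref{sec:graev-metric-amalgam-properties}\eqref{prop:graev-metric-amalgam-properties-item:extending-lipschitz}) applied on $P$; hence $\dist(t,e)\ge 1$.

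The real content is the lower bound $\dist(g,e)\ge d(g,e)$ for $g\in G$, i.e.\ that every pair $(\alpha,\zeta)$ with $\hat\alpha=g$ and $\hat\zeta=e$ in $H$ has $\rho(\alpha,\zeta)\ge d(g,e)$. The new difficulty, absent for ordinary amalgams, is that $\zeta$ may be trivial in $H$ only because of the relations $tat^{-1}=\phi(a)$, so the admissible $\zeta$ form a genuinely larger class. Here I would run the reductions of Section~\ref{sec:hnn-extensions}: after a congruence reduction (as in Lemma~\ref{sec:metrics-amalgams-congruent-reduction}) one passes, without increasing $\rho$, to a hereditary and then a rigid pair (Lemmas~\ref{sec:reduction-to-hereditary-pair} and \ref{sec:induced-metrics-existence-of-a-good-pair}), simultaneously cancelling the Britton pinches $t\concat a\concat t^{-1}\mapsto\phi(a)$ and $t^{-1}\concat b\concat t\mapsto\phi^{-1}(b)$ in $\alpha$ --- this is where $\phi$ being an isometry keeps $\rho$ unchanged under a pinch, and where $\diam{A}\le 1$ is indispensable, via the cancellation-error estimate Lemma~\ref{sec:hnn-extensions-a-cancellation-error} and the ``even subword'' reductions, which bound the slack introduced when straightening interleaved $G$- and $A$-blocks by the number of inserted $A$-letters. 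Once $\alpha$ is Britton-reduced it is just the length-one word $g$, the matching $\zeta$ is forced to be $e$, and $\rho(\alpha,\zeta)=d(g,e)$ (alternatively one reads off $\rho(\alpha,\zeta)\ge\norm_H(g)\ge D(g_0,e)=d(g,e)$ after pushing the rigid $\zeta$ down to the amalgam $\mathcal A$ and invoking Proposition~\ref{sec:metrics-amalgams-norm-lower-bound}). I expect this reduction --- essentially the combinatorial heart of Section~\ref{sec:hnn-extensions} transported from $G*\langle t\rangle$ to $H$ --- to be the main obstacle; the rest is bookkeeping. Combining the bounds, $\dist$ is a genuine tsi metric on $H$ with $\dist|_G=d$ and $\dist(t,e)=1$, and rescaling by $K$ finishes the proof.
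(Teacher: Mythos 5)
Your proposal takes a genuinely different route from the paper and, as written, has a gap at precisely the point you yourself flag as the main obstacle. The paper does \emph{not} define a Graev-type norm on $H$ by taking the infimum of $\rho(\alpha,\zeta)$ over all pairs with $\hat\zeta=e$ in $H$. Instead it embeds $H$ into a single amalgam: it forms $G*\langle u\rangle$ and $G*\langle v\rangle$ with their Graev metrics, uses Theorem~\ref{sec:induced-metrics-equality-of-induced-metrics} (the diameter-one hypothesis enters exactly here) and Proposition~\ref{sec:metr-hnn-extens-free-product-closed} to see that $G*uAu^{-1}$ and $G*vBv^{-1}$ are isometric closed subgroups, amalgamates the two free products over them, and then identifies $H$ with $\langle G, v^{-1}u\rangle$ inside the resulting tsi group. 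The lower bound $\dist|_G\ge d$ then comes for free from Theorem~\ref{sec:metrics-amalgams-MAIN-Graev-metric-on-products}: the already-existing structure theory of trivial words in \emph{amalgams} does all the work, and no analysis of Britton-trivial words is needed.

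Your plan, by contrast, commits to controlling $\rho(\alpha,\zeta)$ for pairs where $\zeta$ is trivial \emph{in the HNN group} $H$. This is a strictly larger class than the free-product-trivial words handled by Section~\ref{sec:hnn-extensions}: the lemmas you invoke (hereditary pairs, structure trees, rigid pairs, Lemmas~\ref{sec:reduction-to-hereditary-pair}, \ref{sec:hnn-extensions-existence-of-structure-tree}, \ref{sec:induced-metrics-existence-of-a-good-pair}) all presuppose $\hat\zeta=e$ in $G*\langle t\rangle$, and their proofs lean on the unique reduced form in a free product. A word can be trivial in $H$ while every letter is nontrivial and no interval evaluates to $e$ in $G*\langle t\rangle$ --- the Britton relations $tat^{-1}=\phi(a)$ create cancellations that the structure trees of Section~\ref{sec:hnn-extensions} simply do not see. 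You would need an HNN analogue of Proposition~\ref{sec:triv-words-amalg-structure-of-the-trivial-word} (a decomposition of HNN-trivial words governed by Britton pinches) together with the analogues of the transfer and symmetrization machinery, all of which is nontrivial and none of which is in the paper. The parenthetical alternative --- ``pushing the rigid $\zeta$ down to the amalgam $\mathcal A$'' --- does not resolve this either: there is no map from HNN-trivial words over $G\cup\langle t\rangle$ into the infinite amalgam $\mathcal{A}$ that obviously preserves the relevant quantities, and the inequality $\rho(\alpha,\zeta)\ge\norm_H(g)$ is tautological rather than helpful. In short, the upper bounds and the two-sided invariance in your write-up are fine, but the lower bound $\dist|_G\ge d$ is asserted rather than proved, and this is the entire content of the theorem. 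The lesson of the paper's argument is that one should avoid analyzing HNN-trivial words altogether by first realizing $H$ inside an amalgam where the Graev machinery already applies.
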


\begin{proof}
  First assume that \( K = 1 \). Let \( \langle u \rangle \) and \( \langle v \rangle \) be two copies of the group
  \( \mathbb{Z} \) of the integers with the usual metric. Form the free products \( (G * \langle u \rangle, d_{u}) \)
  and \( (G * \langle v \rangle,d_{v}) \), where \( d_{u}, d_{v} \) are the Graev metrics.  Since
  \( \diam{A} = \diam{B} \le 1 \), by Theorem \ref{sec:induced-metrics-equality-of-induced-metrics} the Graev metric on
  \( G * uAu^{-1} \) is the restriction of \( d_{u} \) onto \( G* uAu^{-1} \), and, similarly, the Graev metric on
  \( G * vBv^{-1} \) is just the restriction of \( d_{v} \).  Let \( \psi : G * uAu^{-1} \to G * vBv^{-1} \) be an
  isomorphism that is uniquely defined by
  \[ \psi(g) = g, \quad \psi(uau^{-1}) = v\phi(a)v^{-1}, \quad a \in A,\ g \in G.  \]
  By Theorem \ref{sec:induced-metrics-equality-of-induced-metrics} \( \psi \) is an isometry. Also, by Proposition
  \ref{sec:metr-hnn-extens-free-product-closed} \( G*uAu^{-1} \) and \( G*vBv^{-1} \) are closed subgroups of
  \(G * \langle u \rangle \) and \( G * \langle v \rangle \) respectively.  Hence by the results of Section
  \ref{sec:metrics-amalgams} we can amalgamate \( G * \langle u \rangle \) and \( G * \langle v \rangle \) over
  \( G * uAu^{-1} = G*vBv^{-1} \).  Denote the result of this amalgamation by \( (\widetilde{H}, \dist) \).  Then
  \[ uau^{-1} = v\phi(a)v^{-1} \quad \textrm{for all \( a \in A \)}, \]
  and therefore \( v^{-1}uau^{-1}v = \phi(a) \).  If \( H = \langle G, v^{-1}u \rangle \), then \( (H,v^{-1}u) \) is an
  HNN extension of \( (G,\phi) \) and \( \dist|_{H_{\phi}} \) is a two-sided invariant metric on \( H \), which extends
  \( d \).

  This was done under the assumption that \( K=1 \).  The general case can be reduced to this one.  If \( d' = (1/K)d
  \), then \( d' \) is a tsi metric on \( G \), \( \phi \) is a \( d' \)-isometric isomorphism and \(
  d'\)-\(\diam{A}\le 1 \).  By the above construction there is a tsi metric \( \dist' \) on \( H \) such that
  \( \dist'|_{G} = d' \).  Now set \( \dist = K \dist' \).
\end{proof}

It is, of course, natural to ask if the condition of having a bounded diameter is crucial.  The answer to this question
is not known, but here is a necessary condition.

\begin{proposition}
  \label{sec:hnn-extens-class-necessary-hnn-condition}
  Let \( (G,d) \) be a tsi group, \( \phi : A \to B \) be a \( d \)-isometric isomorphism, and \( H \) be the HNN
  extension of \( (G,\phi) \) with the stable letter \( t \).  If \( d \) is extended to a tsi metric \( d' \) on
  \( H \), then
  \[ \sup\{d'(a,\phi(a)) : a \in A\} < \infty.  \]
\end{proposition}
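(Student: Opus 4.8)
The plan is to exploit the defining relation of the HNN extension, namely \( tat^{-1} = \phi(a) \) for all \( a \in A \), together with the two-sided invariance of \( d' \). First I would rewrite the quantity to be bounded: for \( a \in A \) we have \( d'(a,\phi(a)) = d'(a, tat^{-1}) \), so it suffices to bound \( d'(a, tat^{-1}) \) uniformly in \( a \).

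Next, applying item (ii) of Proposition \ref{sec:tsi-groups-tsi-criterion} to the two length-three products \( e \cdot a \cdot e = a \) and \( t \cdot a \cdot t^{-1} = tat^{-1} \), we obtain
\[ d'(a, tat^{-1}) = d'(e \cdot a \cdot e,\ t \cdot a \cdot t^{-1}) \le d'(e,t) + d'(a,a) + d'(e,t^{-1}) = d'(t,e) + d'(t^{-1},e). \]
Two-sided invariance then gives \( d'(t^{-1},e) = d'(t\cdot t^{-1}, t\cdot e) = d'(e,t) = d'(t,e) \), so that
\[ d'(a,\phi(a)) \le 2\, d'(t,e) \]
for every \( a \in A \). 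Since \( d'(t,e) \) is a finite real number, taking the supremum over \( a \in A \) yields \( \sup\{ d'(a,\phi(a)) : a \in A \} \le 2\, d'(t,e) < \infty \), which is the claim.

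There is no genuine obstacle here: the statement is an immediate consequence of the triangle inequality and the invariance of \( d' \) once the relation \( tat^{-1} = \phi(a) \) is used. The only points worth a moment's care are the elementary identity \( d'(t^{-1},e) = d'(t,e) \) (a standard consequence of invariance) and the observation that the resulting bound is uniform in \( a \) precisely because \( \phi(a) = tat^{-1} \) is conjugation by the single fixed element \( t \), whose distance to the identity does not depend on \( a \).
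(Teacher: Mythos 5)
Your proof is correct and follows essentially the same route as the paper: write \( \phi(a)=tat^{-1} \), express \( a=e\cdot a\cdot e \) and \( tat^{-1}=t\cdot a\cdot t^{-1} \), and apply the term-wise estimate from Proposition \ref{sec:tsi-groups-tsi-criterion}(ii) to get the uniform bound \( 2d'(t,e) \). The paper's only cosmetic difference is that it first passes to the norm \( d'(a^{-1}tat^{-1},e) \) before splitting into a product, which your version sidesteps.
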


\begin{proof}
  If \( K = d'(t,e) \), then for any \( a \in A \)
  \begin{displaymath}
    \begin{aligned}
      d'(a,\phi(a)) =& d'(a,tat^{-1}) = d'(a^{-1}tat^{-1},e) =\\
      &d'(a^{-1}tat^{-1}, a^{-1}eae) \le d'(t,e)+ d'(t^{-1},e)=2K.
    \end{aligned}
  \end{displaymath}
  Therefore \( \sup\{d'(a,\phi(a)) : a \in A\} \le 2K \).
\end{proof}

\begin{question}
  \label{sec:hnn-extens-class-sufficiency-of-the-necessary-condition-q}
  Is this condition also sufficient?  To be precise, suppose \( (G,d) \) is a tsi group, \( \phi : A \to B \) is a
  \( d \)-isometric isomorphism between closed subgroups \( A, B \), and suppose that
  \[ \sup \big\{d(a,\phi(a)) : a \in A \big\} < \infty. \]
  Does there exist a tsi metric \( \dist \) on the HNN extension \( H \) of \( (G,\phi) \) such that
  \( \dist|_{G} = d \)?
\end{question}

\subsection{Induced conjugation and HNN extension}
\label{sec:induc-conj-hnn-1}

Recall that a topological group \( G \) is called SIN if for every open \( U \subseteq G \) such that \( e \in U \)
there is an open subset \( V \subseteq U \) such that \( gVg^{-1} = V \) for all \( g \in G \).  A metrizable group
admits a compatible two-sided invariant metric if and only if it is SIN.

\begin{theorem}
  \label{sec:induc-conj-hnn-general-theorem}
  Let \( G \) be a SIN metrizable group.  Let \( \phi : A \to B \) be a topological isomorphism between two closed
  subgroups.  There exist a SIN metrizable group \( H \) and an element \( t \in H \) such that \( G < H \) is a
  topological subgroup and \( tat^{-1} = \phi(a) \) for all \( a \in A \) if and only if there is a compatible tsi
  metric \( d \) on \( G \) such that \( \phi \) becomes a \( d \)-isometric isomorphisms.
\end{theorem}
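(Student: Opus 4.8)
The plan is to prove both implications using results already established in the paper, so that the argument is short.

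\medskip

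\noindent\textbf{The ``only if'' direction} ($H$ exists $\Rightarrow$ a good metric on $G$ exists). Suppose $H$ is a SIN metrizable group, $G < H$ a topological subgroup, and $t \in H$ with $tat^{-1} = \phi(a)$ for all $a \in A$. Since $H$ is SIN and metrizable, it admits a compatible two-sided invariant metric $\rho$. Put $d = \rho|_{G}$. As $G$ carries the subspace topology, $d$ is a compatible metric on $G$, and it is plainly two-sided invariant. It only remains to see that $\phi$ is a $d$-isometric isomorphism, and this is automatic: for $a_{1}, a_{2} \in A$, using the two-sided invariance of $\rho$,
\[ d(\phi(a_{1}),\phi(a_{2})) = \rho(ta_{1}t^{-1}, ta_{2}t^{-1}) = \rho(a_{1}t^{-1}, a_{2}t^{-1}) = \rho(a_{1},a_{2}) = d(a_{1},a_{2}). \]
Thus conjugation by $t$ restricts to a $d$-isometry $A \to B$, which is what we wanted.

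\medskip

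\noindent\textbf{The ``if'' direction} (a good metric on $G$ exists $\Rightarrow$ $H$ exists). Suppose $d$ is a compatible tsi metric on $G$ for which $\phi$ is a $d$-isometric isomorphism. First replace $d$ by $d' = \min\{d,1\}$. This is again a compatible two-sided invariant metric on $G$: two-sided invariance is preserved by the truncation, the triangle inequality $\min\{a+b,1\} \le \min\{a,1\}+\min\{b,1\}$ holds for $a,b \ge 0$, and $d'$ induces the same topology as $d$. Now $d'$-$\diam{G} \le 1$, hence $d'$-$\diam{A} \le 1$, and $\phi$ is still a $d'$-isometric isomorphism. Apply Theorem \ref{sec:hnn-extens-class-existence-of-hnn-extension} with $K = 1$ to the tsi group $(G,d')$ and the isometric isomorphism $\phi$: there is a tsi metric $\dist$ on the abstract HNN extension $H = \langle G, t \mid tat^{-1} = \phi(a),\ a \in A\rangle$ with $\dist|_{G} = d'$ and $\dist(t,e)=1$. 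By Proposition \ref{sec:tsi-groups-tsi-topological-group}, $(H,\dist)$ is a topological group, and since $\dist$ is two-sided invariant, $H$ is SIN metrizable. Because $\dist|_{G} = d'$ is compatible with the original topology of $G$, the inclusion $G \hookrightarrow H$ is a topological embedding onto a subgroup, and $tat^{-1} = \phi(a)$ holds by the construction of $H$. So $H$ (with the element $t$) is exactly the group required.

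\medskip

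\noindent The substantive work is entirely hidden inside Theorem \ref{sec:hnn-extens-class-existence-of-hnn-extension}; the only new observations here are that the unbounded case reduces to the bounded one by truncating the metric, and that the converse implication is essentially free because conjugation by any element is an isometry for a two-sided invariant metric. The only points deserving a line of care are that $\min\{d,1\}$ is genuinely two-sided invariant, satisfies the triangle inequality, and induces the same topology, and that ``topological subgroup'' is exactly what lets us pass the subspace metric back and forth; all of these are routine.
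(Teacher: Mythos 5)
Your proof is correct and takes essentially the same route as the paper's: truncate the metric to $d'=\min\{d,1\}$ so that $\diam{A}\le 1$, then invoke Theorem \ref{sec:hnn-extens-class-existence-of-hnn-extension}; the converse is the standard observation that conjugation is an isometry for a tsi metric. You spell out the routine verifications (tsi-ness and topology of the truncation, the explicit isometry computation in the ``only if'' direction) which the paper leaves implicit, but there is no difference in approach.
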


\begin{proof}
  Necessity of the condition is obvious: if \( d \) is a compatible tsi metric on \( H \), then \( \phi \) is
  \( d|_{G} \)-isometric.  We show sufficiency.  Let \( d \) be a compatible tsi metric on \( G \) such that \( \phi \)
  is a \( d \)-isometric isomorphism.  If \( d'(g,e) = \min\{d(g,e), 1\} \), then \( d' \) is also a compatible tsi
  metric on \( G \), \( \phi \) is a \( d' \)-isometric isomorphism, and \( d' \)-\( \diam{A} \le 1\) (because \( d'
  \)-\( \diam{G} \le 1\)).  Apply Theorem \ref{sec:hnn-extens-class-existence-of-hnn-extension} to get an extension of
  \( d' \) to a tsi metric on \( H \), where \( (H,t) \) is the HNN extension of \( (G,\phi) \).  Then \( (H,t) \)
  satisfies the conclusions of the theorem.
\end{proof}

\begin{corollary}
  \label{sec:induc-conj-hnn-extension-for-discrete-subgroups}
  Let \( G \) be a SIN metrizable group.  Let \( \phi : A \to B \) be a topological group isomorphism.  If \( A \) and
  \( B \) are discrete, then there is a topology on the HNN extension of \( (G,\phi) \) such that \( G \) is a closed
  subgroup of \( H \) and \( H \) is SIN and metrizable.
\end{corollary}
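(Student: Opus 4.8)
The plan is to reduce everything to Theorem~\ref{sec:hnn-extens-class-existence-of-hnn-extension}: I will produce a compatible two-sided invariant metric $d$ on $G$ of diameter at most $1$ for which $\phi$ becomes a $d$-isometric isomorphism, and then invoke that theorem to metrize the HNN extension. Observe first that $A$ and $B$, being discrete subgroups of the Hausdorff group $G$, are automatically closed, so the hypotheses of Theorem~\ref{sec:hnn-extens-class-existence-of-hnn-extension} and of Proposition~\ref{sec:metr-hnn-extens-free-product-closed} will be available.

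To build $d$, fix any compatible two-sided invariant metric $\rho$ on $G$, which exists because $G$ is SIN and metrizable. Discreteness of $A$ supplies an $\epsilon_A>0$ with $\rho(a,e)\ge\epsilon_A$ for all $a\in A\setminus\{e\}$, and similarly an $\epsilon_B>0$ for $B$; set $\epsilon=\min\{\epsilon_A,\epsilon_B,1\}$ and let $d(g,e)=\min\{\rho(g,e),\epsilon\}$. Exactly as in the proofs of Theorems~\ref{sec:hnn-extens-class-existence-of-hnn-extension} and~\ref{sec:induc-conj-hnn-general-theorem}, this truncated $d$ is again a compatible two-sided invariant metric, and clearly $d$-$\diam{G}\le\epsilon\le 1$. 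By the choice of $\epsilon$, every non-identity element of $A$ has $d$-norm exactly $\epsilon$, so by left invariance $d(a_1,a_2)=\epsilon$ whenever $a_1\ne a_2$ in $A$ and $d(a_1,a_2)=0$ otherwise; the same description holds verbatim on $B$. Since $\phi\colon A\to B$ is a bijective homomorphism it carries distinct elements to distinct elements, hence preserves these values, and therefore $\phi$ is a $d$-isometric isomorphism.

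Feeding $(G,d)$ and $\phi$ into Theorem~\ref{sec:hnn-extens-class-existence-of-hnn-extension} (with $K=1$) yields a two-sided invariant metric $\dist$ on the HNN extension $H$ of $(G,\phi)$ with $\dist|_G=d$. Being a group with a two-sided invariant metric, $H$ is SIN and metrizable, and since $d$ is compatible the inclusion $G<H$ is a topological embedding. It remains to check that $G$ is closed in $H$. For this I would retrace the construction in the proof of Theorem~\ref{sec:hnn-extens-class-existence-of-hnn-extension}, where $H$ is realized as a subgroup of the amalgam $\widetilde H=(G*\langle u\rangle)*_{C}(G*\langle v\rangle)$ with $C=G*uAu^{-1}=G*vBv^{-1}$: by Proposition~\ref{sec:metr-hnn-extens-free-product-closed}, $C$ is closed in each factor $G*\langle u\rangle$ and $G*\langle v\rangle$, so Proposition~\ref{sec:graev-metric-amalgam-properties}\eqref{prop:graev-metric-amalgam-properties-item:induced-metric} gives that $(G*\langle u\rangle)*_{C}C=G*\langle u\rangle$ is closed in $\widetilde H$; applying that same item once more (with trivial amalgamated subgroup and the subgroups $G$ and $\{e\}$) shows $G$ is closed in $G*\langle u\rangle$, hence in $\widetilde H$, hence in $H$.

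The construction of $d$ is routine once one notices that truncating below the minimal nontrivial norm of $A$ and of $B$ makes both subgroups uniformly $\epsilon$-discrete; the only step I expect to require care is the last one, verifying that $G$ sits inside $H$ as a closed subgroup, which is why I trace the closedness through the explicit amalgam presentation of Theorem~\ref{sec:hnn-extens-class-existence-of-hnn-extension} rather than appealing to Theorem~\ref{sec:induc-conj-hnn-general-theorem}, whose statement does not record this.
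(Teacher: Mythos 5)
Your proof is correct and follows essentially the same strategy as the paper: truncate a compatible tsi metric at the minimal separation scale of $A$ and $B$, which makes both subgroups uniformly $\epsilon$-separated, hence $\phi$ trivially isometric, and then invoke the HNN metrization machinery. The paper applies Theorem~\ref{sec:induc-conj-hnn-general-theorem} (which in turn reduces to Theorem~\ref{sec:hnn-extens-class-existence-of-hnn-extension}), whereas you cut out that middleman; this is an inessential difference. One genuine value-add in your write-up: the corollary asserts that $G$ is \emph{closed} in $H$, but Theorem~\ref{sec:induc-conj-hnn-general-theorem}, which the paper cites to finish, only records that $G<H$ is a topological subgroup, so the paper's proof leaves the closedness implicit; your tracing of closedness through the amalgam presentation via Proposition~\ref{sec:metr-hnn-extens-free-product-closed} and item~\eqref{prop:graev-metric-amalgam-properties-item:induced-metric} of Proposition~\ref{sec:graev-metric-amalgam-properties} is the right way to fill that in, and the chain $G$ closed in $G*\langle u\rangle$ closed in $\widetilde H$, hence $G$ closed in $H\subseteq\widetilde H$, checks out.
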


\begin{proof}
  Let \( d \) be a compatible tsi metric on \( G \).  Since \( A \) and \( B \) are discrete, there exists constant
  \( c > 0 \) such that
  \[ \inf\{d(a_{1},a_{2}) : a_{1},a_{2} \in A, a_{1} \ne a_{2}\} \ge c, \quad
  \inf\{d(b_{1},b_{2}) : b_{1},b_{2} \in B, b_{1} \ne b_{2}\} \ge c.\]
  If \( d'(g_{1},g_{2}) = \min\{d(g_{1},g_{2}), c\} \), then \( d' \) is a
  compatible tsi metric on \( G \) and \( \phi \) is a \( d' \)-isometric
  isomorphism.  Theorem \ref{sec:induc-conj-hnn-general-theorem} finishes the
  proof.
\end{proof}

\begin{corollary}
  \label{sec:induc-conj-hnn-inverse-abelian-group}
  Let \( (G,+) \) be an abelian metrizable group.  If \( \phi : G \to G \) is given by \( \phi(x) = -x \), then
  there is a SIN metrizable topology on the HNN extension \( H \) of \( (G,\phi) \) that extends the topology of \( G
  \).
\end{corollary}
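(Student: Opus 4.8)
The plan is to obtain this as an immediate consequence of Theorem \ref{sec:induc-conj-hnn-general-theorem}, applied with $A = B = G$ and the given $\phi$. Thus the task is to verify the three hypotheses of that theorem: that $G$ is SIN and metrizable, that $\phi$ is a topological isomorphism between closed subgroups of $G$, and that $G$ carries a compatible tsi metric $d$ with respect to which $\phi$ is an isometric isomorphism.

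First, an abelian topological group is automatically SIN, since every neighbourhood of the identity is conjugation invariant; being also metrizable, $G$ therefore admits a compatible two-sided invariant metric $d$ (equivalently, any compatible left invariant metric on an abelian group is already two-sided invariant). The subgroups $A = B = G$ are trivially closed in $G$, and $\phi(x) = -x$ is a continuous homomorphism whose inverse is $\phi$ itself, hence a topological automorphism of $G$.

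It remains to see that $\phi$ is $d$-isometric, and the key point is that this holds automatically for \emph{every} compatible invariant metric on an abelian group. Indeed, for $x, y \in G$, translating by $x + y$ and then using the symmetry of $d$,
\[ d(-x,-y) = d\big((-x) + (x+y),\, (-y) + (x+y)\big) = d(y,x) = d(x,y). \]
So any compatible tsi metric on $G$ makes $\phi$ an isometric isomorphism.

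With all the hypotheses of Theorem \ref{sec:induc-conj-hnn-general-theorem} verified, it produces a SIN metrizable group $H$ and an element $t \in H$ with $G < H$ a topological subgroup and $tat^{-1} = \phi(a) = -a$ for all $a \in G$; by the definition of an HNN extension, such an $H$ is precisely the HNN extension of $(G,\phi)$ equipped with a SIN metrizable topology extending that of $G$. The argument has no genuine obstacle: its whole content is the one-line automatic-isometry computation above, everything else being a direct appeal to Theorem \ref{sec:induc-conj-hnn-general-theorem}.
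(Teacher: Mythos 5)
Your proof is correct and follows essentially the same route as the paper: check that $\phi(x)=-x$ is automatically $d$-isometric for a compatible tsi metric on an abelian group, then invoke Theorem~\ref{sec:induc-conj-hnn-general-theorem}. You actually spell out the one-line isometry computation $d(-x,-y)=d(y,x)=d(x,y)$, which the paper's terse proof leaves implicit (the paper also inserts a redundant diameter normalization, but Theorem~\ref{sec:induc-conj-hnn-general-theorem} already handles that internally, so you were right to skip it).
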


\begin{proof}
  If \( d \) is a compatible tsi metric on \( G \) such that \(d\)-\( \diam{G} \le 1\), then \( \phi \) is a \( d
  \)-isometric isomorphism and we apply Theorem \ref{sec:induc-conj-hnn-general-theorem}.
\end{proof}

\begin{definition}
  \label{sec:induc-conj-hnn}
  Let \( G \) be a topological group.  Elements \( g_{1}, g_{2} \in G \) are said to be \emph{induced conjugated} if
  there exist a topological group \( H \) and an element \( t \in H \) such that \( G < H\) is a topological subgroup
  and \( tg_{1}t^{-1} = g_{2} \).
\end{definition}

\begin{example}
  \label{sec:induc-conj-hnn-circle-hnn}
  Let \( (\mathbb{T},+) \) be a circle viewed as a compact abelian group, and let \( g_{1}, g_{2} \in \mathbb{T} \).
  The elements \( g_{1} \) and \( g_{2} \) are induced conjugated if and only if one of the two conditions is satisfied:
  \begin{enumerate}[(i)]
  \item \( g_{1} \) and \( g_{2} \) are periodic elements of the same period;
  \item \( g_{1} = \pm g_{2} \).
  \end{enumerate}
\end{example}

\begin{proof}
  The sufficiency of any of these conditions follows from Corollary
  \ref{sec:induc-conj-hnn-extension-for-discrete-subgroups} and Corollary
  \ref{sec:induc-conj-hnn-inverse-abelian-group}.  We need to show the necessity.  If \( g_{1} \) and \( g_{2} \) are
  induced conjugated, then they have the same order.  If the order of \( g_{i} \) is finite, we are done.  Suppose the
  order is infinite.  The groups \( \langle g_{1} \rangle \) and \( \langle g_{2} \rangle\) are naturally isomorphic (as
  topological groups) via the map \( \phi(kg_{1}) = kg_{2}\).  This map extends to a continuous isomorphism
  \( \phi : \mathbb{T} \to \mathbb{T} \), because \( \mathbb{T} \) is compact and \( \langle g_{i} \rangle \) is dense
  in \( \mathbb{T} \).  But there are only two continuous isomorphisms of the circle: \( \phi = \id \) and
  \( \phi = -\id \).  Thus \( g_{1} = \pm g_{2} \).
\end{proof}

\begin{example}
  \label{sec:induc-conj-hnn-infinite-torus-shift}
  Let \( G = \mathbb{T}^{\mathbb{Z}} \) be a product of circles, and let
  \( S : \mathbb{T}^{\mathbb{Z}} \to \mathbb{T}^{\mathbb{Z}} \) be the shift map \( S(x)(n) = x(n+1) \) for all
  \( x \in \mathbb{T}^{\mathbb{Z}}\) and all \( n \in \mathbb{Z} \).  The group \( \mathbb{T}^{\mathbb{Z}} \) is
  monothetic and abelian.  If \( x = \{a_{n}\}_{n \in \mathbb{Z}} \), where \( a_{n} \)'s and \( 1 \) are linearly
  independent over \( \mathbb{Q} \), then \( \langle x \rangle \) is dense in \( \mathbb{T}^{\mathbb{Z}} \) (by
  Kronecker's theorem, see, for example, \cite[Theorem 443]{MR2445243}).  Since
  \( S \) is an automorphism, \( x \) and \( S(x) \) are topologically similar.  We claim that \( x \) and \( S(x) \)
  are not induced conjugated in any SIN metrizable group \( H \).
\end{example}

\begin{proof}
  Suppose \( H \) is a SIN metrizable group, \( G \) is a topological subgroup of \( H \) and \( t \in H \) is such that
  \( txt^{-1} = S(x) \).  If \( \phi_{t} : H \to H \) is given by \( \phi_{t}(y) = tyt^{-1} \), then
  \( \phi_{t}(mx) = S(mx) \) for all \( m \in \mathbb{Z} \) and hence, by continuity and density of
  \( \langle x \rangle \), \( \phi_{t}(y) = S(y) \) for all \( y \in \mathbb{T}^{\mathbb{Z}} \).  If \( d \) is a
  compatible tsi metric on \( H \), then \( \phi_{t} \) is a \( d \)-isometric isomorphism.  Therefore for
  \( x_{0} \in \mathbb{T}^{\mathbb{Z}} \),
  \begin{displaymath}
    x_{0}(n) =
    \begin{cases}
      1/2 & \textrm{if \( n = 0 \)}; \\
      0 & \textrm{otherwise},
    \end{cases}
  \end{displaymath}
  we get
  \[ d(\phi_{t}^{m}(x_{0}), e) = d(\phi_{t}^{m}(x_{0}), \phi_{t}^{m}(e)) = d(x_{0}, e) = \mathrm{const} > 0, \]
  but \( S^{m}(x_{0}) \to 0 \), when \( m \to \infty \).  This contradicts \( \phi_{t}(y) = S(y) \) for all
  \( y \in \mathbb{T}^{\mathbb{Z}} \).
\end{proof}

\bibliographystyle{amsplain} 
\bibliography{/home/kslutsky/gitrep/papers/references}{}

\end{document}